% References to formulas: lowercase
% To theorems etc: Leading Capitals
% To parts of the text: CAPITALS

%==================================================

%% V2: Introduction expanded, minor correctiouns throughout the text, including the statement Cor and of Proposition 6.10.
%% V3:

\documentclass[10pt]{amsart}
\usepackage{amssymb,amscd,euscript}

\numberwithin{equation}{section}

\theoremstyle{plain}
\newtheorem{Theorem}{Theorem}
\newtheorem{Proposition}{Proposition}[section]
\newtheorem{Lemma}[Proposition]{Lemma}
\newtheorem{Corollary}[Proposition]{Corollary}

\theoremstyle{definition}
\newtheorem{Definition}[Proposition]{Definition}

\theoremstyle{remark}
\newtheorem{Remark}[Proposition]{Remark}
\newtheorem*{remark*}{Remark}
\newtheorem{Example}[Proposition]{Example}

\newcommand{\lpar}{{\rm(}\hskip-1pt}
\newcommand{\rpar}{{\rm)}}
\newcommand{\ceps}{\stackrel{\epsilon}{,}}

\renewcommand{\phi}{\varphi}
\renewcommand{\epsilon}{\varepsilon}

\newcommand\p[1]{{{\mathbb P}^#1}}
\newcommand\pp[2]{{{\mathbb P}^#1_#2}}
\newcommand\A[1]{{{\mathbb A}^#1}}
\newcommand\gm{{\mathbf{G_m}}}
\newcommand\gmd{\C[\divisor]^\times}

\newcommand\dif{\mathbf d}

\newcommand\C{{\mathbb C}}
\newcommand\Z{{\mathbb Z}}

\newcommand\Q{{\mathbb Q}}

\newcommand\oM{{\overline M}}
\newcommand\oN{{\overline N}}

\newcommand\cA{{\mathcal A}}
\newcommand\cB{{\mathcal B}}
\newcommand\cC{{\mathcal C}}
\newcommand\cD{{\mathcal D}}
\newcommand\cE{{\mathcal E}}
\newcommand\cF{{\mathcal F}}
\newcommand\cG{{\mathcal G}}

\newcommand\cL{{\mathcal L}}
\newcommand\cM{{\mathcal M}}
\newcommand\cX{{\mathcal X}}
\newcommand\cN{{\mathcal N}}
\newcommand\cP{{\mathcal P}}

\newcommand\cR{{\mathcal R}}
\newcommand\cT{{\mathcal T}}
\newcommand\cS{{\mathcal S}}
\newcommand\cV{{\mathcal V}}
\newcommand\cY{{\mathcal Y}}

\newcommand\fR{{\mathfrak R}}
\newcommand{\divisor}{\mathfrak D}
\newcommand{\Dmod}{\EuScript D}
\newcommand{\iso}{\mathrel{\widetilde\to}}
\newcommand\limind{{\lim\limits_{\longrightarrow}}}
\newcommand\al{{\bullet}}
\newcommand\lb{\vartheta}	% Twisting line bundle

\DeclareMathOperator\gl{\mathfrak{gl}}
\DeclareMathOperator\gr{\mathrm{gr}}
\DeclareMathOperator\supp{\mathrm{supp}}
\DeclareMathOperator\Aut{\mathrm{Aut}}
\DeclareMathOperator\Lie{\mathrm{Lie}}
\DeclareMathOperator\HIGGS{\mathcal{H}\mathit{iggs}}
\DeclareMathOperator\ad{ad}
\DeclareMathOperator\Ker{Ker}
\DeclareMathOperator\Hom{Hom}

\DeclareMathOperator\End{End}
\DeclareMathOperator\END{\mathcal{E}\mathit{nd}}
\DeclareMathOperator\HOM{\mathcal{H}\mathit{om}}
\DeclareMathOperator\TOR{\mathcal{T}\!\mathit{or}}

\DeclareMathOperator\coker{Coker}

\DeclareMathOperator\res{res}
\DeclareMathOperator\spec{Spec}
\DeclareMathOperator\SPEC{\mathcal{S}\mathit{pec}}
\DeclareMathOperator\tr{tr}
\DeclareMathOperator\rk{rk}

\DeclareMathOperator{\cPic}{\overline{\mathcal{P}\mathit{ic}}}
\DeclareMathOperator\Bun{\mathcal{B}\mathit{un}}
\DeclareMathOperator\detrg{detR\Gamma}
\DeclareMathOperator\DR{\mathbb{DR}}
\DeclareMathOperator\HH{{\mathbb H}}
\DeclareMathOperator\id{id}

\DeclareMathOperator\diag{diag}

\newcommand\CPic[1]{\cPic^{\;#1}}  %this is \cPic with one argument (upper index) and nice spacing.

\newcommand\Mh{{\cM_H}}
\newcommand\Mb{{\overline\cM}}
\newcommand\Nb{{\overline{\mathstrut\cN}}}

%-------------------- numeration of parts of theorems
\newcounter{noindnum}[subsection]
\newcommand{\stepzero}{\setcounter{noindnum}{0}}
\stepzero

\newcommand{\noindstep}{\noindent\refstepcounter{noindnum}{\rm(}\alph{noindnum}\/{\rm)}\
}

\newcounter{noindnumrm}[subsection]
\setcounter{noindnumrm}{0}

\newcommand{\noindsteprm}{\noindent\refstepcounter{noindnumrm}{\rm(}\roman{noindnumrm}\/{\rm)}\
}

\begin{document}

\bibliographystyle{alphanum}

%{\noindent\textbf{Version 18 (=arXiv Version 3)}\\
%\vskip3truecm}

\title[An example of the Langlands
correspondence]{An example of the Langlands correspondence for
irregular rank two connections on $\p1$}

\author{D.~Arinkin}
\email{arinkin@email.unc.edu}
\address{Department of mathematics, University of North Carolina,
Chapel Hill, NC}

\author{R.~Fedorov}
\email{fedorov@math.ksu.edu}
\address{Mathematics $\&$ Statistics\\
Boston University\\
111 Cummington St\\
Boston, MA}

\keywords{The Langlands duality; Moduli spaces; Connections
with irregular singularities; Parabolic bundles.}

\begin{abstract}
Special kinds of rank 2 vector bundles with (possibly
irregular) connections on $\p1$ are considered. We construct an
equivalence between the derived category of quasi-coherent
sheaves on the moduli stack of such bundles and the derived
category of modules over a TDO ring on certain non-separated
curve. We identify this curve with the coarse moduli space of
some parabolic bundles on $\p1$. Then our equivalence becomes
an example of the categorical Langlands correspondence.
\end{abstract}

\maketitle

\section{Introduction}

Let ${\mathcal{C}\mathit{onn}}(X,r)$ be the moduli space of
rank~$r$ vector bundles with connections on a smooth complex
projective curve~$X$. Let $\Bun(X,r)$ be the moduli space of
rank~$r$ vector bundles on $X$. The \emph{categorical Langlands
correspondence\/} for $GL(r)$ is a conjectural equivalence
between the derived category of $O$-modules on
${\mathcal{C}\mathit{onn}}(X,r)$ and the derived category of
$\Dmod$-modules on $\Bun(X,r)$. We refer the interested reader
to~\cite[Section 6.2]{FrenkelRecentAdvances}.

This correspondence has been proved by one of the authors in
the settings of rank two bundles equipped with connections with
four simple poles on $X=\p1$ (cf.~\cite{Arinkin}). In this
case, the space $\Bun(X,r)$ should be replaced by the moduli
space of bundles with parabolic structures. More precisely,
\cite{Arinkin} works with $SL(2)$-connections and
$PGL(2)$-bundles. (See~\cite{FrenkelRamifications} for a
discussion of the ramified Langlands program.)

In this paper we extend the results of \cite{Arinkin} to the
case when the ramification divisor still has degree four but we
allow higher order poles as long as leading terms are regular
semisimple (see Theorems~\ref{Langlands} and~\ref{MainTh}).
This provides an example of the categorical Langlands
correspondence for connections with irregular singularities.

In~\cite{FrenkelGross}, Frenkel and Gross present an example of
the Langlands correspondence for a different kind of irregular
singularities. It is instructive to compare the two settings.
Unlike the present paper, the results of Frenkel and Gross
apply to arbitrary group $G$, not just $G=GL(2)$. The
ramification considered is in a sense the simplest nontrivial:
the ramification divisor has degree three. It is proved
in~\cite{FrenkelGross} that in these settings, there is a
unique up to isomorphism local system with prescribed
singularities. In other words, the counterpart of the moduli
space ${\mathcal{C}\mathit{onn}}(X,r)$ consists of a single
point. In particular, the category of $O$-modules on this space
has a unique irreducible object, the structure sheaf of this
point. The corresponding category of automorphic
$\Dmod$-modules (the counterpart of the category of
$\Dmod$-modules on $\Bun(X,r)$) also has a unique irreducible
object \cite[Sections~3,~16]{FrenkelGross}. The categorical
Langlands transform sends the two irreducible generators into
each other.

The present paper studies the `next simplest case': the
ramification divisor has degree four. The moduli space of local
systems is a surface, and the categorical Langlands transform
is an equivalence, similar to the Fourier--Mukai transform.

The techniques used in our argument are similar to that
of~\cite{Arinkin} but more conceptual. We hope that the present
proof is more suitable for generalizations to divisors of
higher order and to the higher genus case.

\begin{remark*}
In positive characteristic, a different approach to Langlands
correspondence was discovered by Bezrukavnikov and Braverman.
In \cite{BravermanBezrukavnikov}, they construct a version of
the categorical Langlands correspondence. In \cite{Nevins},
Nevins uses these ideas for connections with regular
singularities.
\end{remark*}

Our argument uses certain moduli spaces that may be of independent
interest. In Section~\ref{COMPACTIFICATION} we prove that
the moduli space of connections with possibly irregular
singularities has a good moduli space in the sense
of~\cite{Alper}; we also construct a modular projective
compactification of this space, see
Theorems~\ref{GoodModuliSpace} and~\ref{AmpleBundle}. This is
an extension of Simpson's
results~\cite{SimpsonICM,Simpson1,Simpson2}.

Finally, we want to note that our moduli spaces of connections
are the moduli spaces of initial conditions of Painlev\'e
equations. More precisely, the case of regular singularities
corresponds to Painlev\'e~VI, while the cases of irregular
singularities correspond to Painlev\'e~II--Painlev\'e~V,
see~\cite{OhyamaOkumura}.

\subsection{Conventions} We work over the ground field of
complex numbers, thus $\p1$ means $\mathbb{P}^1_\C$, a `scheme'
means a `$\C$-scheme' etc. All schemes and stacks are locally
of finite type.

\subsection{Acknowledgments} We benefited from talks with many
mathematicians. The second author wants to especially thank
David Ben--Zvi, Roman Bez\-ru\-kav\-ni\-kov, Ivan Mirkovic,
Emma Previato, and Matthew Szczesny.

The first author is a Sloan Research Fellow, and he
 is grateful to the Alfred P.~Sloan Foundation for the
support.

\section{Main Results}
Let $\divisor:=\sum n_ix_i$ be a divisor on
$\p1=\mathbb{P}^1_\C$ with $n_i>0$. Let $L$ be a rank 2 vector
bundle on $\p1$, $\nabla:L\to L\otimes\Omega_\p1(\divisor)$ a
connection on $L$ with polar divisor $\divisor$. We call such
pairs $(L,\nabla)$ \emph{connections\/} for brevity.

Choosing a formal coordinate $z$ near $x_i$ and a
trivialization of $L$ on the formal neighborhood of $x_i$, we
can write $\nabla$ near $x_i$ as
\[
\dif+a\frac{\dif z}{z^{n_i}}+\text{higher order terms},\qquad
a\in\gl(2).
\]

In the case $n_i=1$ the connection will be called \emph{non-resonant\/}
if the eigenvalues do not differ by an integer. For $n_i=1$
the conjugacy class of $a$ does not depend on the
choices, so the notion of non-resonant connections does not depend on the choices.

In the case $n_i>1$ the connection will be called
non-resonant at $x_i$ if $a$ has distinct eigenvalues.
For $n_i>1$ the conjugacy class of $a$ does not depend on the
choices up to scaling. Thus
the notion of non-resonant connections is again well defined.

The connection
will be called non-resonant if it is non-resonant at all $x_i$.

\subsection{Moduli stacks}\label{MODST} Let $(L,\nabla)$ be a
non-resonant connection, then in a suitable trivialization of
$L$ over the formal disc at $x_i$ the connection takes a
diagonal form
\begin{equation}\label{fnf}
\nabla=\dif+\begin{pmatrix} \alpha_i^+ & 0\\ 0&\alpha_i^-
\end{pmatrix},
\end{equation}
where $\alpha_i^\pm$ are 1-forms on the formal disc. The polar
parts of these 1-forms do not depend on the trivialization of
$L$, thus we shall call them \emph{the formal type of $\nabla$
at~$x_i$}.

Fix $\divisor$ and for each $i$ polar parts $\alpha_i^\pm$ of
1-forms at $x_i$. Assume that these polar parts satisfy the
following conditions.\\
\noindstep\label{AlphaI} The order of the pole of
$\alpha_i^\pm$ is at most $n_i$, the order of the pole of
$\alpha_i^+-\alpha_i^-$ is exactly
$n_i$.\\
\noindstep\label{AlphaII}
$d:=-\sum_i\res(\alpha_i^++\alpha_i^-)$ is integer.\vskip2pt
\noindstep\label{AlphaIII} $\sum_i\res\alpha_i^\pm\notin\Z$.
(Here for each $i$ there is exactly one summand $\alpha_i^\pm$,
and the choices of signs $+$ and $-$ are independent.)\\
\noindstep\label{AlphaIV} If $n_i=1$, then
$\res\alpha_i^+-\res\alpha_i^-\notin\Z$.

Let $\cM=\cM(\divisor,\alpha_i^\pm)$ be the moduli space of
connections $(L,\nabla)$ such that $\nabla$ has formal types
$\alpha_i^\pm$ at $x_i$. Note that such a connection is
non-resonant by~(\ref{AlphaI}) and~\eqref{AlphaIV}. Also,~$L$
has degree $d$ by~(\ref{AlphaII}).

From now on we assume that $\deg\divisor=4$.

\begin{Theorem}\label{StackProp}
The moduli space $\cM$ is a smooth connected algebraic stack of
dimension $1$. It is a~neutral $\gm$-gerbe over its coarse
moduli space $M$; besides, $M$ is a~smooth quasi-pro\-jec\-tive
surface.
\end{Theorem}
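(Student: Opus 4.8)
The plan is to analyze $\cM$ through the deformation theory of a connection with fixed formal type, and then to exhibit the $\gm$-gerbe structure explicitly using the non-resonance conditions. First I would record that on $\p1$ the determinant connection is rigid: $\det L\cong\cO(d)$ is the unique line bundle of degree $d$ by~\eqref{AlphaII}, and a connection on it with prescribed polar parts $\alpha_i^++\alpha_i^-$ is unique, since two such differ by a global holomorphic $1$-form and $H^0(\p1,\Omega_{\p1})=0$. Hence the moduli problem carries a canonically fixed determinant connection, and deformations of $(L,\nabla)$ preserving formal type and determinant are governed by the two-term complex
\[
\cC^\bullet=\bigl[\cE\xrightarrow{\ad\nabla}\cE\otimes\Omega_{\p1}(\divisor)\bigr]
\]
placed in degrees $0,1$, where $\cE\subset\END(L)$ is the trace-free subsheaf of endomorphisms compatible with the formal type at each $x_i$, i.e.\ preserving the eigen-decomposition of the regular semisimple leading term. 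The infinitesimal automorphisms, the tangent space, and the obstructions are $\HH^0(\cC^\bullet)$, $\HH^1(\cC^\bullet)$, and $\HH^2(\cC^\bullet)$ respectively.

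The key input is irreducibility. If $\nabla$ preserved a line subbundle $\ell\subset L$, then by non-resonance (conditions~\eqref{AlphaI} and~\eqref{AlphaIV}) the bundle $\ell$ would be one of the two leading eigenlines near each $x_i$, so $\res(\nabla|_\ell)$ at $x_i$ equals $\res\alpha_i^+$ or $\res\alpha_i^-$; the residue theorem on $\p1$ then forces $\sum_i\res\alpha_i^{\pm}=-\deg\ell\in\Z$ for some choice of signs, contradicting~\eqref{AlphaIII}. Therefore $\nabla$ is irreducible and $\End(L,\nabla)=\C$. Consequently $\HH^0(\cC^\bullet)=0$, since the only horizontal endomorphisms are scalars and no nonzero scalar is trace-free. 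The complex $\cC^\bullet$ is Serre self-dual up to the twist by $\Omega_{\p1}$, so $\HH^2(\cC^\bullet)\cong\HH^0(\cC^\bullet)^\ast=0$, which gives smoothness of $\cM$. The dimension is then $\dim\HH^1(\cC^\bullet)=-\chi(\cC^\bullet)$, and a Riemann--Roch computation with $\deg\divisor=4$ evaluates this to $2$. Thus $M$ is two-dimensional and smooth; since every point of $\cM$ has one-dimensional automorphism group $\gm$, the stack $\cM$ has dimension $2-1=1$.

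For the gerbe structure, irreducibility gives $\Aut(L,\nabla)=\gm$ at every point, so $\cM$ is a $\gm$-gerbe over its rigidification $M$. To see that the gerbe is neutral I would produce a weight-one line bundle on $\cM$: fix one pole $x_1$; by non-resonance its leading term has distinct eigenvalues, so the $\alpha_1^+$-eigenline $\ell_1\subset L_{x_1}$ is canonically defined, and the scalar automorphisms act on it with weight one. The family of these eigenlines is a line bundle on $\cM$ of weight one, which splits the gerbe and yields $\cM\cong M\times B\gm$; in particular $M$ is smooth of dimension $2$.

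It remains to show that $M$ is a connected quasi-projective surface. Quasi-projectivity I would obtain from the Simpson-type good moduli space construction of~\S\ref{COMPACTIFICATION} (Theorems~\ref{GoodModuliSpace} and~\ref{AmpleBundle}), applied to the present trace-free, fixed-formal-type situation. For connectedness I would pass through the Riemann--Hilbert correspondence, incorporating Stokes data at the irregular points, which identifies $M$ analytically with a moduli space of monodromy and Stokes data that is manifestly connected; alternatively one can exhibit $M$ as birational to a rational surface through apparent-singularity coordinates. The main obstacle is precisely this last step: the local deformation theory, the dimension count, and the gerbe structure are essentially formal once~\eqref{AlphaIII} is in hand, whereas establishing quasi-projectivity of the coarse space in the irregular setting and proving connectedness require the global GIT input and the wild Riemann--Hilbert analysis.
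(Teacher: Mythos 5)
Your local analysis takes a genuinely different route from the paper: the paper never writes a deformation complex. Instead it fibers $\cM$ over the moduli of parabolic bundles — Proposition~\ref{Affine} shows that recording the unique $\nabla$-compatible level-$\divisor$ parabolic structure makes $\rho\colon\cM\to\Bun(d)$ a rank-one affine bundle, and Proposition~\ref{BunP} identifies $\Bun(d)$ with $P\times B(\gm)$ explicitly — after which smoothness, dimension, connectedness, and (via the auxiliary moduli of triples $(L,\nabla,O_{\p1}(-2)\hookrightarrow L_\eta)$) the neutral gerbe structure all fall out of this single fibration. Your irreducibility argument is exactly Lemma~\ref{Irreducible}, your appeal to Theorems~\ref{GoodModuliSpace} and~\ref{AmpleBundle} for quasi-projectivity is exactly the paper's route, and your eigenline $\ell_1\subset L_{x_1}$ is a legitimate alternative trivialization of the gerbe (a weight-one line bundle on a $\gm$-gerbe does split it). The deformation-theoretic part would need more care than you give it — the precise subsheaves in $\cC^\bullet$ for fixed \emph{irregular} formal type, and the fact that only the trace-free part of the obstruction space vanishes (the scalar part contributes $H^1(\p1,\Omega_{\p1})=\C$, which one must argue never receives actual obstructions, e.g.\ by passing to the rigidification first) — but these points are standard and could be filled in.

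The genuine gap is connectedness, and you have named it yourself without closing it. Deformation theory is purely local: $\HH^0=\HH^2=0$ plus Riemann--Roch give smoothness and pure dimension, but say nothing about the number of components. Neither proposed remedy is a proof. The wild Riemann--Hilbert route requires importing the entire Stokes-data formalism, proving that the resulting space of monodromy-plus-Stokes data with fixed exponential factors is connected (it is an explicit affine surface; its connectedness is a theorem, not something "manifest"), and then transferring connectedness back through a map that is only analytic; the apparent-singularities route likewise requires a construction you have not given. The paper sidesteps all of this: since $\cM$ is an affine bundle over $\Bun(d)\simeq P\times B(\gm)$ and $P$ is visibly connected, connectedness of $\cM$ (and of $M$) is immediate. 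If you keep your framework, the cleanest repair is to prove the surjectivity and torsor structure of $\rho$ over all of $\Bun(d)$ together with the identification $\Bun(d)\simeq P\times B(\gm)$ — but at that point you have reconstructed the paper's proof and the deformation complex is redundant.
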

This theorem will be proved in Section~\ref{MODULISPACES}.

Let $\mathrm{Qcoh}(\cM)$ be the category of quasi-coherent
sheaves on $\cM$. Since $\cM$ is a~$\gm$-gerbe, we obtain a
decomposition
\[\mathrm{Qcoh}(\cM)=\prod_{i\in\Z}\mathrm{Qcoh}(\cM)^{(i)},\]
where $\cF\in\mathrm{Qcoh}(\cM)^{(i)}$ if $t\in\gm$ acts on
$\cF$ as $t^i$. Let $\cD^b(\cM)$ be the corresponding bounded
derived category. By definition, objects of $\cD^b(\cM)$ are
complexes of $O_\cM$-modules with quasi-coherent cohomology. It
follows from \cite[Claim~2.7]{ArinkinBezrukavnikov} that
$\cD^b(\cM)$ is equivalent to the bounded derived category of
$\mathrm{Qcoh}(\cM)$. Thus we also have a decomposition
\[
    \cD^b(\cM)=\prod_{i\in\Z}\cD^b(\cM)^{(i)}.
\]
It is easy to see that $\cF\in\cD^b(\cM)^{(i)}$ if and only if
$H^\bullet(\cF)\in\mathrm{Qcoh}(\cM)^{(i)}$.

\subsection{Twisted differential operators}\label{TDO}
Denote by $\wp:P\to\p1$ the projective line with points $x_i$
doubled. In other words, $P$ is obtained by gluing two copies
of~$\p1$ outside the support of $\divisor$. Denote the
preimages of $x_i$ by $x_i^-$ and $x_i^+$. Let
$j:\p1-\divisor\hookrightarrow P$ be the natural embedding.
This notation will be used throughout the paper.

The main result of the present paper is that
$\cD^b(\cM)^{(-1)}$ is equivalent to a category of twisted
$\Dmod$-modules on $P$. To give a precise definition of this
twist, recall that the isomorphism classes of sheaves of rings
of twisted differential operators (TDO) on a smooth (not
necessarily separated) curve are classified by the first
cohomology group of the sheaf of 1-forms.

\begin{Lemma}\label{CohP}
Denote by $\omega_i$ the vector space of polar parts of 1-forms
at $x_i\in\p1$. Then
\[
  H^1(P,\Omega_P)=\C\oplus\bigoplus_i\omega_i.
\]
\end{Lemma}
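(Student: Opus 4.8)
The plan is to realize $\Omega_P$ through the open embedding $j\colon U:=\p1-\divisor\hookrightarrow P$ together with a skyscraper of polar parts, and then to read off the answer from the residue theorem on $\p1$. Write $P=P^+\cup P^-$ for the two copies of $\p1$ that are glued, so that $P^\pm\cong\p1$ are open in $P$, their intersection is the affine curve $U$, and $P\setminus U=\{x_i^+\}\cup\{x_i^-\}$. Since $P$ is smooth, $\Omega_P$ is a line bundle whose restriction to each $P^\pm$ is $\Omega_{\p1}$; in particular a global $1$-form on $P$ restricts to $0$ on each copy, so $H^0(P,\Omega_P)=0$.

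First I would use the short exact sequence of sheaves on $P$
\[
0\to\Omega_P\to j_*\Omega_U\to\cS\to0,
\]
where $\cS=j_*\Omega_U/\Omega_P$ is supported on the doubled points. Near $x_i^+$ the subsheaf $\Omega_P$ consists of the $1$-forms regular on $P^+\cong\p1$, so the stalk of $\cS$ at $x_i^+$ is the space $\omega_i$ of polar parts at $x_i$, and likewise at $x_i^-$; hence $\cS\cong\bigoplus_i(\omega_i\oplus\omega_i)$ as a skyscraper sheaf. Because $U$ is affine and $j$ is an affine morphism (its complement in $P$ is finite), Leray gives $H^\bullet(P,j_*\Omega_U)=H^\bullet(U,\Omega_U)$, so that $H^0(P,j_*\Omega_U)=H^0(U,\Omega_U)$ and $H^1(P,j_*\Omega_U)=0$. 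Taking cohomology of the sequence, and using $H^0(P,\Omega_P)=0$, yields
\[
0\to H^0(U,\Omega_U)\xrightarrow{\ \partial\ }\bigoplus_i(\omega_i\oplus\omega_i)\to H^1(P,\Omega_P)\to0 .
\]
The map $\partial$ sends a form $\eta$ regular on $U$ to its two polar parts at $x_i^+$ and at $x_i^-$, and the key point is that these coincide: $\eta$ is a single form on $U$ and the coordinate near $x_i$ is the same on $P^+$ and on $P^-$ (the copies are glued by the identity on $U$), so $\partial(\eta)=(\mathrm{pp}_i\eta,\mathrm{pp}_i\eta)_i$ lands in the diagonal. Thus $H^1(P,\Omega_P)$ is the cokernel of this diagonal map.

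To compute the cokernel I would note that a rational $1$-form on $\p1$ regular on $U$ is determined by its polar parts $(\mathrm{pp}_i\eta)_i\in\bigoplus_i\omega_i$ (since $H^0(\p1,\Omega_{\p1})=0$), and by the residue theorem these range exactly over the kernel $W$ of the total residue $\bigoplus_i\omega_i\xrightarrow{\sum_i\res}\C$. Hence the image of $\partial$ is the diagonal copy of $W$. Applying the automorphism $(a,b)\mapsto(a-b,b)$ of $\bigoplus_i(\omega_i\oplus\omega_i)$ carries this diagonal onto $0\oplus W$, giving
\[
H^1(P,\Omega_P)\cong\Bigl(\textstyle\bigoplus_i\omega_i\Bigr)\oplus\Bigl((\textstyle\bigoplus_i\omega_i)/W\Bigr)\cong\C\oplus\bigoplus_i\omega_i,
\]
where the last step is the residue isomorphism $(\bigoplus_i\omega_i)/W\iso\C$.

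The hard part will be getting the identification of $\partial$ with the diagonal polar-part map exactly right, and this is precisely where the non-separatedness of $P$ enters: both copies of $\p1$ see the same $\eta$ in the same coordinate, so the two principal parts agree, and the cokernel acquires one full copy of $\bigoplus_i\omega_i$ (recording the \emph{difference} of the two principal parts) beyond what a single $\p1$ would contribute, together with a single $\C$ from the total residue. A secondary technical point is the vanishing $H^1(P,j_*\Omega_U)=0$, which requires checking that $j$ is affine even though $P$ is non-separated; alternatively one can avoid $j_*$ entirely and run Mayer--Vietoris directly for the cover $\{P^+,P^-\}$, using $H^1(\p1,\Omega_{\p1})=\C$ and $H^{>0}(U,\Omega_U)=0$, which produces the same exact sequences.
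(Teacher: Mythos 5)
Your proof is correct, but it takes a genuinely different route from the paper's. The paper computes $H^1(P,\Omega_P)$ by a \v{C}ech argument for the cover of $P$ by $\p1-\divisor$ together with the \emph{formal discs} $D_i^\pm$ at the doubled points: a 1-cocycle is then a collection $(\beta_i^\pm)$ of 1-forms on punctured formal discs, and the isomorphism is read off as $(\beta_i^\pm)\mapsto\bigl(\sum_i\res(\beta_i^++\beta_i^-),\,\beta_i^+-\beta_i^-\bigr)$. You instead work entirely with Zariski opens and quasi-coherent sheaves: the sequence $0\to\Omega_P\to j_*\Omega_U\to\cS\to0$ (with $U=\p1-\divisor$ and $\cS$ the skyscraper of polar parts at the doubled points), vanishing of higher cohomology of $j_*\Omega_U$ by affineness of $U$ and of $j$, and identification of the image of the polar-part map as the \emph{diagonal} copy of $W:=\ker\bigl(\sum_i\res:\bigoplus_i\omega_i\to\C\bigr)$. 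Two remarks. First, your appeal to the residue theorem gives only the containment of the image in the diagonal of $W$; surjectivity onto $W$ needs the standard existence of a rational form on $\p1$ with prescribed polar parts of total residue zero (partial fractions, or equivalently the residue isomorphism $H^1(\p1,\Omega_{\p1})\simeq\C$ you invoke at the end) -- this is routine but should be said, since it is exactly where exactness of your short sequence of vector spaces is used. Second, your identification differs from the paper's by a harmless automorphism of $\C\oplus\bigoplus_i\omega_i$: your scalar coordinate is $\sum_i\res\beta_i^-$ rather than $\sum_i\res(\beta_i^++\beta_i^-)$, while the difference coordinate $\beta_i^+-\beta_i^-$ agrees. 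Both are valid isomorphisms, so the lemma is proved either way; but the paper's normalization is the one used in \S\ref{TDO}, where $\Dmod_{P,\alpha}$ is defined as the TDO ``given by the 1-cocycle $(\alpha_i^\pm)$,'' and your approach buys in exchange a cleaner foundation: the paper's cover by formal discs is not literally an open cover and tacitly relies on formal gluing, whereas your argument avoids that entirely.
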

\begin{proof}
Let $D_i^\pm$ be the formal disc at $x_i^\pm$. These discs,
together with $\p1-\divisor$, give a~cover of $P$; let us use
the corresponding \v{C}ech complex. We see that a~1-cocycle is
a~collection $\beta_i^\pm$ of 1-forms on punctured formal
discs, and one easily checks that the map
$(\beta_i^\pm)\mapsto(\,\sum_i\res(\beta_i^++\beta_i^-),\beta_i^+-\beta_i^-)$
induces the required isomorphism.
\end{proof}

Using this lemma, we define the sheaf of differential operators
on $P$ twisted by $(-d,\alpha_i^+-\alpha_i^-)$; denote it by
$\Dmod_{P,\alpha}$. In other words, it is given by the
1-cocycle $(\alpha_i^\pm)$.

\subsection{The integral transform}
Let $\xi=(L,\nabla)\in\cM$. Denote by $\xi_\alpha$ the
$\Dmod_{P,\alpha}$-module generated by $\wp^*\xi$.
More precisely, since the twist of $\Dmod_{P,\alpha}$
is supported outside of $\p1-\divisor$, we can and shall identify
the restrictions of $\Dmod_{P,\alpha}$ and $\Dmod_P$ to $\p1-\divisor\subset P$. Thus we
have the middle extension functor $j_{!*}$ from the category of $\Dmod_{\p1-\divisor}$-modules to
the category of $\Dmod_{P,\alpha}$-modules. We set
$\xi_\alpha:=j_{!*}(\xi|_{\p1-\divisor})$.
\begin{Remark}\label{ForDisc}
Let us describe the restriction of $\xi_\alpha$ to
$\wp^{-1}(D_i)$, where $D_i$ is the formal disc centered
at~$x_i$. Choose 1-forms $\tilde\alpha^\pm$ with polar parts
$\alpha_i^\pm$. According to~\eqref{fnf}, the restriction
of~$\xi$ to~$D_i$ is isomorphic to
\[
    (O_{D_i},\dif+\tilde\alpha^-)\oplus(O_{D_i},\dif+\tilde\alpha^+).
\]
Now one checks easily that
\[
\begin{split}
\xi_\alpha|_{D_i^-}&\simeq
(O_{D_i^-},\dif+\tilde\alpha^-)\oplus(O_{\dot D},\dif+\tilde\alpha^+),\\
\xi_\alpha|_{D_i^+}&\simeq (O_{\dot
D},\dif+\tilde\alpha^-)\oplus(O_{D_i^+},\dif+\tilde\alpha^+),
\end{split}
\]
where $\dot D\subset D_i^\pm$ is the punctured formal disc.
\end{Remark}

Note that formal normal form exists for families of
connections, and it is constant for families in $\cM$. Thus our
middle extension construction still makes sense for families of
connections in $\cM$. Hence we can apply it to the universal
family $\xi$ on $\cM\times\p1$, getting an $\cM$-family
$\xi_\alpha$ of $\Dmod_{P,\alpha}$-modules. In other words,
$\xi_\alpha$ is an $O_\cM\boxtimes\Dmod_{P,\alpha}$-module on
$\cM\times P$. Thus $\xi_\alpha$ gives rise to an integral
transform from $\cD^b(\cM)$ to the derived category of
$\Dmod_{P,\alpha}$-modules.

Denote the natural projections of $\cM\times P$ to $\cM$ and
$P$ by $p_1$ and $p_2$ respectively.

\begin{Theorem}\label{MainTh}
Let $d$ be an odd number. Then the functor
\[
\Phi_{\cM\to P}:\cF\mapsto
Rp_{2,*}\Bigl(\xi_{\alpha}\mathop{\otimes}\limits_{O_{\cM\times
P}}p_1^*\cF\Bigr)
\]
provides an equivalence between $\cD^b(\cM)^{(-1)}$ and the bounded
derived category of $\Dmod_{P,\alpha}$-modules.
\end{Theorem}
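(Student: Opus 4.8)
The plan is to treat $\Phi_{\cM\to P}$ as a Fourier--Mukai-type transform with kernel $\xi_\alpha$ on $\cM\times P$ and to prove it is an equivalence by first identifying the images of the skyscraper sheaves, then verifying the standard full-faithfulness criterion for kernel transforms through an explicit $\Ext$-computation on $P$, and finally deducing essential surjectivity from a generation argument together with the existence of an adjoint. Since $\cM$ is a neutral $\gm$-gerbe over the smooth surface $M$ (Theorem~\ref{StackProp}), the weight component $\cD^b(\cM)^{(-1)}$ is equivalent to $\cD^b(M)$, so the theorem asserts an equivalence between coherent sheaves on the surface $M$ and $\Dmod_{P,\alpha}$-modules on the curve $P$. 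The first step is to compute $\Phi$ on a skyscraper $O_\xi$ at a point $\xi=(L,\nabla)\in\cM$: restricting the kernel to the fiber $\{\xi\}\times P$ shows $\Phi(O_\xi)\simeq\xi_\alpha=j_{!*}(\xi|_{\p1-\divisor})$. Thus $\Phi$ sends the points of $M$ to the simple middle extensions, and the whole statement becomes a comparison between the local geometry of $M$ and the homological algebra of $\Dmod_{P,\alpha}$.

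The heart of the argument is the full-faithfulness computation. By the criterion for kernel transforms it suffices to show, for points $\xi,\xi'\in\cM$ with images $m,m'\in M$, that
\[
R\Hom_{\Dmod_{P,\alpha}}(\xi_\alpha,\xi'_\alpha)\simeq R\Hom_{M}(O_m,O_{m'}).
\]
The right-hand side is the Koszul algebra of the two-dimensional tangent space $T_mM$: it is $\C,\C^2,\C$ in degrees $0,1,2$ when $\xi\cong\xi'$ and vanishes otherwise. On the left I would compute $R\Hom$ as de Rham cohomology on $P$ of the rank-four local system $\HOM(\xi,\xi')$ over $\p1-\divisor$, with boundary behavior at each doubled point $x_i^\pm$ read off from Remark~\ref{ForDisc}. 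Three things must be checked: $\Ext^0=\C$ exactly when $\xi\cong\xi'$, which amounts to the simplicity of $\xi_\alpha$ as a $\Dmod_{P,\alpha}$-module (the formal-type conditions~(\ref{AlphaI})--(\ref{AlphaIV}) force irreducibility); $\Ext^1=\C^2$ for $\xi=\xi'$, identifying first-order deformations of the $\Dmod_{P,\alpha}$-module $\xi_\alpha$ with first-order deformations of the connection $\xi$ preserving its formal type, i.e.\ with $T_mM$; and $\Ext^2=\C$ together with the vanishing for $\xi\ne\xi'$, which follow from holonomic (Verdier) duality on the curve $P$, the twist $\alpha$ being exchanged with $-\alpha$. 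The non-separatedness of $P$, encoded in Lemma~\ref{CohP} and Remark~\ref{ForDisc}, is precisely what makes these groups reproduce the exterior algebra of a two-plane rather than the smaller answer one gets on an honest curve.

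For essential surjectivity I would show that the middle extensions $\xi_\alpha$ generate the bounded derived category of $\Dmod_{P,\alpha}$-modules, so that a fully faithful transform admitting a right adjoint (which it does, being given by a kernel between quasi-projective targets) is automatically an equivalence. Concretely I would use the identification of $P$ with the coarse moduli space of suitable parabolic bundles on $\p1$ to give the target category a geometric model, and then realize $\Phi$, after these identifications, as the Fourier--Mukai transform constructed in Section~\ref{GENELL} on the relevant compactified Jacobian of a singular degeneration of an elliptic curve, which is shown there to be an equivalence (Theorem~\ref{FourierMukai}). The hypothesis that $d$ is odd enters both here and in the previous step: it guarantees that every connection in $\cM$ is stable, hence that the middle extensions are simple and that the moduli problem carries an honest universal family, so that the kernel $\xi_\alpha$ and the $\Ext$-computation behave uniformly across $\cM$.

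The main obstacle is the $\Ext$-computation on the non-separated curve $P$. One must control the de Rham cohomology of $\HOM(\xi_\alpha,\xi'_\alpha)$ including the irregular contributions at each $x_i^\pm$ and show the total comes out to the exterior algebra of a two-dimensional space; the irregular local terms and the gluing across the doubled points are exactly what make this delicate. Equivalently, the obstacle is to prove that the convolution of $\xi_\alpha$ with its dual kernel is the structure sheaf of the diagonal on $\cM\times\cM$: the fiberwise statement is the $\Ext$-computation above, but promoting it to an isomorphism of kernels requires a base-change and flatness argument over $\cM\times\cM$ and a verification that no off-diagonal higher cohomology sheaves survive.
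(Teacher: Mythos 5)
Your fiberwise $\Ext$-computation is genuinely one half of the paper's proof: it is exactly Proposition~\ref{PpLys}, whose family version is Lysenko's orthogonality (Theorem~\ref{ThLys}), and you correctly flag at the end that the real issue is promoting the pointwise statement to a kernel identity. But the route you propose from there to an equivalence has two gaps. First, the ``standard full-faithfulness criterion'' for kernel transforms (check $\Hom$'s between images of skyscrapers) is a theorem about smooth \emph{projective} varieties; here the source is a $\gm$-gerbe over the quasi-projective, non-proper surface $M$, the target is twisted $\Dmod$-modules on the non-separated curve $P$ (not a quasi-projective variety, so even the existence of adjoints is not off the shelf), and the categories involved are bounded derived categories of \emph{quasi-coherent} sheaves. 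None of the ingredients of that criterion (Serre duality, spanning classes, representability of adjoints) apply as stated, and the paper never invokes it: instead it writes down the candidate inverse $\Phi_{P\to\cM}:\cF\mapsto Rp_{1,*}\DR(\xi_\alpha^\vee\otimes p_2^*\cF)[2]$ explicitly and proves that \emph{both} compositions are what they should be, via two kernel-level identities --- Theorem~\ref{ThLys} (your $\Ext$-computation in family form, giving that $\Phi_{P\to\cM}\circ\Phi_{\cM\to P}$ is the projection onto $\cD^b(\cM)^{(-1)}$) and Theorem~\ref{Theorem3} ($Rp_{12,*}\cF_P=\delta_\Delta[-1]$ on $P\times P$, giving that $\Phi_{\cM\to P}\circ\Phi_{P\to\cM}\simeq\id$, hence essential surjectivity).

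Second, and more seriously, your essential-surjectivity step rests on a misidentification. Theorem~\ref{FourierMukai} is an autoequivalence of $\cD^b(Y)$, where $Y$ is the \emph{spectral} (generalized elliptic) curve attached to the Higgs locus; it is not $\Phi_{\cM\to P}$ after any identification --- $P$ is not a compactified Jacobian of $Y$, and $\cD^b(\cM)^{(-1)}$ is not $\cD^b(Y)$. In the paper that transform enters only as a tool for computing the boundary term $Rp_{12,*}(\cF_H\otimes p_3^*\cE^{\otimes k})$ over $\Mh\simeq Y/(\mu_2\times\gm)$ (Proposition~\ref{MhXxY}), which is one ingredient in the proof of Theorem~\ref{Theorem3}. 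The remaining ingredients --- the projective modular compactification $\Mb$ of \S\ref{COMPACTIFICATION}, the flat degeneration of moduli spaces (Proposition~\ref{PrFlat}) used to reduce the generic vanishing to the four-simple-poles case of~\cite{Arinkin}, and the analysis of the algebra $\cB=\wp_*\Dmod_{P,\alpha}$ (Proposition~\ref{cB}) needed for the support estimate (Proposition~\ref{SSupport}) --- have no counterpart in your plan; this ``other'' orthogonality is precisely what you leave as an unproved generation claim, and it is the main body of the paper's argument rather than a technicality. A small additional correction: irreducibility of connections in $\cM$ (hence simplicity of the middle extensions) follows from condition~(\ref{AlphaIII}) of~\S\ref{MODST}, not from the parity of $d$; the oddness of $d$ is used elsewhere, e.g.\ in Proposition~\ref{BunP} and in the description of the boundary (Corollary~\ref{MhY}).
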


Theorem~\ref{MainTh} is the main result of the paper; we prove it
in Sections~\ref{PRELYM}--\ref{LYSENKO}.

\begin{Remark}\label{RemOdd}
\noindstep It is easy to see that the restriction of
$\Phi_{\cM\to P}$ to $\cD^b(\cM)^{(i)}$ is zero unless $i=-1$.

\noindstep\label{Remodd2} On the other hand, $\cD^b(\cM)^{(i)}$
depends only on parity of $i$. Indeed, fix $x\in\p1-\divisor$,
and let $\delta_x$ be the line bundle on $\cM$ whose  fiber at
$(L,\nabla)$ is equal to $\det L_x$. Then the tensor product
with $\delta_x$ provides an equivalence between
$\cD^b(\cM)^{(i)}$ and $\cD^b(\cM)^{(i+2)}$.

\noindstep\label{RemOdd3} Assume that $\divisor=\sum n_ix_i$ is
not even, that is one of the numbers $n_i$ is odd, then all the
categories $\cD^b(\cM)^{(i)}$ are equivalent. Indeed, let $n_i$
be odd, and for $(L,\nabla)\in\cM$ let $\eta_i$ be a unique
level $n_i$ parabolic structure at $x_i$ compatible with
$\nabla$ (see~Definition~\ref{ParB} and Section~\ref{AFFSTR}).
Tensoring with the line bundle whose fiber at $(L,\nabla)$ is
$\det\eta_i$, we get an equivalence between the odd and the
even components of the derived category.

\noindstep In fact our theorem is also valid if $d$ is even but
$\divisor$ is not even. Indeed, let $n_i$ be odd and define a
collection~$\beta_i^\pm$ of polar parts of 1-forms by
\[
    \beta_i^+=\alpha_i^++n_i\,\frac{\dif z}z,\qquad
    \beta_i^-=\alpha_i^-,\qquad
    \beta_j^\pm=\alpha_j^\pm\text{ for }i\ne j.
\]
Then a modification at $x_i$ provides an isomorphism
$\cM(\alpha)\simeq\cM(\beta)$, and we can apply the theorem to
$\cM(\beta)$. (See Section~\ref{PARBUN} for the definition of
modification.) It remains to notice that the category of
$\Dmod_{P,\alpha}$-modules is equivalent to the category of
$\Dmod_{P,\beta}$-modules: the equivalence is given by
tensoring with a line bundle.
\end{Remark}

\subsection{The Langlands Correspondence}\label{LANGLANDS}
\begin{Definition}\label{ParB}
Let $L$ be a rank 2 vector bundle on $\p1$. A
\emph{level-$\divisor$ parabolic structure} on $L$ is a line
subbundle $\eta$ in the restriction of $L$ to $\divisor$ (we
view $\divisor$ as a non-reduced subscheme of $\p1$). We call a
bundle with a parabolic structure \emph{a~parabolic bundle}.
\end{Definition}

Let
$\overline\Bun(d^\vee)=\overline\Bun(\p1,2,d^\vee,\divisor)$ be
the moduli stack of rank 2 degree $d^\vee$ vector bundles on
$\p1$ with level-$\divisor$ parabolic structures. (We reserve
notation $\Bun$ for its open substack of bundles without
non-scalar endomorphisms, cf. Section~\ref{AFFSTR}.)

Let $\C[\divisor]$ be the ring of functions on the scheme
$\divisor$, $\gmd$ be the group of invertible functions; this
is an algebraic group. Choosing local coordinates at the
points~$x_i$, we get an isomorphism
\[
\gmd=\prod_i(\C[z]/z^{n_i})^\times.
\]
Let $\pi:\eta_{univ}\to\overline\Bun(d^\vee)$ be the
$\gmd$-torsor whose fiber over
$(L,\eta)\in\overline\Bun(d^\vee)$ is
\[
\{s\in H^0(\divisor,\eta)|\;s(x_i)\ne0\;\text{for all }i\}.
\]

The collection $\alpha_i^+$ of polar parts of 1-forms can be
viewed as an element of
\[
    (\Lie(\gmd))^\vee=\C[\divisor]^\vee
\]
via the residue pairing. Thus it gives rise to a TDO ring on
$\overline\Bun(d^\vee)$ through non-commutative reduction of
the sheaf of differential operators on the total space
of~$\eta_{univ}$ (see Section~\ref{TDOEXISTENCE}). We denote this TDO
ring by $\Dmod_{\overline\Bun(d^\vee),\alpha^+}$.

Similarly, we define a $\gmd$-torsor $\eta'_{univ}$ whose
fiber over $(L,\eta)\in\overline\Bun(d^\vee)$ is
\[
\{s\in H^0(\divisor,(L|_\divisor)/\eta)|\;s(x_i)\ne0\;
\text{ for all }i\}.
\]
Denote the TDO ring corresponding to $\eta'_{univ}$ and the
collection $\alpha_i^-$ by
$\Dmod_{\overline\Bun(d^\vee),\alpha^-}$. Let
$\Dmod_{\overline\Bun(d^\vee),\alpha}$ be the Baer sum of the
TDO rings $\Dmod_{\overline\Bun(d^\vee),\alpha^+}$ and
$\Dmod_{\overline\Bun(d^\vee),\alpha^-}$.

\begin{Theorem}[The Langlands Correspondence]\label{Langlands}
Assume that $d$ is an odd number. Then $\cD^b(\cM)^{(-1)}$ is
equivalent to the bounded derived category of
$\Dmod_{\overline\Bun(-1),\alpha}$-modules.
\end{Theorem}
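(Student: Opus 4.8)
The plan is to deduce Theorem~\ref{Langlands} from Theorem~\ref{MainTh} by proving a statement living entirely on the automorphic side, with no reference to connections: namely that the bounded derived category of $\Dmod_{P,\alpha}$-modules is equivalent to that of $\Dmod_{\overline\Bun(-1),\alpha}$-modules. Granting this, the composite with the equivalence of Theorem~\ref{MainTh} gives the assertion. The equivalence of the two module categories rests on three ingredients: (a) a geometric identification of $P$ with the coarse moduli space of simple parabolic bundles of degree $-1$; (b) a cohomological comparison of the two TDO twists under that identification; and (c) a vanishing statement confining all $\Dmod_{\overline\Bun(-1),\alpha}$-modules to the simple locus $\Bun(-1)$.

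For (a), I would first show that a stable parabolic bundle of rank $2$ and degree $-1$ has underlying bundle $O\oplus O(-1)$ generically, so that its isomorphism class is recorded by the position of the parabolic line $\eta$ in $L|_\divisor\cong\C[\divisor]^{\oplus2}$ modulo $\Aut(O\oplus O(-1))$. A direct computation of this quotient --- the group $\Aut(O\oplus O(-1))/\gm$ has dimension three and acts on the four-dimensional $\p1(\C[\divisor])$ --- yields a one-dimensional coarse space on which the scalar automorphisms survive as the generic stabilizer, so that $\Bun(-1)$ is a $\gm$-gerbe over a smooth curve. The non-separated structure of $P$, namely the two sheets $x_i^\pm$ over each $x_i$, must then be matched with the two ways a one-parameter family of parabolic bundles can degenerate as the parabolic line tends to $x_i$, that is, the two eigendirections of the leading term singled out in~\eqref{fnf}; this is exactly the dichotomy recorded in Remark~\ref{ForDisc}. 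I would produce the isomorphism of this curve with $P$ by writing the universal family explicitly over $\p1-\divisor$ and computing both limiting parabolic bundles at each $x_i$.

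For (b), under the identification of~(a) I would check that the Baer-sum twist $\Dmod_{\overline\Bun(-1),\alpha}$ restricts to $\alpha_i^+$ on the sheet $x_i^+$ and to $\alpha_i^-$ on the sheet $x_i^-$: the torsor $\eta_{univ}$ with character $\alpha_i^+$ accounts for the former and $\eta'_{univ}$ with $\alpha_i^-$ for the latter (compare Remark~\ref{ForDisc}). By Lemma~\ref{CohP} this cocycle represents the class $(-d,\alpha_i^+-\alpha_i^-)$ in $H^1(P,\Omega_P)=\C\oplus\bigoplus_i\omega_i$, with the $\C$-component recording $\sum_i\res(\alpha_i^++\alpha_i^-)=-d$, which is precisely the class defining $\Dmod_{P,\alpha}$. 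Finally, for (c), I would observe that a $\Dmod_{\overline\Bun(-1),\alpha}$-module supported on the non-simple boundary would have to be equivariant for the extra, non-scalar automorphisms with respect to the character determined by the residues $\res\alpha_i^\pm$; conditions~(\ref{AlphaIII}) and~(\ref{AlphaIV}) make this character non-integral, so no nonzero such module exists and the whole category is supported on the $\gm$-gerbe of~(a), where gerbe descent identifies it with $\Dmod_{P,\alpha}$-modules.

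I expect the main obstacle to be ingredient~(a) together with the non-separated bookkeeping feeding into~(b): one must reconstruct the doubling of $P$ from the modular geometry of parabolic bundles and simultaneously verify that the Baer-sum twist restricts to $\alpha_i^+$ and $\alpha_i^-$ on the correct sheets. The degeneration analysis at each $x_i$ --- identifying the two limiting parabolic bundles and checking that $\Bun(-1)$ fails to be separated in exactly the way $P$ does --- is where the real work lies; the gerbe descent and the boundary vanishing~(c) are then formal consequences of the non-resonance hypotheses.
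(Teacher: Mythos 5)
Your ingredients (a) and (c) are sound and in fact parallel what the paper already has: (a) is Proposition~\ref{BunP}, and (c) is Step~1 of \S\ref{ProofOfLanglands}, proved there via Corollary~\ref{ConnExist} and a non-integrability argument for the character of the extra automorphisms. The gap is ingredient (b). It is \emph{not} true that the Baer-sum twist $\Dmod_{\overline\Bun(-1),\alpha}$ pulls back to the class $(-d,\alpha_i^+-\alpha_i^-)$, i.e.\ to $\Dmod_{P,\alpha}$. The quotient torsor $\eta'_{univ}=(\cL|_\divisor)/\eta_{univ}$ involves the universal bundle $\cL$, whose determinant is nontrivial on $\divisor\times P$: by Lemma~\ref{Det}, $\det\cL|_{\divisor\times P}\simeq O_{\divisor\times P}(\Delta_-+\Delta_+)\otimes p_2^*\delta'$ with $\delta'\simeq O_P(2(\infty)-\sum n_ix_i^-)$. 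Consequently the pullback of $\eta'_{univ}$ to $\divisor\times P$ is not $O_{\divisor\times P}(\Delta_-)$ but $p_2^*(O_P(2(\infty)-\sum n_ix_i^-))\otimes O_{\divisor\times P}(\Delta_-)$, and the $\mathbf{dlog}$-computation of \S\ref{ProofOfLanglands} yields the class
\[
\bigl(-d,\ \alpha_i^+-\alpha_i^-+n_i\lambda\,\dif z_i/z_i\bigr)\in H^1(P,\Omega_P),
\qquad \lambda=\sum_i\res\alpha_i^-.
\]
Since $\lambda\notin\Z$ by condition~(\ref{AlphaIII}) (in particular $\lambda\ne0$), this class differs from $(-d,\alpha_i^+-\alpha_i^-)$: the pullback TDO is $\Dmod_{P,\beta}$ for the shifted polar parts $\beta_i^\pm=\alpha_i^\pm\pm\frac{n_i\lambda}2\frac{\dif z_i}{z_i}$, not $\Dmod_{P,\alpha}$. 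No alternative choice of the identification in (a) removes this shift; it only moves it to the other torsor, with $\lambda$ replaced by $\sum_i\res\alpha_i^+$, which is also non-integral.

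Because of this, your overall plan---a reduction living ``entirely on the automorphic side, with no reference to connections''---cannot be completed. After (a)--(c) one lands in $\cD^b(\Dmod_{P,\beta})$, and Theorem~\ref{MainTh} (applied with $\beta$, whose admissibility under the conditions of \S\ref{MODST} must also be verified) identifies this with $\cD^b(\cM_\beta)^{(-1)}$, the derived category attached to the moduli space of connections with formal types $\beta_i^\pm$. To reach the statement of Theorem~\ref{Langlands} one still needs an equivalence $\cD^b(\cM_\beta)^{(-1)}\simeq\cD^b(\cM_\alpha)^{(-1)}$; the paper produces it from an isomorphism $\cM_\alpha\simeq\cM_\beta$ constructed via Katz's middle convolution (the Katz--Radon transform), a genuinely spectral-side ingredient with no counterpart in your proposal.
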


Theorem~\ref{Langlands} is derived from Theorem~\ref{MainTh} in Section~\ref{ProofOfLanglands}.

\begin{Remark}\label{RemEven}
\noindstep Let us discuss the notion of the derived category of
$\Dmod_{\overline\Bun(-1),\alpha}$-modules. Note that
$\overline\Bun(-1)$ is a (smooth) algebraic stack, so this
notion is not immediate.

As we show in Section~\ref{ProofOfLanglands} the category of
$\Dmod_{\overline\Bun(-1),\alpha}$-modules is equivalent to a
category of twisted $\Dmod$-modules on $P$ (in fact $P$ is the
coarse moduli space of a certain open subset of
$\overline\Bun(-1)$). Thus we shall view the derived category
of twisted $\Dmod$-modules on $P$ as the definition for the
derived category of $\Dmod_{\overline\Bun(-1),\alpha}$-modules.
(See also the discussion in Section~\ref{ProofOfLanglands}.)

\noindstep In general, we expect an equivalence of categories
between $\cD^b(\cM)^{(d^\vee)}$ and the bounded derived
category of $\Dmod_{\overline\Bun(d^\vee),\alpha}$-modules.
This statement follows from Theorem~\ref{Langlands} if $d^\vee$
is odd. Indeed, pick $x\in\p1-\divisor$, then
$(L,\eta)\mapsto(L(x),\eta)$ is an isomorphism between
$\overline\Bun(d^\vee)$ and $\overline\Bun(d^\vee+2)$. It
remains to use Remark~\ref{RemOdd}(\ref{Remodd2}).

\noindstep We also have the desired equivalence if $\divisor$
is not even. Indeed, let $n_i$ be odd. Modification at $x_i$
gives an isomorphism
$\Dmod_{\overline\Bun(d^\vee),\alpha}\simeq
\Dmod_{\overline\Bun(d^\vee+n_i),\beta}$, where $\beta$ is
obtained from $\alpha$ by swapping $\alpha_i^+$ and
$\alpha_i^-$. It remains to use the previous
remark, Remark~\ref{RemOdd}(\ref{RemOdd3}), and the obvious
identification $\cM(\alpha)=\cM(\beta)$.
\end{Remark}

\subsection{The plan of proof of
Theorem~\ref{MainTh}}\label{PRELYM} Theorem~\ref{MainTh}
reduces to two orthogonality statements.

Let $p_{12}:P\times P\times\cM\to P\times P$ and
$p_{13},p_{23}:P\times P\times\cM\to P\times\cM$ be the
projections. Let $\xi^\vee$ be the vector bundle on
$\p1\times\cM$ dual to $\xi$. Since it has a connection along
$\p1$, we see that $\xi^\vee_\alpha:=(\xi^\vee)_{-\alpha}$ is a
$\Dmod_{P,-\alpha}\boxtimes O_\cM$-module on $P\times\cM$.

Set
$\cF_P:=(p_{13}^*\xi_\alpha)\otimes(p_{23}^*\xi_\alpha^\vee)$.
Here $p_{13}^*$ and $p_{23}^*$ stand for the $O$-module
pullback (from the viewpoint of $\Dmod$-modules, these pullback
functors should include a cohomological shift). Note that
$\xi_\alpha$ is a flat $O_{P\times\cM}$-module (see
Remark~\ref{ForDisc}), hence
\[
    (p_{13}^*\xi_\alpha)\otimes(p_{23}^*\xi_\alpha^\vee)=
    (p_{13}^*\xi_\alpha)\otimes^L(p_{23}^*\xi_\alpha^\vee).
\]

Further, $Rp_{12,*}\cF_P$ is an object of the derived category
of $p_1^\al\Dmod_{P,\alpha}\circledast
p_2^\al\Dmod_{P,-\alpha}$-modules, where $p_1,p_2:P\times P\to
P$ are the projections. Here $p_i^\al$ (resp.\ $\circledast$)
stands for the inverse image (resp.\ Baer sum) of TDO rings
(the corresponding functors on Lie algebroids are described in
\cite{BeilinsonBernstein}).

\begin{Theorem}\label{Theorem3}
$Rp_{12,*}\cF_P=\delta_\Delta[-1]$, where $\Delta\subset
P\times P$ is the diagonal, and $\delta_\Delta$ is the direct
image of $O_{\Delta}$ as a $\Dmod_\Delta$-module.
\end{Theorem}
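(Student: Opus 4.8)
The plan is to read $Rp_{12,*}\cF_P$ as the kernel on $P\times P$ of the composite of the integral transform $\Phi_{\cM\to P}$ with its transpose, so that the assertion $Rp_{12,*}\cF_P=\delta_\Delta[-1]$ becomes the statement that this composite is the identity functor, up to the shift $[-1]$. I would prove it by pinning down the two features that characterize $\delta_\Delta[-1]$ among twisted $\Dmod$-modules on the surface $P\times P$: that its underlying $O_{P\times P}$-complex is supported on the diagonal $\Delta$, and that along $\Delta$ it is the untwisted diagonal $\Dmod$-module. Observe first that on $\Delta$ the Baer sum $p_1^\al\Dmod_{P,\alpha}\circledast p_2^\al\Dmod_{P,-\alpha}$ restricts to the untwisted $\Dmod_\Delta$, the twists $\alpha$ and $-\alpha$ cancelling, so $\delta_\Delta$ indeed makes sense as a candidate answer.

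First I would record, chart by chart, the restriction of $\cF_P$ to $P\times P\times\cM$ using Remark~\ref{ForDisc}: over the open locus $U:=(\p1-\divisor)\times(\p1-\divisor)$ the kernel is the $\cM$-family $p_{13}^*\xi\otimes p_{23}^*\xi^\vee$ of connections along $U$, while near the fibers over the doubled points $x_i^\pm$ the explicit local models of Remark~\ref{ForDisc} apply. Since the pushforward $Rp_{12,*}$ is taken along the $\cM$-direction only (an $O$-module direct image, the $\Dmod$-structure being retained along $P\times P$), its $O$-fiber over a point $(y,y')\in U$ is $R\Gamma(\cM,\xi_y\otimes\xi^\vee_{y'})=R\Hom_{\cM}(\xi_{y'},\xi_y)$, where $\xi_y$ denotes the fiber at $y$ of the universal bundle on $\cM$. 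The decisive step is the \emph{off-diagonal vanishing}: this complex is acyclic whenever $y\ne y'$, and likewise near the doubled points one checks vanishing from the local models above. Granting this, the underlying $O_{P\times P}$-complex of $Rp_{12,*}\cF_P$ is supported on $\Delta$, so by Kashiwara's theorem the object is the direct image of a complex on the smooth curve $\Delta\cong P$.

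It remains to identify that complex. Near a generic point of $\Delta$ (both coordinates in $\p1-\divisor$) the kernel restricts along the diagonal to $\END(\xi)=\xi\otimes\xi^\vee$, a family over the one-dimensional stack $\cM$, which splits by trace and identity into $O\oplus\END_0(\xi)$. The trace-free part $\END_0(\xi)$ should contribute nothing, by the irreducibility of the connections parametrized by $\cM$, so that only the single copy of $O$ survives; using that $\cM$ is smooth of dimension one (Theorem~\ref{StackProp}), this local computation identifies the diagonal $\Dmod$-module as one copy of $O_\Delta$ and fixes both the multiplicity one and the cohomological shift $[-1]$.

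The hard part is the off-diagonal vanishing $R\Hom_{\cM}(\xi_{y'},\xi_y)=0$ for $y\ne y'$. This is the genuine orthogonality of the universal family of connections and cannot be extracted from a soft support argument; it is where the fine structure of $\cM$ — its smoothness and dimension, and the non-resonance conditions (a)--(d) forcing the connections to be mutually non-isomorphic — must be used. A secondary technical obstacle is that $\cM$ is not proper, so before passing to $O$-fibers one must justify that $Rp_{12,*}$ commutes with restriction to points of $P\times P$ and produces a coherent answer; I expect the holonomicity of $\xi_\alpha$ along $P$ and its explicit behaviour at the doubled points to supply the needed finiteness.
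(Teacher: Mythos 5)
Your outline reduces the theorem to exactly the two statements the paper spends \S\S\ref{COMPACTIFICATION}--\ref{ORTHOGONALITY} proving, and supplies a proof of neither; the gap is genuine. The central missing piece is the off-diagonal vanishing, which you explicitly defer. Note that it is not a statement about ``mutually non-isomorphic connections'': for $y\ne y'$ in $\p1-\divisor$, the objects $\xi_y$, $\xi_{y'}$ are rank-two vector bundles on the one-dimensional, \emph{non-proper} stack $\cM$, and what must vanish is $R\Gamma(\cM,\xi_y\otimes\xi_{y'}^\vee)$, the cohomology of a rank-four bundle; irreducibility and non-resonance give no leverage on this. The paper's proof (Proposition~\ref{AwayFromDiag}) is where all the heavy machinery enters: one extends $\cF_{\p1}$ to the bundle $\overline\cF$ on $\p1\times\p1\times\Mb$ over the Simpson-style compactification of \S\ref{COMPACTIFICATION}, filters $\jmath_*\cF_{\p1}$ by $\cF_k=\overline\cF(k(\p1\times\p1\times\Mh))$, shows each graded piece has direct image supported on the diagonal by identifying $\Mh$ with a quotient of a generalized elliptic curve and invoking the Fourier--Mukai orthogonality there (Proposition~\ref{MhXxY}, resting on \S\ref{GENELL}), kills $H^{\ge2}$ by Proposition~\ref{CohomologicalDimension} and $H^0$ by projectivity of $\oM$, and finally disposes of $H^1$ by an Euler-characteristic argument: $\chi(\Mb,\xi_x\otimes\xi_y^\vee)$ is constant in the flat family $\Mb_{univ}$ (Proposition~\ref{PrFlat}, Lemma~\ref{FlatStacks}), so one may deform $(\divisor,\nu_1,\nu_2)$ to four distinct simple poles with generic invariants, where the vanishing is Theorem~2 of \cite{Arinkin}. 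Without this compactification-plus-deformation argument (or a substitute for it), your plan does not get off the ground.

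Two further points would still break the argument even if the vanishing were granted. First, $\Delta$ is not closed in $P\times P$: $\overline\Delta-\Delta$ consists of eight points of the form $(x_i^\pm,x_i^\mp)$, so Kashiwara's theorem for closed embeddings does not let you read off a pushforward from $\Delta$, and you must rule out contributions at these extra points, where the restricted twist is $\pm(\alpha_i^+-\alpha_i^-)$; the paper handles this with Lemma~\ref{Shriek} ($\iota_{\Delta,*}O_P=\iota_{\Delta,!}O_P$, using conditions~(\ref{AlphaI}) and~(\ref{AlphaIV})), which is also what makes the comparison map $\phi$ definable by adjunction and $\delta_\Delta$ irreducible. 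Second, your proposed fix for the non-properness of $\cM$ --- holonomicity of $\xi_\alpha$ along $P$ --- cannot supply the needed finiteness, because the direct image is taken along the $\cM$-direction, not along $P$; the paper's substitute is the coherence statement Proposition~\ref{SSupport}, proved via the algebra $\cB=\wp_*\Dmod_{P,\alpha}$ (Proposition~\ref{cB}) and, once again, the filtration by the boundary divisor, followed by a stratification argument to conclude that the cokernel of $H^1(\phi)$ vanishes.
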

This theorem is proved in Section~\ref{ORTHOGONALITY}.

\begin{Remark}
In general, for a map $f:X\to Y$ and a TDO ring $\Dmod_1$ on
$Y$, there is a functor
$f_+:\cD^b(f^\al\Dmod_1)\to\cD^b(\Dmod_1)$, where
$\cD^b(\Dmod_1)$ is the bounded derived category of
$\Dmod_1$-modules. For the embedding $i:\Delta\hookrightarrow
P\times P$, one easily checks that
$i^\al(p_1^\al\Dmod_{P,\alpha}\circledast
p_2^\al\Dmod_{P,-\alpha})$ is the (non-twisted) differential
operator ring $\Dmod_\Delta$, so $\delta_\Delta:=i_+(O_\Delta)$
is well defined as a $p_1^\al\Dmod_{P,\alpha}\circledast
p_2^\al\Dmod_{P,-\alpha}$-module.
\end{Remark}

By Theorem~\ref{Theorem3}, $\xi_\alpha$ is an orthogonal
$P$-family of vector bundles on $\cM$. To obtain an equivalence
of categories, one should also show that $\xi_\alpha$ is
orthogonal as an $\cM$-family of $\Dmod_{P,\alpha}$-modules.
Let us give the precise statement.
%We follow closely
%S.~Lysenko's unpublished notes~\cite{Lysenko}.

Consider $\cF_\cM:=p_{13}^*\xi_\alpha\otimes
p_{23}^*\xi_\alpha^\vee$ (here
$p_{13},p_{23}:\cM\times\cM\times P\to \cM\times P$ are the
projections).

$\cF_\cM$ can be viewed as a family of $\Dmod_P$-modules
parameterized by $\cM\times\cM$. Consider the de Rham complex
of $\cF_\cM$ in the direction of $P$
\[
  \DR(\cF_\cM)=\DR_P(\cF_\cM):=(\cF_\cM\to\cF_\cM\otimes
  \Omega_{\cM\times\cM\times P/\cM\times\cM}).
\]
Our aim is to compute $Rp_{12,*}\DR(\cF_\cM)$.

By Theorem~\ref{StackProp}, $\cM\times\cM$ is a
$\gm\times\gm$-gerbe over a scheme, so $\gm\times\gm$ acts on
any quasi-coherent sheaf $\cF$ on $\cM$. Therefore, $\cF$ can
be decomposed with respect to the characters of $\gm\times\gm$.
Denote by $\cF^\psi$ the component of $\cF$ corresponding to
the character $\psi:\gm\times\gm\to\gm$ defined by
$(t_1,t_2)\mapsto t_1/t_2$.

Let $\diag:\cM\to\cM\times\cM$ be the diagonal morphism.
\begin{Theorem}[Lysenko]\label{ThLys}
  $Rp_{12,*}\DR(\cF_\cM)=(\diag_*O_\cM)^\psi[-2]$.
\end{Theorem}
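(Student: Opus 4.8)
The plan is to compute $Rp_{12,*}\DR(\cF_\cM)$ fibrewise over $\cM\times\cM$, to show that it is supported on the diagonal $\Delta_\cM$, and to identify it there with a shift of the structure sheaf. Since $\xi_\alpha$ is flat over $\cM$ (Remark~\ref{ForDisc}), base change applies, and the fibre of $Rp_{12,*}\DR(\cF_\cM)$ at a point $(\xi_1,\xi_2)$ is the de Rham cohomology $R\Gamma_{dR}\bigl(P,\xi_{1,\alpha}\otimes\xi_{2,\alpha}^\vee\bigr)$. Because every connection in $\cM$ has the same formal type $\alpha$, this $\Dmod_P$-module has rank and local structure at the $x_i^\pm$ independent of $(\xi_1,\xi_2)$, so its de Rham Euler characteristic $\chi_{dR}$ is constant on $\cM\times\cM$; the computation on the diagonal below will show $\chi_{dR}=0$.

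First I would treat the off-diagonal locus. Condition~\ref{AlphaIII} forbids a $\nabla$-invariant line subbundle—its residues would sum to an integer—so every connection in $\cM$ is irreducible, and for $\xi_1\not\simeq\xi_2$ the middle extensions $\xi_{1,\alpha},\xi_{2,\alpha}$ are simple, non-isomorphic $\Dmod_{P,\alpha}$-modules. A horizontal section of $\xi_{1,\alpha}\otimes\xi_{2,\alpha}^\vee$ restricts on $\p1-\divisor$ to a flat map $\xi_2\to\xi_1$, and there are none, so $H^0_{dR}=0$; de Rham duality along $P$, together with $\mathbb D_P\xi_\alpha\simeq\xi_\alpha^\vee$ (middle extension commutes with Verdier duality), gives $H^2_{dR}\simeq\Hom_{\Dmod_P}(\xi_{1,\alpha},\xi_{2,\alpha})^\vee=0$ as well. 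Since $\chi_{dR}=0$, the middle term $H^1_{dR}$ vanishes too, and $Rp_{12,*}\DR(\cF_\cM)$ is supported on $\Delta_\cM$.

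On $\Delta_\cM$ the kernel becomes $\END(\xi_\alpha)$, and in fact $R\Gamma_{dR}\bigl(P,\END(\xi_\alpha)\bigr)$ is the tangent--obstruction complex of $\cM$ at $\xi$: simplicity gives $H^0_{dR}=\End(\xi_\alpha)=\C\cdot\id$ (the scalar automorphisms), the Kodaira--Spencer map identifies $H^1_{dR}$ with the tangent space to $M$ at $\xi$ (dimension two by Theorem~\ref{StackProp}), and duality gives $H^2_{dR}=\C$ (the obstructions). Thus $\chi_{dR}=1-2+1=0$, and the dimensions $(1,2,1)$ in degrees $(0,1,2)$ are exactly those of the Koszul complex of the rank-two conormal bundle of $\Delta_\cM$, computed on $M\times M$, shifted by $[-2]$. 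To promote this fibrewise match to a sheaf-level identity I would exploit the symmetry of the construction under exchanging the two $\cM$-factors and dualizing, which makes $Rp_{12,*}\DR(\cF_\cM)$ Verdier self-dual on $\cM\times\cM$; a self-dual complex supported on the smooth diagonal whose top cohomology sheaf is $\diag_*O_\cM$ must equal $\diag_*O_\cM[-2]$. Finally $\xi_\alpha$ has gerbe weight $+1$ and $\xi_\alpha^\vee$ weight $-1$, so $\cF_\cM$, and hence the whole answer, lies in the $\gm\times\gm$-weight $\psi\colon(t_1,t_2)\mapsto t_1/t_2$, yielding $(\diag_*O_\cM)^\psi[-2]$.

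The hard part will be the diagonal identification: upgrading the fibrewise computation of $H^\bullet_{dR}(P,\END(\xi_\alpha))$ to the statement that $Rp_{12,*}\DR(\cF_\cM)$ is, near $\Delta_\cM$, precisely the shifted structure sheaf of the diagonal. This is deformation-theoretic—the universal Kodaira--Spencer map must realise the Koszul differential of the conormal bundle, not merely induce an isomorphism on cohomology—and it reduces, via self-duality, to pinning the top cohomology sheaf down as $\diag_*O_\cM$ rather than $\diag_*$ of a nontrivial line bundle; this in turn rests on simplicity of $\xi_\alpha$ and on the trace form on $\END(\xi_\alpha)$ being compatible with the gerbe trivialisation. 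A secondary technical point is the de Rham duality invoked for $H^2_{dR}$ on the non-separated curve $P$, which I would justify through the cover of $P$ by two copies of $\p1$ glued along $\p1-\divisor$.
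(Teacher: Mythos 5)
Your fibrewise analysis coincides with the paper's own: Proposition~\ref{PpLys} is proved by exactly the tools you invoke (irreducibility, duality along the curve, Euler characteristic), and your $\psi$-weight bookkeeping is correct. The genuine gap is in the passage from the fibrewise computation to the sheaf-level identity, which is the actual content of the theorem, and your proposed mechanism for it is both unestablished and insufficient. The uniqueness principle you rely on --- ``a self-dual complex supported on the smooth diagonal whose top cohomology sheaf is $\diag_*O_\cM$ must equal $\diag_*O_\cM[-2]$'' --- begs the question: the whole difficulty is to show that the top cohomology sheaf \emph{is} scheme-theoretically concentrated on the reduced diagonal. Take $Y'$ a local complete intersection nilpotent thickening of the diagonal (locally cut out by $x_1^2,x_2$, where $x_1,x_2$ generate the ideal of the diagonal) and consider its structure sheaf placed in degree $2$: it has the same Koszul fibre dimensions $(1,2,1)$, it restricts to the structure sheaf of the reduced diagonal, and it is self-dual whenever $\det N_{Y'}$ is trivial. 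So all the constraints you propose (fibre dimensions, self-duality, restriction to the reduced diagonal) are satisfied by a complex that is not $\diag_*O_\cM[-2]$. You frame the residual ambiguity as ``trivial versus nontrivial line bundle on the diagonal,'' which misidentifies the problem; the dangerous alternative is a nilpotent thickening, not a twist.

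The paper closes exactly these two holes by different means. First, concentration in cohomological degree $2$ (as sheaves, not just fibrewise) comes from a Mumford-type lemma (Lemma~\ref{LmLys}) applied to a complex of \emph{flat} sheaves, using that $\diag(\cM)$ has codimension $2$; this requires first replacing $\DR(\cF_\cM)$ on $P$ by the two-term complex $\xi_{12}\xrightarrow{\ad\nabla}\tilde\xi_{12}$ of flat sheaves on $\cM\times\cM\times\p1$, a reduction your proposal skips by working directly on the non-separated curve. Second, scheme-theoretic support on the reduced diagonal is proved by contradiction via Artinian test points (Lemma~\ref{lm:Nilpotents}) combined with the relative duality statement of Lemma~\ref{RelLys}, namely that $\HOM(H^2(\cF_{(S)}),O_S)$ embeds into $H^0(\cF_{(S)})$ for any locally Noetherian base $S$: duality for Artinian rings preserves support, while $H^0$ is visibly concentrated on the diagonal. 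This is the precise, family-version substitute for the global Verdier self-duality you assert but never construct --- note that your version would in addition require duality for twisted $\Dmod$-modules on the non-separated $P$ relative to a stacky base, and implicitly a trivialization $\omega_M\simeq O_M$, neither of which the paper needs in that form. Without these two steps your argument does not go through, although the identification of the restriction of $R^2$ to the reduced diagonal via the scalar section (your trace-form remark) is sound and matches Corollary~\ref{CoLys}.
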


In \cite{LysRS}, Lysenko calculates the scalar product of automorphic sheaves corresponding to
irreducible rank $n$ local systems by developing a geometric analogue of the Rankin-Selberg method.
Theorem~\ref{ThLys} is essentially a similar calculation. Unfortunately, Theorem~\ref{ThLys} would
require the Rankin-Selberg method for ramified local systems; this setting is not considered in \cite{LysRS}.
However, as we work with a very special kind of local systems, Theorem~\ref{ThLys} is easy to prove directly.
We prove it in Section~\ref{LYSENKO}; the argument is parallel to Lysenko's calculation from unpublished notes
\cite{Lysenko}.

\begin{Remark}\label{RemDiag}
Note that $\cM$ is
a $\gm$-torsor over $\diag(\cM)$. Thus
\[
    \diag_*O_\cM=\bigoplus_{i\in\Z}(\diag_*O_\cM)^{\psi^i}.
\]

Objects of $\cD^b(\cM\times\cM)$ define endofunctors on $\cD^b(\cM)$. Let us consider the functors
corresponding to the components of $\diag_*O_\cM$. Clearly,
\[\cD^b(\cM)\to\cD^b(\cM):\cF\mapsto Rp_{2,*}((\diag_*O_\cM)\otimes p_1^*\cF)\]
is isomorphic to the identity functor. It is easy to see that the functor
\[\cD^b(\cM)\to\cD^b(\cM):\cF\mapsto Rp_{2,*}((\diag_*O_\cM)^{\psi^i}\otimes p_1^*\cF)\]
is isomorphic to the projection $\cD^b(\cM)\to\cD^b(\cM)^{(-i)}$.
\end{Remark}

\begin{proof}[Proof of Theorem~\ref{MainTh}]
The inverse to the functor $\Phi_{\cM\to P}$ is given by
\[
\Phi_{P\to\cM}:\cF\mapsto
Rp_{1,*}\DR(\xi_\alpha^\vee
\otimes p_2^*\cF)[2].
\]
Indeed, using base change and Theorem~\ref{Theorem3}, one
checks that the composition $\Phi_{\cM\to
P}\circ\Phi_{P\to\cM}$ is isomorphic to the identity functor.
Similarly, it follows from Theorem~\ref{ThLys} and
Remark~\ref{RemDiag} that the composition
$\Phi_{P\to\cM}\circ\Phi_{\cM\to P}$ is isomorphic to the
projection $\cD^b(\cM)\to\cD^b(\cM)^{(-1)}$.
\end{proof}

\section{A compactification of moduli spaces of
connections}\label{COMPACTIFICATION} In this section, we
compactify a moduli space of connections
with singularities following Simpson (\cite{SimpsonICM,Simpson1,
Simpson2}).

In~\cite{Simpson2}, Simpson constructs a natural
compactification of the moduli space of vector bundles with
connections on a smooth projective variety $X$. We consider the
case when $X$ is a smooth projective curve. In this case it is
not hard to generalize the result to the case of connections
with singularities (we use~\cite{Simpson1}). Then we prove that
the compactification is in fact projective (note that for
varieties of higher dimension projectivity of the
compactification is not known). Our description of the divisor
at infinity is also more explicit than in~\cite{Simpson2}.

The compactification is constructed in the following
generality. Let $X$ be a smooth complex projective curve, $r$ a
positive integer, $d$ an integer, and $\divisor$ an effective
divisor on $X$. Denote by $\cN=\cN(X,r,d,\divisor)$ the moduli
stack of pairs $(L,\nabla)$, where $L$ is a vector bundle on
$X$ of rank $r$ and degree $d$, and $\nabla:L\to
L\otimes\Omega_X(\divisor)$ is a connection on $L$ with the
order of poles bounded by $\divisor$. Our goal is to construct
a compactification of the semistable part of $\cN$.

Fix $X$, $r$, $d$, and $\divisor$.

\subsection{$\epsilon$-connections}
We shall construct the compactification as a moduli space of
Deligne's $\lambda$-connections. Recall the following
\begin{Definition}\label{EpsilonConnections}
Let $L$ be a vector bundle on $X$. For a one-dimensional vector
space $E$ and $\epsilon\in E$, an
\emph{$\epsilon$-connection\/} on $L$ is a $\C$-linear map
$\nabla:L\to L\otimes\Omega_X\otimes_\C E$ such that
\[
 \nabla(fs)=f\nabla s+s\otimes\dif f\otimes\epsilon\;\text{
 for all }f\in O_X,s\in L.
\]
More generally, an $\epsilon$-connection on $L$ with poles
bounded by $\divisor$ is a map $\nabla:L\to
L\otimes\Omega_X(\divisor)\otimes_\C E$ satisfying the same
condition.
\end{Definition}

Denote by $\Nb=\Nb(X,r,d,\divisor)$ the moduli stack of
collections $(L,\nabla;\epsilon\in E)$, where $L$ is a vector
bundle on $X$ of rank $r$ and degree $d$, and $\nabla$ is an
$\epsilon$-connection on $L$ with poles bounded by $\divisor$.
This is an algebraic stack, the proof is similar
to~\cite[Proposition~1]{FedorovIsoStokes}.

\begin{Definition}
$(L,\nabla;\epsilon\in E)\in\Nb$ is \emph{semistable\/} if for
any non-zero $\nabla$-invariant subbundle $L_0\subset L$ we
have
\[
  \frac{\deg L_0}{\rk L_0}\le\frac{d}{r}.
\]
Further, $(L,\nabla;\epsilon\in E)$ is \emph{nilpotent} if
$\epsilon=0$ and $\nabla^r=0$. Note that if $\epsilon=0$,
$\nabla$ is $O_X$-linear, so $\nabla^r$ makes sense as a map
$L\to L\otimes(\Omega_X(\divisor)\otimes_\C E)^{\otimes r}$.
Equivalently, $(L,\nabla;\epsilon\in E)$ is nilpotent if there
is a flag of subbundles
\[
    0=L_0\subset L_1\subset\dots\subset L_k=L
\]
with $\nabla(L_i)\subset
L_{i-1}\otimes\Omega_X(\divisor)\otimes_\C E$.
\end{Definition}

Let $\Nb^{ss}\subset\Nb$ be the open substack of semistable
$\epsilon$-connections.

Also, let $\Nb^{\,ss,nn}\subset\Nb^{ss}$ be the open substack
of semistable $\epsilon$-connections that are not nilpotent.

Taking $E=\C$, $\epsilon=1$, we see that connections are
particular cases of $\epsilon$-connections. Moreover if
$\epsilon\ne0$, there is a unique isomorphism $E\to\C$ such
that $\epsilon\mapsto 1$. It follows that the open substack of
$\Nb$ corresponding to $\epsilon$-connections with
$\epsilon\ne0$ parameterizes all connections $(L,\nabla)$,
where $L$ has rank $r$ and degree $d$, $\nabla$ has poles
bounded by $\divisor$. Thus this substack can be identified
with $\cN$.

We use the theory of good moduli spaces developed by J.~Alper
(see \cite{Alper}). By definition, a quasi-compact map
$p:\cS\to S$ from a stack $\cS$ to an algebraic space~$S$ is a
\emph{good moduli space} if the direct image functor $p_*$ is
exact on quasi-coherent sheaves and $p_*O_\cS=O_S$. In
particular, this notion reduces to the notion of quotient in
the sense of geometric invariant theory when $\cS$ is the
quotient stack of a scheme by an action of an algebraic group
(see \cite[Theorem~13.6]{Alper}). Note that
by~\cite[Theorem~4.16(vi)]{Alper} $p$ is universal among maps
to schemes.

\begin{Theorem}[cf.\ Theorem~11.3 of~\cite{Simpson2}]
\label{GoodModuliSpace}\noindstep There is a good moduli space
\lpar{}in the sense of~\cite{Alper}\rpar{}
$p:\Nb^{\,ss,nn}\to\oN$ such that $\oN$ is a complete scheme.

\noindstep\label{GoodModB} Set
$\cN^{ss}=\Nb^{ss}\cap\cN=\Nb^{\,ss,nn}\cap\cN$, then
$\cN^{ss}$ has a good moduli space $N$, which is an open
subscheme of $\oN$ \lpar{}so $N$ is the moduli space of
semistable bundles with connections\rpar.
\end{Theorem}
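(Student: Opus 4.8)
The plan is to realize $\Nb^{ss}$ as a global quotient stack and then invoke Mumford's geometric invariant theory together with Alper's dictionary between GIT quotients and good moduli spaces. Following Simpson, I would first encode an $\epsilon$-connection as a module over a sheaf of rings. Let $\Lambda$ be the Rees algebra of the sheaf of differential operators on $X$ with poles bounded by $\divisor$; its modules that are coherent over $O_X$ are exactly the triples $(L,\nabla;\epsilon\in E)$, with the Rees parameter playing the role of $\epsilon$ and the scaling $\gm$-action $(\nabla,\epsilon)\mapsto(t\nabla,t\epsilon)$ recorded by the abstract line $E$. In this language semistability of $(L,\nabla;\epsilon\in E)$ becomes Simpson's slope semistability for $\Lambda$-modules, and nilpotency becomes the condition that the associated graded Higgs field lies in the nilpotent cone. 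Thus the problem is reduced to the existence of moduli of semistable $\Lambda$-modules, which is Simpson's theorem \cite{Simpson2} adapted to our singular, $\lambda$-connection setting (using \cite{Simpson1} for the poles).

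Next I would establish boundedness: the set of semistable $(L,\nabla;\epsilon\in E)$ with the fixed invariants is bounded. The point is that any line subbundle $L_0\subset L$ generates, after applying $\nabla$ repeatedly, a $\nabla$-invariant (that is, a $\Lambda$-) subsheaf whose slope is $\le d/r$ by semistability; this bounds $\deg L_0$ from above and hence the Harder--Narasimhan type of the underlying bundles. With boundedness in hand, fix $m\gg0$ so that for every semistable object $L(m)$ is globally generated with vanishing higher cohomology and $N:=\dim H^0(L(m))$ is constant. A choice of basis of $H^0(L(m))$ exhibits $(L,\nabla;\epsilon)$ as a point of the Quot scheme of $\Lambda$-module quotients of $\Lambda\otimes\C^N\otimes O_X(-m)$, which is projective. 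Let $R$ be the locally closed subscheme of semistable points with the fixed invariants and $R^{nn}\subset R$ the open locus of non-nilpotent objects; the group $G=GL(N)$ acts by change of basis, the scalars acting as the automorphism gerbe, and there are canonical isomorphisms $\Nb^{ss}=[R/G]$ and $\Nb^{\,ss,nn}=[R^{nn}/G]$.

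I would then choose Simpson's $G$-linearization (depending on $m$ and an auxiliary large integer), verify that the GIT-semistable locus coincides with $R$, and that the non-nilpotent locus yields a projective quotient. Mumford's theory produces a projective good quotient, and by \cite[Theorem~13.6]{Alper} the resulting projective scheme $\oN:=R^{nn}/\!\!/G$ is a good moduli space for $\Nb^{\,ss,nn}=[R^{nn}/G]$, proving (a), completeness being automatic from projectivity. For (b), the locus $\{\epsilon\ne0\}$ is a $G$-invariant saturated open subset $R^{nn}_{\epsilon\ne0}\subset R^{nn}$ whose quotient stack is $\cN^{ss}$; since good moduli spaces are compatible with saturated open substacks \cite{Alper}, the good moduli space $N$ of $\cN^{ss}$ is the corresponding open subscheme of $\oN$.

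The main obstacle is the completeness of $\oN$, that is, properness of the quotient and the precise role of the non-nilpotency condition. Concretely one must match intrinsic non-nilpotency with GIT-semistability for the scaling $\gm$, and check a valuative criterion: given a family of $\epsilon$-connections over a punctured disc, a Langton-type modification produces a semistable extension, and if the connection degenerates one rescales so that $\epsilon\to0$, landing in the Higgs boundary. Excluding the nilpotent cone is precisely what forces the limiting $\gm$-orbits to close up and be separated, so that the boundary $\{\epsilon=0\}$ of $\oN$ is the proper space of semistable non-nilpotent Higgs fields modulo scaling. This is the delicate part of Simpson's compactification, and adapting it to poles along $\divisor$ is where the real work lies.
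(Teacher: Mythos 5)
Your overall framework---Simpson's $\Lambda$-module moduli, Quot schemes, GIT, and Alper's dictionary---is the right one, but the proposal has a genuine gap at precisely the step that carries the content of the theorem: completeness of $\oN$. Two concrete problems. First, with $G=GL(N)$ acting by change of basis, the quotient $[R/G]$ is the stack $\cN^{ss}_\al$ of triples with a concrete $\epsilon\in\C$, not $\Nb^{ss}$; to remember only the abstract line $E$ you must additionally quotient by the scaling action $t\cdot(\nabla,\epsilon)=(t\nabla,t\epsilon)$, so that $\Nb^{\,ss,nn}=[R^{nn}/(GL(N)\times\gm)]$. This is not cosmetic, because the compactification comes entirely from this extra $\gm$-quotient of the $\epsilon$-direction. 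Second, and decisively, the assertion that ``Mumford's theory produces a projective good quotient'' of $R^{nn}$ does not follow from anything you set up: GIT yields projective quotients of \emph{projective} schemes with ample linearizations, whereas $R^{nn}$ is a non-proper, locally closed subscheme of the Quot scheme. Your plan is in fact internally inconsistent: you propose to verify that the GIT-semistable locus equals $R$ (not $R^{nn}$), but $R^{nn}$ is not even saturated inside $R$ for the combined group. Indeed, by \cite[Theorem~10.1]{Simpson2} the limit $\lim_{t\to0}t\cdot z$ exists for every point of $N_\al$ and is a $\gm$-fixed point, which forces $\epsilon=0$ and (since the characteristic polynomial is scaling-invariant) a nilpotent Higgs field; hence every $\gm$-orbit closure meets the nilpotent cone, and a categorical quotient of all of $R$ by $GL(N)\times\gm$ would identify every connection with a nilpotent Higgs point. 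Removing the nilpotent cone \emph{before} quotienting is what makes orbits closed, and relating that removal to properness of the quotient is the real work---which your final paragraph defers (``where the real work lies'') rather than carries out.

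For comparison, the paper resolves this by a two-stage construction that isolates the $\gm$-direction. Stage one is your $GL(N)$-GIT, quoted from Simpson: \cite[Theorem~4.10]{Simpson1}, applied to the Rees sheaf $\Lambda^R$ relative to $X\times\A1\to\A1$, gives a good moduli space $\cN^{ss}_\al\to N_\al$, which is only quasi-projective over $\A1$ and in no way complete. Stage two treats the scaling $\gm$ by Bia{\l}ynicki-Birula-type methods rather than by an ample linearization: $\lim_{t\to0}t\cdot z$ exists for all $z\in N_\al$ \cite[Theorem~10.1]{Simpson2}; $\lim_{t\to\infty}t\cdot z$ exists exactly when $z$ is nilpotent (the paper's Lemma); and the nilpotent cone $N_\al^n$ is complete because it is the zero fiber of the \emph{proper} Hitchin map (the polar analogue of \cite[Theorem~6.11]{Simpson1}). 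Then \cite[Theorem~11.2]{Simpson2} produces a geometric, affine, and complete quotient $\oN=(N_\al-N_\al^n)/\gm$, so completeness enters through properness of the Hitchin map, not through Mumford; a final lemma (a good moduli space followed by an affine geometric quotient by a reductive group is again a good moduli space) finishes part (a). For part (b), saturation of $\{\epsilon\ne0\}$ is likewise not free---as noted above it actually fails inside $\Nb^{ss}$; inside $\Nb^{\,ss,nn}$ the paper proves it by descending $\cE^{\otimes r!}$ to a line bundle $\cE'$ on $\oN$ (Lemma~\ref{TrivialAction}, Corollary~\ref{BundlesOnM}), so that $\epsilon^{\otimes r!}$ defines a section $\epsilon'$ and $\cN^{ss}=p^{-1}(\{\epsilon'\ne0\})$ is manifestly the preimage of an open subscheme.
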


\begin{Remark}\label{SecondFromFirst}
Part~\eqref{GoodModB} of the theorem is clear from the
construction of $\oN$ in Section~\ref{CONSTRUCTION} but it also
easily follows from the first claim and
Corollary~\ref{BundlesOnM}. Indeed, let $\cE$ be the line
bundle on $\Nb^{\,ss,nn}$ whose fiber over
$(L,\nabla;\epsilon\in E)$ is $E$. As shown in
Corollary~\ref{BundlesOnM}, there is a line bundle $\cE'$ on
$\oN$ such that $p^*\cE'=\cE^{\otimes r!}$. Then
by~\cite[Proposition~4.5]{Alper} we can identify
$p_*\cE^{\otimes r!}$ with $\cE'$. Thus $\epsilon^{\otimes r!}$
gives a section of $\cE'$; let~$N$ be the complement of its
zero locus. Clearly $N$ is open and $\cN=p^{-1}(N)$. This
implies the claim.
\end{Remark}

\subsection{Construction of $\oN$} \label{CONSTRUCTION}
The moduli of $\epsilon$-connections (on arbitrary projective
variety) is constructed by Simpson (\cite{SimpsonICM,Simpson1,
Simpson2}); it is easy to see that his argument works in the
case of $\epsilon$-connections with singularities on curves.
Let us quickly recall the construction.

Fix $\epsilon\in\C$, and denote by $\cN_\epsilon$ the moduli
stack of $\epsilon$-connections of the form
$(L,\nabla;\epsilon\in\C)$. Equivalently, $\cN_\epsilon$ is a
fiber of the map
\[
  \Nb\to\A1/\gm:(L,\nabla;\epsilon\in E)\mapsto(\epsilon\in
E).
\]
Here we identify the quotient stack $\A1/\gm$ with the moduli
stack of pairs $(\epsilon\in E)$, where $E$ is a
one-dimensional vector space.

As $\epsilon$ varies, the stacks $\cN_\epsilon$ form a family
$\cN_\al\to\A1$ (whose fiber over $\epsilon\in\A1$
is~$\cN_\epsilon$). The total space $\cN_\al$ carries an action
of $\gm$ via
\[
  t\cdot(L,\nabla;\epsilon\in\C)=(L,t\nabla;t\epsilon\in\C),\quad
  (L,\nabla;\epsilon\in\C)\in\cN_\al,\;t\in\gm.
\]
We can identify $\Nb$ with the quotient stack $\cN_\al/\gm$.

Denote by $\cN_\epsilon^{ss}\subset\cN_\epsilon$
(resp.~$\cN^{ss}_\al\subset\cN_\al$) the open substacks of
semistable $\epsilon$-connections.

\begin{Proposition}
\noindstep\label{GmsI} There exists a good moduli space
$\cN_\epsilon^{ss}\to N_\epsilon$;

\noindstep\label{GmsII} As $\epsilon\in\C$ varies, the spaces
$N_\epsilon$ form a family $N_\al\to\A1$ whose fiber over
$\epsilon\in\A1$ is $N_\epsilon$. There exists a good moduli
space $\cN^{ss}_\al\to N_\al$.
\end{Proposition}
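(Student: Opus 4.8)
The plan is to recall C.~Simpson's geometric invariant theory construction of moduli of modules over a sheaf of rings and to reinterpret the resulting quotient as a good moduli space via~\cite{Alper}; the only points requiring attention are the bookkeeping caused by the poles along $\divisor$ and the degeneration at $\epsilon=0$. First I would recast the data in Simpson's language. For fixed $\epsilon\in\C$, an $\epsilon$-connection with poles bounded by $\divisor$ on a locally free sheaf $L$ of rank $r$ is the same as a module structure on $L$ over a sheaf of rings $\Lambda_\epsilon$, namely the $\epsilon$-Rees deformation of the ring of differential operators on $X$ with poles along $\divisor$. This $\Lambda_\epsilon$ is filtered with $\gr\Lambda_\epsilon=\sym(T_X(-\divisor))$ for every $\epsilon$, since the vector fields vanishing along $\divisor$ are exactly those whose contraction with $\nabla$ carries $L$ back into $L$. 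For $\epsilon\ne0$ one recovers, after rescaling, genuine pole-bounded differential operators, while $\Lambda_0=\sym(T_X(-\divisor))$ is the Higgs case. As $\epsilon$ varies these assemble into a single sheaf of $O_{X\times\A1}$-algebras $\Lambda_\al$, flat over $\A1$, with fibre $\Lambda_\epsilon$ over $\epsilon$; an object of $\cN_\al$ is precisely an $\A1$-flat $\Lambda_\al$-module whose fibres are locally free of rank $r$ and degree $d$. Because $\gr\Lambda_\al$ is the symmetric algebra of a locally free sheaf, $\Lambda_\al$ lies in the class of rings to which Simpson's construction applies directly.

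Next I would run Simpson's machine relatively over $\A1$. The semistable $\epsilon$-connections of type $(r,d)$ form a bounded family as $\epsilon$ ranges over $\A1$: the inequality $\deg L_0/\rk L_0\le d/r$ for $\nabla$-invariant (equivalently, $\Lambda_\epsilon$-sub-) bundles bounds the Harder--Narasimhan type of the underlying $L$ uniformly, whence boundedness on a curve by Grothendieck's criterion. After twisting by a sufficiently ample line bundle, every such $L$ is globally generated with vanishing $H^1$, giving a surjection $O_X(-N)^{\oplus P}\twoheadrightarrow L$ for fixed $N$ and $P$. This exhibits the semistable locus as a $PGL(P)$-invariant locally closed subscheme of a Quot scheme $Q$ of $\Lambda_\al$-module quotients, formed relatively over $\A1$. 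Simpson's theorem then produces the GIT quotient of the semistable part of $Q$ and matches GIT-semistability with the slope-semistability of our Definition. Taking this quotient over $\A1$ gives $N_\al\to\A1$, and by construction its fibre over $\epsilon$ is the GIT quotient of the fibre of $Q$, namely $N_\epsilon$; this establishes~\eqref{GmsI} and~\eqref{GmsII} at the level of schemes.

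Finally I would pass to good moduli spaces. By~\cite[Theorem~13.6]{Alper} the projection from the quotient stack $[Q^{ss}/PGL(P)]$ to its GIT quotient is a good moduli space in the sense of~\cite{Alper}. Rigidifying by the $PGL(P)$-action identifies this quotient stack with $\cN^{ss}_\al$, since the auxiliary choice of surjection is forgotten; we thus obtain a good moduli space $\cN^{ss}_\al\to N_\al$ and, over each point of $\A1$, a good moduli space $\cN_\epsilon^{ss}\to N_\epsilon$. The main obstacle, and the only place where genuine care is needed, is the uniform treatment across $\A1$: one must verify that both boundedness and the comparison of GIT-semistability with the intrinsic $\nabla$-invariant slope condition survive the pole twist by $\divisor$ and remain valid at the Higgs degeneration $\epsilon=0$, so that a single relative Quot scheme and a single relative GIT quotient serve all fibres simultaneously. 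Once this is secured, the remaining steps are Simpson's construction verbatim together with the formal passage to Alper's language.
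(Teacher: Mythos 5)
Your proposal follows essentially the same route as the paper: both identify $\epsilon$-connections as modules over the enveloping algebra of the Lie algebroid $O_X\oplus\cT_X(-\divisor)$ with anchor scaled by $\epsilon$ (a split almost polynomial ring in Simpson's sense), assemble these into a Rees-type family over $\A1$, and combine Simpson's GIT construction (Theorem~4.10 of~\cite{Simpson1}) with Alper's Theorem~13.6 to interpret the quotients as good moduli spaces --- the paper simply cites Simpson's theorem as a black box where you unpack its proof (boundedness, Quot scheme, GIT). One minor slip: the moduli stack $\cN^{ss}_\al$ is the quotient stack $[Q^{ss}/GL(P)]$ rather than the rigidified $[Q^{ss}/PGL(P)]$, but since the central $\gm$ acts trivially on $Q^{ss}$ both quotient stacks have the same good moduli space, so your conclusion is unaffected.
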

\begin{proof}
(\ref{GmsI}) This is a particular case
of~\cite[Theorem~4.10]{Simpson1}. Note
that~\cite[Theorem~4.10]{Simpson1} applies to the moduli space
of semistable modules over a sheaf of split almost polynomial
rings of differential operators $\Lambda$ (see
\cite[Section~2]{Simpson1}, p.~77, 81 for definition). In our case,
$\Lambda$ is the universal enveloping of the Lie algebroid
\[
    \Lambda^{\le1}=(O_X\oplus\cT_X(-\divisor),
    [\cdot\ceps\cdot],\rho),
\]
where $\rho$ is the composition of the natural inclusion
$\cT_X(-\divisor)$ into $\cT_X$ with multiplication by
$\epsilon$, and
\[
[f_1+\tau_1\ceps f_2+\tau_2]=
\epsilon(\tau_1(f_2)-\tau_2(f_1)+[\tau_1,\tau_2]).
\]

(\ref{GmsII}) Consider the sheaf $p_1^*\Lambda$ on
$X\times\A1$, where $\Lambda$ is the sheaf from~(\ref{GmsI})
with $\epsilon=1$. Let $\Lambda^R$ be its subsheaf generated by
the operators of the form $\sum\epsilon^i\lambda_i$, where
$\lambda_i\in\Lambda$ has order at most $i$, $\epsilon$ is the
coordinate on $\A1$. The family $N^\al$ is constructed by
applying Theorem~4.10 to $\Lambda^R$ relative to the projection
$X\times\A1\to\A1$. See \cite[Section on $\tau$-connections,
p.~87]{Simpson1}.
\end{proof}

The action of $\gm$ on $\cN_\al$ induces its action on $N_\al$.
(In fact an action of an algebraic group on a stack always
induces an action on the good moduli space; for the proof, use
the universal property of good moduli spaces
and~\cite[Proposition~4.7(i)]{Alper}.) In particular, a point
$z\in N_\al$ yields a morphism $\gm\to N_\al:t\mapsto t\cdot
z$. If it can be extended to a morphism from $\A1\supset\gm$
(resp.\ from $\p1-\{0\}\supset\gm$), we say that the limit
$\lim_{t\to0}t\cdot z$ (resp.~$\lim_{t\to\infty}t\cdot z$)
exists.

The Hitchin fibration gives the following description of $N_0$.
Let
\[
    B:=\prod_{i=1}^r H^0(X,\Omega_X(\divisor)^{\otimes i})
\]
be the base of Hitchin fibration. Recall that the Hitchin map
sends a Higgs bundle $(L,\nabla;0\in\C)$ to
$(c_1(\nabla),\dots,c_r(\nabla))$, where $c_i(\nabla)$ are
coefficients of the characteristic polynomial of $\nabla$. Thus
we get a map $\cN_0\to B$. This map descends to a map
\[
    (c_1,c_2,\ldots,c_r):N_0\to B.
\]
Denote by $N_\al^n\subset N_0$ the zero fiber of Hitchin
fibration.

\begin{Lemma}
Let $z\in N_\al$ correspond to
$(L,\nabla;\epsilon\in\C)\in\cN^{ss}_\al$. Then
$\lim_{t\to\infty}t\cdot z$ exists if and only if
$(L,\nabla;\epsilon\in\C)$ is nilpotent.
\end{Lemma}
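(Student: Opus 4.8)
The plan is to study the orbit map $\gm\to N_\al$, $t\mapsto t\cdot z$, through the two $\gm$-equivariant morphisms available to us: the projection $N_\al\to\A1$ and the Hitchin map on the special fibre. Recall that $\gm$ acts on $\A1$ with weight one and that $N_\al\to\A1$ is $\gm$-equivariant, so I would first dispose of the easy half of the ``only if'' direction. Suppose $\lim_{t\to\infty}t\cdot z$ exists, i.e.\ the orbit map extends to a morphism $\p1-\{0\}\to N_\al$. Composing with $N_\al\to\A1$ produces an extension of $t\mapsto t\epsilon$ over $t=\infty$; since $\A1$ is separated this is possible only when $\epsilon=0$. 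Hence $z\in N_0$, and the limit point lies in $N_0$ as well, because its image in $\A1$ is $\lim_{t\to\infty}t\epsilon=0$.

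Working now inside $N_0$, I would use the Hitchin map $h=(c_1,\dots,c_r):N_0\to B$. Under the $\gm$-action the coefficient $c_i$ scales with weight $i$, so $\gm$ acts on $B=\prod_i H^0(X,\Omega_X(\divisor)^{\otimes i})$ with strictly positive weights. Because $h$ is a morphism, composing the extended orbit map $\p1-\{0\}\to N_0$ with $h$ yields an extension of $t\mapsto t\cdot h(z)=(tc_1,\dots,t^rc_r)$ over $t=\infty$; in the coordinate $u=1/t$ this reads $(c_1/u,\dots,c_r/u^r)$, which is regular at $u=0$ only if every $c_i$ vanishes. Thus $h(z)=0$, the characteristic polynomial of $\nabla$ equals $\lambda^r$, so $\nabla^r=0$ and $(L,\nabla;0\in\C)$ is nilpotent. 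This completes ``only if''.

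For the converse, assume $(L,\nabla;\epsilon\in\C)$ is nilpotent, so that $\epsilon=0$ and $h(z)=0$. The whole orbit remains nilpotent since $(t\nabla)^r=t^r\nabla^r=0$, hence $t\cdot z\in N_\al^n=h^{-1}(0)$ for every $t$. It therefore suffices to extend the orbit map $\gm\to N_\al^n$ over $t=\infty$, and for this it is enough that $N_\al^n$ be complete: a morphism from $\gm$ to a separated complete scheme extends over any missing point by the valuative criterion of properness, applied to the discrete valuation ring $\C[[u]]$ with $u=1/t$, and the image of the closed point is the desired limit. The completeness of the nilpotent cone $N_\al^n$ is the assertion that the Hitchin map $h:N_0\to B$ is proper, and this is the one step where genuine work is required; I expect it to be the main obstacle. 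I would obtain it from Simpson's construction of the moduli of semistable Higgs sheaves together with the properness of the Hitchin map in that setting (\cite{Simpson1,Simpson2}), adapted to $\epsilon$-connections with poles bounded by $\divisor$ on the curve $X$ exactly as in \S\ref{CONSTRUCTION}. Granting this properness, the valuative criterion supplies the extension, and the limit exists.
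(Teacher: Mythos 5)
Your proposal is correct and follows essentially the same route as the paper: the "only if" direction uses the $\gm$-equivariance of the projection to $\A1$ (forcing $\epsilon=0$) and the weight-$i$ scaling of the Hitchin coordinates $c_i$ (forcing $c_i(\nabla)=0$), while the "if" direction reduces to completeness of the nilpotent cone $N_\al^n$, obtained from properness of the Hitchin map by adapting Simpson's Theorem~6.11 to the case with poles. The only difference is presentational — you spell out the valuative criterion and the intermediate reduction to $N_0$, which the paper leaves implicit.
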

\begin{proof}
Let us start with the `only if' direction. If the limit exists,
then $\lim_{t\to\infty}t\epsilon$ exists, so $\epsilon=0$.
Also, the coefficients of the characteristic polynomial of
$t\nabla$ are equal to $t^i c_i(\nabla)$, and so the limit
$\lim_{t\to\infty}t^ic_i(\nabla)$ exists. Therefore,
$c_i(\nabla)=0$; in other words, $\nabla$ is nilpotent.

To prove the `if' direction, it suffices to notice that
$N_\al^n$ is complete. Indeed, it is the zero fiber of the
Hitchin map. This map is proper, the proof is similar to
(\cite{Simpson1}, Theorem~6.11). More precisely, one has to
repeat the proof of that theorem, changing $T^*$ to
$T^*(\divisor)$.
\end{proof}

\begin{Proposition}
The geometric quotient $(N_\al-N_\al^n)/\gm$ exists; the
quotient is a complete scheme of finite type, the natural
projection $N_\al-N_\al^n\to(N_\al-N_\al^n)/\gm$ is an affine
map.
\end{Proposition}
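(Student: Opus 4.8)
The plan is to identify $N_\al-N_\al^n$ with the stable locus of the $\gm$-action, to build the geometric quotient by gluing quotients over the charts of a weighted projective space, and to read off completeness from the properness of the Hitchin map. First I would package the equivariant data. Extending the Hitchin coefficients to the whole family gives a $\gm$-equivariant morphism
\[
  f=(\epsilon,c_1,\dots,c_r)\colon N_\al\longrightarrow V:=\A{1}\times B,
\]
where $\gm$ acts linearly on $V$ with weights $1,1,2,\dots,r$, since $t\cdot\epsilon=t\epsilon$ and $c_i(t\nabla)=t^ic_i(\nabla)$. By the description of $N_\al^n$ as the zero fiber of the Hitchin map inside $N_0=\{\epsilon=0\}$, one has $N_\al^n=f^{-1}(0)$, hence $N_\al-N_\al^n=f^{-1}(V-\{0\})$. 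As all weights are positive, $\mathbb{P}:=(V-\{0\})/\gm$ is the weighted projective space $\mathrm{Proj}\,\C[V]$, which is projective; the quotient I am after will be shown to be proper over $\mathbb{P}$, and hence complete.

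Next I would check that the action on $N_\al-N_\al^n$ admits a geometric quotient. A point fixed by all of $\gm$ satisfies $\epsilon=0$ and $c_i=t^ic_i$ for all $t$, forcing $f=0$; thus every $\gm$-fixed point lies in $N_\al^n$, and since a stabilizer is a closed subgroup of $\gm$, all stabilizers on $N_\al-N_\al^n$ are finite. Closedness of orbits follows from the preceding lemma: for $z\notin N_\al^n$ the limit $\lim_{t\to\infty}t\cdot z$ does not exist, while $\lim_{t\to0}t\cdot z$, if it exists, is a fixed point and so lies in $N_\al^n$; therefore $\overline{\gm\cdot z}$ meets $N_\al-N_\al^n$ in the orbit alone. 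I would then construct the quotient by gluing: over each standard chart $U_s=\{s\ne0\}$, $s\in\{\epsilon,c_1,\dots,c_r\}$, of $V-\{0\}$, the morphism $f^{-1}(U_s)\to U_s$ is $\gm$-equivariant, and $f^{-1}(U_s)/\gm$ exists as a scheme proper over the affine chart $U_s/\gm$ of $\mathbb{P}$; these glue to the desired $Q:=(N_\al-N_\al^n)/\gm$.

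The projection $q\colon N_\al-N_\al^n\to Q$ is affine: over $U_s$ it is, up to the finite group $\mu_{w_s}$ of $w_s$-th roots of unity, a $\gm$-torsor, which is an affine morphism. This is most transparent over $U_\epsilon$, where the weight-one function $\epsilon$ trivializes the torsor and identifies $f^{-1}(U_\epsilon)\cong N\times\gm$ with quotient $N$. Finally, $f$ descends to $\bar f\colon Q\to\mathbb{P}$; granting that $f$ is proper — the properness of the Hitchin map for the whole family $N_\al\to V$, by the argument of \cite[Theorem~6.11]{Simpson1} already invoked above for $\epsilon=0$ — the map $\bar f$ is proper, and since $\mathbb{P}$ is complete we conclude that $Q$ is complete and of finite type.

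I expect the properness input to be the main obstacle: one must know that $f$ is proper over the entire base $V$, not merely over the fiber $\epsilon=0$ used in the preceding lemma. Once this is granted, the remaining points — positivity of the weights, the location of the fixed points, closedness of orbits, and the chart-local affineness of $q$ — are routine, the only care being the non-freeness of the weighted $\gm$-action, which is handled by the finite groups $\mu_{w_s}$.
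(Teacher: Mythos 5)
The linchpin of your argument is a morphism that does not exist. You assert that ``extending the Hitchin coefficients to the whole family'' gives a $\gm$-equivariant map $f=(\epsilon,c_1,\dots,c_r)\colon N_\al\to V=\A1\times B$, but the coefficients $c_i$ are defined only on $N_0=\{\epsilon=0\}$, where $\nabla$ is $O_X$-linear: for $\epsilon\ne0$ an $\epsilon$-connection is a first-order differential operator and has no characteristic polynomial, and you construct no substitute. Nor can an equivariant extension be obtained by general principles: $N_\al$ is not affine (it contains the complete, positive-dimensional nilpotent cone $N_\al^n$), so there is no reason why functions on the closed subscheme $N_0$ should lift. In fact, in the simplest analogue (rank one, genus one, no poles) such an extension provably does not exist: the fiber of $N_\al\to\A1$ over $\epsilon=1$ is the universal vector extension of the Jacobian, which is anti-affine, so any global $c_1$ would be constant on each fiber $N_\epsilon$ with $\epsilon\ne 0$; by equivariance that constant is $\epsilon\kappa$, and by density of $\{\epsilon\ne0\}$ one would get $c_1|_{N_0}\equiv 0$, contradicting that it must be the (nonconstant) Hitchin map. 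Since the identification $N_\al-N_\al^n=f^{-1}(V-\{0\})$, the gluing of quotients over the charts of the weighted projective space, the chart-local torsor structure giving affineness, and the descended map $\bar f$ are all built on $f$, the proof collapses at this first step. (Your observations that survive without $f$ --- fixed points are nilpotent, stabilizers on the complement are finite, orbits are closed there --- are correct but by themselves do not yield a geometric quotient.)

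There is a second, independent gap, the one you flag yourself and then wave through: completeness in your scheme needs $f$ to be proper over \emph{all} of $V$, and you claim this ``by the argument of \cite[Theorem~6.11]{Simpson1}''. That theorem is the properness of the Hitchin map for Higgs sheaves, i.e.\ precisely over $\epsilon=0$; it says nothing about $\epsilon\ne0$, where properness of $f$ would make the de Rham moduli space $N=f^{-1}(\{1\}\times B)$ proper over the affine space $B$ --- false in the model case above and unproven here. The paper's proof needs neither ingredient: it invokes Simpson's general theory of $\gm$-actions on quasi-projective varieties, namely \cite[Theorem~10.1]{Simpson2} to know that $\lim_{t\to0}t\cdot z$ exists for every $z\in N_\al$, and \cite[Theorems~11.1, 11.2]{Simpson2} to produce the complete geometric quotient once the fixed-point set is known to be complete --- which holds because the fixed points are closed in $N_\al^n$, whose completeness uses properness of the Hitchin map on $N_0$ \emph{only} (the sole properness input). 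Affineness of $N_\al-N_\al^n\to(N_\al-N_\al^n)/\gm$ is then extracted from Proposition~1.9 of \cite{Mumford}, which underlies Simpson's Theorem~11.1. If you want a hands-on construction along your lines, you must replace $f$ by data that genuinely exists on all of $N_\al$; that packaging is exactly what Simpson's theorems provide.
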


\begin{proof}
This follows from~\cite[Theorem~11.2]{Simpson2} and the
previous lemma. Indeed, the fixed point set is closed in
$N_\al^n$ and thus complete. The fact that $\lim_{t\to0}t\cdot
z$ exists for all $z\in N_\al$ follows
from~\cite[Theorem~10.1]{Simpson2} (see also Corollary~10.2).

To show that the map is affine, note
that~\cite[Theorem~11.2]{Simpson2} is derived from
~\cite[Theorem~11.1]{Simpson2}, which in turn uses
Proposition~1.9 in~\cite{MumfordGIT}, but this proposition also
claims that the map is affine.
\end{proof}

The geometric quotient $\oN=(N_\al-N_\al^n)/\gm$ has the
properties required in Theorem~\ref{GoodModuliSpace} because of
the following

\begin{Lemma}
Let $p:\cS\to S$ be a good moduli space. Let $G$ be a reductive
group acting on $\cS$. Consider the induced action on $S$ and
assume that there is a geometric quotient $S/\!/G$ such that
the projection $S\to S/\!/G$ is affine. Then the induced map
$\bar p:\cS/G\to S/\!/G$ is a good moduli space \lpar{}the
quotient in the left hand side is the stacky one\rpar.
\end{Lemma}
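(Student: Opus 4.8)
The statement is a descent/compatibility assertion: a good moduli space $p\colon\cS\to S$, together with a geometric GIT quotient $S\to S/\!/G$ that is affine, yields a good moduli space $\bar p\colon\cS/G\to S/\!/G$. The plan is to verify the two defining properties of a good moduli space (in the sense of~\cite{Alper}) for $\bar p$ directly, namely that $\bar p_*$ is exact on quasi-coherent sheaves and that $\bar p_*O_{\cS/G}=O_{S/\!/G}$. The natural strategy is to factor $\bar p$ through an intermediate stack and reduce to two situations we already control: the given good moduli space $p$, and the passage from $G$-equivariant objects to the GIT quotient.

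First I would set up the commutative square of maps. Write $q\colon S\to S/\!/G$ for the affine geometric quotient and $\pi\colon\cS\to\cS/G$ for the stack quotient. Since $p$ is $G$-equivariant, it descends to $\bar p\colon\cS/G\to S/\!/G$ with $\bar p\circ\pi = q\circ p$. The key technical input is that all of these maps behave well with respect to pushforward of quasi-coherent sheaves. For $q$: because $S\to S/\!/G$ is an affine morphism and the quotient is geometric with $G$ reductive, taking $G$-invariants is exact and $q_*(O_S)^G=O_{S/\!/G}$; more precisely, quasi-coherent sheaves on $S/\!/G$ correspond to $G$-equivariant quasi-coherent sheaves on $S$ via $q^*$, with $q_*$ recovering invariants. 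For $p$: the good moduli space hypothesis gives exactness of $p_*$ and $p_*O_\cS=O_S$.

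The main computation is then to identify $\bar p_*$ on an arbitrary quasi-coherent sheaf $\cF$ on $\cS/G$. I would pull $\cF$ back along $\pi$ to a $G$-equivariant sheaf $\pi^*\cF$ on $\cS$; quasi-coherent sheaves on $\cS/G$ are exactly $G$-equivariant quasi-coherent sheaves on $\cS$. Using the commuting square, $\bar p_*\cF$ is computed as the $G$-invariants of $p_*(\pi^*\cF)$ pushed to $S/\!/G$ via $q$. Exactness of $\bar p_*$ follows by combining exactness of $p_*$ (the good moduli space property) with exactness of the $G$-invariants functor (reductivity of $G$ plus affineness of $q$, which guarantees the invariants are taken in a setting with no higher cohomology). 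For the structure sheaf, applying this to $\cF=O_{\cS/G}$ gives $\bar p_*O_{\cS/G}=q_*\bigl((p_*O_\cS)^G\bigr)=q_*\bigl(O_S^{\,G}\bigr)=O_{S/\!/G}$, where the middle equality uses $p_*O_\cS=O_S$ and the last uses the geometric quotient property. Quasi-compactness of $\bar p$ is inherited from that of $p$ and $q$.

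The step I expect to be the main obstacle is justifying the exactness of the composite pushforward, specifically that forming $G$-invariants commutes with $p_*$ and does not introduce higher derived functors. The affineness of $q\colon S\to S/\!/G$ is exactly what rescues this: it ensures that the sheaf-theoretic invariants agree with global ones locally over $S/\!/G$ and that reductivity of $G$ kills the relevant higher cohomology, so the two exact functors compose to an exact functor. I would phrase this carefully, perhaps by working locally on $S/\!/G$ over an affine open $U$ where $q^{-1}(U)$ is affine, reducing the invariants computation to the exactness of $(-)^G$ on $G$-representations over a ring, which holds since $G$ is reductive. The remaining verifications—quasi-compactness and the comparison $\bar p\circ\pi=q\circ p$—are formal and I would treat them briefly.
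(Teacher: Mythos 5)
Your proof is correct and follows essentially the same route as the paper's: the paper decomposes $\bar p$ as $\cS/G\to S/G\to S/\!/G$ and checks that each pushforward is exact and takes structure sheaf to structure sheaf, which is precisely your computation $\bar p_*\cF=(q_*p_*\pi^*\cF)^G$, with exactness coming from the good moduli space property of $p$, affineness of $q$, and reductivity of $G$. One caveat: your side remark that $q^*$ identifies quasi-coherent sheaves on $S/\!/G$ with $G$-equivariant quasi-coherent sheaves on $S$ is false in general (e.g.\ $S=\A1$, $G=\mu_2$ acting by sign), but it is not load-bearing --- your argument only uses the correct facts that $(q_*(-))^G$ is exact and that $(q_*O_S)^G=O_{S/\!/G}$ for a geometric quotient.
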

\begin{proof}
Let us decompose $\bar p$ as
\[
\cS/G\xrightarrow{p'}S/G\xrightarrow{p''}S/\!/G.
\]
We just have to check that $p'_*$ and $p''_*$ are exact and
take the structure sheaves to the structure sheaves. This
easily follows from our assumptions.
\end{proof}

\subsection{Projectivity of $\oN$}
Let us construct an ample bundle on $\oN$. Fix a point $x\in
X$.

\begin{Lemma} \label{TrivialAction}
Let $\alpha$ be an automorphism of $(L,\nabla;\epsilon\in
E)\in\Nb^{\,ss,nn}$.\\
\noindstep\label{TrivAct1} The action of $\alpha$ on $E$ is a
root of unity of degree at most $r$. In particular,
$\alpha$~acts trivially on $E^{\otimes r!}$.\\
\noindstep\label{TrivAct2} If $\alpha$ acts trivially on $E$,
then $\alpha$ acts trivially on
\[
    \detrg(X,L)^{\otimes r}\otimes\det(L_x)^{\otimes(rg-d-r)}.
\]
\noindstep\label{TrivAct3} The automorphism $\alpha$ acts
trivially on
\[
    \left(\detrg(X,L)^{\otimes r}\otimes\det(L_x)^{\otimes
    (rg-d-r)}\right)^{\otimes r!}.
\]
\end{Lemma}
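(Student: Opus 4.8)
The plan is to first unwind what an automorphism is. An automorphism $\alpha$ of $(L,\nabla;\epsilon\in E)\in\Nb^{\,ss,nn}$ consists of an automorphism $\phi\in\Aut(L)$ of the bundle together with the scalar $\zeta\in\gm$ by which $\alpha$ acts on the line $E$, subject to $\zeta\epsilon=\epsilon$ and the intertwining relation $\nabla\circ\phi=\zeta\,(\phi\otimes\id)\circ\nabla$ (here $\phi\otimes\id$ acts on $L\otimes\Omega_X(\divisor)\otimes E$); write $\bar\phi=\phi\otimes\id$. The structural input I would set up at the outset is that the characteristic polynomial of $\phi$ has coefficients in $H^0(X,O_X)=\C$, so $\phi$ has finitely many constant eigenvalues $\lambda\in\C^\times$ and $L$ decomposes globally as $L=\bigoplus_\lambda L_\lambda$ into generalized eigen-subbundles, the $L_\lambda$ being images of the spectral projectors $P_\lambda=Q_\lambda(\phi)$ (polynomials in $\phi$ with constant coefficients). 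From $\nabla\phi^k=\zeta^k\bar\phi^k\nabla$ one gets $\nabla\,Q(\phi)=Q(\zeta\bar\phi)\,\nabla$ for every such polynomial $Q$; applying this to $Q_\lambda$ identifies $\nabla P_\lambda$ with $\bar P_{\lambda/\zeta}\nabla$, whence the key fact $\nabla(L_\lambda)\subset L_{\lambda/\zeta}\otimes\Omega_X(\divisor)\otimes E$: the connection shifts the $\phi$-eigenvalue by the factor $1/\zeta$.

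For part (a) I would argue as follows. If $\epsilon\ne0$, then $\zeta\epsilon=\epsilon$ forces $\zeta=1$, a root of unity of order $1\le r$. Otherwise $\epsilon=0$, and since the object is not nilpotent, $\nabla^r\ne0$; as $\nabla^r$ respects the eigenspace decomposition (sending $L_\lambda$ into $L_{\lambda/\zeta^r}$ tensored with a power of $\Omega_X(\divisor)\otimes E$, with distinct $\lambda$ landing in distinct summands), there is a $\lambda$ for which the $r$-fold composite is nonzero, forcing $L_{\lambda/\zeta^k}\ne0$ for all $k=0,\dots,r$. Since $\rk L=r$, at most $r$ distinct eigenvalues occur, so $\lambda/\zeta^i=\lambda/\zeta^j$ for some $0\le i<j\le r$; hence $\zeta^{j-i}=1$ with $1\le j-i\le r$, so $\zeta$ is a root of unity of order at most $r$ and $\zeta^{r!}=1$.

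For part (b), assume $\zeta=1$. Then $\nabla$ preserves each $L_\lambda$, so every $L_\lambda$ is a nonzero $\nabla$-invariant subbundle; semistability gives $\deg L_\lambda/\rk L_\lambda\le d/r$, and since the $L_\lambda$ sum to $L$ this forces every slope to equal $d/r$. The induced $\phi$-action on $H^i(X,L_\lambda)$ and on the fibre $(L_\lambda)_x$ has the single eigenvalue $\lambda$, so (with the standard convention $\detrg=\det H^0\otimes(\det H^1)^{-1}$) $\alpha$ acts on $\detrg(X,L)$ by $\prod_\lambda\lambda^{\chi(L_\lambda)}$ and on $\det(L_x)$ by $\prod_\lambda\lambda^{\rk L_\lambda}$. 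Using $\chi(L_\lambda)=\deg L_\lambda+\rk L_\lambda(1-g)=\rk L_\lambda\,(d/r+1-g)$, the exponent of each $\lambda$ in $\detrg(X,L)^{\otimes r}\otimes\det(L_x)^{\otimes(rg-d-r)}$ is $r\chi(L_\lambda)+(rg-d-r)\rk L_\lambda=\rk L_\lambda\big[(d+r-rg)+(rg-d-r)\big]=0$, so the action is trivial. The exponents $r$ and $rg-d-r$ are chosen precisely so that this cancellation occurs.

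Finally, part (c) I would deduce from (a) and (b). Let $m$ be the order of $\zeta$, so $m\le r$ by (a). Since $\zeta^m=1$, the intertwining identity gives $\nabla\phi^m=\bar\phi^m\nabla$, i.e. $\phi^m$ commutes with $\nabla$, so $\alpha^m=(\phi^m,1)$ acts trivially on $E$; by (b) it then acts trivially on the line $\ell:=\detrg(X,L)^{\otimes r}\otimes\det(L_x)^{\otimes(rg-d-r)}$. As $\ell$ depends only on $L$, $\alpha$ acts on $\ell$ by a scalar $c$ with $c^m=1$; since $m\mid r!$ we conclude $c^{r!}=1$, so $\alpha$ acts trivially on $\ell^{\otimes r!}$. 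The step I expect to require the most care is the structural claim of the first paragraph: because $\phi$ need not be semisimple one cannot simply diagonalize, and the eigenvalue-shifting property of $\nabla$ must be extracted from the operator identity $\nabla Q(\phi)=Q(\zeta\bar\phi)\nabla$ applied to the spectral projectors rather than to genuine eigenvectors.
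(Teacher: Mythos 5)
Your parts (b) and (c) are essentially the paper's own argument: the same generalized eigenbundle decomposition $L=\bigoplus_\lambda L_\lambda$, the same use of semistability to force every slope to equal $d/r$, the same cancellation $r\chi(L_\lambda)+(rg-d-r)\rk L_\lambda=0$, and the same reduction of (c) to (b) applied to a power of $\alpha$ acting trivially on $E$. Where you genuinely diverge is part (a). The paper avoids the eigenbundle machinery there entirely: for $\epsilon=0$ it notes that the coefficients $c_i(\nabla)\in H^0(X,\Omega_X(\divisor)^{\otimes i})\otimes E^{\otimes i}$ of the characteristic polynomial of $\nabla$ are fixed by $\alpha$ (conjugation by the bundle automorphism does not change them), while $\alpha$ rescales them by $\zeta^i$; non-nilpotency gives some $c_i\ne 0$ with $1\le i\le r$, whence $\zeta^i=1$. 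Your pigeonhole argument along the chain $L_\lambda,L_{\lambda/\zeta},\dots,L_{\lambda/\zeta^r}$ reaches the same conclusion and is valid; what the paper's route buys is that (a) needs no structural analysis of $\phi$ at all, so the eigenvalue-shifting issue only ever arises in (b), where $\zeta=1$ and it is trivial.

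One step in your first paragraph needs repair, and it is exactly the step you flagged. The identification ``$\nabla P_\lambda=\bar P_{\lambda/\zeta}\nabla$ by applying $\nabla Q(\phi)=Q(\zeta\bar\phi)\nabla$ to $Q_\lambda$'' tacitly uses the operator identity $Q_\lambda(\zeta\bar\phi)=\bar P_{\lambda/\zeta}$, which is false in general: $Q_\lambda$ is only constrained modulo powers of $(x-\mu)$ at the eigenvalues $\mu$ of $\phi$, so $Q_\lambda(\zeta\bar\phi)$ is uncontrolled on any $L_\nu$ with $\zeta\nu$ not an eigenvalue of $\phi$. For instance, for $\phi=\diag(1,2)$, $\zeta=2$, $Q_1(x)=2-x$, one has $Q_1(2\bar\phi)=-2$ on $L_2$, whereas $\bar P_{1/2}=0$. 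The shifting property you need is nevertheless true, with a direct proof that bypasses projectors: from $\nabla\phi=\zeta\bar\phi\nabla$ one gets by induction
\[
(\bar\phi-\lambda/\zeta)^N\nabla=\zeta^{-N}\nabla(\phi-\lambda)^N,
\]
so $\nabla$ carries sections killed by $(\phi-\lambda)^N$ to sections killed by $(\bar\phi-\lambda/\zeta)^N$; hence $\nabla(L_\lambda)\subset L_{\lambda/\zeta}\otimes\Omega_X(\divisor)\otimes E$, with the convention $L_{\lambda/\zeta}=0$ (so $\nabla|_{L_\lambda}=0$) when $\lambda/\zeta$ is not an eigenvalue. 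With this substitution, the rest of your argument goes through as written.
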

\begin{proof}
(\ref{TrivAct1}) Note that $\alpha(\epsilon)=\epsilon$, so if
$\epsilon\ne0$, then $\alpha$ acts trivially on $E$. If
$\epsilon=0$, consider the coefficients of the characteristic
polynomial of $\nabla$
\[c_i\in H^0(X,\Omega_X(\divisor)^{\otimes i})\otimes
E^{\otimes i}.\] Since $\nabla$ is not nilpotent, there is $i$
such that $c_i\ne 0$. Now it suffices to note that
$\alpha(c_i)=c_i$.\\
(\ref{TrivAct2}) We can decompose $L=\oplus_{\lambda\in\C}
L^\lambda$, where $\alpha-\lambda$ is nilpotent on $L^\lambda$
(almost all of the summands vanish). Since $\alpha$ acts
trivially on $E$, we have $\nabla(L^\lambda)\subset
L^\lambda\otimes\Omega_X(\divisor)\otimes_\C E$. By
semistability of $L$, $\deg L^\lambda=\frac{d}{r}\rk
L^\lambda$. We can then identify
\[
    \detrg(X,L)\simeq\bigotimes\detrg(X,L^\lambda)
\]
and $\det L_x\simeq\bigotimes\det((L^\lambda)_x)$. Finally,
$\alpha$ acts as $\lambda^{\deg L^\lambda-(g-1)\rk L^\lambda}$
(here $g$ is the genus of~$X$) on $\detrg(X,L^\lambda)$ and as
$\lambda^{\rk L_\lambda}$ on
$\det((L^\lambda)_x)$.\\
(\ref{TrivAct3}) Follows from (\ref{TrivAct2}) applied to
$\alpha^{r!}$.
\end{proof}

Let us denote by $\delta$ (resp.\ $\cE$, resp.\ $\delta_x$) the
line bundle on $\Nb$ whose fiber over $(L,\nabla;\epsilon\in
E)$ equals $\detrg(X,L)$ (resp.\ $E$, resp.\ $\det(L_x)$).

\begin{Corollary}\label{BundlesOnM}
The line bundles
\[
 \cE^{\otimes r!}|_{\Nb^{\,ss,nn}}\quad\text{and}\quad
 \left.\left(\delta^{\otimes
 r}\otimes\delta_x^{\otimes(rg-d-r)}\right)^{\otimes r!}\right|_{\Nb^{\,ss,nn}}
\]
are pullbacks of line bundles on $\oN$.
\end{Corollary}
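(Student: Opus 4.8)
The plan is to deduce the corollary from Lemma~\ref{TrivialAction} by means of a descent criterion for the good moduli space $p\colon\Nb^{\,ss,nn}\to\oN$ produced in Theorem~\ref{GoodModuliSpace}. The criterion I would use is the good-moduli-space analogue of Kempf's descent lemma: a line bundle $\cL$ on $\Nb^{\,ss,nn}$ is the pullback of a line bundle on $\oN$ if and only if, for every point $\xi$ whose orbit is closed in its fiber over $\oN$, the automorphism group $\Aut(\xi)$ acts trivially on the fiber $\cL_\xi$. Concretely, under this hypothesis one checks that the counit $p^{*}p_{*}\cL\to\cL$ is an isomorphism and that $p_{*}\cL$ is a line bundle on $\oN$. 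Since Simpson's construction realizes $\oN$ via geometric invariant theory, this is nothing but the classical descent lemma of Kempf and Dr\'ezet--Narasimhan; alternatively it follows from the local structure of good moduli spaces.

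Granting the criterion, both statements are immediate. The fiber of $\cE^{\otimes r!}$ at $(L,\nabla;\epsilon\in E)$ is $E^{\otimes r!}$, and part~(\ref{TrivAct1}) of Lemma~\ref{TrivialAction} shows that any automorphism acts on $E$ as a root of unity of degree at most $r$, hence trivially on $E^{\otimes r!}$. Similarly, the fiber of $\bigl(\delta^{\otimes r}\otimes\delta_x^{\otimes(rg-d-r)}\bigr)^{\otimes r!}$ is $\bigl(\detrg(X,L)^{\otimes r}\otimes\det(L_x)^{\otimes(rg-d-r)}\bigr)^{\otimes r!}$, on which part~(\ref{TrivAct3}) asserts triviality of every automorphism. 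As Lemma~\ref{TrivialAction} establishes triviality for \emph{all} automorphisms of \emph{all} objects of $\Nb^{\,ss,nn}$, the hypothesis of the descent criterion holds a fortiori at the closed points, and both line bundles descend to $\oN$.

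The only genuine work lies in justifying the descent criterion on the stacky quotient, and this is where I expect the main subtlety: one must verify that triviality of the stabilizer action at closed points really forces descent, and that testing at closed points suffices. I would handle this by passing to an \'etale chart of the form $[\spec A/G_\xi]$ around a closed point of $\oN$, where $G_\xi$ is the linearly reductive automorphism group of the unique closed orbit in the fiber, and applying Kempf's lemma on this chart; the locally descended bundles are then the restrictions of the globally defined $p_{*}\cL$, which is locally free of rank one because $p_{*}O_{\Nb^{\,ss,nn}}=O_{\oN}$ and $p_{*}$ is exact. Substituting Lemma~\ref{TrivialAction} into this framework then completes the proof.
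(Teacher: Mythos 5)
Your proof is correct and takes essentially the same route as the paper: the paper likewise combines Lemma~\ref{TrivialAction} with the descent criterion for good moduli spaces, which it simply cites as \cite[Theorem~10.3]{Alper} rather than re-deriving. The ``genuine work'' you anticipate in justifying the criterion is thus already available in the literature, and your application of Lemma~\ref{TrivialAction} matches the paper's exactly.
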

\begin{proof}
By Lemma~\ref{TrivialAction}, automorphisms of any closed point
of $\Nb^{\,ss,nn}$ act trivially on the fibers of these two
bundles. The statement now follows from
\cite[Theorem~10.3]{Alper}.
\end{proof}

Denote the corresponding line bundles on $\oN$ by $\cE'$ and
$\delta'$.

\begin{Theorem}\label{AmpleBundle}
The line bundle $(\delta')^{-1}\otimes(\cE')^{\otimes k}$ is
ample on $\oN$ for $k\gg 0$.
\end{Theorem}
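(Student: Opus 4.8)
The plan is to show ampleness by exhibiting $(\delta')^{-1}\otimes(\cE')^{\otimes k}$ as a line bundle that is relatively ample for the $\gm$-quotient map, and then to combine this with completeness of $\oN$ via the numerical criterion for ampleness. Recall that $\oN=(N_\al-N_\al^n)/\gm$ is a geometric quotient obtained by a GIT-type construction, and by the last Proposition the projection $q:N_\al-N_\al^n\to\oN$ is affine. The key point is that $\oN$ is a complete scheme, so by a standard fact (a line bundle on a complete scheme is ample iff its restriction to every irreducible complete curve has positive degree, or equivalently iff it separates points and tangents after a high power) it suffices to produce enough sections.

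First I would recall the GIT picture underlying Simpson's construction. The good moduli space $N_\epsilon$ is realized as a GIT quotient of a Quot-type parameter scheme, and the natural polarization is given (up to the relevant power controlling the automorphism action, namely the $\otimes r!$ that appears in Corollary~\ref{BundlesOnM}) by the determinant-of-cohomology line bundle $\delta$ twisted by $\delta_x$. Concretely, the combination $\delta^{\otimes r}\otimes\delta_x^{\otimes(rg-d-r)}$ is exactly the bundle whose descent $\delta'$ to $\oN$ carries the GIT linearization; this is why that particular twist was isolated in Lemma~\ref{TrivialAction}. So $\delta'$ (or a suitable power) is ample on each Hitchin/$\epsilon$-fiber and, more to the point, it is relatively ample for the affine map $q$.

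Next I would analyze the effect of the $\gm$-action and the bundle $\cE$, whose fiber is $E$ and on which $t\in\gm$ acts by $t\epsilon\mapsto\epsilon$, i.e.\ with weight one. The role of $(\cE')^{\otimes k}$ is to correct the $\gm$-weight of $\delta'$ so that the resulting bundle descends to an \emph{ample} bundle on the quotient rather than merely a relatively ample one. Since $q$ is affine and $\oN$ is complete, the strategy is: $\delta'$ restricted to any curve in a fiber of the family $N_\al\to\A1$ already has the right positivity from GIT, and taking $k\gg0$ makes the $\cE'$-twist dominate on the remaining (horizontal, $\gm$-direction) curve classes. Formally I would invoke the fact that for an affine morphism to a complete base, a line bundle that is relatively ample becomes ample after twisting by a sufficiently positive pullback from the base; here the base direction is governed precisely by $\cE'$ through the $\gm$-linearization. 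The inverse on $\delta'$ (the $(\delta')^{-1}$ factor) reflects that the natural GIT-ample bundle is $\delta'$ but the descent to the $\gm$-quotient flips its sign relative to the weight normalization, which is then overwhelmed by $(\cE')^{\otimes k}$.

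The hard part will be making the relative-ampleness-plus-twist argument precise in the stacky/good-moduli-space setting rather than for an honest GIT quotient: one must check that the descent of the linearization through the two-step quotient (first the good moduli space $p:\Nb^{\,ss,nn}\to\oN$, then the interplay with the $\gm$-action used to build $\oN$) genuinely yields ampleness and not merely nefness or semi-ampleness. I expect to handle this by reducing to Simpson's explicit projective embedding: in~\cite{Simpson2} the relevant determinant bundle gives a very ample bundle on the GIT quotient for large twisting parameter, and I would trace through that construction, verifying that after passing to the $\gm$-quotient the bundle $(\delta')^{-1}\otimes(\cE')^{\otimes k}$ is the pullback of Simpson's very ample bundle (up to a power). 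Completeness of $\oN$, already established, then guarantees that ampleness is equivalent to the numerical positivity we have verified, completing the proof.
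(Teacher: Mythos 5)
There is a genuine gap, and it lies at the heart of your argument. Your plan rests on treating $(\delta')^{-1}$ as relatively ample for some morphism from $\oN$ to a complete base, with $(\cE')^{\otimes k}$ playing the role of ``a sufficiently positive pullback from the base.'' But no such morphism exists: the Hitchin-type map to the weighted projective space $(B-\{0\})/\gm$ is defined only on the boundary $N_H=\oN-N$ (it comes from the Hitchin fibration on the locus $\epsilon=0$), while on the interior $N$ the bundle $\cE'$ is \emph{trivial} --- it is trivialized by the section $\epsilon'$, whose zero locus is exactly the boundary. So $\cE'$ is not positive ``in the base direction'' on all of $\oN$; it is an effective divisor class supported at infinity, and the relative-ampleness-plus-twist principle has nothing to apply to. The variant you state, relative ampleness for the affine quotient map $q:N_\al-N_\al^n\to\oN$, is also unusable: relative ampleness for an affine morphism is automatic for every line bundle, and twisting by pullbacks from $\oN$ is circular, since an ample bundle on $\oN$ is precisely what is being constructed. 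Two of your auxiliary appeals also fail: positivity of degree on all complete curves does not imply ampleness (strictly nef is weaker than ample), and \cite{Simpson2} does not contain a projective embedding to reduce to --- Simpson's Theorem~11.2 only produces $\oN$ as a complete scheme, and projectivity of this compactification is exactly the new content of the present theorem (the paper stresses that in higher dimension it remains unknown).

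What is actually needed, and what the paper's proof supplies, is a patching argument between the open part and the boundary. The paper fixes $\cL_{l,k}=(\delta')^{\otimes-l}\otimes\cE'^{\otimes k}$, proves very ampleness of $\cL_{l,k}$ on the \emph{thickened} boundary $N'_H$ (the scheme-theoretic zero locus of $\epsilon'$) by combining relative ampleness of $(\delta')^{-1}$ over $(B-\{0\})/\gm$ with very ampleness of $\cE'$ pulled back from that weighted projective space, plus Serre-type vanishing and an induction on the infinitesimal neighborhoods (Steps 1, 2, $2'$, $1'$); it then shows the restriction $H^0(\oN,\cL)\to H^0(N'_H,\cL)$ is surjective for $k\gg0$ by a stabilization-of-$H^1$ argument using $\cL(-N'_H)=\cL_{l,k-1}$ and properness (Step 3); and finally it glues sections coming from $N$ (via multiplication by $\epsilon'$) with lifted sections from $N'_H$ to produce an embedding of all of $\oN$, invoking \cite[Proposition~II.7.3]{Hartshorne} (Step 4). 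None of these steps can be bypassed by the general principles you cite; without them your proposal does not close.
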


\begin{proof}
Recall that by construction, $(\delta')^{-1}$ is ample on $N$,
cf.\ Remark before~\cite[Theorem 4.10]{Simpson1}.
Let $N_H$ be $\oN-N$ with reduced scheme
structure. By construction, $N_H$ is the reduction of the
quotient scheme $(N_0-N_\al^n)/\gm$. The Hitchin map therefore
induces a morphism from $N_H$ to the quotient scheme
$(B-\{0\})/\gm$, which is a weighted projective space.
Recall~\cite[Definition~4.6.1]{EGAII} that a sheaf $\cF$ on $Y$
is called relatively ample for a map $f:Y\to Y'$ if for some
cover $Y'=\cup Y'_\alpha$ with affine $Y'_\alpha$, the sheaves
$\cF|_{f^{-1}(Y'_\alpha)}$ are ample.

We make the following observations.\stepzero

\indent\noindstep $(\delta')^{-1}$ is relatively ample for the
Hitchin map $N_0\to B$ (since $(\delta')^{-1}$ is ample on
$N_0$);

\indent\noindstep $(\delta')^{-1}$ is relatively ample for the
equivariant Hitchin map $N_H\to(B-\{0\})/\gm$. Indeed, the
relative ampleness can be proved fiberwise,
cf.~\cite[Theorem~4.7.1]{EGAIII} (We are thankful to Brian
Conrad for the reference). On the other hand, a fiber of the
equivariant Hitchin map is the categorical quotient of the
corresponding fiber of the Hitchin map by the finite stabilizer
of the corresponding point in $(B-\{0\})/\gm$. It remains to
use the following general fact: \emph{the descent of an
equivariant ample line bundle to the quotient by a finite group
is ample}. This follows from~\cite[Proposition~1.15,
Theorem~1.10]{MumfordGIT};

\indent\noindstep $\cE'|_{N_H}$ is naturally a pullback of a
sheaf on $(B-\{0\})/\gm$, which we also denote by $\cE'$;

\indent\noindstep $\cE'$ is very ample on $(B-\{0\})/\gm$.

Recall from Remark~\ref{SecondFromFirst} that
$\epsilon^{\otimes r!}$ yields a section $\epsilon'\in
H^0(\oN,\cE')$, whose set-theoretic zero locus is $N_H$. Denote
by $N_H'$ the scheme-theoretic zero locus of $\epsilon'$. It is
a non-reduced `thickening' of $N_H$.

\emph{Step 1.} For integers $l,k$, consider the line bundle
\[
 \cL=\cL_{l,k}:=(\delta')^{\otimes-l}\otimes\cE'^{\otimes
k}.
\]
There exists $l_0$ such that for all $l>l_0$, there is
$k_0=k_0(l)$ such that for all $k>k_0$ the line bundle
$\cL|_{N_H}$ is very ample on $N_H$. This follows
from~\cite[Proposition~4.6.11]{EGAII}
and~\cite[Proposition~4.4.10(ii)]{EGAII}.

\emph{Step 2.} For any coherent sheaf $\cF$ on $N_H$, there
exists $l_0$ such that for all $l>l_0$, there is $k_0=k_0(l)$
such that for all $k>k_0$ and all $i>0$,
\[
  H^i(N_H,\cF\otimes\cL)=0
\]
and $\cF\otimes\cL$ is generated by global sections. This
follows from the fact that the derived functor of global
sections on $N_H$ is the composition of the derived direct
image to $(B-\{0\})/\gm$ and the derived functor of global
sections on $(B-\{0\})/\gm$.

\emph{Step $2'$.} Same statement as in Step 2 is true with
$N_H$ changed to $N'_H$. For the proof, consider a filtration
of $\cF$ with factors supported scheme-theoretically on $N_H$
and use the long exact sequence of cohomology.

\emph{Step $1'$.} Same statement as in Step 1 is true with
$N_H$ changed to $N'_H$. Indeed, set $\cF_i:=O_{N'_H}(-iN_H)$.
By Step $2'$ we can assume that $\cF_1\otimes\cL$ is generated
by global sections and $H^0(N'_H,\cL)$ surjects onto
$H^0(N_H,\cL)$. By Step~1 we can assume that $\cL|_{N_H}$ is
very ample. Let us show that $\cL$ is very ample on $N_H'$.

We are going to use~\cite[Proposition~II.7.2]{Hartshorne}. Take
$s\in H^0(N'_H,\cL)$ and let~$N'_s$ be the open subset of
$N'_H$ defined by $s\ne0$. Then $N_s:=N'_s\cap N_H$ is affine.
Therefore,~$N'_s$ is also affine. It suffices to show that the
set $\{s'/s|\,s'\in H^0(N'_H,\cL)\}$ generates the ring
$A_i:=H^0(N'_s,O_{N'_H}/\cF_i)$ for all $i$. We proceed by
induction. For $i=1$ this follows from very ampleness
of~$\cL|_{N_H}$. Take $t\in A_i$; using our statement with
$i=1$, we can assume that $t\in\cF_1/\cF_i$. By assumption it
can be written as $\sum\lambda_j(s_j/s)$, where $s_j\in
H^0(N'_H,\cF_1\otimes\cL)$, $\lambda_j\in A_{i-1}$. It remains to use the
inductive hypothesis.

For $i\gg0$ we have $\cF_i=0$ and we are done.

\emph{Step $3$.} From now on, fix $l$ satisfying the conditions
of Steps $1'$ and $2'$, and also such that
$\cL|_N=(\delta')^{\otimes-l}|_N$ is very ample. Then the
restriction map $H^0(\oN,\cL)\to H^0(N'_H,\cL)$ is surjective
for $k\gg0$.

Indeed, this map fits into the exact sequence
\[
  H^0(\oN,\cL)\to H^0(N'_H,\cL)\to
  H^1(\oN,\cL(-N'_H))\to H^1(\oN,\cL)\to H^1(N'_H,\cL)
\]
According to Step $2'$, for $k\gg0$, the rightmost term
vanishes, and so the map $H^1(\oN,\cL(-N'_H))\to H^1(\oN,\cL)$
is surjective. On the other hand, $\cL(-N'_H)$ equals
$\cL_{l,k-1}$, so $\dim H^1(\oN,\cL)$ decreases as a function
of $k$ for $k\gg0$. This dimension is finite by properness of
$\oN$, and therefore stabilizes. In other words, for $k\gg 0$,
the map $H^1(\oN,\cL(-N'_H))\to H^1(\oN,\cL)$ is an
isomorphism.

\emph{Step $4$.} $\cL=\cL_{l,k}$ is very ample on $\oN$ for
$k\gg0$ (the choice of $l$ is as in Step $3$).

Recall that $\mathbb{P}(V)$ denotes the projective space of
hyperplanes in a vector space $V$. Choose a finite-dimensional
vector space $V\subset H^0(N,\cL)$ that defines an embedding
$N\hookrightarrow\mathbb{P}(V)$, and for every $k\gg0$ a
subspace $W_k\subset H^0(N'_H,\cL)$ that defines an embedding
$N'_H\hookrightarrow\mathbb{P}(W_k)$. For $k\gg 0$, the space
$V$ is contained in $H^0(\oN,\cL\otimes(\cE')^{-1})\subset
H^0(N,\cL)$ (because $H^0(N,\cL)$ is the limit of spaces
$H^0(\oN,\cL)$ as $k\to\infty$), and $W_k$ can be lifted to a
finite-dimensional subspace of $H^0(\oN,\cL)$ (which we still
denote by $W_k$) by Step $3$. It follows
from~\cite[Proposition~II.7.3]{Hartshorne} that
$V\epsilon'+W_k+W_{k-1}\epsilon'$ defines an embedding
$\oN\hookrightarrow{\mathbb{P}}(V\epsilon'+W_k+W_{k-1}\epsilon')$
for $k\gg0$. Note that this
proposition is stated for projective schemes only but it is
valid for any proper scheme. Indeed, the projectivity is needed
only for applying \cite[Corollary~5.20]{Hartshorne}, but the
corollary is well-known to be true with the weaker assumption.
\end{proof}

\section{Properties of $\cM$ and of its
compactification}\label{MODULISPACES}

In this section we prove Theorem~\ref{StackProp}. We also prove
\begin{Proposition}\label{CohomologicalDimension}
Let $\cF$ be any quasi-coherent sheaf on $\cM$. Then
$H^i(\cM,\cF)=0$ for $i\ge2$.
\end{Proposition}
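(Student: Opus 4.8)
The plan is to reduce the computation from the gerbe $\cM$ to its coarse space $M$, and then to bound the cohomological dimension of the surface $M$. By Theorem~\ref{StackProp}, $\cM$ is a neutral $\gm$-gerbe over the smooth quasi-projective surface $M$; write $\pi:\cM\to M$ for the projection. Since $\gm$ is linearly reductive, $\pi_*$ is exact on quasi-coherent sheaves and $R^j\pi_*=0$ for $j>0$; moreover $\pi_*\cF$ is the weight-zero component $\cF^{(0)}$ of $\cF$ with respect to the gerbe band. The Leray spectral sequence therefore degenerates and yields $H^i(\cM,\cF)=H^i(M,\cF^{(0)})$ for all $i$. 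Hence it suffices to prove that $H^i(M,\cG)=0$ for every quasi-coherent sheaf $\cG$ on $M$ and every $i\ge 2$.

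Since $M$ is Noetherian, cohomology commutes with filtered colimits, and every quasi-coherent sheaf is the filtered colimit of its coherent subsheaves; so we may assume $\cG$ is coherent. The claim is then that the cohomological dimension of $M$ is at most $1$. As $M$ is a surface, Grothendieck vanishing only gives $H^i=0$ for $i\ge 3$, so the real content is the vanishing of $H^2$. I would deduce this from the fact that $M$ is not complete: for a connected variety $X$ of dimension $n$ over a field one has $H^n(X,\cG)=0$ for all coherent $\cG$ unless $X$ is proper (Lichtenbaum's theorem). The surface $M$ is connected (being the coarse space of the connected stack $\cM$) and is not proper, since the moduli space of connections with fixed irregular formal type is not complete — this is precisely why the modular compactification of \S\ref{COMPACTIFICATION} has nonempty boundary. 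Thus $\mathrm{cd}(M)\le\dim M-1=1$, which is exactly the assertion $H^2(M,\cG)=0$.

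An alternative, more hands-on route to $\mathrm{cd}(M)\le 1$ is to exhibit an affine morphism $f:M\to C$ onto a curve $C$ (for instance the apparent-singularities map), or to cover $M$ by two affine open subsets; in either case $H^i(M,\cG)=0$ for $i\ge 2$ follows, respectively from $Rf_*=f_*$ together with $\dim C=1$, or from the length-one \v{C}ech complex of a two-element affine cover. The main obstacle is precisely this last point: producing the non-completeness input (or the explicit affine fibration) for the specific surface $M$, rather than the homological bookkeeping, which is routine. Note that one should not expect $H^1$ to vanish, since $M$ is not affine in general; this is consistent with the bound being stated only for $i\ge 2$.
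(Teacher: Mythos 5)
Your homological skeleton is sound and genuinely different from the paper's argument: the reduction $H^i(\cM,\cF)=H^i(M,\cF^{(0)})$ along the neutral $\gm$-gerbe, the passage to coherent sheaves, Grothendieck vanishing for $i\ge3$, and the Lichtenbaum--Kleiman criterion for $H^2$ are all legitimate steps (and since $M$ is smooth and connected, hence irreducible, the "no proper top-dimensional component" hypothesis of that criterion is not an issue). The genuine gap — which you flag yourself as "the main obstacle" — is the non-properness of $M$, and your justification for it is circular. You assert that $M$ is not complete "because the modular compactification of \S\ref{COMPACTIFICATION} has nonempty boundary," but \S\ref{COMPACTIFICATION} only \emph{constructs} the compactification; nonemptiness of the boundary $\Mh$ is established only later, via the spectral-curve description in \S\ref{MH} (Corollary~\ref{MhY}). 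Moreover, even granting $\Mh\ne\emptyset$, the implication "nonempty boundary $\Rightarrow M$ not proper" needs an argument: if $M$ were proper it would be open and closed in $\oM$, so one must also know $\oM$ is connected, which in turn uses the density of $\cM$ in $\Mb$ (purity of dimension, Proposition~\ref{LocCI}, which itself forward-references Corollary~\ref{MhY}). As written, the one input your proof actually needs is unproved.

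The gap is easily closed with material you already cite: the proof of Theorem~\ref{StackProp} exhibits $M$ as an affine ($\A1$-)bundle over the curve $P$, so $M$ contains closed copies of $\A1$ and cannot be proper. But once you invoke that fibration, you may as well use it directly — and that is exactly the paper's proof, which is your "alternative, more hands-on route": $\rho:\cM\to\Bun(d)$ is an affine morphism (Proposition~\ref{Affine}(\ref{Aff2})), $\Bun(d)\simeq P\times B(\gm)$ (Proposition~\ref{BunP}), so cohomology on $\cM$ is computed as cohomology of a quasi-coherent sheaf on the $1$-dimensional Noetherian space $P$ (after taking the exact $\gm$-weight decomposition), and Grothendieck vanishing finishes the argument. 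In short: your main route is correct modulo an input that requires the same geometric fact (the affine fibration over $P$) that already yields the shorter, self-contained proof.
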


We construct a compactification $\Mb=\cM\sqcup\Mh$ (see
Proposition~\ref{Cartier}). We prove that the stacks $\Mb$ form
a flat family as $\divisor$ and the local invariants of
connections vary (Proposition~\ref{PrFlat}). We give an
explicit description of parabolic bundles underlying bundles
with connections (Proposition~\ref{BunP}). We begin with
general statements but starting from Lemma~\ref{Irreducible} we
assume that $X=\p1$ and $\deg\divisor=4$. This assumption
continues through the end of the paper.

\subsection{Connections compatible with parabolic structure}
We start by describing parabolic bundles that possess
compatible connections.

Let $X$ be a smooth projective curve of genus $g$.  Let $L$ be
a vector bundle of degree $d$ on $X$. Denote by $b(L)\in
H^1(X,\END(L)\otimes\Omega_X)$ the class of the Atiyah sequence
\[
    0\to L\otimes\Omega_X\to B(L)\to L\to 0.
\]
We have the Serre duality pairing
$\langle\cdot,b(L)\rangle:\End(L)\to\C$. Recall from
\cite[Proposition~18]{Atiyah} that
\begin{equation}\label{atiyah}
\begin{split}
\langle A,b(L)\rangle &=0\text{ if $A$ is nilpotent},\\
\langle\id_L,b(L)\rangle &=-d.
\end{split}
\end{equation}

\begin{Remark}
\noindstep To match \cite{Atiyah}, the Serre duality pairing
should include the factor of $2\pi\sqrt{-1}$.

\noindstep For every $A\in \End(L)$ the Serre duality pairing
$\langle A,b(L)\rangle$ is given by
\[
    \langle A,b(L)\rangle =-\tr(A|R\Gamma(X,L))+\tr(A)\chi(O_X).
\]
Here $\tr(A | R\Gamma(X,L))$ is the alternating sum of the
traces of maps on $H^i(X,L)$ induced by $A$, and
$\chi(O_X)=1-g$. This follows from~\eqref{atiyah}. Indeed, we
can decompose $L$ into a direct sum such that the semisimple
part of $A$ is scalar on each summand.
\end{Remark}

Fix $\epsilon\in\C$, distinct points $x_1,\dots,x_k\in X$, and
principal parts
\[
    A_i\in\END(L)(\infty\cdot x_i)/\END(L)
\]
for the vector bundle $\END(L)$ at $x_i$ for all $i=1,\dots,k$.
To these data, we associate the sheaf of $\epsilon$-connections
$\cC=\cC(L,\epsilon,\{A_i\}_{i=1}^k)$: its sections over an
open subset $U\subset X$ are $\epsilon$-connections
\[
    \nabla:L|_U\to L|_U\otimes\Omega_U\left(\sum_{i=1}^k\infty\cdot x_i\right)
\]
such that $\nabla-A_i$ is regular at $x_i$ for all $x_i\in U$.
Since $\cC$ is a torsor over $\END(L)\otimes\Omega_X$, its
isomorphism class is given by an element
\[
    c=c\,(L,\epsilon,\{A_i\}_{i=1}^k)\in H^1(X,\END(L)\otimes\Omega_X).
\]

\begin{Lemma}\label{lm:PolarParts} For every $A\in\End(L)$,
\[
    \langle A,c\rangle=\epsilon\langle A,b(L)\rangle+
    \sum_{i=1}^k\tr(\res(A\cdot A_i)).
\]
\end{Lemma}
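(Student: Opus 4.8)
The plan is to write the class $c$ as a sum of two explicitly computable contributions and then to pair each with $A$ separately, using bilinearity of the pairing in its second argument. Recall first that $\langle\cdot,\cdot\rangle$ is simply Serre duality: since $\END(L)$ is self-dual via the trace form, $H^1(X,\END(L)\otimes\Omega_X)$ is dual to $H^0(X,\END(L))=\End(L)$, the pairing being $(A,\gamma)\mapsto\mathrm{tr}_X(\tr(A\cdot\gamma))$, where $\tr(A\cdot-)\colon\END(L)\otimes\Omega_X\to\Omega_X$ is induced by the trace form and $\mathrm{tr}_X\colon H^1(X,\Omega_X)\iso\C$ is the Serre trace.

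First I would decompose the torsor $\cC$. Consider the short exact sequence of sheaves
\[
0\to\END(L)\otimes\Omega_X\to\END(L)\otimes\Omega_X\Bigl(\sum_{i=1}^k\infty\cdot x_i\Bigr)\xrightarrow{\mathrm{pp}}\bigoplus_{i=1}^k\cQ_i\to0,
\]
where $\cQ_i$ is the skyscraper of principal parts of $\END(L)\otimes\Omega_X$ at $x_i$ and $\mathrm{pp}$ takes principal parts. On a cover $\{U_\alpha\}$ of $X$ I choose a regular $\epsilon$-connection $\nabla^0_\alpha$ on $L|_{U_\alpha}$ together with a meromorphic $\Psi_\alpha\in(\END(L)\otimes\Omega_X(\sum_i\infty\cdot x_i))(U_\alpha)$ having principal part $A_i$ at each $x_i\in U_\alpha$; both exist locally. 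Then $\nabla_\alpha:=\nabla^0_\alpha+\Psi_\alpha$ is a section of $\cC$ over $U_\alpha$, so the \v{C}ech cocycle $\{\nabla_\alpha-\nabla_\beta\}$ represents $c$. Splitting it as $(\nabla^0_\alpha-\nabla^0_\beta)+(\Psi_\alpha-\Psi_\beta)$ I read off $c=\epsilon\,b(L)+\partial(\{A_i\})$: the first summand is the class of the torsor of regular $\epsilon$-connections, which equals $\epsilon\,b(L)$ because $\nabla\mapsto\epsilon\nabla$ sends ordinary connections to $\epsilon$-connections equivariantly for multiplication by $\epsilon$ on $\END(L)\otimes\Omega_X$, hence scales the class by $\epsilon$; the second summand is precisely the image of $\{A_i\}$ under the connecting map $\partial$ of the sequence above, since $\Psi_\alpha-\Psi_\beta$ is the regular difference of two local lifts of the same principal parts.

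By bilinearity it then remains to compute the two pairings. The first gives $\epsilon\langle A,b(L)\rangle$ directly. For the second, the fixed global section $A$ induces a morphism from the displayed sequence to
\[
0\to\Omega_X\to\Omega_X\Bigl(\sum_{i=1}^k\infty\cdot x_i\Bigr)\to\bigoplus_{i=1}^k\bigl(\Omega_X(\infty\cdot x_i)/\Omega_X\bigr)\to0
\]
via $C\mapsto\tr(A\cdot C)$ (legitimate because $A$ is holomorphic, so poles and principal parts are preserved). By naturality of the connecting homomorphism, $\tr(A\cdot\partial(\{A_i\}))=\partial(\{\tr(A\cdot A_i)\})$ in $H^1(X,\Omega_X)$, and the composite of $\partial$ with the Serre trace $\mathrm{tr}_X$ is the sum of residues. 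Hence $\langle A,\partial(\{A_i\})\rangle=\sum_i\res_{x_i}\tr(A\cdot A_i)=\sum_i\tr(\res(A\cdot A_i))$, since trace and residue act on independent tensor factors. Adding the two contributions yields the claimed formula.

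The main obstacle is the identification $c=\epsilon\,b(L)+\partial(\{A_i\})$, that is, verifying that the regular-$\epsilon$-connection part of the torsor contributes exactly $\epsilon\,b(L)$ and that the polar data is governed by the connecting map; everything after that is naturality. A secondary point requiring care is the normalization of the Serre trace: one must use the convention under which $\mathrm{tr}_X\circ\partial$ is genuinely the sum of residues (equivalently, the residue theorem), which is the same normalization forced by $\langle\id_L,b(L)\rangle=-d$. As a consistency check, taking $A=\id_L$ and $\epsilon=1$ gives $\langle\id_L,c\rangle=-d+\sum_i\tr\res A_i$, whose vanishing is exactly the Fuchs relation for the existence of a connection with the prescribed polar parts.
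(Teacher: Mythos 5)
Your argument is correct and is essentially the paper's own proof made explicit: the paper notes that $c$ depends linearly on $(\epsilon,\{A_i\})$ and checks the two extreme cases ($A_i=0$, where $c=\epsilon b(L)$, and $\epsilon=0$, where the pairing is the residue pairing by the definition of Serre duality), which is precisely your cocycle decomposition $c=\epsilon\,b(L)+\partial(\{A_i\})$ followed by pairing each summand with $A$. One caveat: your closing consistency check states the Fuchs relation with the wrong sign --- for an actual connection one has $\sum_i\tr(\res A_i)=-d$, as in condition~(\ref{AlphaII}) of~\S\ref{MODST} --- so the check in fact exposes a sign-convention tension between the lemma and \eqref{atiyah} (under the normalization $\mathrm{tr}_X\circ\partial=\sum\res$ one gets $\langle\id_L,b(L)\rangle=+d$ for the connection-torsor class); this tension is present in the paper's own conventions and does not affect the structure of your argument.
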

\begin{proof} The torsor $\cC$ depends linearly on the
collection $(L,\epsilon,\{A_i\}_{i=1}^k)$. We can therefore
assume that either all $A_i=0$ (and then $c=\epsilon b(L)$) or
$\epsilon=0$ (this case follows from the definition of Serre
pairing).
\end{proof}

It is easy to adapt Lemma~\ref{lm:PolarParts} to the settings
of parabolic bundles. For simplicity, we only state it for
bundles of rank two. Let us fix a divisor $\divisor=\sum
n_ix_i\ge 0$ on~$X$. Suppose that $L$ is a rank two vector
bundle on $X$, and $\eta$ is a level $\divisor$ parabolic
structure on $L$, that is, a line subbundle $\eta\subset
L|_\divisor$ (cf.\ Definition~\ref{ParB}).

Denote by $\END(L,\eta)$ the locally free sheaf of
endomorphisms of $L$, preserving $\eta$; let
$\End(L,\eta):=H^0(\p1,\END(L,\eta))$ be the corresponding ring
of endomorphisms.

\begin{Corollary}\label{ConnExist}
Fix $\epsilon\in\C$ and principal parts
$\alpha^+,\alpha^-\in\Omega_X(\divisor)/\Omega_X$. The
following conditions are equivalent:

\stepzero\noindstep\label{cond:parabolic} There exists an
$\epsilon$-connection $\nabla:L\to L\otimes\Omega_X(\divisor)$
whose `polar part'
$L|_\divisor\to(L\otimes\Omega_X(\divisor))|_\divisor$ equals
to $\alpha^+$ on $\eta$ and induces $\alpha^-$ on
$(L|_\divisor)/\eta$.

\noindstep For any endomorphism $A\in\End(L,\eta)$, we have
\[
    \res(A_+\alpha_+)+\res(A_-\alpha_-)+\epsilon
    \langle A, b(L)\rangle=0.
\]
Here $A_+, A_-\in \C[\divisor]$ are the \lpar{}scalar\rpar{}
operators induced by $A$ on $\eta$ and on $(L|_\divisor)/\eta$
respectively, and the residue functional
$\res:\Omega_X(\divisor)/\Omega_X\to\C$ is given by
\[\res\omega:=\sum_{x\in\divisor}\res_x\omega.\]
\end{Corollary}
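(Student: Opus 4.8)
The plan is to realize condition~(\ref{cond:parabolic}) as the vanishing of the obstruction class of a suitable torsor, and then to identify that obstruction via Serre duality with the linear functional appearing in condition~(b), reducing everything to Lemma~\ref{lm:PolarParts}.

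First I would introduce the sheaf $\cC_\eta$ on $X$ whose sections over an open $U$ are the $\epsilon$-connections $\nabla\colon L|_U\to L|_U\otimes\Omega_X(\divisor)$ whose polar part preserves $\eta$ and acts as $\alpha^+$ on $\eta$ and as $\alpha^-$ on $(L|_\divisor)/\eta$. Such connections exist locally (one writes down a local diagonal model), so $\cC_\eta$ is a torsor over the sheaf $\cG$ of $O_X$-linear differences of two such connections. Condition~(\ref{cond:parabolic}) is then exactly the existence of a global section of $\cC_\eta$, i.e.\ the vanishing of its class $c_\eta\in H^1(X,\cG)$.

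Next comes the local bookkeeping. Working in a local frame for $L$ near $x_i$ adapted to $\eta$, a direct computation shows that $\cG=\cE'\otimes\Omega_X$, where $\cE'$ is the subsheaf of $\END(L)(\divisor)$ given near $\divisor$ by matrices whose only entry allowed to have a pole is the one mapping $L/\eta$ into $\eta$. Under the trace pairing this sheaf is dual to $\END(L,\eta)$, so $\cG\cong\END(L,\eta)^\vee\otimes\Omega_X$. Serre duality then gives $H^1(X,\cG)\cong\End(L,\eta)^\vee$, and $c_\eta=0$ if and only if $\langle A,c_\eta\rangle=0$ for every $A\in\End(L,\eta)$. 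This reduces the Corollary to computing this pairing.

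Finally I would compute $\langle A,c_\eta\rangle$ by comparison with the non-parabolic Lemma~\ref{lm:PolarParts}. Choose the diagonal principal parts $\tilde A_i=\diag(\alpha_i^+,\alpha_i^-)$ in the adapted frame; every connection with exact polar part $\tilde A_i$ lies in $\cC_\eta$, so the inclusion of torsors $\cC(L,\epsilon,\{\tilde A_i\})\hookrightarrow\cC_\eta$ exhibits $c_\eta$ as the image of $c=c(L,\epsilon,\{\tilde A_i\})$ under the natural map $H^1(X,\END(L)\otimes\Omega_X)\to H^1(X,\cG)$. Since $\END(L)\hookrightarrow\END(L,\eta)^\vee$ is Serre dual to the inclusion $\END(L,\eta)\hookrightarrow\END(L)$, functoriality of the duality pairing gives $\langle A,c_\eta\rangle=\langle A,c\rangle$ for $A\in\End(L,\eta)\subset\End(L)$. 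Now Lemma~\ref{lm:PolarParts} evaluates the right-hand side, and because $A$ preserves $\eta$ while $\tilde A_i$ is diagonal, $\tr(A\tilde A_i)=A_+\alpha_i^++A_-\alpha_i^-$, whence $\sum_i\tr\res(A\tilde A_i)=\res(A_+\alpha^+)+\res(A_-\alpha^-)$. This yields $\langle A,c_\eta\rangle=\epsilon\langle A,b(L)\rangle+\res(A_+\alpha^+)+\res(A_-\alpha^-)$, and the equivalence of~(\ref{cond:parabolic}) and~(b) follows.

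The step I expect to be the main obstacle is the local identification $\cG\cong\END(L,\eta)^\vee\otimes\Omega_X$, i.e.\ checking that the sheaf controlling the ambiguity of the connection is Serre dual to $\END(L,\eta)$ \emph{precisely}, so that the obstruction pairs against exactly the endomorphisms appearing in condition~(b); this must be done together with verifying that the torsor inclusion $\cC(L,\epsilon,\{\tilde A_i\})\hookrightarrow\cC_\eta$ is compatible with the Serre duality pairing.
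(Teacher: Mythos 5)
Your proposal is correct and follows essentially the same route as the paper: the sheaf $\cG$ you construct is precisely the paper's $\HIGGS(L,\eta)$, the duality $\cG\cong\END(L,\eta)^\vee\otimes\Omega_X$ via the trace pairing is the paper's key identification, and realizing $\cC_\eta$ as induced from the torsor $\cC(L,\epsilon,\{\tilde A_i\})$ with $\eta$-compatible polar parts and then invoking Lemma~\ref{lm:PolarParts} is exactly the paper's argument. You merely spell out the steps the paper leaves implicit (the Serre-duality functoriality and the computation $\tr(A\tilde A_i)=A_+\alpha_i^++A_-\alpha_i^-$), which is fine.
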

\begin{proof}
Denote by $\HIGGS(L,\eta)$ the sheaf of Higgs fields $B:L\to
L\otimes\Omega_X(\divisor)$ whose polar part
$L|_\divisor\to(L\otimes\Omega_X(\divisor))|_\divisor$ induces
$0$ on both $\eta$ and $(L|_\divisor)/\eta$. In other words, in
any $\eta$-compatible local trivialization of $L|_{n_ix_i}$ we
have
\[
B=\begin{pmatrix}
0 & *\\
0 & 0
\end{pmatrix}
+\text{non-singular terms}.
\]
Note that
$\HIGGS(L,\eta)\simeq\END(L,\eta)^\vee\otimes\Omega_X$. Indeed,
the pairing is given by the trace of the product, and one
checks in local coordinates that it is perfect.

The sheaf of connections satisfying \eqref{cond:parabolic}
forms a torsor over $\HIGGS(L,\eta)$; clearly, this torsor is
induced from the $\END(L)\otimes\Omega_X$-torsor
$\cC(L,\epsilon,\{A_i\}_{i=1}^n)$ for a choice of polar parts
$A_i$ compatible with $\eta$ and $\alpha^\pm$. Now the claim
follows from Lemma~\ref{lm:PolarParts}.
\end{proof}

\begin{Remark}
If $A_i$ has a pole of first order for all $i$, then
Corollary~\ref{ConnExist} becomes Theorem~7.2 in
\cite{Crawley-Boevey}, which is a special case of Mihai's
results \cite{Mihai1,Mihai2}.
\end{Remark}

\subsection{Local invariants of connections,
revisited}\label{DEFMB} Let $\cN=\cN(X,2,d,\divisor)$ be as
in Section~\ref{COMPACTIFICATION}. Let $(L,\nabla;\epsilon\in
E)\in\Nb$ be an $\epsilon$-connection, where $L$ has rank 2.
Let $D$ be the formal disc centered at $x_i$. Trivializing
$L|_D$, we can write
\[
\nabla|_D=\epsilon\dif+a,\qquad a\in\gl(2)\otimes
\Omega_D(n_ix_i)\otimes_\C E.
\]
It is easy to see that $\tr a$ and $\det a$ are well defined
(that is independent of the trivialization) as sections of
$E\otimes_\C(\Omega_X(n_ix_i)/\Omega_X)$ and
$E^{\otimes2}\otimes_\C(\Omega^{\otimes2}_X(2n_ix_i)/
\Omega^{\otimes2}_X(n_ix_i))$ respectively. Performing this
operation at every $x_i$, we get well-defined sections of
$E\otimes_\C(\Omega_X(\divisor)/\Omega_X)$ and
$E^{\otimes2}\otimes_\C
(\Omega^{\otimes2}_X(2\divisor)/\Omega^{\otimes2}_X(\divisor))$,
which we denote $[\tr\nabla]$ and $[\det\nabla]$ respectively.

Clearly, in the case of a non-resonant connection
$(L,\nabla;1\in\C)$ we get
\[
[\tr\nabla]=\nu_1:=(\alpha_i^++\alpha_i^-),\qquad
[\det\nabla]=\nu_2:=(\alpha_i^+\alpha_i^-),
\]
where $(\alpha_i^\pm)$ is the formal type of the connection
(cf. Section~\ref{MODST}). Thus in this case the data
$\bigl([\tr\nabla],[\det\nabla]\bigr)$ is equivalent to the
formal type.

Fix $\nu_1\in\Omega_X(\divisor)/\Omega_X$ and
$\nu_2\in\Omega^{\otimes2}_X(2\divisor)/
\Omega^{\otimes2}_X(\divisor)$ and denote by $\Mb$ the closed
substack of $\Nb^{\,ss,nn}$ parameterizing
$\epsilon$-connections such that
\begin{equation}\label{hitchincond}
[\tr\nabla]=\epsilon\otimes\nu_1,\qquad
[\det\nabla]=\epsilon^{\otimes2}\otimes\nu_2.
\end{equation}

Assume now that $X=\p1$, $\deg\divisor=4$, $d$ is odd. Recall
that in Section~\ref{MODST} we defined a moduli stack $\cM$.

\begin{Lemma}\label{Irreducible}
Every connection $(L,\nabla)\in\cM$ is irreducible.
\end{Lemma}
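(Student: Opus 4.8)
The plan is to argue by contradiction, and the whole point of the lemma is to show that the nonintegrality condition~\eqref{AlphaIII} is precisely what forbids invariant subbundles. Suppose $(L,\nabla)\in\cM$ is reducible. Since $L$ has rank two, reducibility means there is a nonzero proper $\nabla$-invariant subbundle, which is necessarily a line subbundle $L_0\subset L$. The connection then restricts to a connection $\nabla_0\colon L_0\to L_0\otimes\Omega_{\p1}(\divisor)$ with poles bounded by $\divisor$, and the goal is to compute the residues of $\nabla_0$ and reach a contradiction.

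The first step is a local analysis at each $x_i$. Because $(L,\nabla)$ is non-resonant (guaranteed by~\eqref{AlphaI} and~\eqref{AlphaIV}), its restriction to the formal disc $D_i$ splits as a direct sum of two rank-one connections with polar parts $\alpha_i^+$ and $\alpha_i^-$, exactly as recorded in Remark~\ref{ForDisc}. Condition~\eqref{AlphaI} ensures $\alpha_i^+-\alpha_i^-$ has a pole of order exactly $n_i\ge1$, so the two formal summands are non-isomorphic as connections; consequently the only $\nabla$-invariant line subbundles of $L|_{D_i}$ are these two eigenlines. Hence $L_0|_{D_i}$ must coincide with one of them, and the induced connection $\nabla_0$ has polar part $\alpha_i^{s_i}$ at $x_i$ for some sign $s_i\in\{+,-\}$. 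In particular $\res_{x_i}\nabla_0=\res\alpha_i^{s_i}$, since the residue depends only on the polar part.

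The second step is the global residue relation for a connection on a line bundle over $\p1$: summing residues gives $\sum_i\res_{x_i}\nabla_0=-\deg L_0$, which is an integer. Combining with the local computation, we obtain $\sum_i\res\alpha_i^{s_i}=-\deg L_0\in\Z$ for the particular choice of signs $(s_i)$ coming from $L_0$. This directly contradicts condition~\eqref{AlphaIII}, which asserts that $\sum_i\res\alpha_i^{s_i}\notin\Z$ for \emph{every} choice of signs. Therefore no $\nabla$-invariant line subbundle exists and $(L,\nabla)$ is irreducible. (Note that the sign convention in the residue formula is immaterial: condition~\eqref{AlphaIII} rules out integrality regardless of an overall sign.)

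The step I expect to require the most care is the local claim that $L_0|_{D_i}$ must equal one of the two formal eigenlines, rather than some diagonally embedded line bundle. This is where non-resonance is essential: a $\nabla$-invariant line in $\ell_i^+\oplus\ell_i^-$ whose projections to both factors were nonzero would force an isomorphism $\ell_i^+\simeq\ell_i^-$ of formal connections, which is impossible once $\alpha_i^+-\alpha_i^-$ has a pole. Everything else is formal bookkeeping with residues.
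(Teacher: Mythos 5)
Your proof is correct and is essentially the paper's own argument: the paper likewise takes a $\nabla$-invariant line subbundle $L'$, checks that $\res_{x_i}(\nabla|_{L'})=\res\alpha_i^\pm$ by the formal splitting, and derives a contradiction with condition~(\ref{AlphaIII}) via the integrality of the sum of residues of a connection on a line bundle over $\p1$. One small precision: for $n_i=1$ the mere existence of a pole of $\alpha_i^+-\alpha_i^-$ does not by itself forbid a meromorphic isomorphism of the two formal eigenlines (an order-one pole with integer residue is of the form $\dif f/f$); it is condition~(\ref{AlphaIV}) — which you do invoke as part of non-resonance — that closes this case.
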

\begin{proof}
Assume for a contradiction that $L'\subset L$ is a
$\nabla$-invariant line subbundle. One checks that
$\res_{x_i}(\nabla|_{L'})=\res\alpha_i^\pm$, where
$\alpha_i^\pm$ are defined in Section~\ref{MODST}. This contradicts
condition~(\ref{AlphaIII}) of Section~\ref{MODST}
(cf.~\cite[Proposition~1]{ArinkinLysenko}).
\end{proof}

In particular every $(L,\nabla)\in\cM$ is semistable, and we
see that the open substack of~$\Mb$ given by $\epsilon\ne0$ is
identified with $\cM$. Let $\Mh$ be the closed substack
of~$\Mb$ defined by $\epsilon=0$. Then $\cM=\Mb-\Mh$.
By~Theorem~\ref{GoodModuliSpace} and~\cite[Lemma~4.14]{Alper},
$\Mb$ has a good moduli space $\oM$. It follows from
Theorem~\ref{AmpleBundle} that $\oM$ is projective.

Note that $\cM$ is a closed substack of $\cN^{ss}$, so, using
again~\cite[Lemma~4.14]{Alper}, we see that $M:=N\cap\oM$ is
the good moduli space of $\cM$. Clearly, $M$ is
open in $\oM$.

\begin{Proposition}\label{Cartier}
$\Mh\subset\Mb$ is a Cartier divisor.
\end{Proposition}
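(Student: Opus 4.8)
The plan is to realize $\Mh$ as the zero scheme of the tautological section $\epsilon$ of the line bundle $\cE$ on $\Nb$ (the bundle whose fiber over $(L,\nabla;\epsilon\in E)$ is $E$), restricted to $\Mb$, and then to prove that this section is a non-zero-divisor. Since $\Mh$ is by definition the closed substack of $\Mb$ cut out by $\epsilon=0$, we have $\Mh=V(\epsilon)$ scheme-theoretically, so the whole content of the proposition is that $\epsilon$ is locally a non-zero-divisor in $O_\Mb$; once this is known, $V(\epsilon)$ is an effective Cartier divisor.

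To obtain this I would show that $\Mb$ is smooth of pure dimension one in a neighborhood of $\Mh$. Granting smoothness, every local ring of $\Mb$ along $\Mh$ is a regular one-dimensional local ring, hence a domain, so $\epsilon$ can fail to be a non-zero-divisor only by vanishing identically on some connected component; such a component would then lie entirely inside $\Mh$. Since each $(L,\nabla)\in\cM$ is a point of $\Mb$ with $\epsilon\ne0$, it suffices to know that $\cM$ is dense, which rules out any component contained in $\Mh$ and finishes the argument. For density I would use the $\gm$-action $t\cdot(L,\nabla;\epsilon\in\C)=(L,t\nabla;t\epsilon\in\C)$ on the preimage $\cM_\al\subset\cN_\al$ of $\Mb$, under which $\epsilon$ is homogeneous of weight one: the open locus $\{\epsilon\ne0\}$ is a single $\gm$-family over the connected stack $\cM$, hence irreducible of dimension $\dim\cM+1$, and its closure maps onto the closure of $\cM$ in $\Mb=\cM_\al/\gm$. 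Comparing with the dimension-one count from smoothness shows $\Mb$ has no further components, so $\cM$ is dense.

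The smoothness of $\Mb$ along the boundary is the technical heart. I would prove it by the deformation theory of $\epsilon$-connections with prescribed leading Hitchin data $\bigl([\tr\nabla],[\det\nabla]\bigr)=(\epsilon\otimes\nu_1,\epsilon^{\otimes2}\otimes\nu_2)$, where $\epsilon\in E$ is itself allowed to deform. These deformations are controlled by a two-term hypercohomology complex assembled, exactly as in Corollary~\ref{ConnExist}, from the parabolic endomorphism sheaf $\END(L,\eta)$ and its Serre dual $\HIGGS(L,\eta)\simeq\END(L,\eta)^\vee\otimes\Omega_{\p1}$. The obstruction space is the top hypercohomology $\HH^2$, which by Serre duality is identified with the space of $\nabla$-horizontal traceless parabolic endomorphisms of $(L,\nabla)$. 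By the irreducibility established in Lemma~\ref{Irreducible} this space vanishes, giving $\HH^2=0$ and hence smoothness, while the expected dimension of the complex agrees with that of $\cM$, namely one. The point to verify carefully is that this computation does not degenerate at $\epsilon=0$: the residues of the boundary Higgs bundles are still governed by the $\alpha_i^\pm$, so the proof of Lemma~\ref{Irreducible} — and with it the non-resonance conditions~(\ref{AlphaIII})–(\ref{AlphaIV}) — applies verbatim, and the obstruction vanishes uniformly over all of $\Mb$.

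The main obstacle is precisely this boundary obstruction-vanishing: one must check that passing from genuine connections to the Higgs bundles living over $\epsilon=0$ creates no $\nabla$-horizontal parabolic endomorphisms beyond the scalars. This is exactly where the residue conditions~(\ref{AlphaIII}) and~(\ref{AlphaIV}) are essential, since without them a $\nabla$-invariant line subbundle (and hence a nontrivial horizontal endomorphism) could appear at the boundary, breaking both the smoothness and the dimension count.
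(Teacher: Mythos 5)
Your reduction of the statement to showing that $\epsilon$ is locally a non-zero-divisor on $\Mb$, hence that no component of $\Mb$ lies set-theoretically inside $\Mh$, is exactly the paper's strategy. The gap is in the step that carries all the weight: smoothness of $\Mb$ along $\Mh$. You justify the vanishing of the obstruction space by asserting that Lemma~\ref{Irreducible} ``applies verbatim'' at $\epsilon=0$. It does not. The proof of that lemma is the residue theorem for \emph{connections}: a $\nabla$-invariant line subbundle $L'$ satisfies $\deg L'=-\sum_i\res(\nabla|_{L'})$, contradicting condition~(\ref{AlphaIII}). For a Higgs field ($\epsilon=0$) there is no relation between residues and degrees, so the argument collapses; worse, the conclusion itself fails on the boundary: by Remark~\ref{Types}, when $\divisor=2(x_1)+2(x_2)$ or $\divisor=4(x_1)$ the spectral curve $Y$ is reducible, and the corresponding boundary Higgs bundles do have $\nabla$-invariant line subbundles (coming from the restriction of the spectral sheaf to a component); they are stable, by Proposition~\ref{Hitchin}, but not irreducible. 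A second problem: the deformation complex you invoke, built from $\END(L,\eta)$ as in Corollary~\ref{ConnExist}, is not available at $\epsilon=0$, since the equations defining $\Mh$ force $\tr\nabla=0$ and $\det\nabla\in\Omega^{\otimes2}_{\p1}(\divisor)$, so the leading coefficient of $\nabla$ at each $x_i$ is nilpotent rather than regular semisimple and the distinguished parabolic structure existing on $\cM$ degenerates at the boundary. Note finally that $\Mh\simeq(Y\times B(\gm))/\mu_2$ is a singular stack wherever $Y$ is singular, so smoothness of $\Mb$ at such points is at best delicate; the paper never claims it.

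What the paper proves instead is weaker and suffices: Proposition~\ref{LocCI} shows that $\Mb$ is a \emph{locally complete intersection}, by embedding it into a visibly smooth stack $\widetilde\cN(\divisor')$ of $\epsilon$-connections with a larger polar divisor $\divisor'$ and checking that $\Mb$ is cut out by exactly $\dim\widetilde\cN(\divisor')-\dim\Mb$ equations; the input $\dim\Mb=1$ comes from $\dim\Mh=0$, which follows from the explicit identification of $\Mh$ in Corollary~\ref{MhY} (via Proposition~\ref{PicY}), not from any deformation theory at the boundary. Since local complete intersections have pure dimension, no component of $\Mb$ (each of dimension $1$) can be contained in the $0$-dimensional $\Mh$, and $\epsilon$ is a non-zero-divisor. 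To salvage your approach, replace ``smooth of pure dimension one'' by ``lci of pure dimension one'': that is what an equation count can deliver uniformly over the boundary, and it is all your final step actually needs.
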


This will be proved below after we prove some properties of
$\cM$.

\subsection{An affine bundle structure on $\cM$}\label{AFFSTR}
Denote by $\Bun(d)=\Bun(\p1,2,d,\divisor)$ the moduli stack of
level-$\divisor$ parabolic bundles $(L,\eta)$ such that $L$ has
degree $d$ and $\End(L,\eta)=\C$. By semicontinuity $\Bun(d)$
is an open substack in $\overline\Bun(d)$.

Consider $(L,\nabla)\in\cM$. The formal
classification~(\ref{fnf}) of connections shows that there is a
unique level-$\divisor$ parabolic structure $\eta$ compatible
with $\nabla$ in the following sense: for any $i$ and every
section $s$ of $L$ in a neighborhood of $x_i$ such that
$s|_{n_ix_i}\in H^0((n_ix_i),\eta)$ we have that $\nabla
s-\alpha_i^+s$ is regular at $x_i$.

\begin{Proposition}\label{Affine}
\noindstep\label{Aff0} If $(L,\nabla)\in\cM$, then $L\simeq
O_\p1(m)\oplus
O_\p1(n)$ with $m-n=1$.\\
\noindstep\label{Aff1} Assume that $(L,\eta)$ is the parabolic
bundle corresponding to $(L,\nabla)\in\cM$, then
$\End(L,\eta)=\C$.\\
\noindstep\label{Aff2} The resulting map $\rho:\cM\to\Bun(d)$
is an affine bundle of rank 1.
\end{Proposition}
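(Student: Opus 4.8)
The plan is to treat the three parts in order, since each feeds into the next. For part~\eqref{Aff0}, I would argue that any $(L,\nabla)\in\cM$ is irreducible (Lemma~\ref{Irreducible}), hence in particular semistable, and has odd degree $d$. On $\p1$ every rank two bundle splits as $O_\p1(m)\oplus O_\p1(n)$ with $m\ge n$; semistability forces $m-n\le 1$, and since $d=m+n$ is odd we cannot have $m=n$. Thus $m-n=1$. (One has to check that a bundle with $m-n\ge 2$ admits a $\nabla$-invariant destabilizing line subbundle $O_\p1(m)$, contradicting irreducibility; this is the standard argument that the maximal subbundle is preserved because $\Hom(O(m),O(n)\otimes\Omega_\p1(\divisor))$ vanishes for $m-n$ large enough relative to $\deg\divisor=4$.)

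For part~\eqref{Aff1}, I would verify directly that the parabolic bundle $(L,\eta)$ associated to $(L,\nabla)$ has only scalar endomorphisms. The key point is that $\eta\subset L|_\divisor$ is determined by $\nabla$ via the compatibility condition in~\S\ref{AFFSTR} (the $\alpha_i^+$-eigenline of the leading term of $\nabla$ at each $x_i$). An element $A\in\End(L,\eta)$ must commute with $\nabla$ up to the parabolic constraint; non-resonance~\eqref{AlphaI} guarantees the leading terms of $\nabla$ at each $x_i$ have distinct eigenvalues, so any endomorphism respecting the eigenline decomposition is forced to be scalar on the associated graded, and irreducibility then pins it down to a global scalar. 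This is exactly what places $(L,\eta)$ in the open substack $\Bun(d)\subset\overline\Bun(d)$, so the map $\rho:\cM\to\Bun(d)$ is well defined.

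Part~\eqref{Aff2} is the substantive claim, and I expect it to be the main obstacle. The strategy is to identify the fiber of $\rho$ over a fixed $(L,\eta)$ with the set of $\epsilon$-connections (with $\epsilon=1$) having the prescribed formal types and compatible parabolic structure, and to show this set is a torsor over a one-dimensional vector space. Concretely, the difference of two such connections is a Higgs field lying in $H^0(\p1,\HIGGS(L,\eta))$ in the notation of Corollary~\ref{ConnExist}, i.e.\ a section of $\END(L,\eta)^\vee\otimes\Omega_\p1$. I would compute this space using Serre duality: $\dim H^0(\p1,\HIGGS(L,\eta))=\dim H^1(\p1,\END(L,\eta))=\dim\End(L,\eta)=1$ by part~\eqref{Aff1} together with a Riemann--Roch count for $\END(L,\eta)$ on $\p1$ (the parabolic structure at the degree-four divisor $\divisor$ makes the Euler characteristic work out so that $h^0-h^1=0$, whence $h^1=1$). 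The existence of at least one connection in each fiber is exactly the content of Corollary~\ref{ConnExist}: the obstruction vanishes because the residue identity there is guaranteed by condition~\eqref{AlphaII}. Finally, to upgrade this fiberwise statement to an \emph{affine bundle} structure I would work relative to $\Bun(d)$, forming the sheaf of Higgs fields $\HIGGS$ as a rank one locally free $O_{\Bun(d)}$-module (its rank is constant by the dimension count, and local freeness follows from base change since $H^0$ has constant rank and $H^1$ of the relevant complex is controlled), and exhibiting $\cM$ as a torsor over the corresponding line bundle; the torsor is a nontrivial affine $\A1$-bundle precisely because a canonical zero section (a global compatible connection defined uniformly) need not exist. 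The delicate part is verifying the cohomology-and-base-change hypotheses so that the fiberwise computation globalizes to a genuine affine bundle of rank one, rather than merely a fibration with one-dimensional affine fibers.
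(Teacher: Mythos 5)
Your parts (a) and (c) track the paper's own proof quite closely: for (a) the real mechanism is exactly the one in your parenthetical (the composite $O_\p1(m)\hookrightarrow L\xrightarrow{\nabla}L\otimes\Omega_\p1(\divisor)\twoheadrightarrow O_\p1(n+2)$ is $O_\p1$-linear, hence zero, making $O_\p1(m)$ a $\nabla$-invariant subbundle and contradicting Lemma~\ref{Irreducible}), and for (c) the paper does precisely your torsor argument, with the count $\chi(\END(L,\eta))=\deg\END(L,\eta)+\rk\END(L,\eta)=-4+4=0$, so $h^1=h^0=1$ by part (b). Two corrections to (a): semistability of the pair $(L,\nabla)$ places no constraint on the splitting type of $L$ itself, so your first sentence is vacuous; and the vanishing $\Hom(O_\p1(m),O_\p1(n+2))=0$ needs $m-n\ge3$, not $m-n\ge2$ --- the case $m-n=2$ must be excluded by parity of $d$, which fortunately you also invoke.

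The genuine gap is part (b), which you dismiss as routine while expecting (c) to be the obstacle; it is the other way around, and the paper's remark following the proposition makes this explicit: part (b) is exactly where $\deg\divisor=4$ enters, and the statement fails for divisors of higher degree, so no soft argument can prove it. Your proposed mechanism is invalid on two counts. First, an element $A\in\End(L,\eta)$ is an endomorphism of the \emph{parabolic bundle} only; there is no reason whatsoever for it to commute with $\nabla$, so neither non-resonance of $\nabla$ nor irreducibility of $(L,\nabla)$ applies to $A$ directly. Second, preserving $\eta$ only makes $A$ triangular along $\divisor$, acting by two a priori unrelated scalars $A_+$ on $\eta$ and $A_-$ on $(L|_\divisor)/\eta$; "scalar on the associated graded" simply does not follow. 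The paper's actual proof is a two-case analysis. If $A$ has distinct eigenvalues, the eigendecomposition $L=L_1\oplus L_2$ is compatible with $\eta$, and feeding the idempotent of this decomposition into the residue identity of Corollary~\ref{ConnExist} (which holds because the compatible connection $\nabla$ exists), together with~\eqref{atiyah}, yields $\sum_i\res\alpha_i^\pm\in\Z$, contradicting condition~(\ref{AlphaIII}) of~\S\ref{MODST}. If $A$ is non-scalar with equal eigenvalues, one writes it in the splitting $L=O_\p1(m)\oplus O_\p1(n)$ as $c\cdot\id+f$ with $f$ a nonzero section of $O_\p1(1)$, sets $m_i$ to be the order of agreement of $\eta$ with $O_\p1(m)$ at $x_i$, and checks that $A$ preserving $\eta$ forces $\sum_i(n_i-2m_i)\le1$, while the nonvanishing map $\overline\nabla:O_\p1(m)\to O_\p1(n+2)$ (nonzero by irreducibility) vanishes to order $m_i$ at each $x_i$, forcing $\sum_i m_i\le1$; together these give $\deg\divisor\le3$, a contradiction. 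Your sketch contains no counterpart of either step --- neither the residue argument for semisimple $A$ nor the order-of-vanishing count for nilpotent $A$ --- so part (b), and with it the well-definedness of $\rho$ and the input $\End(L,\eta)=\C$ needed in your own part (c), remains unproved.
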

\begin{proof}
(\ref{Aff0}) Let us write $L=O_\p1(m)\oplus O_\p1(n)$ with
$m>n$. Assume for a contradiction that $m-n\ge3$. Consider the
map
\[
    \overline\nabla:O_\p1(m)\hookrightarrow L\xrightarrow{\nabla}
    L\otimes\Omega_\p1(\divisor)
    \twoheadrightarrow O_\p1(n)\otimes\Omega_\p1(\divisor)\simeq O_\p1(n+2).
\]
It is easy to see that this map is $O_\p1$-linear, thus it is
zero. It follows that $O_\p1(m)$ is a $\nabla$-invariant
subbundle in $L$, which contradicts Lemma~\ref{Irreducible}.

(\ref{Aff1}) Assume first that there is $A\in\End(L,\eta)$ such
that $A$ has different eigenvalues. Then in exactly the same
way as in~\cite[Proposition~3]{ArinkinLysenko} we come to
contradiction with condition (\ref{AlphaIII}) of Section~\ref{MODST}.

Assume now that $A\in\End(L,\eta)$ is not scalar and it has
equal eigenvalues. Then the matrix of $A$ with respect to the
decomposition $L=O_\p1(m)\oplus O_\p1(n)$ is
\[
A=
\begin{pmatrix}
c & f\\
0 & c
\end{pmatrix},
\]
where $c$ is a constant, $f$ is a section of $O_\p1(m-n)$, that
is a polynomial of degree at most 1.

For every $i$ choose the maximal $m_i$ such that
$\eta|_{m_ix_i}=O_\p1(m)|_{m_ix_i}$. It is easy to see that $A$
preserves $\eta$ if and only if $f$ vanishes at $x_i$ at least
to order $n_i-2m_i$ for all $i$. Hence the existence of
non-scalar endomorphism implies
\begin{equation}\label{ineq}
\sum_i n_i-2m_i\le1.
\end{equation}

Consider again $\overline\nabla:O_\p1(m)\to O_\p1(n+2)$. It
follows from the compatibility that~$\overline\nabla$ has zero
of order at least $m_i$ at $x_i$. Again, $\overline\nabla\ne0$,
so $\sum m_i\le1$. However this inequality together
with~(\ref{ineq}) would imply $\deg\divisor\le3$.

(\ref{Aff2}) Consider $(L,\eta)\in\Bun(d)$. Combining
part~\eqref{Aff1}, the second formula in~\eqref{atiyah},
condition~\eqref{AlphaII} of Section~\ref{MODST}, and
Corollary~\ref{ConnExist}, we see that the fiber of $\rho$ over
$(L,\eta)$ is non-empty. Thus it is a torsor over
$H^0(\p1,\HIGGS(L,\eta))$. Using the identification
$\HIGGS(L,\eta)=\END(L,\eta)^\vee\otimes\Omega_X$, we obtain
\begin{multline}\label{dimcalc}
\dim H^0(\p1,\HIGGS(L,\eta))=\dim H^1(\p1,\END(L,\eta))=1-\chi(\END(L,\eta))=\\
1-\deg(\END(L,\eta))-\rk(\END(L,\eta))=1.
\end{multline}
We see that $H^0(\p1,\HIGGS(L,\eta))$ form a vector bundle over
$\Bun(d)$, and $\cM$ is a torsor over this bundle.
\end{proof}

\begin{Remark}
Note that, contrary to the case of regular singularities, this
proposition is not valid for $n>4$ because the proof of
part~(\ref{Aff1}) is specific for this case.
\end{Remark}

\subsection{Parabolic bundles}\label{PARBUN}
Let $(L,\eta)$ be a rank 2 parabolic bundle. We define the
\emph{lower modification} of $(L,\eta)$ at $x_i$ as the vector
bundle $L_i$ whose sheaf of sections is
\[
\{s\in L:\,s|_{n_ix_i}\in\eta|_{n_ix_i}\}.
\]

Clearly, $\deg L_i=\deg L-n_i$. We shall also use the lower
modification $L_\eta$ of $L$ at all $x_i$: its sheaf of
sections is
\[
\{s\in L:\,s|_\divisor\in\eta\}.
\]
Note that $\eta$ induces parabolic structures on $L_i$ and
$L_\eta$. For example in the case of~$L_\eta$ we get an exact
sequence
\[
0\to\eta\otimes O_\p1(-\divisor)|_\divisor\to
L(-\divisor)|_\divisor\to L_\eta|_\divisor,
\]
thus the image $\eta'$ of $L(-\divisor)|_\divisor$ in
$L_\eta|_\divisor$ is a parabolic structure on $L_\eta$. Upon
choosing local coordinates $z_i$ at each $x_i$, $\eta'$ can be
identified with $(L|_\divisor)/\eta$. It is easy to see that
$\End(L,\eta)=\End(L_\eta,\eta')$ and a similar statement is
true for $L_i$ with induced parabolic structure.

Recall that $P$ is a projective line doubled at the points of
the support of $\divisor$.

\begin{Proposition}\label{BunP}
$\Bun(d)\simeq P\times B(\gm)$, where $B(\gm):=pt/\gm$ is the
classifying stack of $\gm$.
\end{Proposition}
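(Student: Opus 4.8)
The plan is to realise $\Bun(d)$ as a global quotient stack for one fixed bundle, to compute the underlying algebraic space of that quotient and match it with $P$, and finally to trivialise the leftover $\gm$-gerbe.

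First I would fix the underlying bundle. For $(L,\eta)\in\Bun(d)$ the condition $\End(L,\eta)=\C$ says that $A\mapsto\bigl(\eta\hookrightarrow L|_\divisor\xrightarrow{A}L|_\divisor\twoheadrightarrow(L|_\divisor)/\eta\bigr)$ has one-dimensional kernel as a map from $\End(L)$ to $\Hom_{\C[\divisor]}(\eta,(L|_\divisor)/\eta)$; the target has dimension $\deg\divisor=4$, so writing $L=O_\p1(a)\oplus O_\p1(b)$ with $a\ge b$ the image, of dimension $\dim\End(L)-1=a-b+2$, must embed in a $4$-dimensional space, whence $a-b\le2$. As $d=a+b$ is odd this forces $a-b=1$, in agreement with Proposition~\ref{Affine}. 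Since all objects have the same underlying $L\cong O_\p1(m)\oplus O_\p1(n)$, we get $\Bun(d)=[\mathcal P^\circ/\Aut(L)]$, where $\mathcal P^\circ$ is the (open) scheme of line subbundles $\eta\subset L|_\divisor$ with $\End(L,\eta)=\C$, and $\Aut(L)=\{\left(\begin{smallmatrix}c_1&f\\0&c_2\end{smallmatrix}\right):c_1,c_2\in\gm,\ f\in H^0(\p1,O_\p1(1))\}$ acts.

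The central scalars $\gm\subset\Aut(L)$ act trivially on $\mathcal P^\circ$ and are exactly the automorphisms of every parabolic bundle, so $\Bun(d)$ is a $\gm$-gerbe over $Y:=[\mathcal P^\circ/\Gamma]$ with $\Gamma:=\Aut(L)/\gm\cong\gm\ltimes H^0(\p1,O_\p1(1))$; moreover $\Gamma$ acts \emph{freely} on $\mathcal P^\circ$ (the stabiliser of $\eta$ is the unit group of $\End(L,\eta)=\C$), so $Y$ is an algebraic space. To identify $Y$ with $P$ I would stratify by the interaction of $\eta$ with the canonical subbundle $L_+:=O_\p1(m)$ (with quotient $L_-:=L/L_+\cong O_\p1(n)$). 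On the open stratum where $\eta$ is everywhere transverse to $L_+|_\divisor$, $\eta$ is the graph of a homomorphism $\psi\in U:=H^0(\divisor,\HOM(L_-,L_+)|_\divisor)=H^0(\divisor,O_\p1(1)|_\divisor)$ (dimension $4$); here $\End(L,\eta)=\C$ reads $\psi\notin W:=\im\bigl(H^0(\p1,O_\p1(1))\to U\bigr)$ (dimension $2$), and $\Gamma$ acts by $\psi\mapsto t\psi+g|_\divisor$. Quotienting first by the translations $W$ and then by the scaling $\gm$ yields $\mathbb P(U/W)$. Using $0\to O_\p1(-3)\to O_\p1(1)\to O_\p1(1)|_\divisor\to0$ and Serre duality one gets $U/W\cong H^1(\p1,O_\p1(-3))\cong H^0(\p1,O_\p1(1))^\vee$, so this stratum is canonically a copy $\p1_-\cong\p1$ of the projective line, the class of the evaluation functional at $x_i$ being the point $x_i$.

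It remains to glue on the non-transverse strata. The endomorphism count of the second paragraph shows $\eta$ can align with $L_+$ on at most a length-one subscheme of $\divisor$, so the only extra strata are those where $\eta$ aligns with $L_+$ at a single $x_i$; after the same quotient each contributes a single point $Q_i$. Taking a family $\eta_s$ that is transverse for $s\neq0$ and aligns with $L_+$ at $x_i$ as $s\to0$, its image in $\mathbb P(U/W)$ tends to $x_i\in\p1_-$, while the member $\eta_0$ represents $Q_i$; since $x_i$ is itself represented by a genuine transverse parabolic bundle, $Q_i$ and $x_i$ are distinct but non-separated points of $Y$. This is exactly the doubled point $x_i^+$ lying over $x_i\in\p1_-$, so $Y\cong P$. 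Finally, the line bundle on $\Bun(d)$ with fibre $\eta|_{x_1}$ carries weight $1$ under the central $\gm$, which trivialises the gerbe and gives $\Bun(d)\cong P\times B(\gm)$. The main obstacle is precisely this last gluing: checking that the non-transverse strata attach to $\p1_-$ as the non-separated doubled points of $P$ and nothing worse — and carrying the bookkeeping through for non-reduced $\divisor$, where ``alignment with $L_+$'' must be read in terms of jets and pole orders — is the delicate step, since it is what produces the non-separatedness of $P$.
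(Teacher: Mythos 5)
Your opening steps are correct, and they follow a genuinely different route from the paper: you present $\Bun(d)$ directly as $[\mathcal{P}^\circ/\Aut(L)]$ for a fixed $L\simeq O_\p1(m)\oplus O_\p1(n)$ and stratify by the incidence of $\eta$ with the canonical subbundle $L_+$, whereas the paper rigidifies by the two embeddings $O_\p1\hookrightarrow L$ and $O_\p1(-2)\hookrightarrow L_\eta$, identifies the resulting $\gm\times\gm$-torsor over $\Bun(-1)$ with the explicit locus $\{(p,p',q):pp'=f(q)\}$ minus the cross-centers, and reads $P$ off as the quotient. The genuine gap is exactly the step you flag at the end: identifying $Y=\mathcal{P}^\circ/\Gamma$ with $P$ \emph{as an algebraic space}. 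What your argument establishes is (i) the point set of $Y$ is $\p1\sqcup\{Q_i\}$, and (ii) each pair $(Q_i,x_i)$ admits a common degeneration, hence is non-separated. This does not pin down the structure of $Y$ at $Q_i$: a priori $Y$ is only an algebraic space (a free quotient by $\Gamma\cong\gm\ltimes H^0(\p1,O_\p1(1))$), and one-dimensional algebraic spaces need not be schemes (e.g.\ $\A1$ with $x\sim -x$ glued away from the origin); even granting that $Y$ is a scheme near $Q_i$, non-separatedness alone does not show that $Q_i$ has an open neighborhood isomorphic to a neighborhood of $x_i$ in $\p1$ glued along the punctured neighborhood, which is what $Y\simeq P$ asserts. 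To close the gap you must produce a chart at $Q_i$ --- an open subspace containing $Q_i$ and isomorphic to an open subscheme of $\p1$ compatibly with the identification on $Y\setminus\{Q_j\}$ --- equivalently a slice: a locally closed curve $S\subset\mathcal{P}^\circ$ through the stratum $m_i=1$ such that $\Gamma\times S\to\mathcal{P}^\circ$ is an open immersion. This is precisely what the paper's coordinates provide (the loci $p\ne0$ and $p'\ne 0$ become the two $\p1$-charts of $P$ after dividing by the anti-diagonal $\gm$); in your setup the missing chart can be extracted from the lower modification $(L,\eta)\mapsto(L_\eta,\eta')$, since at the points $Q_i$ the induced structure $\eta'$ is transverse to the canonical subbundle of $L_\eta$ --- but this is real work, and it is in effect the paper's reason for using both $L$ and $L_\eta$.

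Two smaller repairs. First, the claim that $\End(L,\eta)=\C$ forces $\eta$ to agree with $L_+$ on a subscheme of length at most one does not follow from your second paragraph's count as stated; it does follow once you add the observation that every endomorphism of $L$ preserves $L_+$ (since $\Hom(O_\p1(m),O_\p1(n))=0$ for $m>n$), so the image of $\End(L)\to\Hom_{\C[\divisor]}(\eta,(L|_\divisor)/\eta)$ consists of homomorphisms vanishing on the agreement subscheme $\sum m_ix_i$, whence $\dim\End(L,\eta)\ge\sum m_i$. Second, the limit computation for non-reduced $\divisor$ indeed requires care: for a family degenerating at $x_i$ one finds that the limit in $\mathbb{P}(U/W)$ is the class of $z_i^{n_i-1}$ times the idempotent of $\C[z_i]/z_i^{n_i}$ (not the idempotent itself), and only after pairing via residues does one check that this class is proportional to the evaluation functional at $x_i$; for $n_i\ge 2$ the idempotent itself represents a \emph{different} point of $\mathbb{P}(U/W)$, so the assertion "tends to $x_i$" is true but not for the naive reason.
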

\begin{proof}
The idea of the proof was suggested to the first author by
Drinfeld.

\emph{Step 1. We can assume $d=-1$.} Let us pick a point
$\infty\in\p1-\divisor$. Then the map $(L,\eta)\mapsto(L\otimes
O_\p1(\infty),\eta)$ identifies $\Bun(\p1,2,d,\divisor)$ with
$\Bun(\p1,2,d+2,\divisor)$. Since $d$ is odd, the statement
follows.

\emph{Step 2. $L\simeq O_\p1\oplus O_\p1(-1)$.} Indeed, it
follows from Proposition~\ref{Affine}(\ref{Aff2}) that
$(L,\eta)$ corresponds to a connection $(L,\nabla)\in\cM$, thus
we can use Proposition~\ref{Affine}(\ref{Aff0}).

\emph{Step 3.} The discussion, preceding this proposition,
shows that $(L,\eta)\in\Bun(-1)$ implies $(L_\eta\otimes
O_\p1(2\infty),\eta')\in\Bun(-1)$ and therefore $L_\eta\simeq
O_\p1(-2)\oplus O_\p1(-3)$.

Let $\tilde P$ be the moduli stack of collections $(L,\eta,
O_\p1\hookrightarrow L, O_\p1(-2)\hookrightarrow L_\eta)$,
where $(L,\eta)\in\Bun(-1)$. Note that there is a unique up to
scalar map $O_\p1\to L$ and a unique up to scalar map
$O_\p1(-2)\to L_\eta$. Thus $\tilde P$ is a principal
$\gm\times\gm$-bundle on $\Bun(-1)$.

\emph{Step 4.} For a point of $\tilde P$ we get a map
$\phi:O_\p1\oplus O_\p1(-2)\to L$. We claim that this map is
injective. Indeed, let $m_i$ be as in the proof of
Proposition~\ref{Affine}(\ref{Aff1}). If the image of $\phi$ is
a line subbundle, then $\sum m_i\ge2$. But we saw (again in the
proof of Proposition~\ref{Affine}(\ref{Aff1})) that this is
impossible.

Thus $\phi$ has a simple zero at a single point $q$. Note that
$\Ker\phi(q)$ does not coincide with the fiber of $O_\p1$
(because $O_\p1\to L$ is an embedding of vector bundles). That
is, the kernel of $\phi(q)$ is spanned by $(p,1)$, where $p$ is
a point in the fiber of $O_\p1(2)$ over $q$. (More canonically,
$p$ is a homomorphism from the fiber of $O_\p1(-2)$ to that of
$O_\p1$.) The pair $(p,q)$ completely describes $L$ as an upper
modification of $O_\p1\oplus O_\p1(-2)$: the sheaf of sections
of $L(-q)$ is
\begin{equation}\label{upper1}
    \{(s_1,s_2)\in O_\p1\oplus
O_\p1(-2)|\,s_1(q)=ps_2(q)\}.
\end{equation}

\emph{Step 5}. Similarly, we get a map
$\phi':O_\p1(-\divisor)\oplus O_\p1(-2)\to L_\eta$. It also has
exactly one simple zero. Note that $\det\phi=\det\phi'$ (since
$\phi$ and $\phi'$ can be identified on $\p1-\divisor$), so the
zero is at the same point $q$. Then $\Ker\phi'(q)$ is spanned
by $(1,p')$, where $p'$ is in the fiber of $O_\p1(2)$ (more
properly, $p'$ is a homomorphism between the fiber of
$O_\p1(-\divisor)$ and that of $O_\p1(-2)$). Again, $(p',q)$
completely determines~$L_\eta$, indeed, the sheaf of sections
of $L_\eta(-q)$ is
\begin{equation}\label{upper2}
    \{(s_1,s_2)\in O_\p1(-\divisor)\oplus
    O_\p1(-2)|\,p's_1(q)=s_2(q)\}.
\end{equation}

\emph{Step 6.} Note that $(p,p',q)$ determines the inclusion
$L_\eta\hookrightarrow L$ uniquely as well because it
determines it on $\p1-\divisor$, thus this triple determines a
point of $\tilde P$. We must have $L_\eta\subset L$. Looking
at~(\ref{upper1}) and~(\ref{upper2}) it is easy to see that
this condition is exactly $pp'=f(q)$, where $f$ is the
canonical section of $O_\p1(\divisor)$ (thus the zero locus of
$f$ is exactly $\divisor$). This makes sense: the product $pp'$
is in the fiber of $O_\p1(\divisor)$.

Let $P'$ be the set of triples $(p,p',q)$ as above subject to
the condition $pp'=f(q)$. Every such point determines a
parabolic bundle $(L,\eta)$ but some of these bundles can have
extra automorphisms. In other words, $\tilde P\subset P'$.

Clearly, $P'$ is fibered over $\p1$ with coordinate $q$, and
the fiber over $x$ is either a hyperbola or a cross, depending
on whether $x$ is in $\divisor$ or not.

Finally, we need to mod out the embeddings
$O_\p1\hookrightarrow L$ and $O_\p1(-2)\hookrightarrow L_\eta$.
If we scale one of them by $a$ and the other by $b$, we get
\[
    (p,p',q)\mapsto((a/b)p,(b/a)p',q).
\]

Therefore
\begin{itemize}
\item The only points with extra automorphisms are of the
    form $(0,0,q)$, $q\in\divisor$ (the centers of the
    crosses);

\item The stable locus $\tilde P$ is exactly the part of
    $P'$ that is smooth over $\p1$.

\item We have $\Bun(-1)=\tilde P/\mathbf{G_m^\mathrm{2}}$.
    Since the diagonal group $a=b$ acts trivially, this
    stack is $(\tilde P/\gm)\times B(\gm)$. Clearly,
    $\tilde P/\gm=P$.
\end{itemize}
\end{proof}

\begin{Remark}\label{RemP}
Note that we can (and shall) view $P$ as the moduli space of
collections
\[
    (L,\eta,O_\p1(-2)\hookrightarrow L_\eta).
\]
\end{Remark}

\begin{proof}[Proof of Theorem~\ref{StackProp}]
By Propositions~\ref{Affine}(\ref{Aff2}) and~\ref{BunP} $\cM$
is a smooth connected algebraic stack of dimension $1$. To
prove that $\cM=M\times B(\gm)$ consider the moduli stack of
triples $(L,\nabla,O_{\p1}(-2)\hookrightarrow L_\eta)$, where
$(L,\nabla)\in\cM$; denote it by $M'$. We have $\cM=M'/\gm$, where $\gm$ acts by
rescaling the embedding $O_{\p1}(-2)\hookrightarrow L_\eta$. It
is easy to see that this action is trivial, so that
$\cM=M'\times B(\gm)$. On the other hand,
Lemma~\ref{Irreducible} shows that connections in $\cM$ have
only scalar automorphisms, thus $M'$ is an algebraic space.
Therefore $\cM\to M'$ is the good moduli space and $M'=M$ by
uniqueness of good moduli spaces. We see that $\cM$ is a
neutral gerbe over $M$. Next, we have a cartesian diagram
\[
\begin{CD}
M @>>> \cM\\
@VVV @VV \rho V\\
P @>>> \Bun(d).
\end{CD}
\]
Thus $M$ is an affine bundle over $P$. It follows that $M$ is a
smooth surface. Finally, $M$ is quasi-projective, since it is
open in $\oM$.
\end{proof}

\begin{proof}[Proof of Proposition~\ref{CohomologicalDimension}]
The map $\rho:\cM\to\Bun(d)$ is an affine bundle, thus it is an
affine morphism. On the other hand, the good moduli space of
$\Bun(d)$ is $P$, which is a 1-dimensional scheme.
\end{proof}

\subsection{$\Mb$ is a locally complete intersection}
\begin{Lemma}\label{mnOne}
If $(L,\nabla;\epsilon\in E)\in\Mb$, then $L\simeq
O_\p1(n)\oplus O_\p1(m)$ with $m-n=1$.
\end{Lemma}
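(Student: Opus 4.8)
The plan is to mirror the proof of Proposition~\ref{Affine}(\ref{Aff0}), which treats the open part $\cM=\Mb-\Mh$, and to extend it uniformly across the boundary $\Mh$ (where $\epsilon=0$). Every rank two bundle on $\p1$ splits as $L\simeq O_\p1(m)\oplus O_\p1(n)$; since $d=m+n$ is odd we have $m\ne n$, so we may assume $m>n$. Then $m-n=d-2n$ is a positive odd integer, and it suffices to rule out $m-n\ge 3$.

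Assume $m-n\ge 3$. Following Proposition~\ref{Affine}(\ref{Aff0}), I would form the composite
\[
\overline\nabla:O_\p1(m)\hookrightarrow L\xrightarrow{\nabla}L\otimes\Omega_\p1(\divisor)\otimes_\C E\twoheadrightarrow O_\p1(n)\otimes\Omega_\p1(\divisor)\otimes_\C E\simeq O_\p1(n+2)\otimes_\C E,
\]
using $\Omega_\p1(\divisor)\simeq O_\p1(2)$ since $\deg\divisor=4$. The key point, valid for $\epsilon$-connections and not only for genuine connections, is that $\overline\nabla$ is $O_\p1$-linear: the Leibniz term $s\otimes\dif f\otimes\epsilon$ produced by an $\epsilon$-connection lies in the $O_\p1(m)$-summand, and hence projects to zero in $O_\p1(n)\otimes\Omega_\p1(\divisor)\otimes_\C E$. (When $\epsilon=0$ the map $\nabla$ is already $O_\p1$-linear, so the same conclusion holds uniformly over $\Mh$.) Thus $\overline\nabla\in\Hom(O_\p1(m),O_\p1(n+2))\otimes_\C E$, which vanishes because $m\ge n+3>n+2$. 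Consequently $O_\p1(m)\subset L$ is a $\nabla$-invariant subbundle.

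The final step is where the argument departs from the $\cM$ case. There one invokes irreducibility (Lemma~\ref{Irreducible}), which is unavailable once $\epsilon=0$; instead I would use that every point of $\Mb\subset\Nb^{\,ss,nn}$ is semistable by definition. Applying the semistability inequality to the $\nabla$-invariant line subbundle $O_\p1(m)$ gives $m=\deg O_\p1(m)\le d/2=(m+n)/2$, hence $m\le n$, contradicting $m>n$. This forces $m-n=1$. I expect no serious obstacle here: the only genuinely new ingredients beyond Proposition~\ref{Affine}(\ref{Aff0}) are the replacement of irreducibility by semistability—which is exactly what is available uniformly on the compactification—and the observation that the $O_\p1$-linearity of $\overline\nabla$ survives the passage to $\epsilon$-connections, both of which are immediate.
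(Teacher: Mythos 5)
Your proof is correct and follows essentially the same route as the paper: form the $O_\p1$-linear composite $\overline\nabla:O_\p1(m)\to O_\p1(n+2)\otimes_\C E$, observe it vanishes when $m-n\ge3$, and contradict semistability via the resulting $\nabla$-invariant subbundle $O_\p1(m)$. The only (cosmetic) difference is that you invoke semistability uniformly for all $\epsilon$, whereas the paper handles $\epsilon\ne0$ by citing Proposition~\ref{Affine}(\ref{Aff0}) (which rests on irreducibility) and uses semistability only on the boundary $\epsilon=0$; both are valid since $\Mb\subset\Nb^{\,ss,nn}$.
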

\begin{proof}
For $\epsilon\ne0$ this is
Proposition~\ref{Affine}~(\ref{Aff0}). Let $\epsilon=0$ and
assume that $m-n\ge3$. Then the same argument as in
Proposition~\ref{Affine}~(\ref{Aff0}) shows that $O_\p1(m)$ is
$\nabla$-invariant, which contradicts semistability.
\end{proof}

\begin{Proposition}\label{LocCI}
$\Mb$ is a locally complete intersection.
\end{Proposition}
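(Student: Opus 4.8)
The plan is to exhibit $\Mb$ as the zero locus of a regular section of a vector bundle on a \emph{smooth} ambient stack: a closed substack cut out of a smooth (hence Cohen--Macaulay) stack by a section whose zero locus has codimension equal to the rank of the bundle is a complete intersection, in particular a local complete intersection. By Lemma~\ref{mnOne} every point of $\Mb$ has balanced underlying bundle $L\simeq O_{\p1}(n)\oplus O_{\p1}(m)$ with $m-n=1$; write $L_0$ for this bundle. Let $\cV\subset\Nb^{\,ss,nn}$ be the open substack on which $L$ is balanced, so that $\Mb\subset\cV$. Since $L_0$ is rigid ($H^1(\p1,\END L_0)=0$), the stack of balanced bundles is $B\Aut(L_0)$, and $\cV$ is an open substack of $[A/(\Aut(L_0)\times\gm)]$, where $A$ is the affine space of pairs $(\nabla,\epsilon\in\C)$ with $\nabla$ an $\epsilon$-connection on $L_0$, the group $\Aut(L_0)$ acts by conjugation, and $\gm$ acts by $t\cdot(\nabla,\epsilon)=(t\nabla,t\epsilon)$. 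As $A$ is smooth of dimension $\dim H^0(\p1,\END(L_0)\otimes\Omega_{\p1}(\divisor))+1=12+1=13$ and $\Aut(L_0)\times\gm$ is a smooth group of dimension $4+1=5$, the quotient is smooth; hence $\cV$ is a smooth stack of dimension $8$.

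By the very definition of $\Mb$ in~\S\ref{DEFMB}, $\Mb$ is the zero locus in $\cV$ of the section
\[
  s=\bigl([\tr\nabla]-\epsilon\otimes\nu_1,\ [\det\nabla]-\epsilon^{\otimes2}\otimes\nu_2\bigr)
\]
of the bundle $\bigl(\cE\otimes(\Omega_{\p1}(\divisor)/\Omega_{\p1})\bigr)\oplus\bigl(\cE^{\otimes2}\otimes(\Omega_{\p1}^{\otimes2}(2\divisor)/\Omega_{\p1}^{\otimes2}(\divisor))\bigr)$, where $\cE$ is the line bundle with fibre $E$. This bundle has rank $\deg\divisor+\deg\divisor=8$, but $s$ does not meet all eight directions: since $\tr\nabla$ is an $\epsilon$-connection on $\det L$, the residue theorem gives $\sum_i\res(\tr\nabla)=-\epsilon d=\epsilon\sum_i\res\nu_1$ (using $d=-\sum_i\res(\alpha_i^++\alpha_i^-)$ from condition~(\ref{AlphaII})), so the residue-sum component of the first entry of $s$ vanishes identically. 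Thus $s$ is in fact a section of a rank-$7$ subbundle, and $\Mb=Z(s)$ is defined in the smooth $8$-dimensional stack $\cV$ by $7$ equations.

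It remains to check that these $7$ equations cut out the expected codimension, i.e. that $\dim_z\Mb=1$ for every $z$. Krull's principal ideal theorem gives $\dim_z\Mb\ge 8-7=1$ at each point. For the reverse inequality, note that $\dim\cM=1$ by Theorem~\ref{StackProp}, while the good moduli space morphism $\Mb\to\oM$ sends the complementary substack $\Mh$ into $\oM\setminus M$, a proper closed subset of the projective surface $\oM$ and hence at most $1$-dimensional; since the dimension of a stack is bounded by that of its good moduli space, $\dim\Mh\le 1$, and therefore $\dim\Mb\le1$. Consequently $Z(s)$ has pure codimension $7$ in $\cV$, so its $7$ defining equations form a regular sequence at every point (in a regular, hence Cohen--Macaulay, local ring an ideal of height equal to its number of generators is generated by a regular sequence). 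Thus $\Mb$ is a complete intersection in the smooth stack $\cV$, and in particular a local complete intersection.

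I expect the delicate point to be the bookkeeping of this last step: confirming that the residue theorem accounts for exactly one relation among the eight conditions and that $\Mh$ carries no excess-dimensional component, so that the codimension is exactly $7$ everywhere and the section is genuinely regular. By contrast, the smoothness of the ambient $\cV$ falls out immediately from the rigidity of balanced bundles, which turns $\cV$ into a quotient of a smooth affine space by a smooth group.
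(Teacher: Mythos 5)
Your setup is sound, and it essentially reproduces the paper's own strategy: the paper also works inside a smooth ambient stack of $\epsilon$-connections on balanced bundles (there denoted $\widetilde\cN(\divisor')$, its smoothness proved by deformation theory --- the obstruction lives in $H^1(\p1,\END(L)\otimes\Omega_{\p1}(\divisor')\otimes_\C E)$, which vanishes since $L\simeq O_{\p1}(n)\oplus O_{\p1}(n+1)$), while you present the balanced locus as an open substack of $[A/(\Aut(L_0)\times\gm)]$; both yield a smooth $8$-dimensional ambient stack, and your count of $7$ equations, with the residue theorem removing one trace condition, agrees with the paper's $2\deg\divisor-1=7$. The genuine gap is in the last step, the bound $\dim\Mb\le 1$, equivalently $\dim\Mh\le 1$. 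Your argument for it rests on two claims, neither of which holds up. First, ``the dimension of a stack is bounded by that of its good moduli space'' is false in general: $[\A2/\gm]$, with $\gm$ acting with weights $(1,1)$, has good moduli space $\spec\C[x,y]^{\gm}=\spec\C$, a point, yet the stack has dimension $1$; good moduli space morphisms can collapse positive-dimensional loci of non-closed points. Second, the assertion that $\oM\setminus M$ is at most $1$-dimensional because it is ``a proper closed subset of the projective surface $\oM$'' presupposes that $\oM$ is irreducible, or at least that no irreducible component of $\oM$ is contained in $\oM\setminus M$. But $\oM\setminus M$ is precisely the good moduli space of $\Mh$ (the image of the closed substack $\Mh$ under $\Mb\to\oM$), so if $\Mh$ --- hence $\Mb$ --- had an excess-dimensional component, then $\oM$ would acquire a component lying entirely inside $\oM\setminus M$ and the ``proper closed subset'' argument would give nothing. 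Ruling out such excess components of $\Mh$ is exactly what is at issue, so your argument is circular: the two facts you use are true, but only a posteriori, once one knows what $\Mh$ is.

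The paper closes this gap with concrete input about the Higgs locus $\epsilon=0$: it invokes the explicit description of $\Mh$ proved later, namely $\Mh\simeq Y/(\mu_2\times\gm)$ with $Y$ the (generalized elliptic) spectral curve (Corollary~\ref{MhY}, which rests on the spectral correspondence of Proposition~\ref{Hitchin} and on Proposition~\ref{PicY}); this gives $\dim\Mh=0$, and combined with $\dim\cM=1$ from Theorem~\ref{StackProp} yields $\dim\Mb=1$. Some such concrete control of $\Mh$ --- for instance, observing that on $\Mh$ the Hitchin invariants $(\tr\nabla,\det\nabla)$ are confined, up to the $\gm$-rescaling of $E$, to a single point of the Hitchin base, and then bounding the corresponding fiber --- is unavoidable here; a purely formal argument through the projective good moduli space $\oM$ cannot do the job, for the reasons above.
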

\begin{proof}
Let $\divisor'\supset\divisor$ be a divisor on $X$. Consider
the moduli stack
$\widetilde\cN(\divisor')\subset\Nb(\p1,2,d,\divisor')$
parameterizing $(L,\nabla;\epsilon\in E)$, where $L\simeq
O_\p1(n)\oplus O_\p1(m)$ with $m-n=1$, $m+n=d$, $\nabla:L\to
L\otimes\Omega_\p1(\divisor')\otimes_\C E$ is an
$\epsilon$-connection, $(L,\nabla)$ is semistable and
non-nilpotent. It is enough to show that if $\deg\divisor'$ is
big enough, then (i) $\widetilde\cN(\divisor')$ is smooth and
(ii) $\Mb$ is defined by $\dim\widetilde\cN(\divisor')-\dim\Mb$
equations in $\widetilde\cN(\divisor')$ (note that
$\Mb\subset\widetilde\cN(\divisor')$ by Lemma~\ref{mnOne}).

For (i) it is enough to show that the map
$\widetilde\cN(\divisor')\to\A1/\gm$ sending
$(L,\nabla;\epsilon\in E)$ to $\epsilon\in E$ is smooth. The
relative deformation complex of this map at
$(L,\nabla;\epsilon\in E)$ is
\[
    \cG^\al:=
    (\END(L)\xrightarrow{\ad\nabla}\END(L)\otimes
    \Omega_\p1(\divisor')\otimes_\C E),
\]
so that the obstruction to smoothness lies in the hypercomology group
$\HH^2(\p1,\cG^\al)$, which is a quotient of \[
H^1(\p1,\END(L)\otimes\Omega_\p1(\divisor')\otimes_\C E).
\]
The latter group vanishes for $\deg\divisor'$ large enough because
$L\simeq O_\p1(n)\oplus O_\p1(m)$, with $m-n=1$.

For (ii) note that
$\dim\widetilde\cN(\divisor')=-\chi(\cG^\al)=4\deg\divisor'-8$.

In Corollary~\ref{MhY} below, we shall give an explicit
description of $\Mh$; this description implies that
$\dim\Mh=0$. Combining this with Theorem~\ref{StackProp}, we
see that $\dim\Mb=1$.

Further, let $\cL$ be the vector bundle on
$\widetilde\cN(\divisor')$ whose fiber at
$(L,\nabla;\epsilon\in E)$ is $\END(L)\otimes
(O_\p1(\divisor')/O_\p1(\divisor))\otimes_\C E$. The polar part
of $\nabla$ gives rise to a section of $\cL$ and
$\widetilde\cN(\divisor)\subset\widetilde\cN(\divisor')$ is
given by the zero locus of this section. Thus
$\widetilde\cN(\divisor)$ is locally cut out by
$4(\deg\divisor'-\deg\divisor)$ equations. It follows from the
definition of $\Mb$ (cf. Section~\ref{DEFMB}) that $\Mb$ is cut out
from $\widetilde\cN(\divisor)$ by $2\deg\divisor-1$ equations
(note that the sum of residues of $\nabla$ is equal to $-d$).
\end{proof}

\begin{proof}[Proof of Proposition~\ref{Cartier}]
$\Mh$ is given by $\epsilon=0$, so we only need to check that
$\epsilon$ is locally not a zero divisor on $\Mb$. However, if
it was the case, $\Mh$ would contain a component of $\Mb$
(set-theoretically), and we would come to a contradiction with
complete intersections having pure dimension.
\end{proof}

\subsection{Universal Moduli Spaces} Recall that
$\divisor=\sum n_ix_i$. Fix $\infty\in\p1\setminus\divisor$.

Consider a moduli space $\cB$, parameterizing local invariants
of connections, that is, triples $(\divisor,\nu_1,\nu_2)$,
where $\divisor$ is a degree 4 divisor on $\p1$ such that
$\infty\notin\supp\divisor$,
$\nu_1\in\Omega_\p1(\divisor)/\Omega_\p1$,
$\nu_2\in\Omega^{\otimes2}_\p1(2\divisor)/\Omega^{\otimes2}_\p1(\divisor)$,
and the sum of residues of $\nu_1$ equals $-d$,
cf. Section~\ref{DEFMB}. We can identify such $\divisor$ with roots
of degree 4 monic polynomial $p(z)$, then we can write uniquely
\[
\nu_1=\frac{a_0+a_1z+a_2z^2+dz^3}{p(z)}\,\dif z,\qquad
\nu_2=\frac{b_0+b_1z+b_2z^2+b_3z^3}{p(z)^2}\,\dif z\otimes\dif z.
\]
Here $z$ is the standard coordinate on $\p1$. Thus
$\cB\simeq\C^{11}$. Note that the subset of points in $\cB$
satisfying conditions of Section~\ref{MODST} is open in
\emph{analytic\/} topology.

As $(\divisor,\nu_1,\nu_2)$ varies, we obtain a family
$\Mb_{univ}\to\cB$ of moduli stacks. In Section~\ref{DEFMB} we fixed
$\divisor$, $\nu_1$ and $\nu_2$. Denote the corresponding point
of $\cB$ by $t_0$. Then the fiber of $\Mb_{univ}\to\cB$ over
$t_0$ is $\Mb$. Our goal is to prove
\begin{Proposition}\label{PrFlat}
The family $\Mb_{univ}\to\cB$ is a flat family of stacks in a
Zariski neighborhood of $t_0\in\cB$.
\end{Proposition}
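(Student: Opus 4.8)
The plan is to prove flatness by exhibiting $\Mb_{univ}$ locally as a complete intersection in a smooth ambient family over $\cB$, and then invoking the fact that a family of complete intersections of constant (relative) dimension over a smooth base is flat. This mirrors the strategy of Proposition~\ref{LocCI}, where $\Mb$ itself was shown to be a local complete intersection; the present proposition is the relative version of that argument.

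First I would globalize the construction from Proposition~\ref{LocCI} over the base $\cB$. Fix an auxiliary divisor $\divisor'\supset\divisor$ of large degree containing $\supp\divisor$ for all points of $\cB$ near $t_0$ (this is possible after shrinking to a Zariski neighborhood, since the four points move in a bounded family). Over $\cB$ one forms the relative analogue $\widetilde\cN(\divisor')_{univ}$ parameterizing $(L,\nabla;\epsilon\in E)$ with $L\simeq O_\p1(n)\oplus O_\p1(m)$, $m-n=1$, together with the point of $\cB$ recording $(\divisor,\nu_1,\nu_2)$. The key point carried over from Proposition~\ref{LocCI}(i) is that the map $\widetilde\cN(\divisor')_{univ}\to\A1/\gm\times\cB$ is smooth: the obstruction lives in $H^1(\p1,\END(L)\otimes\Omega_\p1(\divisor')\otimes_\C E)$, which vanishes for $\deg\divisor'$ large because $L$ has the balanced splitting type; this vanishing is uniform in the family. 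Hence $\widetilde\cN(\divisor')_{univ}$ is smooth over $\cB$, and in particular flat over $\cB$.

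Next I would cut out $\Mb_{univ}$ inside this smooth family by the same equations as in the absolute case, now with coefficients varying over $\cB$. The polar-part conditions defining $\widetilde\cN(\divisor)_{univ}\subset\widetilde\cN(\divisor')_{univ}$ give a section of the relative bundle $\cL$ whose zero locus is a codimension-$4(\deg\divisor'-\deg\divisor)$ subscheme, and the Hitchin-type conditions~\eqref{hitchincond}—with $\nu_1,\nu_2$ now being the tautological data pulled back from $\cB$—impose a further $2\deg\divisor-1$ equations. Thus $\Mb_{univ}$ is locally defined inside the $\cB$-smooth ambient space by the expected number of equations. The fibers of $\Mb_{univ}\to\cB$ all have dimension $1$ (this is $\dim\Mb=1$, established in Proposition~\ref{LocCI} via $\dim\Mh=0$ from Corollary~\ref{MhY} and Theorem~\ref{StackProp}), at least for fibers near $t_0$ where the non-resonance conditions of~\S\ref{MODST} hold on an analytically open set.

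The main obstacle is bridging from \emph{constant fiber dimension} to \emph{flatness}. The clean statement one wants is: if $Z\hookrightarrow W$ is cut out by $c$ equations, $W$ is smooth over $\cB$ of relative dimension $N$, and every fiber $Z_t$ has dimension exactly $N-c$, then $Z\to\cB$ is flat. This follows from the local criterion for flatness together with the miracle-flatness criterion applied fiberwise (the fibers $Z_t$ are complete intersections in the smooth fibers $W_t$, hence Cohen--Macaulay of the expected dimension, and a Cohen--Macaulay morphism with Cohen--Macaulay total space to a regular base of the right relative dimension is flat). The delicate point I would check carefully is that the fiber dimension really is constant—and not just at $t_0$—in a genuine \emph{Zariski} neighborhood: since the non-resonance conditions only cut out an analytically open set, I would argue that the relevant upper-semicontinuous fiber-dimension function is locally constant near $t_0$ in the Zariski topology, perhaps by bounding the fiber dimension above everywhere on $\widetilde\cN(\divisor)_{univ}$ and below on a Zariski-open set, so that flatness holds on an honest Zariski neighborhood as claimed.
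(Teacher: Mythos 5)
Your proposal follows essentially the paper's own route: the paper proves, exactly as in Proposition~\ref{LocCI}, that $\Mb_{univ}$ is a locally complete intersection of dimension $\dim\cB+1$, and then invokes \cite[Proposition~6.15]{EGAIV} (miracle flatness: a morphism from a locally complete intersection to a smooth scheme with equidimensional fibers is flat), which is the same criterion you state via Cohen--Macaulayness of the total space and fibers.

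The one place you deviate is the bookkeeping for equidimensionality of the fibers near $t_0$, and there your plan is inverted: you propose to bound the fiber dimension \emph{above} everywhere and \emph{below} on a Zariski-open set, but fiber dimension is only \emph{upper} semicontinuous and can perfectly well jump upward far from $t_0$, so no global upper bound is available. The correct (and the paper's) bookkeeping is the opposite. The lci structure gives the lower bound everywhere: since $\Mb_{univ}$ has pure dimension $\dim\cB+1$ and maps to the $\dim\cB$-dimensional base, every fiber has dimension at least $1$, with no openness or non-resonance hypothesis needed. The upper bound is what holds only near $t_0$: the fiber over $t_0$ is $\Mb$, of dimension $1$ by Theorem~\ref{StackProp} and Corollary~\ref{MhY}, and upper semicontinuity of fiber dimension then gives $\dim\le 1$ on a Zariski neighborhood. (To descend semicontinuity from the total space to the base one uses that $\Mb_{univ}\to\cB$ is a closed map, as it factors through the good moduli space, which is proper over $\cB$ by the relative versions of Theorems~\ref{GoodModuliSpace} and~\ref{AmpleBundle}; good moduli space morphisms are themselves universally closed.) With this correction your argument closes, and it coincides with the proof in the text.
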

\begin{proof}
Similarly to the previous subsection we prove that $\Mb_{univ}$
is a locally complete intersection of dimension $\dim\cB+1$. It
follows that the fibers of $\Mb_{univ}\to\cB$ are at least
1-dimensional. By semicontinuity, there is a neighborhood of
$t_0$, where fibers are 1-dimensional. It remains to note, that
by~\cite[Proposition~6.15]{EGAIV} a morphism from a locally
complete intersection to a smooth scheme with equidimensional
fibers is flat.
\end{proof}

\section{Generalized line bundles on generalized elliptic
curves}\label{GENELL}

In this section, we review the Fourier-Mukai
transform for singular degenerations of elliptic curves.
Such transform is constructed in \cite{EllipticFM};
the case of singular reduced irreducible genus one curve
(nodal or cuspidal) goes back to \cite{BurbanKreussler}.

As explained in \cite{EllipticFM}, the Fourier-Mukai transform
provides isomorphisms between various moduli spaces of semistable
sheaves on the curve. In particular, the moduli space of semistable
generalized line bundles of degree $-1$ is naturally identified with
the curve itself. (The precise definitions are given below.)
For our purposes, it is desirable to work with moduli stacks rather
than the moduli spaces: we outline the required (very straightforward)
changes to the results of \cite{EllipticFM} below.

For the reader's convenience, we provide sketches of proofs along with
appropriate references.

\subsection{Generalized elliptic curves}

For the purposes of this paper, it is important to work with
all double covers of $\p1$ ramified at four points, including
reducible covers (see Remark~\ref{Types}). However, the
argument naturally applies to the following class of curves.

\begin{Definition}
A projective curve $Y$ is \emph{generalized elliptic} if
$H^0(Y,O_Y)=\C$ (in particular, $Y$ is connected and has no
embedded points) and the dualizing sheaf of~$Y$ is trivial. In
particular, $Y$ is Gorenstein and has arithmetic genus $1$.
\end{Definition}

\begin{Remark}\label{RemDualizing}
The dualizing sheaf of $Y$ is canonically identified with
\[O_Y\otimes_\C H^1(Y,O_Y)^\vee\]
Indeed, its space of global sections is identified with $H^1(Y,O_Y)^\vee$ by
Serre's duality.
\end{Remark}

\begin{Example}
Any plane cubic (reduced or not) is a generalized elliptic
curve.
\end{Example}

\begin{Remark}
Note that the singularities of $Y$ need not be planar.
For example, an intersection of two space quadrics is a
generalized elliptic curve, even if the two quadrics are cones
with a common vertex. In this case, the intersection is a union
of four lines that meet at the vertex.
\end{Remark}

We need to consider (semi)stable coherent sheaves on $Y$.
These can be defined using the Hilbert function corresponding to some
polarization. Since $Y$ is a curve, we can use the following equivalent
definition.

Denote by $\Sigma$ the collection of generic points of $Y$ (by
definition a point $s\in Y$ is generic if its local ring
$O_{Y,s}$ is Artinian). For a sheaf $\ell$ on $Y$ and $s\in
\Sigma$, we denote by $\rk_s\ell$ the length of the stalk
$\ell_s$ as a module over the local ring $O_{Y,s}$. In
particular, $m(s):=\rk_sO_Y$ is the multiplicity of the
corresponding irreducible component.

Fix a weight function $w:\Sigma\to\Q^{>0}$ and set
\[
    \rk\ell=\rk_w\ell:=\sum_{s\in\Sigma}w(s)\rk_s\ell.
\]
We can now use this notion of rank (and the corresponding
notion of slope) to define a stability of coherent sheaves on
$Y$.

\begin{Definition}
A coherent sheaf $\ell\ne 0$ of pure dimension 1 is said to be
\emph{semistable} if for any proper subsheaf
$\ell_0\subset\ell$, $\ell_0\ne0,\ell$, we have
\[
\frac{\chi(\ell)}{\rk\ell}\ge\frac{\chi(\ell_0)}{\rk\ell_0}.
\]
If the inequality is strict, $\ell$ is \emph{stable}.
\end{Definition}

\begin{Definition}
We say that a sheaf $\ell$ is a \emph{generalized line bundle}
on $Y$ if it is of pure dimension 1 and its length at all
generic points of $Y$ equals to the multiplicity of the
corresponding component: $\rk_s\ell=m(s)$ for $s\in\Sigma$.

By definition, $\deg\ell:=\chi(\ell)-\chi(O_Y)=\chi(\ell)$.
\end{Definition}

Denote by $\CPic{d}(Y)$ the stack of generalized line bundles of
degree $d$ on $Y$, and let
$\CPic{d}_s(Y)\subset\CPic{d}_{ss}(Y)\subset\CPic{d}(Y)$ stand for
the open substacks of stable and semistable generalized line
bundles, respectively.

\subsection{The Fourier-Mukai transform}\label{FMproof}

Let $\cP:=O_{Y\times Y}(-\Delta)$ be the ideal sheaf of the
diagonal $\Delta\subset Y\times Y$. Note that~$\cP$ is flat
over both factors, being the kernel of a surjection of
sheaves that are flat over both factors.
Generally speaking, $\cP$ is not flat
over the product.

Consider the Serre dual $\cP^\vee:=\HOM(\cP,O_{Y\times
Y})$ of $\cP$. The proof of \cite[Proposition~1.3(3)]{EllipticFM}
shows that $\cP^\vee$ is of finite Tor-dimension over both factors.
(Actually, it is easy to see that $\cP^\vee$ is isomorphic to an extension of $O_\Delta$ by $O_{Y\times Y}$,
and therefore flat over both factors.)

\begin{Proposition}{\cite[Proposition~1.3]{EllipticFM}}
\label{FourierMukai}
The Fourier-Mukai transform with the kernel $\cP$
\[\Xi:\cD^b(Y)\to\cD^b(Y):\cF\mapsto Rp_{1,*}(\cP\otimes^Lp_2^*\cF)\]
is an auto-equivalence of the category of $\cD^b(Y)$ \lpar{}the
bounded derived category of quasi-coherent sheaves on $Y$\rpar{}.
Here $p_1,p_2:Y\times Y\to Y$ are projections.

The inverse of this equivalence is given by
\[
\Xi':\cD^b(Y)\to\cD^b(Y):\cF\mapsto Rp_{2,*}(\cP^\vee\otimes^Lp_1^*\cF)
    \otimes_\C(H^1(Y,O_Y))^{-1}[1].
\]
\end{Proposition}

\begin{proof}[Sketch of proof]
Proposition~\ref{FourierMukai} is proved in \cite{EllipticFM}
(if $C$ is irreducible, see also \cite[Proposition~2.10]{BurbanKreussler}).
The key idea is that the structure sheaf $O_Y\in\cD^b(Y)$ is a \emph{spherical object} in the sense of
Seidel and Thomas \cite{SeidelThomas}. A spherical object $\cE\in\cD^b(Y)$ defines
an equivalence $T_\cE:\cD^b(Y)\to\cD^b(Y)$ called the \emph{twist
functor}. For the spherical
object $\cE=O_Y$, the twist functor $T_\cE$ is isomorphic to $\Xi$. Therefore, $\Xi$ is an equivalence.

By Serre's duality, $\Xi$ and $\Xi'$ are adjoint. Therefore, they are mutual inverses.
\end{proof}

\begin{remark*} \cite{EllipticFM} considers the Fourier-Mukai transform on the bounded
coherent derived category $\cD^b_{coh}(Y)$. However, the argument works equally well for
the derived category $\cD^b(Y)$ (or for the unbounded derived category $\cD(Y)$).

Compared to \cite{EllipticFM}, we introduced a twist by the one-dimensional
vector space $(H^1(Y,O_Y))^{-1}$ in the formula for $\Xi'$. This makes the isomorphism between
$\Xi'$ and the inverse of $\Xi$ canonical.
\end{remark*}

Let us translate Proposition~\ref{FourierMukai} into the language of
Fourier-Mukai kernels, since we shall use
Proposition~\ref{FMOrthog} later. Let $p_{12}$, $p_{13}$, and
$p_{23}$ be the usual projections $Y\times Y\times Y\to Y\times
Y$. Set $\cF_Y:=Rp_{13,*}(p^*_{12}\cP^\vee\otimes^L
p^*_{23}\cP)$. Recall that both
$\cP$ and $\cP^\vee$ are flat with respect to both projections
$Y\times Y\to Y$. In particular,
\[p^*_{12}\cP^\vee\otimes^L p^*_{23}\cP=p^*_{12}\cP^\vee\otimes p^*_{23}\cP
\] is a sheaf, so that $\cF_Y$ is concentrated in
cohomological dimensions $0$ and $1$.

\begin{Proposition}\label{FMOrthog}
\[
   \cF_Y=O_\Delta[-1]\otimes_\C H^1(Y,O_Y)\simeq O_\Delta[-1].
\]
\end{Proposition}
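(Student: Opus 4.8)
The plan is to compute the convolution kernel $\cF_Y$ by hand, using two short exact sequences attached to the Poincar\'e kernel $\cP$ and its dual, and reducing everything to the cohomology computations already recorded in Corollary~\ref{Chi}. Conceptually, $\cF_Y$ is the kernel representing the composition of the Fourier--Mukai transform of Theorem~\ref{FourierMukai} with the transform attached to $\cP^\vee$, and the statement is the kernel incarnation of the fact that these two transforms are mutually inverse up to the shift $[-1]$ and the dualizing twist by $H^1(Y,O_Y)$ (the spherical-twist picture of \cite{ST,BK}). I would nonetheless prove it directly, so that the kernel-level statement is available for later use. Throughout I write $Y_1\times Y_2\times Y_3$ for the triple product.

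The starting point is the ideal sequence $0\to\cP\to O_{Y\times Y}\to O_\Delta\to 0$ on $Y\times Y$. Applying $R\HOM(-,O_{Y\times Y})$ and using that $Y\times Y$ is Gorenstein with $\omega_{Y\times Y}=O_{Y\times Y}\otimes_\C(H^1(Y,O_Y)^\vee)^{\otimes2}$ (Remark~\ref{RemDualizing}), Grothendieck duality for the diagonal $\Delta\simeq Y$ gives
\[
R\HOM(O_\Delta,O_{Y\times Y})=O_\Delta\otimes_\C H^1(Y,O_Y)[-1];
\]
this is where the factor $H^1(Y,O_Y)$ in the statement is born. Since $\cP^\vee=R\HOM(\cP,O_{Y\times Y})$ is a genuine sheaf by Lemma~\ref{EllipticDual}\eqref{EDb} (view $\cP$ as a $Y$-family of pure one-dimensional sheaves), the triangle obtained from the ideal sequence collapses to the extension
\[
0\to O_{Y\times Y}\to\cP^\vee\to O_\Delta\otimes_\C H^1(Y,O_Y)\to 0.
\]

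Now pull the ideal sequence back by $p_{23}$ and tensor with $p_{12}^*\cP^\vee$. Since $p_{12}^*\cP^\vee\otimes^L p_{23}^*\cP$ is already a sheaf (noted before the statement), applying $Rp_{13,*}$ to the resulting triangle yields
\[
\cF_Y\to Rp_{13,*}\bigl(p_{12}^*\cP^\vee\bigr)\to Rp_{13,*}\bigl(p_{12}^*\cP^\vee|_{Y_1\times\Delta_{23}}\bigr)\xrightarrow{+1}.
\]
By base change and Corollary~\ref{Chi}, the pushforward of $\cP^\vee$ to $Y_1$ is a line bundle concentrated in degree $0$ (the fibre $H^0(Y,\cP^\vee|_{\{y_1\}\times Y})$ is one-dimensional and $H^1$ vanishes), and the canonical section $O_{Y\times Y}\hookrightarrow\cP^\vee$ identifies it with $O_{Y_1}$; hence the middle term is $O_{Y\times Y}$. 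The third term pushes forward isomorphically to $\cP^\vee$ via the identification $Y_1\times\Delta_{23}\iso Y_1\times Y_3$, and fibrewise the map is the evaluation of the canonical section, i.e.\ the inclusion of the dual sequence above. The triangle thus reads $\cF_Y\to O_{Y\times Y}\xrightarrow{\gamma}\cP^\vee\to\cF_Y[1]$ with $\gamma$ the canonical inclusion, so that
\[
\cF_Y=\mathrm{cone}(\gamma)[-1]=O_\Delta\otimes_\C H^1(Y,O_Y)[-1]\simeq O_\Delta[-1].
\]

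The main obstacle is the control of derived tensor products at the singular points of $Y$, where $\Delta\subset Y\times Y$ is not regularly embedded and $O_\Delta$ admits no Koszul resolution. This is exactly where the flatness of $\cP$ and $\cP^\vee$ over each factor (Lemma~\ref{EllipticDual}\eqref{EDc}) is indispensable: it makes $p_{12}^*\cP^\vee$ flat over $Y_2$, so that its restriction to the graph $Y_1\times\Delta_{23}$ is underived, and it is what guarantees that no hidden higher Tor enters the triangle above. As an independent check on the identification, one may compute fibres by base change: for $y_1\neq y_3$ the fibre $R\Gamma\bigl(Y,\cP^\vee|_{\{y_1\}\times Y}\otimes^L\cP|_{Y\times\{y_3\}}\bigr)$ vanishes by a short argument with the pointwise form of the dual sequence (using that $Y$ is Gorenstein and Proposition~\ref{GenLB}), while on the diagonal it is one-dimensional in degree $1$, consistent with $O_\Delta[-1]$.
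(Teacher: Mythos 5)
Your proposal is correct and follows essentially the same route as the paper's proof: dualizing the ideal sequence $0\to\cP\to O_{Y\times Y}\to O_\Delta\to 0$ via the diagonal duality computation $R\HOM(O_\Delta,O_{Y\times Y})=O_\Delta\otimes_\C H^1(Y,O_Y)[-1]$ to produce the extension $0\to O_{Y\times Y}\to\cP^\vee\to O_\Delta\otimes_\C H^1(Y,O_Y)\to 0$, then tensoring the pulled-back ideal sequence with $p_{12}^*\cP^\vee$, applying $Rp_{13,*}$, and identifying the resulting triangle's second map with the canonical inclusion $O_{Y\times Y}\to\cP^\vee$ using $Rp_{1,*}\cP^\vee\simeq O_Y$ (Corollary~\ref{Chi}) and the flatness statements of Lemma~\ref{EllipticDual}. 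The only additions beyond the paper's argument are your closing fibrewise consistency check, which is fine but not needed.
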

\begin{proof} Let
\[\Xi_0:D^b(Y)\to D^b(Y):\cF\mapsto Rp_{2,*}(\cF_Y\otimes^Lp_1^*\cF)\otimes_\C(H^1(Y,O_Y))^{-1}[1]\]
be the Fourier-Mukai transform with the kernel $\cF_Y\otimes_\C(H^1(Y,O_Y))^{-1}[1]$.
There is a canonical isomorphism between $\Xi_0$ and the composition $\Xi\circ\Xi'$, which
is isomorphic to the identity functor by Proposition~\ref{FourierMukai}.

An isomorphism between $\Xi_0$ and the identity functor yields an isomorphism between the kernel of $\Xi_0$
and $O_\Delta$. Unfortunately, most references for this statement impose extra assumptions, such as
smoothness. For the sake of completeness, let us sketch the proof.

For every point $y\in Y$, consider the length-one sky-scraper sheaf $O_y$ supported at $y$.
Since $\Xi_0(O_y)\simeq O_y$, it follows from
the proof of \cite[Corollary~1.12]{Nahm} that
$\cF_Y\otimes_\C(H^1(Y,O_Y))^{-1}[1]$ is a line bundle on the graph of a morphism $Y\to Y$.
More precisely, there is a map $\psi:Y\to Y$ and a line bundle
$L$ on $Y$ such that
\[\cF_Y\otimes_\C(H^1(Y,O_Y))^{-1}[1]\simeq (\id_Y,\psi)_*L.\]

Therefore, $\Xi_0$ is isomorphic to the functor
$\cF\mapsto R\psi_*(L\otimes^L\cF)$.
On the other hand, $\Xi_0$ is the identity functor. It remains to verify that $\psi=\id_Y$ and $L\simeq O_Y$.

Indeed, looking at the action of $\Xi_0$ on objects $O_y$, we see that $\psi$ induces the identity map on the
set of points of $Y$. Looking at the action of $\Xi_0$ on endomorphisms of objects $j_*(O_U)$ for open
embeddings $j:U\hookrightarrow Y$, we conclude that $\psi=\id_Y$. Since $\Xi(O_Y)=L$, we see that $L\simeq O_Y$,
as claimed.
\end{proof}

\subsection{Stable generalized line bundles}

Since the coherent sheaf $\cP$ on $Y\times Y$ is flat over the first factor, we can view it as a $Y$-family of coherent sheaves on $Y$. This family assigns to
every $y\in Y$ its ideal sheaf $O_Y(-y)$. Obviously, $O_Y(-y)$ is a generalized line bundle of degree $-1$ on $Y$ for every $y\in Y$.

\begin{Proposition} \label{Pointwise iso}
For every $y\in Y$, the generalized line bundle $O_Y(-y)$ is stable. Conversely, any semistable generalized line bundle of degree $-1$ is isomorphic
to $O_Y(-y)$ for unique $y\in Y$.
\end{Proposition}
\begin{proof}
The claim easily follows from  Proposition~1.9(1) and Theorem~1.20 of \cite{EllipticFM} (see also \cite[Corollary~1.23]{EllipticFM}). Indeed, these results imply
that for $\cF\in D^b(Y)$, the following conditions are equivalent
\begin{itemize}
\item $\cF$ is a stable (resp. semistable) coherent sheaf with $\rk_w\cF=0$ and\\ $\chi(\cF)=1$;

\item Its Fourier-Mukai transform $\Xi(\cF)$ is a stable (resp. semistable) coherent sheaf with $\rk_w\cF=\rk_w O_Y$ and $\chi(\cF)=-1$.
\end{itemize}
Note however that the first condition simply requires that $\cF$ is a sky-scraper sheaf of length one, and that any such sheaf is automatically stable. Since
$\Xi(O_y)=O_Y(-y)$, the claim follows.
\end{proof}

\begin{Remark}
In particular, the moduli stack $\CPic{-1}_s(Y)=\CPic{-1}_{ss}(Y)$ does not
depend on $w$.
\end{Remark}

Proposition~\ref{Pointwise iso} gives a bijection between points of $\CPic{-1}_s(Y)$ and
points of $Y$, which are in bijection with isomorphism classes of sky-scraper sheaves of length one on $Y$.
This bijection naturally extends to an isomorphism between the corresponding moduli stacks. If one
works with the moduli spaces instead, this is a special case of \cite[Corollary~1.25]{EllipticFM}.

Let us go over the construction. By Proposition~\ref{Pointwise iso}, $\cP$ is a $Y$-family of stable
degree $-1$ generalized line bundles on $Y$, so it defines a map
$Y\to\CPic{-1}_s(Y)$.  The above map naturally extends to a map
\begin{equation}
Y\times B(\gm)\to\CPic{-1}_s(Y).\label{picmap}
\end{equation}
Explicitly, for a test scheme $S$, the map \eqref{picmap}
assigns to a morphism $\psi:S\to Y$ and a line bundle $L$ on
$S$ (recall that a line bundle on $S$ is the same as a map
$S\to B(\gm)$) the sheaf $p_1^*L\otimes O_{S\times
Y}(-\Gamma_\psi)$ on $S\times Y$, viewed as an $S$-family of
degree $-1$ generalized line bundles on $Y$, that is, as a
morphism $S\to\CPic{-1}(Y)$. Here $\Gamma_\psi\subset S\times
Y$ is the graph of $\psi$.

\begin{Proposition}\label{PicY}
The map \eqref{picmap} is an isomorphism
\[
    Y\times B(\gm)\iso\CPic{-1}_s(Y)=\CPic{-1}_{ss}(Y).
\]
\end{Proposition}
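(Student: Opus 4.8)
The plan is to exhibit $\Phi$ of \eqref{picmap} as an isomorphism by recognizing both sides as $\gm$-gerbes and then producing an explicit inverse. First I would check that $\Phi$ lands in the stable locus. For a closed point $y\in Y$ the ideal sheaf $\cI_y=O_Y(-y)$ is a generalized line bundle, since its length at each generic point is unchanged (a generic point is never a closed point of positive codimension), and from $0\to\cI_y\to O_Y\to O_y\to 0$ one gets $\deg\cI_y=\chi(O_Y)-1=-1$. It is stable: any proper subsheaf is an ideal-type subsheaf whose Euler characteristic is bounded below using $p_a(Y)=1$ and the triviality of the dualizing sheaf, so its slope cannot exceed $-1/\rk_w\cI_y$. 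Being stable, $\cI_y$ is simple, $\Hom(\cI_y,\cI_y)=\C$, and its automorphisms are exactly the scalars. Hence $\Phi$ is compatible with the two $\gm$-gerbe structures ($Y\times B(\gm)\to Y$ is the neutral gerbe, and $\CPic{-1}_s\to\underline{\CPic{-1}_s}$ is a gerbe because all stable sheaves are simple), and it suffices to prove that the induced map of rigidifications $\bar\Phi:Y\to\underline{\CPic{-1}_s}$ is an isomorphism.

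The inverse arises from Serre duality. Given $\ell\in\CPic{-1}_{ss}(Y)$, I would first show $H^0(Y,\ell)=0$: a nonzero section is a map $O_Y\to\ell$ whose image is a pure one-dimensional quotient $O_Z$ of $O_Y$ with $\chi(O_Z)\ge 0$ (again because $p_a(Y)=1$), hence of slope $\ge 0>-1/\rk_w\ell$, contradicting semistability. With $h^0(\ell)=0$ and $\chi(\ell)=-1$ we get $h^1(\ell)=1$, so by Serre duality and the triviality of the dualizing sheaf (Remark~\ref{RemDualizing}) the space $\Hom(\ell,O_Y)\cong H^1(Y,\ell)^\vee$ is one-dimensional. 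Let $\phi:\ell\to O_Y$ be the nonzero map, unique up to scalar. I would then prove that $\phi$ is injective with $\coker\phi\cong O_y$ for a single reduced point $y$, so that $\ell\cong\cI_y$; this proves surjectivity of $\bar\Phi$ on points and simultaneously that $\CPic{-1}_{ss}=\CPic{-1}_s$. Injectivity and the length-one cokernel follow by comparing ranks at the generic points (where $\phi$ is multiplication by a nonzerodivisor, as $\ell$ is a generalized line bundle) together with the numerical identity $\chi(\coker\phi)=\chi(O_Y)-\chi(\ell)=1$, while stability forces $\phi$ to be nonzero on every component and so prevents the cokernel from gaining a one-dimensional piece. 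Carrying out this construction for the universal sheaf $\ell^{\mathrm{univ}}$ on $\CPic{-1}_s\times Y$ (the relative Serre-duality map $\ell^{\mathrm{univ}}\to p_Y^*O_Y$, twisted by a line bundle pulled back from the base, which is where the gerbe enters) gives a cokernel supported on the graph of a morphism $f:\underline{\CPic{-1}_s}\to Y$; this is the candidate inverse to $\bar\Phi$.

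To finish I would match infinitesimal structure by computing $\Ext^\bullet(\cI_y,\cI_y)$ locally at $y$: one has $\Hom(\cI_y,\cI_y)=\C$, $\Ext^2=0$ (dimension one), and $\Ext^1(\cI_y,\cI_y)\cong T_yY$. For smooth $y$ this is $H^1(O_Y)=\C=T_yY$; at a singular point the local term $H^0(\EXT^1(\cI_y,\cI_y))$ in the local-to-global sequence contributes exactly the extra dimensions of $T_yY$. Unobstructedness then shows $\bar\Phi$ and $f$ are mutually inverse (equivalently, that $\bar\Phi$ is an étale bijection between proper, reduced, separated spaces, hence an isomorphism), and combined with the gerbe compatibility this yields $Y\times B(\gm)\iso\CPic{-1}_s=\CPic{-1}_{ss}$.

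The hard part will be everything in the non-integral case, which is precisely the point of this section (cf.\ the example of four lines meeting at a vertex). The estimates "the image of a section has $\chi\ge 0$'', the claim that $\phi$ is injective with cokernel $O_y$ for a \emph{single reduced} point, and the local computation $\Ext^1(\cI_y,\cI_y)\cong T_yY$ at the singularities cannot be reduced to the smooth or integral case; they must be verified against explicit local models (dual numbers along multiple components, the Gorenstein condition at the singular points). Ensuring that the destabilizing-subsheaf bounds hold uniformly over all components, and in particular do not require genericity of the weight $w$, is the technical heart of the argument.
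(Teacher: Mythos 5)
Your pointwise classification is essentially the paper's own argument (Proposition~\ref{GenLB}): vanishing of $H^0(Y,\ell)$, one-dimensionality of $H^1(Y,\ell)$, and the resulting map $\ell\to O_Y$ with length-one cokernel. But the input you take for granted --- that every pure one-dimensional quotient of $O_Y$ has $\chi\ge 0$, ``because $p_a(Y)=1$'' --- is exactly the nontrivial point: it is the stability of $O_Y$ (Lemma~\ref{lm:OStable}), and it is \emph{not} a formal consequence of arithmetic genus one. The paper proves it by a second Serre-duality argument that uses triviality of the dualizing sheaf: a pure quotient $\ell_1$ of $O_Y$ with $\chi(\ell_1)\le 0$ has a section, hence $H^1(Y,\ell_1)\ne0$, hence $\ell_1^\vee$ has a nonzero section; but $\ell_1^\vee$ embeds into $O_Y$ as a proper subsheaf, which is absurd since $H^0(Y,O_Y)=\C$. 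Your fallback of ``verifying against explicit local models'' is not available: the singularities of $Y$ are not assumed planar and are not classified (the paper's example of four concurrent lines in $\p3$ is exactly a warning that no such list exists). Likewise, injectivity of $\phi$ cannot be read off from ``multiplication by a nonzerodivisor at the generic points'' when $Y$ is non-reduced; the paper deduces it from stability of $O_Y$ together with stability of $\ell$, purely via Euler characteristics.

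The closing step is where your route genuinely fails, and it is also where the paper does something different. First, $\Ext^2(\cI_y,\cI_y)\ne0$ whenever $y$ is a singular point: locally at a node, $\cI_y$ is the first syzygy of the residue field, and a dimension shift along $0\to\cI_y\to O_Y\to O_y\to 0$ together with the Gorenstein property gives $\EXT^2(\cI_y,\cI_y)\cong\EXT^2(O_y,O_y)\ne0$, which survives to global $\Ext^2$ because $Y$ is a curve. Consistently, the space you are trying to identify with the moduli space is $Y$ itself, which is \emph{singular} at $y$, so deformations of $\cI_y$ are obstructed and ``unobstructedness'' cannot be invoked. Second, $Y$ may be non-reduced (any plane cubic is allowed), so ``\'etale bijection between proper, reduced, separated spaces'' does not apply; and in any case matching tangent spaces at closed points plus bijectivity on points never yields an isomorphism without flatness (consider $\spec\C[t]/(t^2)\to\spec\C[t]/(t^3)$). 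The paper avoids all of this: Proposition~\ref{FamiliesGenLB} proves the statement directly for an \emph{arbitrary} locally Noetherian family, by dualizing the family (Lemma~\ref{EllipticDual}, which supplies the needed flatness and base-change properties of $\ell\mapsto\ell^\vee$), pushing forward to produce the line bundle $L$, and identifying the resulting ideal sheaf with the graph of a morphism $\psi:S\to Y$; together with the uniqueness clause, this computes the functor of points of $\CPic{-1}_s(Y)$ and gives the stack isomorphism with no deformation theory, properness, or reducedness. Your candidate inverse $f$ built from the universal sheaf is the right object; the correct way to finish is to check that the two composites are the identity using the uniqueness (up to scalar) of the duality map --- which is in effect what the paper's uniqueness statement does --- rather than through the \'etale/unobstructedness argument, which as written is not repairable.
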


\begin{proof} Let $S$ be a Noetherian test scheme. A morphism $S\to Y\times B(\gm)$ is a pair
of a morphism $\psi:S\to Y$ and a line bundle $L$ on $S$. To such pair, we assign the coherent sheaf
$\cF_{\psi,L}=(\id_S,\psi)_*L$ on $S\times Y$. Obviously, $\cF_{\psi,L}$ is a family of sky-scraper sheaves
of length one on $Y$: it is flat over $S$ and its restriction to $\{s\}\times Y$ is a length-one sky-scraper
sheaf for any $s\in S$. Conversely, any family of sky-scraper sheaves of length one on $Y$ is of the form
$\cF_{\psi,L}$ for uniquely determined $\psi$ and $L$. In other
words, as $S$ varies, the correspondence
\[(\psi,L)\mapsto \cF_{\psi,L}\]
defines an isomorphism between $Y\times B(\gm)$ and the moduli stack of sky-scraper sheaves of length one on $Y$.

Proposition~\ref{Pointwise iso} implies that the Fourier-Mukai transform defines an equivalence between
the groupoid of $S$-families of sky-scraper sheaves of length one on $Y$ and the groupoid of $S$-families
of degree $-1$ stable generalized line bundles on $Y$. This follows from Corollary~1.9 of \cite{Nahm}
(see also Proposition~4.2 of \cite{EquivalencesAndFM}, or, for the classical Fourier-Mukai transform on abelian varieties, \cite[Theorem~1.6]{FourierAndModuli}). Explicitly, if $\cF$ is an $S$-flat coherent sheaf on $S\times Y$ whose restriction to every fiber $\{s\}\times Y$ is a length-one sky-scraper sheaf, the corresponding family of
generalized line bundles is
\[\Xi(\cF)=Rp_{13,*}(p_{23}^*(\cP)\otimes^L p_{12}^*(\cF)).\]
Here $p_{12},p_{13}:S\times Y\times Y\to S\times Y$ and $p_{23}:S\times Y\times Y\to Y\times Y$ are the
projections.

It now remains to notice that for $\cF=\cF_{\psi,L}$, its image $\Xi(\cF_{\psi,L})$ is given by \eqref{picmap}.
\end{proof}

\section{Geometric description of $\Mh$}\label{MH}
Recall that our goal is to calculate cohomology of certain
natural vector bundle on $\cM$ (or more precisely, a direct
image, see Theorem~\ref{Theorem3}). In this section we
calculate the direct image of the extension of this sheaf to
$\Mh$ (see Section~\ref{DEFMB} for the definition of $\Mh$). The main
result is Proposition~\ref{MhXxY}. The calculation is based on
explicit identification of $\Mh$, see Corollary~\ref{MhY}, and
applying the Fourier--Mukai transform.

We claim that $\Mh$ is the moduli stack of collections
$(L,\nabla;E)$, where $L$ is a rank 2 degree $d$ vector bundle
on $\p1$, $E$ is a one-dimensional vector space, $\nabla:L\to
L\otimes\Omega_\p1(\divisor)\otimes_\C E$ is an
$O_{\p1}$-linear morphism, satisfying the following
conditions:\\
\noindstep\label{NoNilp} $\nabla$ is not nilpotent, that is $\nabla^2\ne0$.\\
\noindstep\label{HitchinTrace} $\tr\nabla=0$.\\
\noindstep\label{HitchinDet} $\det\nabla$ is a section of
$E^{\otimes2}\otimes_\C\Omega_\p1^{\otimes2}(\divisor)$.\\
\noindstep $(L,\nabla;E)$ is semistable.\\
Note that $\tr\nabla$ is a section of
$E\otimes_\C\Omega_\p1(\divisor)$. It follows
from~(\ref{hitchincond}) that $\tr\nabla$ is in fact a section
of $E\otimes_\C\Omega_\p1$, which implies
condition~(\ref{HitchinTrace}). Condition~(\ref{HitchinDet}) is
a condition on the polar part of $\nabla$: a priori
$\det\nabla$ is in
$E^{\otimes2}\otimes_\C\Omega_\p1^{\otimes2}(2\divisor)$. This
condition also follows from~(\ref{hitchincond}).

Note that $\nabla^2=-\det\nabla\otimes\id_L$. Recall that $\cE$
is the line bundle on $\Mb$ whose fiber at
$(L,\nabla;\epsilon\in E)$ is $E$. For simplicity we write
$\cE$ for $\cE|_\Mh$. The following statement follows
from~(\ref{NoNilp}) and~(\ref{HitchinDet}) above
\begin{Lemma}\label{Esquare}
\[
\cE^{\otimes2}|_\Mh\simeq O_{\Mh}.
\]
\end{Lemma}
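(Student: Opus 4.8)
The plan is to produce an isomorphism $\cE^{\otimes 2}|_\Mh \simeq O_\Mh$ by writing down a nowhere-vanishing global section of $\cE^{\otimes 2}$ (equivalently, a global section of $\cE^{\otimes 2}$ that trivializes it) built canonically out of the data $(L,\nabla;E)\in\Mh$. The natural candidate is $\det\nabla$, which by condition~(\ref{HitchinDet}) lives in $E^{\otimes 2}\otimes_\C\Omega_\p1^{\otimes 2}(\divisor)$. The key point is that $\det\nabla$ is built algebraically from $\nabla$ and varies holomorphically in families, so it defines a morphism of line bundles on $\Mh$ rather than just a fiberwise assignment.

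First I would make precise how $\det\nabla$ is interpreted as a section of $\cE^{\otimes 2}$ twisted by a fixed line bundle. Since $(L,\nabla;E)$ ranges over $\Mh$ with the underlying data $\nu_1,\nu_2$ fixed (see~\S\ref{DEFMB}), the polar part of $\det\nabla$ is pinned down by $\epsilon^{\otimes 2}\otimes\nu_2$; but on $\Mh$ we have $\epsilon = 0$, so $\det\nabla$ is a \emph{genuine} section of $E^{\otimes 2}\otimes_\C\Omega_\p1^{\otimes 2}(\divisor)$ as stated in~(\ref{HitchinDet}), with no higher-order poles. On $\p1$ with $\deg\divisor = 4$, the line bundle $\Omega_\p1^{\otimes 2}(\divisor)$ has degree $-4+4 = 0$, and in fact it is canonically trivial: a nowhere-vanishing quadratic differential with its double pole cancelled by the degree-$4$ divisor $\divisor$. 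So I would fix once and for all a trivialization $\Omega_\p1^{\otimes 2}(\divisor)\simeq O_\p1$, under which $\det\nabla$ becomes a global \emph{function} on $\p1$ valued in $E^{\otimes 2}$. Because $H^0(\p1,O_\p1) = \C$, this function is a constant, i.e.\ an element of $E^{\otimes 2}$, and this is exactly a fiber of $\cE^{\otimes 2}$.

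Next I would assemble these fiberwise elements into a global section of $\cE^{\otimes 2}$ over $\Mh$. The characteristic-polynomial coefficients of $\nabla$ are defined for families (this is the content of the Hitchin map $\cN_0\to B$ used in~\S\ref{COMPACTIFICATION}, and $\det\nabla = c_2(\nabla)$ in rank two), so $\det\nabla$ defines a morphism $O_\Mh \to \cE^{\otimes 2}\otimes p_{\p1,*}\bigl(\Omega_\p1^{\otimes 2}(\divisor)\bigr)$ over $\Mh\times\p1$, and pushing down along $\p1$ together with the trivialization above gives a morphism $s\colon O_\Mh\to\cE^{\otimes 2}$. The crucial step is to show $s$ is an \emph{isomorphism}: equivalently, that $\det\nabla$ is nowhere zero as a fiber of $\cE^{\otimes 2}$. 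Here I would invoke condition~(\ref{NoNilp}): the identity $\nabla^2 = -\det\nabla\otimes\id_L$ noted just before the lemma shows that $\det\nabla = 0$ forces $\nabla^2 = 0$, i.e.\ $\nabla$ nilpotent, contradicting~(\ref{NoNilp}). Thus on every closed point of $\Mh$ the section $s$ is nonzero, so the map of line bundles $s\colon O_\Mh\to\cE^{\otimes 2}$ is a fiberwise-nonzero map of line bundles on a reduced (in fact smooth, by the l.c.i.\ and dimension analysis) stack, hence an isomorphism.

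The main obstacle I anticipate is the bookkeeping in the second step: making sure that $\det\nabla$, \emph{a priori} a section of the bundle with the larger polar bound $E^{\otimes 2}\otimes\Omega_\p1^{\otimes 2}(2\divisor)$, really descends to a section of the $\deg 0$ bundle $E^{\otimes 2}\otimes\Omega_\p1^{\otimes 2}(\divisor)$ uniformly in families, and that the resulting trivialization is canonical enough to glue. This is precisely what condition~(\ref{HitchinDet}) and the defining equations~(\ref{hitchincond}) of $\Mb$ guarantee, but I would want to check it is compatible with the $\gm$-action recording the weight of $\cE$, so that $s$ really lands in $\cE^{\otimes 2}$ and not some other twist. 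Once that is pinned down, the nilpotency argument via $\nabla^2 = -\det\nabla\otimes\id_L$ finishes the proof cleanly, with no further computation needed.
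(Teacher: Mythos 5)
Your proof is correct and follows essentially the same route as the paper: the paper states the lemma as an immediate consequence of conditions~(\ref{NoNilp}) and~(\ref{HitchinDet}), the implicit argument being exactly yours — $\det\nabla$ is a section of $E^{\otimes2}\otimes_\C\Omega_\p1^{\otimes2}(\divisor)$ with $\Omega_\p1^{\otimes2}(\divisor)\simeq O_\p1$, nonvanishing by $\nabla^2=-\det\nabla\otimes\id_L$ and non-nilpotency, hence trivializing $\cE^{\otimes2}$ (and indeed, immediately after the lemma the paper fixes $\mu\in H^0(\p1,\Omega_\p1^{\otimes2}(\divisor))$ and normalizes $\det\nabla=\mu$, just as you propose). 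Your extra care about working in families and the $\gm$-weight is sound bookkeeping but not a departure from the paper's argument.
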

Let us fix a global section $\mu$ of
$\Omega_\p1^{\otimes2}(\divisor)\simeq O_{\p1}$, $\mu\ne0$. One
can choose an isomorphism $E\simeq\C$ such that
$\det\nabla=\mu$ (there are two choices for such an
isomorphism). Denote by $\cY$ the moduli stack of pairs
$(L,\nabla)$, where $L$ is a rank 2 degree $d$ vector bundle on
$\p1$, $\nabla\in H^0(\p1,\END(L)\otimes
\Omega_\p1(\divisor))$, $\tr\nabla=0$, $\det\nabla=\mu$, and
the pair $(L,\nabla)$ is semistable. We have proved the
following
\begin{Proposition}
The correspondence $(L,\nabla)\mapsto(L,\nabla;0\in\C)$ yields
a double cover $\cY\to\Mh$. Besides, $\Mh$ is identified with
the quotient stack $\mu_2\backslash\cY$, where $\pm1\in\mu_2$
acts on $\cY$ by $(L,\nabla)\mapsto(L,\pm\nabla)$.
\end{Proposition}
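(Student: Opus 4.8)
The plan is to exhibit $\cY\to\Mh$ as the $\mu_2$-torsor produced by the standard square-root construction applied to the line bundle $\cE$ on $\Mh$ together with a canonical trivialization of $\cE^{\otimes2}$. First I would upgrade Lemma~\ref{Esquare} to a \emph{canonical} isomorphism. On $\Mh$ the section $\det\nabla$ lives in $\cE^{\otimes2}\otimes_\C\Omega_{\p1}^{\otimes2}(\divisor)$; since $\nabla^2=-\det\nabla\otimes\id_L$ and $\nabla$ is not nilpotent, $\det\nabla$ is nowhere vanishing, and as $\Omega_{\p1}^{\otimes2}(\divisor)\simeq O_{\p1}$ it is fibrewise over $\p1$ a nonzero element of $\cE^{\otimes2}$. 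Comparing with the fixed nowhere-vanishing $\mu\in H^0(\p1,\Omega_{\p1}^{\otimes2}(\divisor))$ gives a canonical isomorphism $\tau\colon\cE^{\otimes2}\iso O_{\Mh}$ determined by $(\tau\otimes\id)(\det\nabla)=\mu$. The key point is that $\tau$ is defined in families: for any $S$-point of $\Mh$, the ratio $\det\nabla/\mu$ is a nowhere-vanishing section over $S$ of the pulled-back line bundle $\cE^{\otimes2}$, hence a trivialization, and this is functorial in $S$.

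Next I would introduce the auxiliary stack $\cV\to\Mh$ whose $S$-points are $S$-points of $\Mh$ equipped with a trivialization $\phi\colon\cE\iso O_{\Mh}$ with $\phi^{\otimes2}=\tau$. By construction $\cV\to\Mh$ is a $\mu_2$-torsor, the two sheets being the two square roots $\pm\phi$ of $\tau$, with $-1\in\mu_2$ acting by $\phi\mapsto-\phi$; tautologically $\Mh\simeq\mu_2\backslash\cV$. I would then write down mutually inverse, $\mu_2$-equivariant morphisms over $\Mh$ identifying $\cV$ with $\cY$. In one direction, a point $(L,\nabla;0\in E,\phi)$ of $\cV$ goes to $(L,\phi\nabla)$, where $\phi\nabla\colon L\to L\otimes\Omega_{\p1}(\divisor)$ is obtained by using $\phi$ to trivialize $\cE$; then $\tr(\phi\nabla)=0$ and $\det(\phi\nabla)=(\phi^{\otimes2}\otimes\id)(\det\nabla)=(\tau\otimes\id)(\det\nabla)=\mu$, so $(L,\phi\nabla)\in\cY$. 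In the other direction $(L,\nabla)\in\cY$ goes to $(L,\nabla;0\in\C)$ together with $\phi=\id_\C$, which satisfies $\phi^{\otimes2}=\tau$ precisely because $\det\nabla=\mu$. These are visibly inverse to one another and carry $\phi\mapsto-\phi$ to $(L,\nabla)\mapsto(L,-\nabla)$; the target action is well defined on $\cY$ because for a rank-two bundle $\det(-\nabla)=\det\nabla=\mu$ and $\tr(-\nabla)=0$, while semistability and non-nilpotency are manifestly preserved.

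Putting these together gives $\cY\iso\cV$ as $\mu_2$-stacks over $\Mh$, so that $\cY\to\Mh$ is a $\mu_2$-torsor --- in particular a finite \'etale double cover --- and $\Mh\simeq\mu_2\backslash\cV\simeq\mu_2\backslash\cY$ with $\mu_2$ acting by $(L,\nabla)\mapsto(L,-\nabla)$, as claimed. I do not anticipate a genuinely hard step: the entire geometric content is already packaged in Lemma~\ref{Esquare}, and what remains is torsor bookkeeping. The only point demanding care is that $\tau$, the torsor $\cV$, and the equivalence $\cV\simeq\cY$ be constructed functorially for arbitrary test schemes rather than merely on geometric points, which comes down to the fibrewise non-nilpotency making $\det\nabla/\mu$ a nowhere-vanishing section of $\cE^{\otimes2}$ over the base. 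A secondary subtlety worth flagging is that the $\mu_2$-action on $\cY$ may fail to be free --- a traceless $\nabla$ can admit a global automorphism conjugating it to $-\nabla$ --- so $\mu_2\backslash\cY$ can acquire extra automorphisms; this is harmless, since the projection from a quotient stack is a $\mu_2$-torsor regardless of freeness and it simply matches the sign automorphisms already present in $\Aut_{\Mh}$.
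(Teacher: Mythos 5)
Your proposal is correct and is essentially the paper's own argument: the paper's proof consists precisely of the observation that, since $\det\nabla$ is a nowhere-vanishing section of $\cE^{\otimes2}\otimes_\C\Omega_{\p1}^{\otimes2}(\divisor)$, an isomorphism $E\simeq\C$ with $\det\nabla=\mu$ exists and is unique up to sign, so that $\cY\to\Mh$ is exactly the $\mu_2$-torsor of such square roots and $\Mh=\mu_2\backslash\cY$. Your write-up only adds the explicit functor-of-points bookkeeping (the canonical trivialization $\tau$ of $\cE^{\otimes2}$ and the auxiliary torsor $\cV$), which the paper leaves implicit in Lemma~\ref{Esquare} and the sentence preceding the proposition.
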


It follows directly from the definition of $\cY$ that the
pullback of $\cE$ to $\cY$ is $O_\cY$.

Set $\cA:=O_\p1\oplus\Omega_\p1(\divisor)^{-1}$. Then $\cA$ is
a sheaf of $O_\p1$-algebras with respect to the multiplication
\[
    (f_1,\tau_1)\times(f_2,\tau_2):=
(f_1f_2-\mu\otimes\tau_1\otimes\tau_2,f_1\tau_2+f_2\tau_1).
\]
Set $\pi:Y:= \SPEC(\cA)\to\p1$. Denote by $y_i\in Y$ the
preimage of $x_i\in\p1$, and by $\sigma:Y\to Y$ the involution
induced by $\sigma^*:\cA\to\cA:(f,\tau) \mapsto(f,-\tau)$.

\begin{Proposition}
$Y$ is a generalized elliptic curve.
\end{Proposition}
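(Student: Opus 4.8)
The plan is to verify the three defining properties directly: that $Y$ is a connected, pure one-dimensional, Gorenstein projective curve, that $H^0(Y,O_Y)=\C$, and that the dualizing sheaf $\omega_Y$ is trivial. Since $\cA$ is locally free of rank $2$ over $O_\p1$, the map $\pi\colon Y\to\p1$ is finite and flat of degree $2$; hence $Y$ is a projective curve, and being finite and flat over the smooth curve $\p1$ it is Cohen--Macaulay of pure dimension $1$ (in particular it has no embedded points). From $\pi_*O_Y=\cA=O_\p1\oplus\Omega_\p1(\divisor)^{-1}$ and $\deg\Omega_\p1(\divisor)=2$ one gets $H^0(Y,O_Y)=H^0(\p1,O_\p1)\oplus H^0(\p1,\Omega_\p1(\divisor)^{-1})=\C$, so $Y$ is connected and the first condition holds. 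The same computation gives $\chi(O_Y)=\chi(\cA)=1+(1-2)=0$, so the arithmetic genus is $1$, as it must be.

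The heart of the argument is the triviality of $\omega_Y$, which I would obtain by adjunction after realizing $Y$ as a Cartier divisor in a smooth surface. Let $\bar\pi\colon V\to\p1$ be the total space of the line bundle $\Omega_\p1(\divisor)$, a smooth quasi-projective surface, and let $t\in H^0(V,\bar\pi^*\Omega_\p1(\divisor))$ be the tautological section. The displayed multiplication on $\cA$ is exactly the relation $t\otimes t=-\bar\pi^*\mu$ inside $\bar\pi^*\Omega_\p1(\divisor)^{\otimes2}$, so $Y=\SPEC(\cA)$ is the zero locus in $V$ of the section $t\otimes t+\bar\pi^*\mu$ of $\bar\pi^*\Omega_\p1(\divisor)^{\otimes2}$; here $\mu$ is viewed, through the inclusion $\Omega_\p1^{\otimes2}(\divisor)\hookrightarrow\Omega_\p1^{\otimes2}(2\divisor)=\Omega_\p1(\divisor)^{\otimes2}$, as a section vanishing along $\divisor$. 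Thus $Y\subset V$ is an effective Cartier divisor in a smooth surface, hence a local complete intersection; in particular $Y$ is Gorenstein, and $O_V(Y)=\bar\pi^*\Omega_\p1(\divisor)^{\otimes2}$.

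It then remains to compute $\omega_Y$ by the adjunction formula $\omega_Y=(\omega_V\otimes O_V(Y))|_Y$. For the total space of a line bundle the relative canonical sheaf is $\omega_{V/\p1}=\bar\pi^*\Omega_\p1(\divisor)^{-1}$, so $\omega_V=\bar\pi^*\bigl(\Omega_\p1\otimes\Omega_\p1(\divisor)^{-1}\bigr)=\bar\pi^*O_\p1(-\divisor)$. Hence
\[
\omega_V\otimes O_V(Y)=\bar\pi^*\bigl(O_\p1(-\divisor)\otimes\Omega_\p1(\divisor)^{\otimes2}\bigr)=\bar\pi^*\bigl(\Omega_\p1^{\otimes2}(\divisor)\bigr)\simeq\bar\pi^*O_\p1=O_V,
\]
using $\Omega_\p1^{\otimes2}(\divisor)\simeq O_\p1$. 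Restricting to $Y$ gives $\omega_Y\simeq O_Y$, which is the last condition, and completes the verification that $Y$ is generalized elliptic.

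I expect the only delicate point to be the bookkeeping of twists: checking that the algebra relation really presents $Y$ as the hypersurface $t\otimes t+\bar\pi^*\mu=0$ with $O_V(Y)=\bar\pi^*\Omega_\p1(\divisor)^{\otimes2}$, and pinning down the relative canonical sheaf $\omega_{V/\p1}=\bar\pi^*\Omega_\p1(\divisor)^{-1}$ with the correct sign of the twist. Once these are in place everything collapses, because the combination $\Omega_\p1\otimes\Omega_\p1(\divisor)^{-1}\otimes\Omega_\p1(\divisor)^{\otimes2}=\Omega_\p1^{\otimes2}(\divisor)$ is exactly the trivial bundle of which $\mu$ is a nowhere-vanishing section. (Alternatively one can bypass the embedding in $V$ and compute $\omega_Y=\pi^!\Omega_\p1=\HOM_{O_\p1}(\cA,\Omega_\p1)$ directly, exhibiting a free $\cA$-module generator via the residue/trace pairing on $\cA$; but the adjunction route seems cleanest.)
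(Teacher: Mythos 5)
Your proof is correct, and it takes a genuinely different route from the paper's. The paper invokes duality for the finite morphism $\pi$ to write the dualizing sheaf as $\HOM_{O_{\p1}}(\cA,\Omega_{\p1})$, and then shows this is a free $\cA$-module of rank one by exhibiting an explicit generator: the functional $\gamma$ given by projection onto the summand $\Omega_{\p1}(\divisor)^{-1}$ followed by an isomorphism $\Omega_{\p1}(\divisor)^{-1}\simeq\Omega_{\p1}$ (both have degree $-2$); injectivity of $\cA\to\HOM_{O_{\p1}}(\cA,\Omega_{\p1})$, $1\mapsto\gamma$, plus the observation that an injective map between isomorphic vector bundles on a curve is an isomorphism, finishes the argument. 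You instead realize $Y=\SPEC(\cA)$ as the zero divisor of $t\otimes t+\bar\pi^*\mu$ inside the total space $V$ of $\Omega_{\p1}(\divisor)$ and apply adjunction, so that triviality of $\omega_Y$ falls out of the line-bundle arithmetic $O_{\p1}(-\divisor)\otimes\Omega_{\p1}(\divisor)^{\otimes2}=\Omega_{\p1}^{\otimes2}(\divisor)\simeq O_{\p1}$ — exactly the bundle of which $\mu$ is a trivializing section. (Your sketched alternative at the end is precisely the paper's proof.) Your route costs the bookkeeping of identifying the ideal of $Y$ with that of the hypersurface, which you correctly flag, and it uses adjunction/$i^!$ for a Cartier divisor in the non-proper surface $V$ — harmless, since duality for finite-type separated schemes and the Koszul computation of $i^!$ are local and do not require properness. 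In exchange it gives strictly more than the paper's argument: it exhibits $Y$ as a hypersurface in a smooth surface, so $Y$ is a local complete intersection with planar singularities and Gorenstein-ness is manifest (the paper's definition deliberately allows non-planar generalized elliptic curves, but this $Y$ — a spectral curve of the Hitchin system — is planar), and the triviality of $\omega_Y$ is structural rather than resting on an explicit generator. Both proofs share the computation $H^0(Y,O_Y)=H^0(\p1,\cA)=\C$.
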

\begin{proof}
Since $\pi$ is a finite morphism, $Y$ has dimension 1. The
dualizing complex of $Y$ is given by $\HOM(\cA,\Omega_\p1)$.
Thus we need to show that this sheaf is isomorphic to $\cA$ as
an $\cA$-module. It is clear on the level of $O_\p1$-modules,
since $\cA\simeq O_\p1\oplus O_\p1(-2)$. Let
$\gamma\in\HOM(\cA,\Omega_\p1)$ be the composition of the
projection $\cA\to(\Omega_\p1(\divisor))^{-1}$ and an
isomorphism. One checks easily that the map of $\cA$-modules
$\cA\to\HOM(\cA,\Omega_\p1)$ given by $1\mapsto\gamma$ is
injective. Now, an injective map of a vector bundle to an
isomorphic one is necessarily an isomorphism.

Also, $H^0(Y,O_Y)=H^0(\p1,\cA)=\C$, thus $Y$ is generalized
elliptic.
\end{proof}

\begin{Remark}\label{Types}
Actually $Y$ is always reduced. Precisely, $Y$ is a smooth
elliptic curve if $\divisor$ has no multiple points; $Y$ is a
nodal cubic if $\divisor$ has a single multiple point of
multiplicity 2; $Y$ is a cuspidal cubic if
$\divisor=3(x_1)+(x_2)$; $Y$ has two components, isomorphic to
$\p1$, which intersect transversally at two points if
$\divisor=2(x_1)+2(x_2)$; and $Y$ has two components,
isomorphic to $\p1$, which are tangent to each other if
$\divisor=4(x_1)$.
\end{Remark}

\begin{Proposition}\label{Hitchin}
$\cY$ is naturally isomorphic to $\CPic{d+2}_sY$, that is the
moduli stack of stable generalized line bundles of degree $d+2$
on $Y$.
\end{Proposition}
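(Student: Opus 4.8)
The plan is to invoke the Beauville--Narasimhan--Ramanan spectral correspondence for the double cover $\pi\colon Y\to\p1$. The algebra $\cA=\pi_*O_Y=O_\p1\oplus\Omega_\p1(\divisor)^{-1}$ was built so that the defining relation $\tau^2=-\mu$ encodes the characteristic polynomial $t^2+\mu$. Since $\pi$ is affine, pushforward identifies quasi-coherent $O_Y$-modules with quasi-coherent $\cA$-modules on $\p1$, and an $\cA$-module structure on a bundle $L$ is exactly an $O_\p1$-linear map $\nabla\colon L\to L\otimes\Omega_\p1(\divisor)$ — the action of the degree-one generator $\tau$ — subject to $\nabla^2=-\mu\cdot\id_L$. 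For $(L,\nabla)\in\cY$ this relation holds by Cayley--Hamilton (the paper already notes $\nabla^2=-\det\nabla\otimes\id_L$), so $L$ becomes an $\cA$-module and hence $L=\pi_*\ell$ for a unique sheaf $\ell$ on $Y$. Conversely, starting from $\ell$ I would set $L=\pi_*\ell$ with $\nabla$ equal to multiplication by $\tau$; here I must check $\tr\nabla=0$ and $\det\nabla=\mu$, which I would do by comparing the characteristic polynomial of $\nabla$ with $t^2+\mu$ at the generic point of $\p1$ and noting that global sections of line bundles on $\p1$ agreeing generically agree everywhere. Both functors are manifestly compatible with base change, so they define a morphism of stacks, indeed an isomorphism of groupoids over each test scheme.

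Next I would match numerical invariants to land in $\CPic{d+2}(Y)$. As $\pi$ is finite and $L=\pi_*\ell$ is locally free, hence torsion-free, $\ell$ has no zero-dimensional subsheaf, so $\ell$ is of pure dimension $1$; conversely $\pi_*$ of a pure sheaf is torsion-free, hence locally free on the smooth curve $\p1$. The generalized--line--bundle condition $\rk_s\ell=m(s)$ at every generic point is equivalent to $\pi_*\ell$ having $O_\p1$-rank $2$: one has $\rk_{O_\p1}\pi_*\ell=\sum_s[\kappa(s):\kappa(\eta)]\,\rk_s\ell$, and by Remark~\ref{Types} every component of $Y$ is reduced, so $m(s)=1$ and the sum equals $2$ precisely when each $\rk_s\ell=1$. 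Finally, since $Y$ is generalized elliptic one has $\chi(O_Y)=0$, so $\deg\ell=\chi(\ell)=\chi(\p1,\pi_*\ell)=\deg L+2=d+2$. This identifies the stacks underlying $\cY$ and $\CPic{d+2}(Y)$ before stability is imposed.

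The real work — and the step I expect to be the main obstacle — is matching (semi)stability. Under $\pi_*$, the $\nabla$-invariant subbundles $L_0\subset L$ correspond bijectively to $O_Y$-subsheaves $\ell_0\subset\ell$ with $\ell/\ell_0$ pure, and $\chi(\ell_0)=\chi(\p1,L_0)=\deg L_0+\rk_{O_\p1}L_0$. When $Y$ is integral there are no proper invariant subbundles, since a rank-one $O_\p1$-piece would force $\rk_{O_\p1}\pi_*\ell_0$ to be even; correspondingly every generalized line bundle on an integral $Y$ is automatically stable (a proper pure subsheaf has full generic rank, hence strictly smaller $\chi$), so both sides reduce to the whole stack. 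When $Y$ is reducible — two copies of $\p1$ exchanged by $\sigma$ — the invariant line subbundles are exactly the subsheaves $\ell_0$ generically supported on one component, with $\rk_w\ell_0=w(s)$. Choosing the $\sigma$-invariant (equal) weight, a short computation turns the Higgs inequality $\deg L_0\le d/2$ into $\chi(\ell_0)/\rk\ell_0\le\chi(\ell)/\rk\ell$ on the nose. Since $d$ is odd, $\deg L_0\le d/2$ can never be an equality, so semistability coincides with stability on both sides and $\cY\iso\CPic{d+2}_{ss}(Y)=\CPic{d+2}_s(Y)$, in parallel with the degree $-1$ situation of Proposition~\ref{PicY}. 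I expect the delicate bookkeeping to be in the reducible, non-integral boundary cases: verifying purity of $\ell/\ell_0$, confirming that the weight normalization is forced, and checking that the numerics are stable under the families appearing in the base-change argument.
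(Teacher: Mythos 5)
Your proposal follows the same route as the paper's proof: the Beauville--Narasimhan--Ramanan/Hitchin spectral correspondence through $\pi_*$, with the weight function chosen so that $\rk_w\ell_0=\rk_{O_{\p1}}\pi_*\ell_0$ (the paper takes $w$ to be the degree of $\pi$ on each component, which in the reducible case is exactly your ``equal'' weight), and the Riemann--Roch identity $\chi(\ell_0)=\deg\pi_*\ell_0+\rk_{O_\p1}\pi_*\ell_0$ to identify the two stability conditions. Your treatment of stability, of the degree shift $d\mapsto d+2$, and of $d$ odd forcing semistable $=$ stable is correct, and more detailed than the paper's proof, which cites the correspondence as ``a standard fact about the Hitchin system.''

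There is, however, one step that is false as stated, precisely in the reducible cases ($\divisor=2(x_1)+2(x_2)$ or $\divisor=4(x_1)$) which you flag at the end, but for the wrong reasons. You claim that $\rk_{O_\p1}\pi_*\ell=2$ is \emph{equivalent} to the generalized line bundle condition $\rk_s\ell=m(s)$ for all generic points $s$. The forward implication is fine, but the converse fails when $Y$ has two components, each of degree $1$ over $\p1$: a sheaf of generic rank $(2,0)$ --- rank $2$ on one component and $0$ on the other --- also pushes forward to a rank-$2$ bundle. This is exactly the case your argument must exclude in the direction $\cY\to\CPic{d+2}_s(Y)$: writing $-\mu=\nu^{\otimes2}$, the Higgs field $\nabla=\nu\cdot\id_L$ on any rank-$2$ bundle $L$ satisfies $\nabla^2=-\mu\otimes\id_L$, so Cayley--Hamilton alone (which is all your first paragraph retains of the hypotheses defining $\cY$) produces an $\cA$-module whose spectral sheaf has generic rank $(2,0)$ and rank-$2$ pushforward, yet is not a generalized line bundle. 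The fix is easy and is the same generic-point comparison you already use in the opposite direction: for $(L,\nabla)\in\cY$ the full characteristic polynomial of $\nabla$ at the generic point of $\p1$ is $t^2+\mu$, which in the reducible case has two \emph{distinct} roots $\pm\nu$ (since $\mu\ne0$), so both eigenspaces of $\nabla$, i.e.\ both generic ranks of $\ell$, equal $1$; equivalently, $\tr\nabla=0$ rules out $\nabla=\pm\nu\cdot\id_L$. The point your write-up misses is that the trace condition, and not merely $\nabla^2=-\mu\otimes\id_L$ plus rank bookkeeping, is what forces the generalized line bundle property.
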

\begin{proof}
Let $(L,\nabla)$ be a point of $\cY$. Then $L$ is an
$\cA$-module with respect to the multiplication
$(f,\tau)s:=fs+\tau\otimes\nabla s$, let us denote the
corresponding sheaf on $Y$ by $\ell$. It is a standard
fact about the Hitchin system that $\ell$
is a generalized line bundle on $Y$. The inverse construction
is given by $\ell\mapsto L:=\pi_*\ell$.

Let the weight function $w$ from Section~\ref{GENELL} be given by the
degree of the projection $\pi:Y\to\p1$. Then
$\rk\pi_*\ell_0=\rk\ell_0$ for any coherent sheaf $\ell_0$ on
$Y$.

We would like to show that $\ell$ is stable if and only if
$(L,\nabla)$ is stable. Note that $\nabla$-invariant subsheaves
of $L$ are in bijection with subsheaves $\ell_0\subset\ell$ via
$\ell_0\mapsto\pi_*\ell_0$. Further,
\begin{equation}\label{degree}
\deg\ell_0=\chi(\ell_0)=\chi(\pi_*
\ell_0)=\deg\pi_*\ell_0+\rk\pi_*\ell_0.
\end{equation}
It follows that the stability condition is the same.

It also follows from~(\ref{degree}) that the generalized line
bundles on $Y$ corresponding to rank 2 degree $d$ bundles on
$\p1$ have degree $d+2$.
\end{proof}

\begin{Remark}\label{SsTrivial}
If $\divisor$ is not even, then $Y$ is integral, and for every
$\ell_0\subset\ell$, $\ell_0\ne0$ we have $\rk\ell_0=\rk\ell$
thus the semistability condition is trivial.
\end{Remark}

Fix a degree $(d+3)/2$ line bundle $\lb$ on $\p1$ (recall that
$d$ is odd). A Higgs bundle $(L,\nabla)$ is semistable if and
only if $(L\otimes\lb,\nabla)$ is. Therefore,
Proposition~\ref{PicY} implies the following

\begin{Corollary}\label{MhY} Consider the map $Y\to\cY$ that
sends $y\in Y$ to the vector bundle $\lb\otimes\pi_* O_Y(-y)$
equipped with the natural Higgs field. The map induces an
isomorphism
\[Y\times B(\gm)\iso\cY.\]
Thus $\Mh$ is the quotient of the generalized elliptic curve
$Y$ by the action of $\mu_2\times\gm$, where $\mu_2$ acts by
$\sigma$, $\gm$ acts trivially.\qed
\end{Corollary}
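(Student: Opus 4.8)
The plan is to exhibit the map in the statement as the composite of three isomorphisms that are already available, and then to read off the description of $\Mh$ from the double cover $\cY\to\Mh$ established in the preceding proposition. Concretely, I would factor the correspondence $y\mapsto\lb\otimes\pi_*O_Y(-y)$ through the chain
\[
Y\times B(\gm)\xrightarrow{\ \sim\ }\CPic{-1}_s(Y)\xrightarrow{\ \otimes\,\pi^*\lb\ }\CPic{d+2}_s(Y)\xrightarrow{\ \sim\ }\cY,
\]
where the first arrow is Proposition~\ref{PicY}, the middle arrow is tensoring by the line bundle $\pi^*\lb$, and the last arrow is the spectral correspondence of Proposition~\ref{Hitchin}.

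First I would check the middle arrow is a well-defined isomorphism landing in the correct degree. Since $\pi:Y\to\p1$ is finite flat, $\pi^*\lb$ is an honest line bundle on $Y$, so tensoring by it preserves the generalized-line-bundle condition (it does not change $\rk_s$) and preserves stability (it rescales the slope of every subsheaf by the same amount); it is invertible via $\otimes\,\pi^*\lb^{-1}$, hence an isomorphism of stacks. For the degree, the projection formula gives $\pi_*(\ell\otimes\pi^*\lb)=(\pi_*\ell)\otimes\lb$, so combining with the formula $\deg\ell=\deg\pi_*\ell+\rk\pi_*\ell$ from~\eqref{degree} one gets $\deg(\ell\otimes\pi^*\lb)=\deg\ell+2\deg\lb=\deg\ell+(d+3)$, using $\deg\lb=(d+3)/2$. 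Thus $-1\mapsto d+2$, as needed. On points the composite then reads $y\mapsto O_Y(-y)\mapsto O_Y(-y)\otimes\pi^*\lb\mapsto\pi_*\bigl(O_Y(-y)\otimes\pi^*\lb\bigr)=\lb\otimes\pi_*O_Y(-y)$, equipped with the Higgs field coming from the tautological $\cA$-module structure, which is exactly the map in the statement.

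For the description of $\Mh$, I would invoke the preceding proposition, which identifies $\Mh=\mu_2\backslash\cY$ with $\pm1\in\mu_2$ acting by $(L,\nabla)\mapsto(L,\pm\nabla)$. Under the spectral correspondence of Proposition~\ref{Hitchin} the $\cA$-module structure for $(L,-\nabla)$ is the $\sigma^*$-twist of that for $(L,\nabla)$, so this $\mu_2$-action corresponds to $\ell\mapsto\sigma^*\ell$ on $\CPic{d+2}_s(Y)$. Because $\pi\circ\sigma=\pi$ we have $\sigma^*\pi^*\lb=\pi^*\lb$, and $\sigma^*O_Y(-y)=O_Y(-\sigma(y))$; hence under our composite isomorphism the $\mu_2$-action becomes $y\mapsto\sigma(y)$ on the $Y$-factor and the identity on $B(\gm)$. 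Writing $Y\times B(\gm)=[Y/\gm]$ with trivial $\gm$-action, we conclude $\Mh=(\mu_2\times\gm)\backslash Y$ with $\mu_2$ acting by $\sigma$ and $\gm$ trivially, as asserted.

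I do not expect a serious obstacle here: the two nontrivial propositions have already done the real work, and what remains is the degree bookkeeping of the twist by $\pi^*\lb$ together with the identification of the sign involution $(L,\nabla)\mapsto(L,-\nabla)$ with $\sigma$. The mildly delicate point is this last identification, which must be traced through the definition of $\SPEC(\cA)$ and the involution $\sigma^*:(f,\tau)\mapsto(f,-\tau)$; once that is observed, everything else is formal, which is why the statement is marked as immediate.
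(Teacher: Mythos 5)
Your proposal is correct and follows essentially the same route as the paper: the corollary is stated there with no proof precisely because it is the composite of Proposition~\ref{PicY}, the twist by $\lb$ (equivalently $\pi^*\lb$, which preserves semistability), and the spectral correspondence of Proposition~\ref{Hitchin}. The details you supply — the degree bookkeeping for $\otimes\,\pi^*\lb$ and the identification of the involution $(L,\nabla)\mapsto(L,-\nabla)$ with $\sigma$ — are exactly the steps the paper leaves implicit.
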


 Let us use the isomorphism of Corollary~\ref{MhY} to describe
the universal Higgs bundle on $\p1\times\Mh$. Denote this
universal Higgs bundle by $\xi$ and its pullback to $\p1\times
Y$ by $\tilde\xi:=(\id_\p1\times\bar\pi)^*\xi$ (here $\bar\pi$
is the natural composition $Y\to\cY\to\Mh$). Recall also that
$\cP:=O_{Y\times Y}(-\Delta)$ is the ideal sheaf of the
diagonal.

\begin{Corollary}\label{FMcalculation} We have
$\tilde\xi=(\pi\times\id_Y)_*\cP\otimes p_1^*\lb$. The action
of $\gm$ on $\tilde\xi$ is via the identity character $a\mapsto
a$ and the action of $\mu_2$ comes from its action on $\cP$
\lpar{}on $Y\times Y$, $-1\in\mu_2$ acts as
$\sigma^*\times\sigma^*$\rpar.

For the dual bundle,
\[
\tilde\xi^\vee=(\pi\times\id_Y)_*\cP^\vee\otimes
p_1^*(\lb^\vee\otimes\cT_\p1)\otimes_\C H^1(Y,O_Y)^\vee.
\]
On this bundle, $\gm$ acts via the character $a\mapsto a^{-1}$
and the action of $\mu_2$ comes from its action on $\cP$ and
its action on $H^1(Y,O_Y)$ \lpar{}by $-1$\rpar.
\end{Corollary}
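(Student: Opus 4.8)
The plan is to deduce both formulas from the universal property of $\cP$ established in Proposition~\ref{PicY}, and then, for the dual, to invoke relative duality for the finite flat map $\pi\times\id_Y$. Write $p_1\colon\p1\times Y\to\p1$ for the projection to $\p1$ and $q\colon Y\times Y\to Y$ for the projection onto the first (spectral) factor, so that $(\pi\times\id_Y)^*p_1^*=q^*\pi^*$. First I would produce $\tilde\xi$. By Proposition~\ref{Hitchin} the equivalence $\cY\iso\CPic{d+2}_s(Y)$ sends a Higgs bundle to its $\cA$-module $\ell$, and recovers $(L,\nabla)$ as $L=\pi_*\ell$ with $\nabla$ the multiplication by the tautological section of $\pi^*\Omega_\p1(\divisor)$; this works in families, so the universal Higgs bundle is $(\pi\times\id_Y)_*$ of the universal $\cA$-module, \emph{together} with its tautological Higgs field. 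By Corollary~\ref{MhY} the map $Y\to\cY$ carries $y$ to the $\cA$-module $\pi^*\lb\otimes O_Y(-y)$ (indeed $\pi_*(\pi^*\lb\otimes O_Y(-y))=\lb\otimes\pi_*O_Y(-y)$). By the universal property of Proposition~\ref{PicY}, the degree $-1$ family with fibre $O_Y(-y)$ over $y$ is $\cP=O_{Y\times Y}(-\Delta)$, so the relevant $Y$-family of $\cA$-modules is $\cP\otimes q^*\pi^*\lb$. Applying $(\pi\times\id_Y)_*$ and the projection formula gives $\tilde\xi=(\pi\times\id_Y)_*\cP\otimes p_1^*\lb$, with the Higgs field produced simultaneously by the $\cA$-module structure on the pushforward.

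Next I would settle the equivariance. The $\gm$ that bands the gerbe scales $L$ tautologically, so it acts on $\tilde\xi$ by the identity character $a\mapsto a$. For $\mu_2$, the involution $(L,\nabla)\mapsto(L,-\nabla)$ corresponds on $\cA$-modules to pullback by $\sigma$ (since $\sigma^*$ negates $\tau$), hence to $\sigma$ on both the spectral and the moduli copy of $Y$; because $\Delta$ is $(\sigma\times\sigma)$-invariant, $\cP$ inherits a $(\sigma^*\times\sigma^*)$-equivariant structure, while $p_1^*\lb$ is equivariant for the trivial $\mu_2$-action on $\p1$. This yields the asserted actions on $\tilde\xi$.

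Finally the dual. I would apply relative Grothendieck duality to the finite flat morphism $\pi\times\id_Y$, whose relative dualizing sheaf is $q^*\omega_{Y/\p1}$ with $\omega_{Y/\p1}=\omega_Y\otimes\pi^*\cT_\p1$; by Remark~\ref{RemDualizing}, $\omega_Y=O_Y\otimes_\C H^1(Y,O_Y)^\vee$, so $\omega_{Y/\p1}=\pi^*\cT_\p1\otimes_\C H^1(Y,O_Y)^\vee$. Since $\cP$ is flat with Cohen--Macaulay fibres (Lemma~\ref{EllipticDual}), the duality is underived: $\HOM\bigl((\pi\times\id_Y)_*\cG,O\bigr)=(\pi\times\id_Y)_*\HOM(\cG,q^*\omega_{Y/\p1})$ for $\cG=\cP\otimes q^*\pi^*\lb$. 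Expanding the inner $\HOM$, using $\cP^\vee=\HOM(\cP,O)$, and applying the projection formula yields exactly $\tilde\xi^\vee=(\pi\times\id_Y)_*\cP^\vee\otimes p_1^*(\lb^\vee\otimes\cT_\p1)\otimes_\C H^1(Y,O_Y)^\vee$. Dualizing the $\gm$-weight gives the character $a\mapsto a^{-1}$; the $\mu_2$-action comes from $\sigma^*\times\sigma^*$ on $\cP^\vee$ together with the $\sigma$-action on $H^1(Y,O_Y)$, which is multiplication by $-1$ because $H^1(Y,O_Y)=H^1(\p1,\cA)=H^1(\p1,\Omega_\p1(\divisor)^{-1})$ (the summand $H^1(\p1,O_\p1)$ vanishing) and $\sigma^*$ acts by $-1$ on the $\Omega_\p1(\divisor)^{-1}$-summand of $\cA$.

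The main obstacle is the bookkeeping in this last step: correctly pinning down the relative dualizing sheaf (the $\cT_\p1$-twist) and the scalar factor $H^1(Y,O_Y)^\vee$, and checking that relative duality is compatible with the $(\sigma\times\sigma)$-equivariant structure so that the $\mu_2$-action is transported as claimed. The identification of the underlying bundle, the Higgs field, and the $\gm$-weights is then a formal consequence of the universal properties already proved.
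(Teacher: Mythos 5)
Your proposal is correct and follows essentially the same route as the paper, whose entire proof reads: ``The description of $\tilde\xi$ follows from Proposition~\ref{Hitchin} and Corollary~\ref{MhY}, and then the description of $\tilde\xi^\vee$ follows from Serre duality.'' Your relative Grothendieck duality for the finite flat morphism $\pi\times\id_Y$ (with $\omega_{Y/\p1}=\pi^*\cT_\p1\otimes_\C H^1(Y,O_Y)^\vee$, underived by Lemma~\ref{EllipticDual}) is exactly the form of Serre duality the authors invoke, and your equivariance bookkeeping matches their stated actions.
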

\begin{proof}
The description of $\tilde\xi$ follows from
Proposition~\ref{Hitchin} and Corollary~\ref{MhY}, and then the
description of $\tilde\xi^\vee$ follows from Serre duality.
\end{proof}

\begin{Remark}
It is easy to describe the Fourier-Mukai transform of
$\tilde\xi$: this is the structure sheaf of the graph of $\pi$
twisted by $\lb$.
\end{Remark}

Consider now the sheaf $\cF_H:=p_{13}^*\xi\otimes
p_{23}^*\xi^\vee$ on $\p1\times\p1\times\Mh$. The main result
of this section is the following

\begin{Proposition}\label{MhXxY}
\[
    Rp_{12,*}(\cF_H\otimes p_3^*\cE^{\otimes k})\simeq
    \begin{cases}
        \iota_{\Delta,*}(\cT_\p1)[-1]&\text{if $k$ is even},\\
        \iota_{\Delta,*}(\cT^{\otimes 2}_\p1(-\divisor))[-1]&\text{if $k$ is odd},
    \end{cases}
\]
where $\iota_\Delta:\p1\to\p1\times\p1$ is the diagonal
embedding.
\end{Proposition}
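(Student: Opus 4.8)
The plan is to lift the whole computation to the double cover $Y$ of $\Mh$ provided by Corollary~\ref{MhY} and to reduce the parameter integration to Proposition~\ref{FMOrthog}. By Corollary~\ref{MhY} we have $\Mh=(\mu_2\times\gm)\backslash Y$, with $\gm$ acting trivially and $\mu_2$ acting by $\sigma$, so a quasi-coherent sheaf on $\Mh$ is the same as a $\mu_2$-equivariant, $\gm$-graded sheaf on $Y$; pushing forward to the scheme $\p1\times\p1$ then amounts to taking the weight-$0$ part of the $\mu_2$-invariants of the corresponding direct image from $\p1\times\p1\times Y$. Since $\tilde\xi$, $\tilde\xi^\vee$ carry $\gm$-weights $+1$, $-1$ (Corollary~\ref{FMcalculation}) and $\cE$ has $\gm$-weight $0$, the sheaf $\cF_H\otimes p_3^*\cE^{\otimes k}$ lies entirely in weight $0$, so the weight projection is harmless. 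The crucial point is the $\mu_2$-structure: as $\cE$ is the square root of $O_\Mh$ whose associated double cover is exactly $\cY\to\Mh$, its pullback to $\cY$ is $O_\cY$ on which $\mu_2$ acts by the sign character; hence $p_3^*\cE^{\otimes k}$ contributes precisely the character $\mathrm{sgn}^k$, which is the sole origin of the parity dichotomy.

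For the geometric content I would first pull $\cF_H$ back to $\p1\times\p1\times Y$, obtaining $p_{13}^*\tilde\xi\otimes p_{23}^*\tilde\xi^\vee$, and substitute $\tilde\xi=(\pi\times\id_Y)_*\cP\otimes p_1^*\lb$ and $\tilde\xi^\vee=(\pi\times\id_Y)_*\cP^\vee\otimes p_1^*(\lb^\vee\otimes\cT_\p1)\otimes H^1(Y,O_Y)^\vee$ from Corollary~\ref{FMcalculation}. Because $\pi\colon Y\to\p1$ is finite flat of degree $2$, the relevant squares are Tor-independent, so base change and the projection formula rewrite the tensor product of the two pushforwards as a single pushforward along $\pi\times\pi\times\id$ from $Y_1\times Y_2\times Y$, where $Y_1,Y_2$ are the two spectral copies and the last $Y$ is the parameter. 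The line-bundle twists are pulled back from $Y_1$ and $Y_2$ and factor out of the parameter integration, while the remaining "main part" is $p_{12}^*\cP\otimes^L p_{23}^*\cP^\vee$ with the parameter $Y$ as the middle factor. Integrating out the parameter is then exactly Proposition~\ref{FMOrthog} (the interchange of $\cP,\cP^\vee$ being harmless since the diagonal is symmetric), giving $O_{\Delta_{12}}[-1]\otimes H^1(Y,O_Y)$ on $Y_1\times Y_2$. Restricting to $\Delta_{12}\cong Y$ the two copies of $\lb$ cancel and leave $\pi^*\cT_\p1$, while $H^1(Y,O_Y)^\vee\otimes H^1(Y,O_Y)\simeq\C$; pushing the diagonal down by $\pi\times\pi$ and using $\pi_*O_Y=\cA=O_\p1\oplus\Omega_\p1(\divisor)^{-1}=O_\p1\oplus\cT_\p1(-\divisor)$ yields, before invariants,
\[
\iota_{\Delta,*}\big(\cT_\p1\oplus\cT^{\otimes2}_\p1(-\divisor)\big)[-1].
\]

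To finish I would read off the $\mu_2$-equivariant structure. The splitting $\cA=O_\p1\oplus\cT_\p1(-\divisor)$ is the $\sigma$-eigenspace decomposition, since $\sigma^*(f,\tau)=(f,-\tau)$; thus the summand $\cT_\p1$ is $\sigma$-even and $\cT^{\otimes2}_\p1(-\divisor)$ is $\sigma$-odd. One checks in addition that $\sigma$ acts by $-1$ on $H^1(Y,O_Y)\simeq H^1(\p1,\cT_\p1(-\divisor))$, so the two $H^1$-factors (one from the twist in $\tilde\xi^\vee$, one produced by Proposition~\ref{FMOrthog} along the parameter) cancel equivariantly as well. Twisting by $\mathrm{sgn}^k$ and taking $\mu_2$-invariants then selects the $\sigma$-even summand $\cT_\p1$ when $k$ is even and the $\sigma$-odd summand $\cT^{\otimes2}_\p1(-\divisor)$ when $k$ is odd, which is precisely the assertion.

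I expect the main obstacle to be the equivariant bookkeeping rather than any single hard estimate: one must correctly combine the $\sigma$-actions on $\cP$ and $\cP^\vee$, on the two factors $H^1(Y,O_Y)$, and the $\mathrm{sgn}^k$ coming from $\cE^{\otimes k}$, and in particular pin down that $\sigma=-1$ on $H^1(Y,O_Y)$ and that $\cE|_\cY$ carries the sign character. The one genuinely geometric verification is the Tor-independence/base-change step legitimizing the reduction to Proposition~\ref{FMOrthog}; once the parameter integration is identified with that proposition, everything that remains is formal.
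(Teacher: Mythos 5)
Your proposal is correct and follows essentially the same route as the paper's proof: pull back along $\bar\pi$ to $\p1\times\p1\times Y$, substitute the expressions of Corollary~\ref{FMcalculation}, integrate out the parameter via Proposition~\ref{FMOrthog} to get $\iota_{\Delta,*}(\pi_*O_Y\otimes\cT_\p1)[-1]$, and then take $\gm\times\mu_2$-invariants, with the parity dichotomy coming from $-1\in\mu_2$ acting on $\bar\pi^*\cE$ as $-\sigma^*$ and from the eigenspace decomposition $\pi_*O_Y=O_\p1\oplus\Omega_\p1(\divisor)^{-1}$. The points you flag as needing care (Tor-independence for the base-change step, $\sigma=-1$ on $H^1(Y,O_Y)$, the cancellation of the two $H^1$-factors) are exactly the details the paper treats implicitly, and your handling of them is sound.
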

\begin{proof}
The pullback
$(\id_{\p1\times\p1}\times\bar\pi)^*(p_{13}^*\xi\otimes
p_{23}^*\xi^\vee)$ to $\p1\times\p1\times Y$ equals
$p_{13}^*\tilde\xi\otimes p_{23}^*\tilde\xi^\vee$. By
Corollary~\ref{FMcalculation} and Proposition~\ref{FMOrthog},
we have the following identity on $\p1\times\p1$
\begin{multline*}
Rp_{12,*}(p_{13}^*\tilde\xi\otimes p_{23}^*\tilde\xi^\vee)=
\\(\pi\times\pi)_*Rp_{12,*}(p_{13}^*\cP\otimes p_{23}^*\cP^\vee)
\otimes p_1^*\lb\otimes
p_2^*(\lb^\vee\otimes\cT_\p1)\otimes_\C H^1(Y,O_Y)^\vee=\\
\iota_{\Delta,*}(\pi_*O_Y\otimes\cT_\p1)[-1].
\end{multline*}
The action of $\gm$ on the right-hand side is trivial. The
action of $\mu_2$ on $O_Y$ is the standard action coming from
$\sigma:Y\to Y$ (in other words, $-1\in\mu_2$ acts by
$\sigma^*$).

Note that $\bar\pi^*\cE=O_Y$, but $-1\in\mu_2$ acts on
$\bar\pi^*\cE$ as $-\sigma^*$ (and $\gm$ acts trivially). Since
$\Mh=Y/(\gm\times\mu_2)$,
\begin{multline*}
Rp_{12,*}(\cF_H\otimes p_3^*\cE^{\otimes k})=
\Bigl(Rp_{12,*}((\id_{\p1\times\p1}\times\bar\pi)^*
(\cF_H\otimes p_3^*\cE^{\otimes k}))\Bigr)^{\gm\times\mu_2}\\
\simeq\begin{cases}(\iota_{\Delta,*}(\pi_*O_Y\otimes\cT_\p1))^{(1)}[-1]&\text{if $k$ is even,}\\
(\iota_{\Delta,*}(\pi_*O_Y\otimes\cT_\p1))^{(-1)}[-1]&\text{if $k$ is
odd}.\end{cases}
\end{multline*}
Here for a sheaf $\cV$ with an action of $\mu_2$, we denote by
$\cV^{(1)}$ (resp.\ $\cV^{(-1)}$) its eigensheaf on which
$-1\in\mu_2$ acts as 1 (resp. $-1$). Finally,
\[\pi_*O_Y=\cA=O_\p1\oplus\Omega_\p1(\divisor)^{-1},\]
and $-1\in\mu_2$ acts on $O_\p1$ as $1$ and on
$\Omega_\p1(\divisor)^{-1}$ as $-1$.
\end{proof}

\section{First orthogonality relation}\label{ORTHOGONALITY}
In this section, we prove Theorem~\ref{Theorem3}.

\subsection{}
Recall that $\cF_P=p_{13}^*\xi_\alpha\otimes
p_{23}^*\xi^\vee_\alpha$ is a quasi-coherent sheaf on $P\times
P\times\cM$ equipped with an action of $\Dmod_{P,\alpha}$ along
the first copy of $P$ and an action of $\Dmod_{P,-\alpha}$
along the second. Accordingly, the direct image
$Rp_{12,*}\cF_P$ is an object of the derived category of
$p_1^\al\Dmod_{P,\alpha}\circledast
p_2^\al\Dmod_{P,-\alpha}$-modules on $P\times P$.

Let $\iota_\Delta:P\to P\times P$ be the diagonal embedding.
Recall that $\delta_\Delta$ is a
$\Dmod_{P,\alpha}\boxtimes\Dmod_{P,-\alpha}$-module given by
$\iota_{\Delta,*}O_P$.
\begin{Lemma}\label{Shriek}
In the category of
$\Dmod_{P,\alpha}\boxtimes\Dmod_{P,-\alpha}$-modules we have
\[
    \delta_\Delta=\iota_{\Delta,*}O_P=\iota_{\Delta,!}O_P.
\]
\end{Lemma}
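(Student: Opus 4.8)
The plan is to show that the canonical adjunction morphism $\iota_{\Delta,!}O_P\to\iota_{\Delta,*}O_P$ is an isomorphism; since this is a local question on $P\times P$, I would first localize the problem to the locus where $\iota_\Delta$ fails to be a closed immersion. Because $P$ is glued from two copies of $\p1$ along $\p1-\divisor$, the only distinct points of $P$ that cannot be separated by open sets are $x_i^+$ and $x_i^-$; a short limit argument then identifies $\overline\Delta\setminus\Delta$ with exactly the finite set of points $(x_i^+,x_i^-)$ and $(x_i^-,x_i^+)$. Away from this set $\iota_\Delta$ is a closed immersion of smooth varieties, so $\iota_{\Delta,!}O_P=\iota_{\Delta,*}O_P=\iota_{\Delta,+}O_P=\delta_\Delta$ there. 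Hence it suffices to prove the isomorphism near each $(x_i^+,x_i^-)$, the point $(x_i^-,x_i^+)$ being symmetric.

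Next I would fix the chart $D_i^+\times D_i^-$ with coordinates $z_1,z_2$ centred at $x_i^+,x_i^-$. As $D_i^+$ and $D_i^-$ are members of the cover used in Lemma~\ref{CohP}, the ring $\Dmod_{P,\alpha}\boxtimes\Dmod_{P,-\alpha}$ is the \emph{untwisted} ring $\Dmod_{\A2}$ on this chart, so both extensions restrict there to honest $\Dmod_{\A2}$-modules. In this chart the diagonal appears as the punctured line $L=\{z_1=z_2\}\setminus\{(0,0)\}$, whose points lie over $\p1-\divisor$. Over $\p1-\divisor$ the two twists $\alpha$ and $-\alpha$ cancel (cf.\ the Remark preceding the lemma), so $O_\Delta$ is the structure sheaf of the diagonal there; passing to the chart trivialization across the overlap $\dot D\times\dot D$ tensors by the gluing connection, which along $L$ equals the rank one connection $N=(O_L,\dif+(\alpha_i^+-\alpha_i^-))$ (up to sign). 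By condition~(\ref{AlphaI}) of~\S\ref{MODST} the $1$-form $\alpha_i^+-\alpha_i^-$ has a pole of order exactly $n_i$ at the origin of $L$.

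I would then factor the embedding as $L\xrightarrow{a}\{z_1=z_2\}\xrightarrow{b}\A2$, where $b$ is a closed immersion of smooth varieties (so $b_*=b_!=b_+$) and $a$ is the open immersion of the complement of the origin in the line $\{z_1=z_2\}\simeq\A1$. Then $\iota_{\Delta,*}O_P$ and $\iota_{\Delta,!}O_P$ restrict on the chart to $b_+(a_*N)$ and $b_+(a_!N)$, and the whole statement reduces to the one-dimensional claim that $a_!N\to a_*N$ is an isomorphism for this rank one connection on the punctured disc.

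The main point is therefore the standard local fact that, for a rank one connection $N$ on a punctured disc, $a_!N=a_{!*}N=a_*N$ whenever $N$ is not isomorphic to the trivial connection, i.e.\ whenever $N$ either has an irregular singularity or has a regular singularity with nontrivial monodromy. If $n_i\ge 2$ then $N$ is irregular by the pole-order computation above; if $n_i=1$ then $N$ is regular singular with residue $\pm(\res\alpha_i^+-\res\alpha_i^-)$, which is not an integer by condition~(\ref{AlphaIV}), so the monodromy is nontrivial. In either case $a_!N\to a_*N$ is an isomorphism, and gluing these local isomorphisms with the trivial isomorphism away from $\overline\Delta\setminus\Delta$ yields the lemma. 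The one genuinely delicate step is the bookkeeping of the second paragraph: correctly identifying $N$, in particular checking that the twists cancel along $\Delta$ but contribute precisely $\alpha_i^+-\alpha_i^-$ along the off-diagonal boundary line, so that conditions~(\ref{AlphaI}) and~(\ref{AlphaIV}) are exactly what is needed.
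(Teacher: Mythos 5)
Your proposal is correct and follows essentially the same route as the paper's proof: both identify $\overline\Delta\setminus\Delta$ with the finitely many points $(x_i^\pm,x_i^\mp)$, factor $\iota_\Delta$ as an open immersion followed by a closed immersion (the paper globally through $\overline\Delta$, you locally in the charts $D_i^+\times D_i^-$), and reduce to the fact that the twist along the diagonal boundary is $\pm(\alpha_i^+-\alpha_i^-)$, so conditions~(\ref{AlphaI}) and~(\ref{AlphaIV}) force $!$- and $*$-extensions of the resulting rank one connection to coincide. Your chart computation merely spells out in detail what the paper asserts in one line.
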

\begin{proof}
Let $\Delta$ be the diagonal in $P\times P$ and
$\overline\Delta$ be its closure. We can decompose
$\iota_\Delta$ as
\[
\Delta\xrightarrow{\iota_1}\overline\Delta\xrightarrow{\iota_2}P\times P.
\]
Since $\iota_2$ is a closed embedding, we have
$\iota_{2,*}=\iota_{2,!}$. Thus it is enough to show that
\[
\iota_{1,*}O_P=\iota_{1,!}O_P.
\]
Note that $\iota_1$ is an open embedding,
$\overline\Delta-\Delta$ consists of 8 points, and twists at
these points are given by $\pm(\alpha_i^+-\alpha_i^-)$. Now the
statement follows from conditions~(\ref{AlphaI})
and~\eqref{AlphaIV} of Section~\ref{MODST}. Note that
$\iota_{\Delta,*}$ and $\iota_{\Delta,!}$ are exact functors,
since $\iota_\Delta$ is an affine embedding.
\end{proof}

Further, the restriction $(\iota_\Delta\times\id_\cM)^*\cF_P$
is a quasi-coherent sheaf on $P\times\cM$ equipped with a
structure of a $\Dmod_P$-module. Recall that $\wp:P\to\p1$ is
the natural projection. It is easy to see that we have a
natural inclusion $\wp^*\xi\subset\xi_\alpha$ (see
Remark~\ref{ForDisc}). Thus, the identity automorphism of $\xi$
gives a horizontal section {\small
\[
    1\in H^0(P\times\cM,
    (\iota_\Delta\times\id_\cM)^*(p_{13}^*\wp^*\xi\otimes p_{23}^*\wp^*\xi^\vee))\subset
    H^0(P\times\cM,(\iota_\Delta\times\id_\cM)^*\cF_P).
\]}
We thus obtain a horizontal section of
$p_{1,*}(\iota_\Delta\times\id_\cM)^*\cF_P$, which can be
viewed as a morphism
\[
    O_P\to Rp_{1,*}(\iota_\Delta\times\id_\cM)^*\cF_P=
    \iota_\Delta^* Rp_{12,*}\cF_P
\]
in the derived category of $\Dmod_P$-modules (we use base
change). Finally, adjunction provides a morphism
\[
    \phi:\delta_\Delta[-1]=\iota_{\Delta,!}O_P[-1]
    \to Rp_{12,*}\cF_P
\]
(we are using Lemma~\ref{Shriek}). Note that the appearance of
the shift $[-1]$ is due to the fact that our inverse images are
$O$-module inverse images; from the point of view of
$\Dmod$-modules they should contain shifts.

Theorem~\ref{Theorem3} claims that $\phi$ is an isomorphism. We
derive Theorem~\ref{Theorem3} from two statements that are
proved later in this section.

\begin{Proposition}\label{AwayFromDiag} The direct image
$R(\wp\times\wp)_*Rp_{12,*}\cF_P$ vanishes outside the diagonal
in $\p1\times\p1$.
\end{Proposition}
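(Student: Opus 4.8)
The plan is to prove the vanishing fibrewise over $P\times P$ and then push forward. Set $W:=\{(y_1,y_2)\in P\times P:\wp(y_1)\neq\wp(y_2)\}$, so that $(\wp\times\wp)^{-1}(U)=W$ for the open complement $U$ of the diagonal in $\p1\times\p1$. Since restriction to the open set $U$ commutes with $R(\wp\times\wp)_*$, it suffices to show that $Rp_{12,*}\cF_P$ vanishes on $W$. The projection $p_{12}\colon P\times P\times\cM\to P\times P$ is flat, so derived base change applies and the derived restriction of $Rp_{12,*}\cF_P$ to a point $(y_1,y_2)\in W$ is $R\Gamma(\cM,\xi_\alpha|_{y_1}\otimes\xi_\alpha^\vee|_{y_2})$. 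Thus the whole statement reduces to the vanishing of this cohomology whenever $\wp(y_1)\neq\wp(y_2)$.

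Next I would identify the restricted sheaves. By Remark~\ref{ForDisc}, $\xi_\alpha|_{y}$ is the fibre $L_{\wp(y)}$ of the universal bundle when $\wp(y)\notin\supp\divisor$, and a distinguished rank-one sub- or quotient line (determined by the compatible parabolic structure) when $y=x_i^\pm$. In every case this sheaf is pulled back from the coarse surface: by Theorem~\ref{StackProp} and Proposition~\ref{Affine} we have $\cM=M\times B(\gm)$ with $M$ a smooth separated surface, and the universal bundle is pulled back from $\Bun(d)$ along the affine bundle $\rho$. Since $\xi_\alpha|_{y_1}$ has weight $+1$ and $\xi_\alpha^\vee|_{y_2}$ weight $-1$, their tensor product is weight $0$ and descends to a coherent sheaf $\cG$ of rank at most $4$ on $M$, with $R\Gamma(\cM,\xi_\alpha|_{y_1}\otimes\xi_\alpha^\vee|_{y_2})=R\Gamma(M,\cG)$. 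By Proposition~\ref{CohomologicalDimension} this complex lives in degrees $0$ and $1$, so it remains to kill $H^0(M,\cG)$ and $H^1(M,\cG)$.

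For the vanishing itself I would exploit the projective compactification $\Mb=\cM\sqcup\Mh$, whose coarse space is projective by Theorem~\ref{AmpleBundle} and in which $\Mh$ is a Cartier divisor (Proposition~\ref{Cartier}) with $\cE^{\otimes2}|_\Mh\simeq O_\Mh$ (Lemma~\ref{Esquare}). The sheaf $\cG$ extends to $\Mb$ because the universal bundle does (Lemma~\ref{mnOne}), and on the proper $\Mb$ the pushforwards are coherent and base change is unproblematic; the universal connection moreover makes the resulting cohomology sheaves flat along the off-diagonal strata of $W$, so it is enough to compute at one convenient point. The comparison between $R\Gamma(\cM,\cG)$ and $R\Gamma(\Mb,\cG\otimes\cE^{\otimes k})$ is governed, through the localization sequence for $\Mh$, by the restriction to the boundary. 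By Corollary~\ref{MhY} the boundary is $Y/(\gm\times\mu_2)$ for the generalized elliptic curve $Y$, and the relevant direct image there is exactly the object computed by the Fourier--Mukai transform on $Y$: Propositions~\ref{FMOrthog} and~\ref{MhXxY} show that it is supported on the diagonal, hence vanishes once $\wp(y_1)\neq\wp(y_2)$. Feeding this back yields $R\Gamma(\cM,\cG)=0$ off the diagonal.

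The main obstacle is organizing this comparison correctly, and it is genuinely where the proof lives. The deceptive point is that $M$ is separated while $P$ is not, so $\rho$ has transition data with poles at the doubled points and $\rho_*O_\cM$ is \emph{not} a naive symmetric algebra; a fibrewise degree count over $P$ would wrongly suggest nonvanishing, and one really needs the proper model $\Mb$ to compute. One must also track the middle-extension twist by the formal types $\alpha_i^\pm$ (Remark~\ref{ForDisc}), which is what makes $\xi_\alpha$ differ from $\wp^*\xi$ over $\supp\divisor$ and what aligns the boundary term with the $Y$-computation. Verifying that the boundary correction is \emph{precisely} the diagonal-supported object of Proposition~\ref{MhXxY}, and that no extra off-diagonal cohomology survives on $\Mb$ itself, is the crux of the argument.
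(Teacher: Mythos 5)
Your reduction to fibrewise vanishing and your use of the compactification $\Mb$ with the boundary filtration by powers of $\cE$ do parallel the paper's setup (the paper reaches the same place slightly differently, first proving $R(\wp\times\id_\cM)_*\xi_\alpha=\xi$, Lemma~\ref{DirectIm}, so that everything is rewritten in terms of the universal bundle $\xi$ on $\p1\times\p1\times\Mb$, Corollary~\ref{DirectImage}). But the final step of your argument has a genuine gap. The fact that the boundary subquotients $\cF_k/\cF_{k-1}=\cF_H\otimes p_3^*\cE^{\otimes k}$ push forward to diagonal-supported objects (Proposition~\ref{MhXxY}) yields only the paper's Lemma~\ref{Comparison}: off the diagonal, $R^ip_{12,*}\cF_\p1\simeq R^ip_{12,*}\cF_k$ for every $k$. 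This is an identification of the open-part cohomology with the cohomology of each twist on $\Mb$; it does not make that common value zero, and "feeding this back" produces no vanishing. The paper needs two further arguments that are absent from your outline: (i) for $H^0$, finite-dimensionality of $H^0(\Mb,\overline\cF|_{(x,y)})$ (properness of the good moduli space) together with connectedness of $\cM$ forces $H^0(\Mb,\cF_k|_{(x,y)})=0$ for $k\ll0$, whence $H^0(\cM,\cF_\p1|_{(x,y)})=0$; (ii) for $H^1$ — the crux — the vanishing of $H^0$ and $H^2$ (the latter from Proposition~\ref{CohomologicalDimension}) makes $R^1p_{12,*}\overline\cF$ a vector bundle off the diagonal, whose rank is the Euler characteristic $-\chi(\Mb,\xi_x\otimes\xi_y^\vee)$, and this number is computed by deforming the moduli problem itself: by Proposition~\ref{PrFlat} the stacks $\Mb$ form a flat family as $(\divisor,\nu_1,\nu_2)$ vary, Lemma~\ref{FlatStacks} shows $\chi$ is constant in that family, and one degenerates to four distinct simple poles with generic formal types, where the vanishing is Theorem~2 of~\cite{Arinkin}.

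Your proposed substitute — that flatness along the off-diagonal locus makes it "enough to compute at one convenient point" — cannot close this gap: within a fixed $\cM$ every off-diagonal point is equally intractable, and local constancy tells you nothing about the value. The deformation that actually does the work does not move the point $(x,y)$; it moves $\divisor$ and the formal types $\nu_1,\nu_2$, i.e.\ it changes the moduli space, and imports the known orthogonality result from the regular-singularities case. Since your outline never invokes that external input (or any replacement for it), the vanishing itself remains unproven.
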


\begin{Proposition}\label{SSupport} Consider the morphism
\[H^1(\phi):\delta_\Delta\to R^1p_{12,*}\cF_P\]
induced by $\phi$. Then its cokernel is such that
$(\wp\times\wp)_*\coker(H^1(\phi))$ is coherent.
\end{Proposition}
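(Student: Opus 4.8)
The plan is to isolate the two mechanisms that make $R^1p_{12,*}\cF_P$ fail to be $O$-coherent and to show that both are absorbed by $\phi$. By Remark~\ref{ForDisc}, $\xi_\alpha$ is coherent as an $O_P$-module away from the doubled points, but at each $x_i^\pm$ one summand is the structure sheaf of the \emph{punctured} disc. I would record this through the short exact sequence of $O_{P\times\cM}$-modules
\[
    0\to\wp^*\xi\to\xi_\alpha\to T\to 0,
\]
where $T$ is supported on $\bigcup_i\{x_i^\pm\}\times\cM$ and, at each doubled point, is the principal-part module of the $*$-extended eigen-connection. The section $1$ defining $\phi$ lives in the coherent subsheaf $\wp^*\xi$, so $\phi$ factors through the part of the direct image built from $\wp^*\xi$. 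Since $\wp\times\wp$ is affine, $(\wp\times\wp)_*$ is exact and commutes with $Rp_{12,*}$ and with $\coker$, so it suffices to prove: (i) the contribution of the coherent core $\wp^*\xi$ is coherent, and (ii) the contribution of $T$ is matched, on the diagonal, by the image of $\delta_\Delta$.

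For (i) I would use the projective compactification $\Mb=\cM\sqcup\Mh$. Let $\Xi$ be the universal rank two bundle on $\p1\times\Mb$ (coherent, flat over $\Mb$, restricting over $\cM$ to the bundle underlying $\xi$), and set $\tilde\cF:=p_{13}^*\Xi\otimes p_{23}^*\Xi^\vee$ on $\p1\times\p1\times\Mb$, so that $\tilde\cF|_{\p1\times\p1\times\Mh}=\cF_H$. Since $\oM$ is projective, $Rp_{12,*}\tilde\cF$ is coherent. As $\Mh\subset\Mb$ is the Cartier divisor cut out by $\epsilon\in H^0(\Mb,\cE)$ (Proposition~\ref{Cartier}), we have $O(\Mh)\simeq\cE$, and because $\tilde\cF$ is flat over $\Mb$ the open--closed triangle for $j:\cM\hookrightarrow\Mb$, $i:\Mh\hookrightarrow\Mb$ gives
\[
    Rp_{12,*}(i_*Ri^!\tilde\cF)\to Rp_{12,*}\tilde\cF\to Rp_{12,*}(Rj_*\,j^*\tilde\cF)\to[+1],
\]
with $Ri^!\tilde\cF\simeq\cF_H\otimes\cE|_\Mh[-1]$. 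By Corollary~\ref{MhY} the stack $\Mh$ is proper, and $Rp_{12,*}(\cF_H\otimes\cE|_\Mh)$ is exactly the $k=1$ case of Proposition~\ref{MhXxY}, namely $\iota_{\Delta,*}(\cT^{\otimes2}_\p1(-\divisor))[-1]$, which is coherent and supported on the diagonal. Hence the first two terms are coherent, so the third, the direct image over the open part $\cM$ of the coherent core, is coherent as well.

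For (ii) the difference between $(\wp\times\wp)_*\cF_P$ and the coherent core of (i) is governed by $T$, i.e.\ by the error terms $p_{13}^*T\otimes p_{23}^*\xi_\alpha^\vee$ and its companion on the second factor. After pushing forward these are supported on the lines $\{x_i\}\times\p1$ and $\p1\times\{x_j\}$; by Proposition~\ref{AwayFromDiag} only their intersections with the diagonal, the points $(x_i,x_i)$, survive in $R^1p_{12,*}\cF_P$. I would then run a local computation at each doubled point, using the explicit description in Remark~\ref{ForDisc}, to identify the resulting non-coherent ($*$-pole) contribution with the image under $\phi$ of the corresponding local piece of $\delta_\Delta=\iota_{\Delta,!}O_P$ (which is itself non-coherent in the conormal direction to the diagonal). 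Granting this matching, $\coker(H^1(\phi))$ is $O$-coherent with support in the proper diagonal, so its image under the affine map $\wp\times\wp$ is coherent.

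The main obstacle is step (ii): controlling at the doubled points the interaction between the irregular $*$-extension data carried by $T$ and the direct image, and verifying that it is captured \emph{exactly} by $\phi$ rather than producing spurious non-coherent pieces. This is a local residue-type calculation in which conditions~(\ref{AlphaI}) and~(\ref{AlphaIV}) of~\S\ref{MODST}, already used in Lemma~\ref{Shriek} to match $\iota_{\Delta,*}$ with $\iota_{\Delta,!}$, should ensure that the eigen-connections pair nontrivially only in the configuration produced by $\phi$, so that no extra non-coherent contributions arise.
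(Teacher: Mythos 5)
Your opening reduction to $\p1\times\p1$ (pushing everything down by $(\wp\times\wp)_*$ and comparing with $\cF_\p1=p_{13}^*\xi\otimes p_{23}^*\xi^\vee$) is the same first move as the paper's, but part (i) then contains the decisive error. The triangle you write,
\[
Rp_{12,*}(i_*Ri^!\tilde\cF)\to Rp_{12,*}\tilde\cF\to Rp_{12,*}(Rj_*\,j^*\tilde\cF)\to[+1],
\qquad Ri^!\tilde\cF\simeq\cF_H\otimes\cE|_\Mh[-1],
\]
is not the open--closed triangle for quasi-coherent sheaves: the correct first term is the local cohomology $R\Gamma_{\Mh}\tilde\cF$, which involves \emph{all} infinitesimal thickenings of the Cartier divisor $\Mh$, not the first-order functor $i_*(i^*(\cdot)\otimes\cE[-1])$ (that identity for $i^!$ is a duality statement and does not sit in the localization triangle). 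Concretely, since the complement of a Cartier divisor is affinely embedded, $Rj_*j^*\tilde\cF=\jmath_*\cF_\p1=\limind\cF_k$ with $\cF_k=\tilde\cF(k(\p1\times\p1\times\Mh))$, so the direct image over the open part is $\limind Rp_{12,*}\cF_k$: each term is coherent, but the limit is not. Indeed your conclusion in (i) cannot be true: $H^1(\phi)$ is injective and $(\wp\times\wp)_*\delta_\Delta=\iota_{\Delta,*}(\cB\otimes_{O_\p1}\cT_\p1)$ is an ascending union of coherent sheaves of unbounded rank along the whole diagonal, so $R^1p_{12,*}\cF_\p1$ contains a non-coherent subsheaf. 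The whole content of the proposition is that this non-coherence is \emph{exactly} the image of $\delta_\Delta$; the paper proves this by comparing two filtrations --- the order filtration $\cB^{\le k}$ of $\wp_*\Dmod_{P,\alpha}$ and the pole-order filtration $R^1p_{12,*}\cF_k$ along $\Mh$ --- and showing, via Proposition~\ref{MhXxY} for all $k$, the compatibility $(p_1^{-1}\cB^{\le l})\cF_k\subset\cF_{k+l}$, and a parity argument, that the induced map on associated graded pieces is an isomorphism for $k\gg0$; coherence of the cokernel then follows because it is a quotient of a single coherent sheaf $R^1p_{12,*}\cF_k$.

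Part (ii) then mislocates the residual difficulty. After applying $(\wp\times\wp)_*$, the tails $T$ contribute nothing at all: $R\wp_*\xi_\alpha=\xi$ (Lemma~\ref{DirectIm}), so $R(\wp\times\wp\times\id_\cM)_*\cF_P=\cF_\p1$ on the nose (Corollary~\ref{DirectImage}), and there is no correction supported on $\{x_i\}\times\p1$ or at the points $(x_i,x_i)$ to be matched against $\phi$. What must be matched is the growth of $R^1p_{12,*}\cF_k$ as $k\to\infty$ against the growth of $\cB^{\le k}$, globally along the entire diagonal (where $\delta_\Delta$ is non-coherent everywhere, not only in conormal directions at the doubled points); a local residue computation at the $x_i^\pm$ cannot produce this, and in any case you leave it as an unproved "matching". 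A smaller point: $\wp$ is not an affine morphism ($P$ is non-separated, so the preimage of an affine neighborhood of $x_i$ is not affine), hence exactness of $(\wp\times\wp)_*$ is not formal; it holds on twisted $\Dmod$-modules only because $R^1\wp_*\Dmod_{P,\alpha}=0$, which is part of what Proposition~\ref{cB} and Remark~\ref{cBRem} establish.
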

\begin{Remark}
Note that in Proposition~\ref{SSupport}, we consider naive (not
derived) direct image $(\wp\times\wp)_*$. Actually, higher
derived images $R^i(\wp\times\wp)_*\cG$ ($i>0$) vanish for any
$p_1^\al\Dmod_{P,\alpha}\circledast
p_2^\al\Dmod_{P,-\alpha}$-module $\cG$ (see
Remark~\ref{cBRem}\eqref{cBRem3}).
\end{Remark}

\begin{proof}[Proof of Theorem~\ref{Theorem3}]
By Proposition~\ref{CohomologicalDimension}, we see that
$R^ip_{12,*}\cF_P=0$ for all $i\ne 0,1$. Also,
$R^0p_{12,*}\cF_P$ vanishes at the generic point by
Proposition~\ref{AwayFromDiag}, which implies that
$R^0p_{12,*}\cF_P=0$. Thus $Rp_{12,*}\cF_P$ is concentrated in
cohomological dimension one. It remains to show that
$H^1(\phi)$ is an isomorphism.

By construction, $\phi\ne 0$. Since $\delta_\Delta$ is
irreducible as a $p_1^\al\Dmod_{P,\alpha}\circledast
p_2^\al\Dmod_{P,-\alpha}$-module, $H^1(\phi)$ is injective. Its
cokernel $\cF':=\coker(H^1(\phi))$ is a
$p_1^\al\Dmod_{P,\alpha}\circledast
p_2^\al\Dmod_{P,-\alpha}$-module such that
$(\wp\times\wp)_*\cF'$ is a coherent sheaf (by
Proposition~\ref{SSupport}) that vanishes generically (by
Proposition~\ref{AwayFromDiag}). It is now easy to see that
$\cF'=0$.

Indeed, consider a stratification of $P\times P$ by sets of the
form $\{(x_i^\pm,x_j^\pm)\}$,
$\{x_i^\pm\}\times(\p1-\divisor)$,
$(\p1-\divisor)\times\{x_i^\pm\}$, and $(\p1-\divisor)\times
(\p1-\divisor)$. We can now show that $\cF'$ vanishes on all
strata by descending induction on the dimension of strata.
\end{proof}

\subsection{Proof of Proposition~\ref{AwayFromDiag}}
\begin{Lemma}\label{DirectIm}
\[
    R(\wp\times\id_\cM)_*\xi_\alpha=\xi.
\]
\end{Lemma}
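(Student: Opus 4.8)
The plan is to produce a natural morphism $\xi\to R(\wp\times\id_\cM)_*\xi_\alpha$ and to prove it is an isomorphism by a purely local computation near the points $x_i$. The morphism is built from the inclusion $\wp^*\xi\subset\xi_\alpha$ recorded above (cf.\ Remark~\ref{ForDisc}): under the adjunction between $(\wp\times\id_\cM)^*$ and $R(\wp\times\id_\cM)_*$ it corresponds to a map $\xi\to R(\wp\times\id_\cM)_*\xi_\alpha$, and it suffices to check that this map is an isomorphism in the derived category. In particular this will force $R^i(\wp\times\id_\cM)_*\xi_\alpha=0$ for $i>0$, so the derived pushforward is concentrated in degree $0$ and equal to $\xi$.

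Since the formation of $R(\wp\times\id_\cM)_*$ localizes on the base $\p1\times\cM$ in the $\p1$-direction, and since $\xi_\alpha$ is flat over $\cM$ with formal type constant along $\cM$ (Remark~\ref{ForDisc}), it is enough to identify both sides after restricting to $(\p1-\divisor)\times\cM$ and to the formal neighborhood of each $\{x_i\}\times\cM$; the $\cM$-flatness guarantees that these local computations commute with base change and assemble correctly in families. Over $(\p1-\divisor)\times\cM$ the map $\wp\times\id_\cM$ is an isomorphism, so there is nothing to prove; all the content is concentrated at the $x_i$.

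Near $x_i$ the preimage $\wp^{-1}(D_i)$ is the doubled disc obtained by gluing $D_i^+$ and $D_i^-$ along the punctured disc $\dot D=D_i^+\cap D_i^-$, and I would compute $R\wp_*$ with the \v{C}ech complex $\mathcal G(D_i^+)\oplus\mathcal G(D_i^-)\to\mathcal G(\dot D)$ for this two-chart cover. By Remark~\ref{ForDisc} the restriction $\xi_\alpha|_{\wp^{-1}(D_i)}$ splits as a sum of two rank-one pieces. For the $\tilde\alpha^+$-piece, equal to $(O_{D_i^+},\dif+\tilde\alpha^+)$ on $D_i^+$ and to $(O_{\dot D},\dif+\tilde\alpha^+)$ on $D_i^-$, the \v{C}ech differential
\[
O(D_i^+)\oplus O_{\dot D}(\dot D)\to O_{\dot D}(\dot D),\qquad (s^+,s^-)\mapsto s^+|_{\dot D}-s^-|_{\dot D},
\]
is surjective because the punctured-disc summand $O_{\dot D}(\dot D)$ already surjects, whence $R^1=0$; its kernel is $\{(s^+,s^-):s^+=s^- \text{ on }\dot D\}$, and as $s^+$ is regular this forces $s^-$ regular, so the kernel is $O(D_i)$ with the connection gluing to $\dif+\tilde\alpha^+$. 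The $\tilde\alpha^-$-piece is identical with the roles of $D_i^+$ and $D_i^-$ exchanged. Thus $R\wp_*\xi_\alpha|_{D_i}=\xi|_{D_i}$ with no higher direct images, and one checks this agrees with the map induced by $\wp^*\xi\subset\xi_\alpha$; patching with the trivial computation over $\p1-\divisor$ completes the argument.

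The main obstacle is precisely the vanishing of $R^1$, i.e.\ the surjectivity of the \v{C}ech differential displayed above. This is exactly where the asymmetric local structure of the middle extension in Remark~\ref{ForDisc} is indispensable: each rank-one summand is regular on one of the two discs but only meromorphic on the other, and it is the meromorphic summand that surjects onto $O_{\dot D}(\dot D)$. Were both copies regular, as for $\wp_*O_P$, the cokernel would be the infinite-dimensional space of principal parts and $R^1$ would not vanish; the twist defining $\Dmod_{P,\alpha}$ is what arranges the cancellation. The only other point requiring care is the family version over $\cM$, which rests on the constancy of the normal form of Remark~\ref{ForDisc} in families together with the $\cM$-flatness of $\xi_\alpha$.
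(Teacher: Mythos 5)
Your proof is correct and takes essentially the same route as the paper: both reduce to the formal disc at each $x_i$, split $\xi_\alpha$ via Remark~\ref{ForDisc} into two rank-one summands, and show each summand pushes forward to the corresponding eigen-line-subbundle of $\xi$ with no higher direct images. The paper packages this last step slightly more abstractly, writing $\xi_\alpha|_{\wp^{-1}D}=(j_+\times\id_\cM)_*\xi^+\oplus(j_-\times\id_\cM)_*\xi^-$ and using $\wp\circ j_\pm=\id_D$ so that $R\wp_*\circ(j_\pm)_*=\id$, whereas you carry out the equivalent two-chart \v{C}ech computation explicitly.
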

\begin{proof}
The sheaves are obviously identified on
$(\p1-\divisor)\times\cM$, so it remains to verify that this
identification extends to $\p1\times\cM$. It suffices to check
this on $D\times\cM$, where~$D$ is the formal neighborhood of
$x_i$. The restriction $\xi|_{D\times\cM}$
decomposes into a direct sum
$\xi^+\oplus\xi^-$ of one-dimensional bundles that are
invariant under the connection (that acts in the direction of
$D$). This can be viewed as a version of diagonalization
\eqref{fnf}.

The preimage $\wp^{-1} D$ is a union of two copies of $D$ glued
away from the center, and the restrictions of  $\xi_\alpha$ to
$\wp^{-1} D$ is of the form $(j_+\times\id_\cM)_*\xi_+\oplus
(j_-\times\id_\cM)_*\xi_-$, where $j_\pm:D\to \wp^{-1} D$ are
the embeddings of the two copies (see Remark~\ref{ForDisc}).
Since $\wp\circ j_\pm=\id_D$, the claim follows.
\end{proof}

Consider now the sheaf $\cF_\p1:=p_{13}^*\xi\otimes
p_{23}^*\xi^\vee$ on $\p1\times\p1\times\cM$.
\begin{Corollary}\label{DirectImage}
\[
    R(\wp\times\wp\times\id_\cM)_*\cF_P=\cF_\p1.
\]
\end{Corollary}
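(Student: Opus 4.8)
The plan is to deduce the corollary from Lemma~\ref{DirectIm} by factoring the projection $\wp\times\wp\times\id_\cM$ into two steps, each collapsing a single copy of $P$, and combining flat base change with the projection formula. Throughout I use that $\wp:P\to\p1$ is a local isomorphism (an isomorphism over $\p1-\divisor$ and an open immersion on each of the two discs over $x_i$), so $P$ is a smooth curve and all the projections forgetting a $P$- or $\p1$-factor are flat; this makes the base-change and projection-formula identities available with no higher-derived corrections. I also need the dual version of Lemma~\ref{DirectIm}, namely $R(\wp\times\id_\cM)_*\xi_\alpha^\vee=\xi^\vee$. This follows by the same proof: since $\xi_\alpha^\vee=(\xi^\vee)_{-\alpha}$ is the middle extension of the connection $\xi^\vee$ (which has formal types $-\alpha_i^\pm$), Remark~\ref{ForDisc} gives it the same local shape $(j_+\times\id_\cM)_*(\cdot)\oplus(j_-\times\id_\cM)_*(\cdot)$ over $\wp^{-1}D_i$, and $\wp\circ j_\pm=\id$ recovers $\xi^\vee$ on pushing forward.

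For the first step, write $\wp_1:=\wp\times\id_{P\times\cM}$ for the map $P\times P\times\cM\to\p1\times P\times\cM$ collapsing the first factor, and let $q_{13},q_{23}:\p1\times P\times\cM\to P\times\cM$ be the projections. Since $q_{23}\circ\wp_1=p_{23}$, the factor $p_{23}^*\xi_\alpha^\vee$ is pulled back along $\wp_1$ from the base, so the projection formula (applicable as $\xi_\alpha$ is a flat $O$-module) gives
\[
R\wp_{1,*}\cF_P=\bigl(R\wp_{1,*}p_{13}^*\xi_\alpha\bigr)\otimes q_{23}^*\xi_\alpha^\vee.
\]
The square with vertical maps $p_{13}$ and $q_{13}$ and horizontal maps $\wp_1$ and $\wp\times\id_\cM$ is cartesian, and $q_{13}$ is flat, so flat base change together with Lemma~\ref{DirectIm} yields $R\wp_{1,*}p_{13}^*\xi_\alpha=q_{13}^*\xi$. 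Hence $R\wp_{1,*}\cF_P=q_{13}^*\xi\otimes q_{23}^*\xi_\alpha^\vee$ on $\p1\times P\times\cM$.

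The second step is symmetric. Writing $\wp_2:=\id_{\p1}\times\wp\times\id_\cM$ for the remaining collapse, the sheaf $q_{13}^*\xi$ is now pulled back from $\p1\times\cM$ along $\wp_2$, so the projection formula strips it off, while the same cartesian-square argument applied to the dual statement $R(\wp\times\id_\cM)_*\xi_\alpha^\vee=\xi^\vee$ gives $R\wp_{2,*}q_{23}^*\xi_\alpha^\vee=p_{23}^*\xi^\vee$. Since $R(\wp\times\wp\times\id_\cM)_*=R\wp_{2,*}\circ R\wp_{1,*}$ (the map factors as $\wp_2\circ\wp_1$), I obtain $p_{13}^*\xi\otimes p_{23}^*\xi^\vee=\cF_\p1$, as claimed.

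The computation is essentially bookkeeping once Lemma~\ref{DirectIm} is in hand; the only points that need care are (a) confirming the two squares are genuinely cartesian and that the relevant projections are flat, so that base change applies cleanly, and (b) the dual statement $R(\wp\times\id_\cM)_*\xi_\alpha^\vee=\xi^\vee$. I expect (b) to be the main (though minor) obstacle, since it requires re-running the local analysis of Remark~\ref{ForDisc} for $\xi_\alpha^\vee$ rather than for $\xi_\alpha$.
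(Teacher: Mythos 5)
Your proof is correct and is essentially the paper's own argument: the paper likewise deduces the corollary from Lemma~\ref{DirectIm} together with the analogous statement for $\xi_\alpha^\vee$, only packaging the bookkeeping differently --- it writes $\cF_P=\Delta_{24}^*(\xi_\alpha\boxtimes\xi_\alpha^\vee)$ for a partial diagonal $\Delta_{24}:P\times P\times\cM\to P\times\cM\times P\times\cM$, whereas you factor $\wp\times\wp\times\id_\cM$ into two collapses and use the projection formula and flat base change at each step. In particular, your point (b), the dual statement $R(\wp\times\id_\cM)_*\xi_\alpha^\vee=\xi^\vee$, is exactly what the paper invokes as ``a similar statement about $\xi_\alpha^\vee$,'' and it holds by rerunning the proof of Lemma~\ref{DirectIm} as you indicate.
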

\begin{proof}
This follows from Lemma~\ref{DirectIm} and a similar statement
about $\xi_\alpha^\vee$ upon writing
$\cF_P=\Delta_{24}^*(\xi_\alpha\boxtimes\xi_\alpha^\vee)$,
where $\Delta_{24}:P\times P\times\cM\to P\times\cM\times
P\times\cM$ is a partial diagonal.
\end{proof}

By Corollary~\ref{DirectImage},
\begin{equation}\label{dirimage}
    R(\wp\times\wp)_*Rp_{12,*}\cF_P=Rp_{12,*}\cF_\p1.
\end{equation}
The advantage of working with $\xi$ rather than $\xi_\alpha$ is
that $\xi$ is naturally defined as a vector bundle (the
universal bundle) on $\p1\times\Mb$. Accordingly, $\cF_\p1$
extends to a vector bundle $\overline\cF:=p_{13}^*\xi\otimes
p_{23}^*\xi^\vee$ on $\p1\times\p1\times\Mb$. Set
\[
    \cF_k:=\overline\cF(k(\p1\times \p1\times\Mh)),\qquad k\in\Z.
\]
Let $\jmath:\p1\times\p1\times\cM\to\p1\times\p1\times\Mb$ be
the natural embedding. In view of Proposition~\ref{Cartier}, we
have a filtration
\begin{equation}\label{filtration}
\cF_0=\overline\cF\subset\dots\subset\cF_k\subset\dots\subset
\cF_\infty:=\jmath_*\cF_\p1.
\end{equation}

We shall use notation $\Delta$ for diagonals in $P\times P$ and
$\p1\times\p1$ through the end of this section.

\begin{Lemma}\label{Comparison}
For any $k$ and $i$ there is an isomorphism
\[(R^i p_{12,*}\cF_\p1)|_{\p1\times\p1-\Delta}\simeq
(R^i p_{12}\cF_k)|_{\p1\times\p1-\Delta}.\]
\end{Lemma}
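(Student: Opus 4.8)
The plan is to exploit the filtration~\eqref{filtration} together with the orthogonality computation of Proposition~\ref{MhXxY}, which shows that each graded piece contributes only along the diagonal $\Delta\subset\p1\times\p1$.

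First I would reduce both sides to a single ambient space. Since $\Mh\subset\Mb$ is a Cartier divisor (Proposition~\ref{Cartier}), the open embedding $\jmath:\p1\times\p1\times\cM\hookrightarrow\p1\times\p1\times\Mb$ is affine, so $R\jmath_*=\jmath_*$ on quasi-coherent sheaves and
\[
    Rp_{12,*}\cF_\p1=Rp_{12,*}R\jmath_*\cF_\p1=Rp_{12,*}\cF_\infty,
\]
where $\cF_\infty=\jmath_*\cF_\p1$ is the top term of~\eqref{filtration}. Thus it suffices to compare $R^ip_{12,*}\cF_\infty$ with $R^ip_{12,*}\cF_k$ after restriction to $\p1\times\p1-\Delta$.

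Next I would pass to the graded pieces of the filtration. The section $\epsilon$ of $\cE$ cuts out $\Mh$, so $O_\Mb(\Mh)\simeq\cE$ and $\cF_k=\overline\cF\otimes p_3^*\cE^{\otimes k}$; tensoring the Cartier sequence $0\to O(k\Mh)\to O((k+1)\Mh)\to O((k+1)\Mh)|_\Mh\to0$ with $\overline\cF$ identifies the successive quotients as
\[
    \cF_{k+1}/\cF_k\simeq\cF_H\otimes p_3^*\cE^{\otimes(k+1)},
\]
a sheaf on $\p1\times\p1\times\Mh$ (here $\cF_H=\overline\cF|_{\p1\times\p1\times\Mh}$). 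By Proposition~\ref{MhXxY}, $Rp_{12,*}(\cF_{k+1}/\cF_k)$ is of the form $\iota_{\Delta,*}(\cdots)[-1]$, hence supported on $\Delta$. Restricting the long exact sequence of $R^\bullet p_{12,*}$ attached to $0\to\cF_k\to\cF_{k+1}\to\cF_{k+1}/\cF_k\to0$ to the open set $\p1\times\p1-\Delta$, every quotient term vanishes, so the transition maps
\[
    (R^ip_{12,*}\cF_k)|_{\p1\times\p1-\Delta}\to(R^ip_{12,*}\cF_{k+1})|_{\p1\times\p1-\Delta}
\]
are isomorphisms for all $i$ and $k$.

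Finally, since $p_{12}$ is quasi-compact and quasi-separated, $R^ip_{12,*}$ commutes with the filtered colimit $\cF_\infty=\varinjlim_k\cF_k$, whence $(R^ip_{12,*}\cF_\infty)|_{\p1\times\p1-\Delta}=\varinjlim_k(R^ip_{12,*}\cF_k)|_{\p1\times\p1-\Delta}$ is a colimit of isomorphisms and therefore isomorphic to $(R^ip_{12,*}\cF_k)|_{\p1\times\p1-\Delta}$ for each fixed $k$; taking $k=0$ (so $\cF_0=\overline\cF$) and combining with the first step yields the claim. The one point requiring care is the identification of the graded pieces $\cF_{k+1}/\cF_k$ with $\cF_H\otimes p_3^*\cE^{\otimes(k+1)}$, so that Proposition~\ref{MhXxY} applies verbatim; once this is set up, the remainder is a diagram chase using exactness of restriction to an open set and of filtered colimits.
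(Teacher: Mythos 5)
Your proof is correct and follows essentially the same route as the paper: filter $\jmath_*\cF_\p1$ by the $\cF_k$, identify the graded pieces with $\cF_H\otimes p_3^*\cE^{\otimes k}$, apply Proposition~\ref{MhXxY} to see that their direct images are supported on $\Delta$, and pass to the colimit. The only difference is cosmetic: you spell out the justification of the identity $R^ip_{12,*}\cF_\p1=\limind R^ip_{12,*}\cF_k$ (affineness of $\jmath$, since $\Mh$ is Cartier, plus commutation of $R^ip_{12,*}$ with filtered colimits), which the paper invokes without comment.
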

\begin{proof}
For every $k$ we have the short exact sequence
\[
    0\to\cF_{k-1}\to\cF_k\to\iota_*(\cF_k|_{\p1\times\p1\times\Mh})\to 0,
\]
where $\iota:\p1\times\p1\times\Mh\to\p1\times\p1\times\Mb$ is
the closed embedding. Since
\[
\cF_k|_{\p1\times\p1\times\Mh}=\cF_H\otimes p_3^*\cE^{\otimes
k},
\]
Proposition~\ref{MhXxY} implies that
$Rp_{12,*}(\cF_k/\cF_{k-1})=0$ away from the diagonal, so
$Rp_{12,*}\cF_k=Rp_{12,*}\cF_{k-1}$ away from the diagonal. Now
the claim follows from the identity $R^i
p_{12,*}\cF_\p1=\limind R^i p_{12,*}\cF_k$.
\end{proof}

\begin{proof}[Proof of Proposition~\ref{AwayFromDiag}]
Consider $(x,y)\in\p1\times\p1-\Delta$. We have
\begin{equation}\label{vanishing1}
    H^i(\Mb,\overline\cF|_{(x,y)})=H^i(\cM,\cF_\p1|_{(x,y)})=0
\end{equation}
for $i\ge 2$ by Proposition~\ref{CohomologicalDimension}. Next,
$H^0(\Mb,\overline\cF|_{(x,y)})$ is finite-dimensional because
the good moduli space of $\Mb$ is projective. It follows that
$H^0(\Mb,\cF_k|_{(x,y)})=0$ for $k\ll 0$ because $\cM$ is
connected and $\cF_\p1$ is a vector bundle. Therefore
Lemma~\ref{Comparison} implies that
\begin{equation}\label{vanishing2}
    H^0(\Mb,\overline\cF|_{(x,y)})=H^0(\cM,\cF_\p1|_{(x,y)})=0.
\end{equation}
It remains to show that
$\cG:=(R^1p_{12,*}\cF_\p1)|_{\p1\times\p1-\Delta}$ vanishes. By
Lemma~\ref{Comparison},
$\cG=R^1p_{12,*}\overline\cF|_{\p1\times\p1-\Delta}$.
Moreover,~\eqref{vanishing1} and~\eqref{vanishing2} imply
that~$\cG$ is a vector bundle on $\p1\times\p1-\Delta$. Its
rank can be computed using the Euler characteristic; it equals
$-\chi(\Mb,\xi_x\otimes\xi_y^\vee)$ for any $x,y\in\p1$.
(Here~$\xi_x$ is the restriction of $\xi$ to $\{x\}\times\Mb$.)

Recall from Section~\ref{DEFMB} that the stack $\Mb$ depends on the
divisor $\divisor$ and the formal type, which we encode by
$\nu_1\in\Omega_\p1(\divisor)/\Omega_\p1$ and
$\nu_2\in\Omega^{\otimes2}_\p1(2\divisor)/\Omega^{\otimes2}_\p1(\divisor)$.
By Proposition~\ref{PrFlat}, as the parameters vary, stacks
$\Mb$ form a flat family $\Mb_{univ}$ over a Zariski open
subspace in the space of collections $(\divisor,\nu_1,\nu_2)$.
The vector bundle $\xi_x\otimes\xi_y^\vee$ makes sense in this
family. We shall use
\begin{Lemma}\label{FlatStacks}
Let $\cX\to S$ be a flat family of stacks over a scheme $S$.
Let $\cF$ be a flat sheaf on $\cX$. For $s\in S$ denote by
$\cF_s$ the fiber over $s$. Assume that there is a good moduli
space $p:\cX\to X$ such that the induced map $X\to S$ is
projective.

Then $\chi(\cF_s)$ is locally constant as a function of $s$.
\end{Lemma}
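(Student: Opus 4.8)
The plan is to push the computation down from the stack $\cX$ to the scheme $X$ using the good moduli space map $p:\cX\to X$, and then invoke the classical constancy of Euler characteristics for the projective morphism $X\to S$. Write the structure map $q:\cX\to S$ as the composite $\cX\xrightarrow{p}X\to S$, and set $\cG:=p_*\cF$. Here $\cF$ is coherent (in the application it is a vector bundle) and flat over $S$, and $\cF_s$ denotes its restriction to the fibre $\cX_s$. Since $p$ is a good moduli space, $p_*$ is exact, so $Rp_*\cF=\cG$; moreover $\cX$ is locally Noetherian, hence so is $X$, and $\cG$ is coherent \cite{Alper}. For $s\in S$ I write $\iota_s:X_s\hookrightarrow X$ and $\iota_s^\cX:\cX_s\hookrightarrow\cX$ for the fibre inclusions and $p_s:\cX_s\to X_s$ for the restriction of $p$.

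The first step is to establish base change for the fibre squares. Because $q$ factors through $X$, one has $\cX_s=\cX\times_X X_s$, so by the stability of good moduli spaces under arbitrary base change \cite[Proposition~4.7(i)]{Alper} the map $p_s$ is again a good moduli space; in particular $p_{s,*}$ is exact. Combining this with the exactness of $p_*$ and the $S$-flatness of $\cF$ (so that $L(\iota_s^\cX)^*\cF=\cF_s$ is a sheaf), the derived base-change morphism for the cartesian square gives an isomorphism
\[
  L\iota_s^*\cG\;\iso\;Rp_{s,*}\cF_s=p_{s,*}\cF_s
\]
in the derived category on $X_s$, concentrated in cohomological degree $0$.

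This has two consequences. On the one hand, $L\iota_s^*\cG$ has no higher cohomology for every $s\in S$, i.e.\ $\TOR_i^{O_S}(\cG,k(s))=0$ for $i>0$ at every point; by the fibrewise local criterion of flatness this shows that $\cG$ is flat over $S$. On the other hand, the underived restriction satisfies $\cG_s=\iota_s^*\cG=p_{s,*}\cF_s$, so using exactness of $p_{s,*}$ once more,
\[
  \chi(\cF_s)=\chi(\cX_s,\cF_s)=\chi(X_s,p_{s,*}\cF_s)=\chi(X_s,\cG_s).
\]
It then remains to apply the scheme-theoretic statement to the coherent, $S$-flat sheaf $\cG$ and the projective morphism $X\to S$: the Euler characteristic $s\mapsto\chi(X_s,\cG_s)$ is locally constant, as it is the alternating sum of the ranks of a bounded complex of finite free $O_S$-modules universally computing the derived pushforward (cf.\ \cite[Ch.~III, \S12]{Hartshorne}). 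Combined with the displayed identity this proves the lemma.

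The main obstacle is the base-change isomorphism $L\iota_s^*\cG\iso p_{s,*}\cF_s$ along the inclusion of a fibre, which is not a flat morphism of $S$. Everything hinges on pushing the good moduli space formalism through this non-flat base change, and the point that makes it work is the identification $\cX_s=\cX\times_X X_s$: it converts the fibre over $s\in S$ into a base change over $X$, where \cite[Proposition~4.7(i)]{Alper} applies, and the cohomological affineness of $p$ (equivalently, exactness of $p_*$ and $p_{s,*}$) together with $S$-flatness of $\cF$ forces the derived pushforward to commute with the base change.
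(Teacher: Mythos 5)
Your overall architecture is the same as the paper's: push $\cF$ down to $X$ via the exact functor $p_*$, identify $\chi(\cF_s)$ with $\chi(X_s,(p_*\cF)_s)$ by base change for good moduli spaces, and finish with the classical constancy of Euler characteristics of a flat coherent sheaf under the projective map $X\to S$. However, there is a genuine gap at your key step, and it sits exactly where the content of the lemma lies. The derived base-change isomorphism $L\iota_s^*\cG\iso Rp_{s,*}\cF_s$ does not follow from the facts you cite. Alper's Proposition~4.7(i), exactness of $p_*$ and $p_{s,*}$, and $S$-flatness of $\cF$ give you only two things: the \emph{underived} base change $\iota_s^*\cG\iso p_{s,*}\cF_s$ (pushforward along a good moduli space commutes with arbitrary base change), and the identification of the target $Rp_{s,*}L(\iota_s^{\cX})^*\cF=p_{s,*}\cF_s$ as a sheaf in degree $0$. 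They say nothing about the source: $\iota_s$ is a non-flat closed immersion, so $L\iota_s^*\cG$ may a priori have cohomology in negative degrees, and the assertion that it does not, i.e.\ $L_i\iota_s^*\cG=0$ for $i>0$, is precisely the Tor-vanishing that you then feed into the local criterion to conclude that $\cG$ is $S$-flat. In other words, the flatness you are trying to prove is smuggled into the unproved base-change isomorphism; the argument is circular. (Two further silent assumptions: identifying $H^{-i}(L\iota_s^*\cG)$ with $\TOR_i^{O_S}(\cG,k(s))$, and any honest proof of Tor-independence of your cartesian square, both use flatness of $X\to S$, which is \cite[Theorem~4.16(ix)]{Alper} and is never invoked in your proof.)

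The repair is to prove $S$-flatness of $\cG=p_*\cF$ directly, which is what the paper does, and it is short: for any quasi-coherent sheaf $\cN$ on $S$, the projection formula gives $\cN\otimes_{O_S}\cG=p_*(\pi^*\cN\otimes_{O_\cX}\cF)$, where $\pi:\cX\to S$ is the structure map; the functor $\cN\mapsto\pi^*\cN\otimes\cF$ is exact because $\pi$ is flat and $\cF$ is flat on $\cX$, and $p_*$ is exact because $p$ is a good moduli space; hence $\cN\mapsto\cN\otimes_{O_S}\cG$ is exact, i.e.\ $\cG$ is flat over $S$. (The paper phrases this as flatness of $p_*\cF$ via a projection-formula computation combined with flatness of $X\to S$ from \cite[Theorem~4.16(ix)]{Alper}; note that the flatness statement one should actually aim for is flatness over $S$ --- flatness of $p_*\cF$ as an $O_X$-module can genuinely fail, e.g.\ for a $\mu_2$-equivariant line bundle on $\mathbb{A}^2$ twisted by the sign character, whose invariant pushforward to the quadric cone is not flat at the vertex.) Once flatness of $\cG$ over $S$ is known, your derived base change becomes unnecessary: the underived base change already gives $\chi(\cF_s)=\chi(\cX_s,\cF_s)=\chi(X_s,\cG_s)$, and the remainder of your proof goes through verbatim.
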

Let us apply this Lemma to $\xi_x\otimes\xi_y^\vee$. A slight
generalization of Theorems~\ref{GoodModuliSpace}
and~\ref{AmpleBundle} shows that $\Mb_{univ}$ has a good moduli
space $\oM_{univ}$, which is projective over the space of
collections $(\divisor,\nu_1,\nu_2)$. Therefore,
$\chi(\Mb,\xi_x\otimes\xi_y^\vee)$ does not depend on
$\divisor$,~$\nu_1$, and $\nu_2$. In particular, we may assume
that $\divisor=x_1+x_2+x_3+x_4$ for distinct $x_i\in\p1$ and
that $\nu_1$, $\nu_2$ are generic. Using
Lemma~\ref{Comparison}, we see that it is enough to prove that
$H^i(\cM,\xi_x\otimes\xi_y^\vee)=0$ for $x\ne y$ and all $i$ in
the case of simple $\divisor$ and generic $\nu_1$ and $\nu_2$.
This case is treated in~\cite[Theorem~2]{Arinkin}, except for a
slight difference that $SL(2)$-bundles are considered there.
However, both moduli stacks have the same good moduli space, so
the cohomology groups are the same (in fact, our moduli space
is $M\times B(\gm)$, while the moduli space in~\cite{Arinkin}
is a $\mu_2$-gerbe over~$M$).
\end{proof}

\begin{proof}[Proof of Lemma~\ref{FlatStacks}]
Note that $p_*\cF$ is flat on $X$. Indeed, if $\cG$ is a sheaf
on $X$, and $\cG^\al$ is its resolution by locally free
sheaves, we have
\[
\TOR^i(\cG,p_*\cF)=H^{-i}(\cG^\al\otimes p_*\cF)=
p_*H^{-i}(p^*\cG^\al\otimes\cF)=0.
\]
We have used the projection formula and the fact that good
moduli spaces are cohomologically affine.

By Proposition~4.7(i) of~\cite{Alper}, the restriction of $p$
to $s\in S$ is a good moduli space $p_s:\cX_s\to X_s$ so we
have
\[
    \chi(\cF_s)=\chi(p_{s,*}\cF_s)=\chi((p_*\cF)_s).
\]
(We are using a base change).

By Theorem~4.16(ix) of~\cite{Alper}, the map $X\to S$ is flat,
and we see that $\chi((p_*\cF)_s)$ is locally constant.
\end{proof}

\subsection{Proof of Proposition~\ref{SSupport}}
It is convenient to replace $\Dmod_{P,\alpha}$-modules with
modules over a certain sheaf of algebras on $\p1$. Let us make
the corresponding definitions. We identify $\Dmod_{P,\alpha}$
with a subsheaf in the pushforward of $\Dmod_{\p1-\divisor}$ to
$P$.

Let $z_i$ be a local coordinate at $x_i$. Let us lift polar
parts $\alpha_i^\pm$ to actual 1-forms on formal neighborhoods
of $x_i$; we shall denote these 1-forms by the same letters.
Consider the open embedding
$\jmath:\p1-\divisor\hookrightarrow\p1$.

Define the sheaf of algebras $\cB=\cB_{\alpha}\subset
\jmath_*\Dmod_{\p1-\divisor}$ as follows:
\begin{itemize}
\item We have $\cB|_{\p1-\divisor}=\Dmod_{\p1-\divisor}$;
\item Near $x_i\in\p1$, $\cB$ is generated by $O_\p1$,
    $z_i^{n_i}\frac{\dif}{\dif z_i}$, and
    $z_i^{-n_i}(z_i^{n_i}\frac{\dif-\alpha_i^+}{\dif
    z_i})(z_i^{n_i}\frac{\dif-\alpha_i^-}{\dif z_i})$.
\end{itemize}
Clearly, $\cB$ inherits from $\jmath_*\Dmod_{\p1-\divisor}$ the
filtration by degree of differential operators. We denote by
$\cB^{\le k}\subset\cB$ the subsheaf of operators of degree at
most $k$.

The properties of $\cB$ are summarized in the following
\begin{Proposition}\label{cB}
\noindstep\label{cB1} $\cB=\wp_*\Dmod_{P,\alpha}$.\\
\noindstep\label{cB2} Moreover, $R^1\wp_*\Dmod_{P,\alpha}=0$,
so that $\cB=R\wp_*\Dmod_{P,\alpha}$.\\
\noindstep\label{cB3} $\cB^{\le k}/\cB^{\le
k-1}=\cT_\p1^{\otimes k}(-\left\lceil\frac
k2\right\rceil\divisor)$. \lpar{}Here $\lceil\,\rceil$ is the
ceiling function.\rpar
\end{Proposition}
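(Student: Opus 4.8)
The plan is to reduce everything to a local computation near each $x_i$, since over $\p1-\supp\divisor$ the map $\wp$ is an isomorphism and the twist is trivial, so all three statements are immediate there. Fix $x_i$, write $D$ for its formal disc, $z=z_i$, $n=n_i$, $\partial=\dif/\dif z$, and set $a^\pm:=\alpha_i^\pm/\dif z$, $\delta:=a^+-a^-$; by condition~(\ref{AlphaI}) the $a^\pm$ have poles of order $\le n$ while $\delta$ has a pole of order exactly $n$. The preimage $\wp^{-1}(D)$ is a union of two discs $D^+,D^-$ glued along the punctured disc $\dot D=D^+\cap D^-$, and from the cocycle $(\alpha_i^\pm)$ defining $\Dmod_{P,\alpha}$ its restriction to $D^\pm$ is identified, inside $\jmath_*\Dmod_{\dot D}$, with the subalgebra $\langle O_D,\partial-a^\pm\rangle$ of operators regular in the $\pm$-gauge (this is exactly the gauge appearing in the definition of $\cB$). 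Using the affine cover $\{D^+,D^-\}$, the complex $R\wp_*\Dmod_{P,\alpha}$ over $D$ is the two-term complex
\[
 \Dmod_{P,\alpha}(D^+)\oplus\Dmod_{P,\alpha}(D^-)\xrightarrow{\ d\ }\Dmod_{\dot D}(\dot D)=\C((z))\langle\partial\rangle,\qquad d(s_+,s_-)=s_+-s_-,
\]
so that (a) becomes the statement $\ker d=\cB$ and (b) becomes the surjectivity of $d$.

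For (c) I would compute $\gr\cB$ directly from the two generators. Their symbols are $\sigma(z^n\partial)=z^n\partial$ and $\sigma(\theta_2)=z^n\partial^2$, the latter from the one-line expansion $\theta_2=z^{-n}(z^n(\partial-a^+))(z^n(\partial-a^-))=z^n\partial^2+(\text{lower order})$. As $\cB$ is the sheaf of $O$-algebras they generate and the symbol map is multiplicative, $\gr\cB$ is the $O$-subalgebra of $\sym\cT$ generated by these two symbols. A degree-$k$ monomial $(z^n\partial)^a(z^n\partial^2)^b$ with $a+2b=k$ equals $z^{n(a+b)}\partial^k$, and the minimal exponent $n(a+b)=n\lceil k/2\rceil$ is achieved by taking $b$ maximal; hence $\gr_k\cB=z^{n\lceil k/2\rceil}O\,\partial^k$, which globally is $\cT_{\p1}^{\otimes k}(-\lceil k/2\rceil\divisor)$, proving (c).

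For (a), the inclusion $\cB\subseteq\ker d$ is a direct check on generators: $z^n\partial=z^n(\partial-a^\pm)+z^na^\pm$ lies in both gauges, and $\theta_2=(\partial-a^+)\cdot z^n(\partial-a^-)$ is manifestly in $\Dmod_{P,\alpha}(D^+)$, while its $+\!\leftrightarrow\!-$ swap $\theta_2'$ lies in $\Dmod_{P,\alpha}(D^-)$ and $\theta_2-\theta_2'=z^{-n}[\,z^n(\partial-a^+),z^n(\partial-a^-)\,]=(z^n\delta)'$ is regular, so $\theta_2\in\ker d$ as well. For the reverse inclusion I would induct on the order $k$: given $T\in\ker d$ of order $k$, conjugating by $g_\pm=\exp(\int a^\pm)$ turns the two regularity conditions into the single requirement that a regular operator $S=\sum_{i\le k}c_i\partial^i$ remain regular after the irregular twist $\partial\mapsto\partial-\delta$, i.e. that every coefficient of $\sum_i c_i(\partial-\delta)^i$ is regular. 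The key estimate is that this forces $\operatorname{ord}_0(c_k)\ge\lceil k/2\rceil n$; granting it, the leading symbol of $T$ lies in $\gr_k\cB$, so subtracting a product of generators with the same symbol lowers the order and the induction closes (the order-zero part of $\ker d$ being $O$). Together with $\cB\subseteq\ker d$ and the graded computation this gives $\cB=\ker d=\wp_*\Dmod_{P,\alpha}$.

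Finally, for (b), conjugation by $g_+$ reduces the surjectivity of $d$ to the identity $\C[[z]]\langle\partial\rangle+\C[[z]]\langle\partial+\delta\rangle=\C((z))\langle\partial\rangle$. I would prove this by induction on the maximal pole order $m$ of the coefficients of a target operator: the case $m\le0$ is the regular operators, and for the step one uses that $(\partial+\delta)^l$ puts $\delta^l$, of pole order exactly $ln$, into its $\partial^0$-coefficient, so that $O$-combinations of the $(\partial+\delta)^l$ and of the $\partial^j$ realize any coefficient of pole order $m$ modulo pole order $<m$; since $\delta$ has pole order exactly $n$ its powers reach every pole order and the induction closes. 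The main obstacle is the estimate $\operatorname{ord}_0(c_k)\ge\lceil k/2\rceil n$ in (a): the regularity condition read off from a single coefficient of $\sum_i c_i(\partial-\delta)^i$ yields only $\operatorname{ord}_0(c_k)\ge n$, and obtaining the sharp bound requires combining the regularity conditions from all lower coefficients into one overdetermined Newton-polygon system, whose solvability is controlled precisely by the slope-$n$ irregularity guaranteed by condition~(\ref{AlphaI}) --- the same input that makes $d$ surjective in (b).
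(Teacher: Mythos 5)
Your global-to-local reduction and the \v{C}ech reformulation (part (a) as $\ker d$, part (b) as surjectivity of $d$) coincide with the paper's own setup, and your verification that the generators lie in $\ker d$ is correct. But the core of the proposition is missing. For (c), multiplicativity of the symbol map only yields that the $O$-algebra generated by $z^n\partial$ and $z^n\partial^2$ is \emph{contained} in $\gr\cB$; the reverse inclusion is not formal, because a general element of $\cB$ is a \emph{sum} of products of generators, and cancellation among their leading terms can a priori produce order-$k$ elements whose symbols vanish to order less than $n\lceil k/2\rceil$. That reverse inclusion is precisely the estimate $\mathrm{ord}_0(c_k)\ge n\lceil\frac k2\rceil$ which your induction in (a) needs and which you explicitly leave unproved (``the main obstacle''). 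So both (a) and (c) rest on a step you have not carried out, and that step is the actual content of the proposition. The paper's proof is organized entirely around it: instead of the order filtration it filters $\Dmod_K$ by the $z$-power translates of the lattice $F=\C[[z]]\langle z^n\frac{\dif}{\dif z}\rangle$ (Lemma~\ref{lm:Filtration}), computes the associated graded ring (commutative for $n>1$, differential operators on $\A1-0$ for $n=1$), characterizes the images of $\Dmod_O^\pm$ and of $\cB_O$ there by divisibility of a polynomial in $\bar\delta$ by $q_l^\pm(t)$, resp.\ by $q_l^+(t)q_l^-(t)$ (Lemma~\ref{Image} and the lemma following it), and then deduces (a), (b), and the basis statement $\{1,\delta,B,B\delta,B^2,\dots\}$ --- hence (c) --- simultaneously from the coprimality of $q_l^+$ and $q_l^-$.

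Moreover, your argument for (b) is actually false in the case $n_i=1$. You claim that the $\partial^0$-coefficient of $(\partial+\delta)^l$ (your $\delta=a^+-a^-$) has pole order exactly $ln$ because $\delta^l$ does; but for $n=1$ the derivative corrections have the \emph{same} pole order and cancellation occurs: the leading coefficient of $(\partial+\delta)^l\cdot 1$ is the falling factorial $c(c-1)\cdots(c-l+1)$, where $c=\res(\alpha_i^+-\alpha_i^-)$, and this vanishes whenever $c$ is a nonnegative integer less than $l$. This is the resonance phenomenon: in the regular-singular case the needed input is not the ``slope-$n$ irregularity'' of condition~(\ref{AlphaI}) of \S\ref{MODST} (which carries no force when $n_i=1$) but the non-resonance condition~(\ref{AlphaIV}), $\res\alpha_i^+-\res\alpha_i^-\notin\Z$; in the paper this is exactly what makes $q_l^+(t)=\prod_{i=0}^{l-1}(t-a^+-i)$ and $q_l^-(t)$ coprime for $n=1$. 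Since your proposal invokes only (\ref{AlphaI}), neither the induction you sketch for (b) nor the ``Newton-polygon system'' you gesture at for the key estimate in (a) can close when some $n_i=1$.
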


\begin{Remark}\label{cBRem}\stepzero
\noindsteprm The isomorphisms \eqref{cB1} and \eqref{cB3} are
naturally normalized by the condition that they become the
obvious identifications on $\p1-\divisor$.\\
\noindsteprm\label{cBRem2} Let us fix $\mu\in
H^0(\p1,\Omega_\p1^{\otimes2}(\divisor))$, $\mu\ne 0$, as
in Section~\ref{MH}. Then \eqref{cB3} can be rewritten as
\[
\cB^{\le k}/\cB^{\le k-1}=\begin{cases}
    \cT_\p1(-\divisor) &\text{if $k$ is odd,}\\
    O_\p1 &\text{if $k$ is even}.
\end{cases}
\]
\noindsteprm\label{cBRem3} Actually, $\wp:P\to\p1$ is affine
with respect to $\Dmod_{P,\alpha}$ in the sense that the
functor~$\wp_*$ is exact on $\Dmod_{P,\alpha}$-modules and
provides an equivalence between the category of
$\Dmod_{P,\alpha}$-modules and that of $\cB$-modules. We do not
use this claim, so its proof is left to the reader.
\end{Remark}

\begin{proof}[Proof of Proposition~\ref{cB}]
As we have already mentioned, the claims are obvious on
$\p1-\divisor$. Therefore, it suffices to consider the formal
neighborhood of a point $x_i$. Since we concentrate on a single
point, we drop the index $i$ to simplify the notation, so
$\alpha^\pm=\alpha^\pm_i$, $z=z_i$, and  $n=n_i$.

Let $\Dmod_K:=\C((z))\langle\frac{\dif}{\dif z}\rangle$ be the
ring of differential operators on the punctured formal
neighborhood of $x_i$. Set
\[
    \delta:=z^n\frac\dif{\dif z},\quad
    B:=z^{-n}\left(z^n\frac{\dif-\alpha^+}{\dif z}\right)
    \left(z^n\frac{\dif-\alpha^-}{\dif z}\right)\in\Dmod_K
\]
and
\[
    \cB_O:=\C[[z]]\left\langle\delta,B\right\rangle\subset\Dmod_K,\qquad
    \Dmod_O^\pm:=\C[[z]]\left\langle\frac{\dif-\alpha^\pm}{\dif z}\right\rangle\subset\Dmod_K.
\]
Then the restriction of $\cB$ to the formal neighborhood of
$x_i$ is $\cB_O$, the restriction of $\Dmod_{P,\alpha}$ to the
formal neighborhoods of $x_i^\pm$ is $\Dmod_O^\pm$, and the
restriction of\nopagebreak{} $R^0\wp_*\Dmod_{P,\alpha}$ (resp.
$R^1\wp_*\Dmod_{P,\alpha}$) to the formal neighborhood of $x_i$
equals $\Dmod_O^+\cap\Dmod_O^-$ (resp.
$\Dmod_K/(\Dmod_O^++\Dmod_O^-)$). The proposition thus reduces
to the following statements:
\begin{enumerate}
\item\label{Odin} $\cB_O=\Dmod_O^+\cap\Dmod_O^-$,
\item\label{Dva} $\Dmod_K=\Dmod_O^++\Dmod_O^-$,
\item\label{Tri} The set $\{1,\delta,B,B\delta ,B^2,
    B^2\delta,\dots\}$ is a basis of $\cB_O$ as of a
    $\C[[z]]$-module. Note that the symbol of $\delta$
    (resp.\ the symbol of $B$) is a section of
    $\cT_\p1(-\divisor)$ (resp. of
    $\cT_\p1^{\otimes2}(-\divisor)$).
\end{enumerate}

Set $F:=\C[[z]]\langle\delta\rangle\subset\Dmod_K$, and
introduce the filtration
\[
    \dots\subset z F\subset F\subset z^{-1}F\subset\dots\subset\Dmod_K.
\]
For an element $C\in\Dmod_K$ denote by $\bar C$ its image in
$\gr\Dmod_K$.

\begin{Lemma}\label{lm:Filtration}\stepzero
\noindstep\label{Gr1} This filtration is exhaustive, separated,
and compatible with the ring structure.

\noindstep\label{Gr2} If $n>1$, then the associated graded ring
is isomorphic to $\C[\bar z,\bar z^{-1},\bar\delta]$, that is,
to the ring of functions on $\A1\times(\A1-0)$.

\noindstep\label{Gr3} For $n=1$ the associated graded ring is
isomorphic to $\C[\bar z,\bar
z^{-1}]\langle\bar\delta\rangle/(\bar\delta\bar z-\bar
z\bar\delta-\bar z)$, that is, to the ring of differential
operators on $\A1-0$.
\end{Lemma}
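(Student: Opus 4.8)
The plan is to reduce all three parts to a single commutation relation together with a freeness statement, after which everything follows by bookkeeping. Write $\partial:=\frac{\dif}{\dif z}$, so that $\delta=z^n\partial$ and $[\delta,f]=z^nf'$ for every $f\in\C((z))$; in particular $\delta z-z\delta=z^n$. First I would record that $\delta^k=z^{kn}\partial^k+(\text{lower order in }\partial)$ with coefficients in $\C[[z]]$, so that $\{\delta^k\}_{k\ge0}$ is obtained from $\{\partial^k\}_{k\ge0}$ by a change of basis over $\C((z))$ that is upper triangular with invertible (unit) diagonal entries $z^{kn}$. Hence $\{\delta^k\}$ is a $\C((z))$-basis of $\Dmod_K$. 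Moving every $\delta$ to the right by repeated use of $\delta a=a\delta+z^na'$, and noting $z^na'\in\C[[z]]$, shows that $\{\delta^k\}$ spans $F=\C[[z]]\langle\delta\rangle$ over $\C[[z]]$; linear independence then makes $F$ a free $\C[[z]]$-module on $\{\delta^k\}$. Consequently every element of $\Dmod_K$ is uniquely $\sum_ka_k(z)\delta^k$ with $a_k\in\C((z))$, finitely many nonzero.

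For part~\eqref{Gr1}, uniqueness of these expansions makes the filtration transparent: $z^mF=\{\sum_ka_k\delta^k:\,a_k\in z^m\C[[z]]\}$. Exhaustiveness holds because any $P=\sum_ka_k\delta^k$ lies in $z^mF$ once $m$ is at most the minimal $z$-adic valuation of the $a_k$; separatedness holds because $\bigcap_m z^m\C[[z]]=0$ forces each $a_k=0$. For compatibility with the ring structure I would prove $Fz^m\subseteq z^mF$: the identity $\delta z^m=z^m(\delta+m\,z^{n-1})$ — valid precisely because $n\ge1$ guarantees $z^{n-1}\in\C[[z]]\subset F$ — gives by induction $\delta^kz^m=z^m(\delta+m\,z^{n-1})^k\in z^mF$, and because $az^m=z^ma$ for $a\in\C[[z]]$ this upgrades to $Fz^m\subseteq z^mF$. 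Using $FF\subseteq F$ one then gets $z^mF\cdot z^{m'}F\subseteq z^{m+m'}F$. I would flag that this step is exactly where the hypothesis $n\ge1$ is used.

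It remains to identify $\gr$ for parts~\eqref{Gr2} and~\eqref{Gr3}. From the description of $z^mF$ one reads off $z^mF/z^{m+1}F=\bigoplus_k\C\,\bar z^m\bar\delta^k$, so as a graded vector space $\gr=\bigoplus_{m\in\Z,\,k\ge0}\C\,\bar z^m\bar\delta^k$, with $\bar z$ invertible of degree $1$ and $\bar\delta$ of degree $0$. The only relation to compute is the degree-$1$ symbol of $\delta z=z\delta+z^n$: when $n>1$ the term $z^n$ has degree $n>1$ and drops out, giving $\bar\delta\bar z=\bar z\bar\delta$ and hence $\gr\cong\C[\bar z,\bar z^{-1},\bar\delta]$; when $n=1$ the term $z^n=z$ survives in degree $1$, giving $\bar\delta\bar z-\bar z\bar\delta=\bar z$, which presents $\gr$ as the ring of differential operators on $\A1-0$. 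To conclude I would observe that in each case there is an evident graded surjection from the model ring onto $\gr$ sending generators to symbols, and that it carries the basis $\{\bar z^m\bar\delta^k\}$ of the model ring bijectively onto the displayed basis of $\gr$, hence is an isomorphism. The one genuinely delicate point is the multiplicativity of the filtration in~\eqref{Gr1}; once it is in place, the graded computation — including the $n=1$ versus $n>1$ dichotomy — is forced.
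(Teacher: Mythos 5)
Your proposal is correct and follows essentially the same route as the paper: both rest on the unique expansion $C=\sum_l f_l(z)\delta^l$ with $f_l\in\C((z))$, characterize $z^mF$ by the valuations of the coefficients to get exhaustiveness and separatedness, prove multiplicativity from the commutation relation $[\delta,z^m]=mz^{m+n-1}$ (your $\delta^kz^m=z^m(\delta+mz^{n-1})^k$ is the paper's $\delta^lz^k\in z^kF$), and then identify $\gr$ by mapping the model ring onto it and checking bijectivity on the monomial bases $\bar z^m\bar\delta^k$. The only difference is that you spell out the triangular change of basis from $\{\partial^k\}$ to $\{\delta^k\}$ justifying the unique expansion, which the paper asserts without proof.
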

\begin{proof}
(\ref{Gr1}) Note first that every element of $C\in\Dmod_K$ can
be written uniquely as
\begin{equation}\label{present}
    C=\sum_{l\ge0} f_l(z)\delta^l,
\end{equation}
where $f_l(z)\in\C((z))$. It follows from commutation relation
\begin{equation}\label{commute}
[\delta,z^k]=kz^{k+n-1}
\end{equation}
that $C\in F$ if and only if for all $l$ we have
$f_l(z)\in\C[[z]]$. Thus $C\in z^{-k}F$ if and only if for all
$l$ we have $z^kf_l(z)\in\C[[z]]$. Hence the filtration is
exhaustive and separated.

It follows from commutation relation~(\ref{commute}) by
induction on $l$ that $\delta^lz^k\in z^kF$. Now it is easy to
see that the filtration is compatible with the ring structure.

(\ref{Gr2}) It follows from (\ref{commute}) that $\bar z$ and
$\bar\delta$ commute in $\gr\Dmod_k$ if $n>1$. Thus we get a
homomorphism
\[
    \C[\bar z,\bar z^{-1},\bar\delta]\to\gr\Dmod_K.
\]
Using presentation~(\ref{present}), we see that it is
bijective.

The proof of~(\ref{Gr3}) is similar to that of~(\ref{Gr2}).
\end{proof}

Denote by $a^\pm$ the leading coefficient of $\alpha^\pm=a^\pm
z^{-n}\dif z+\dots$, and define the polynomials $q^\pm_l(t)$
for a non-negative integer $l$ by
\[
    q_l^\pm(t):=\begin{cases}
        (t-a^\pm)^l &\text{ if }n>1,\\
        \prod_{i=0}^{l-1}(t-a^\pm-i) &\text{ if }n=1.
    \end{cases}
\]
\begin{Lemma}\label{Image}
\stepzero\noindstep\label{Image1} The image of
$\left(\frac{\dif-\alpha^\pm}{\dif z}\right)^l$
in $\gr\Dmod_K$ is $\bar z^{-nl}q_l^\pm(\bar\delta)$.\\
\noindstep\label{Image2} The image of $B^l$ in $\gr\Dmod_K$ is
$\bar z^{-nl}q_l^+(\bar\delta)q_l^-(\bar\delta)$.
\end{Lemma}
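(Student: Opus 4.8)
The plan is to deduce both identities from one principle: since the filtration on $\Dmod_K$ is multiplicative (Lemma~\ref{lm:Filtration}), the image in $\gr\Dmod_K$ of an $l$-fold product is the product of the images, as long as that product is nonzero. So I would first pin down the leading symbols of the two generators ($l=1$). Writing $\alpha^\pm=(a^\pm z^{-n}+\dots)\,\dif z$, condition~(\ref{AlphaI}) of~\S\ref{MODST} guarantees that $z^n\frac{\alpha^\pm}{\dif z}\in\C[[z]]$ with constant term $a^\pm$, whence
\[
 z^n\frac{\dif-\alpha^\pm}{\dif z}=\delta-c^\pm,\qquad c^\pm\in\C[[z]],\ c^\pm(0)=a^\pm.
\]
Therefore $\frac{\dif-\alpha^\pm}{\dif z}=z^{-n}(\delta-c^\pm)\in z^{-n}F$ and $B=z^{-n}(\delta-c^+)(\delta-c^-)\in z^{-n}F$, and since $c^\pm-a^\pm\in z\C[[z]]$ drops to a lower filtration piece, their images in $\gr\Dmod_K$ are $\bar z^{-n}(\bar\delta-a^\pm)$ and $\bar z^{-n}(\bar\delta-a^+)(\bar\delta-a^-)$, respectively. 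These are exactly the $l=1$ cases of (\ref{Image1}) and (\ref{Image2}).

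By multiplicativity it then remains to raise these symbols to the $l$-th power inside $\gr\Dmod_K$ and to check the result is nonzero (so that no filtration degree is lost and the computed symbol is genuinely the leading one). When $n>1$ the graded ring is the commutative domain $\C[\bar z,\bar z^{-1},\bar\delta]$, so the powers are just $\bar z^{-nl}(\bar\delta-a^\pm)^l$ and $\bar z^{-nl}(\bar\delta-a^+)^l(\bar\delta-a^-)^l$; these coincide with $\bar z^{-nl}q_l^\pm(\bar\delta)$ and $\bar z^{-nl}q_l^+(\bar\delta)q_l^-(\bar\delta)$ by the $n>1$ definition of $q_l^\pm$, and are manifestly nonzero. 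This disposes of the proposition for $n>1$ with essentially no computation.

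The case $n=1$ is the real content, and I expect the noncommutative bookkeeping here to be the main obstacle. Now $\gr\Dmod_K$ is the (noncommutative) domain of differential operators on $\A1-0$, with $\bar\delta\bar z-\bar z\bar\delta=\bar z$; iterating this relation yields $f(\bar\delta)\bar z^{-1}=\bar z^{-1}f(\bar\delta-1)$ for every polynomial $f$. I would compute $\bigl(\bar z^{-1}(\bar\delta-a^\pm)\bigr)^l$ (resp.\ $\bigl(\bar z^{-1}(\bar\delta-a^+)(\bar\delta-a^-)\bigr)^l$) by sweeping all $l$ of the $\bar z^{-1}$ factors to the left: each time a polynomial factor crosses a $\bar z^{-1}$ its argument is shifted by $-1$, so the $k$-th factor from the left picks up a shift by $-(l-k)$. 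As polynomials in $\bar\delta$ commute, the product telescopes to
\[
 \bar z^{-l}\prod_{j=0}^{l-1}(\bar\delta-a^\pm-j)\quad\Bigl(\text{resp.}\ \bar z^{-l}\prod_{j=0}^{l-1}(\bar\delta-a^+-j)(\bar\delta-a^--j)\Bigr),
\]
which is precisely $\bar z^{-l}q_l^\pm(\bar\delta)$ (resp.\ $\bar z^{-l}q_l^+(\bar\delta)q_l^-(\bar\delta)$) for the $n=1$, falling-factorial definition of $q_l^\pm$. The only delicate point — and the very reason $q_l^\pm$ is a falling product rather than a pure power when $n=1$ — is getting this shift pattern right. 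Finally, each displayed symbol is a unit $\bar z^{-l}$ times a nonzero polynomial in $\bar\delta$, hence nonzero in the domain $\gr\Dmod_K$, so the products of leading symbols do not drop filtration degree and the computed images are indeed the leading symbols, proving both parts.
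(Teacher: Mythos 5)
Your proof is correct and takes essentially the same route as the paper's: identify the leading symbols $\bar z^{-n}(\bar\delta-a^\pm)$ and $\bar z^{-n}(\bar\delta-a^+)(\bar\delta-a^-)$ of the generators, then use commutativity of $\gr\Dmod_K$ when $n>1$ and the relation $(\bar\delta-a)\bar z^{-1}=\bar z^{-1}(\bar\delta-a-1)$ to sweep the $\bar z^{-1}$ factors to the left when $n=1$, producing the falling factorials. The only difference is cosmetic: you make explicit the multiplicativity-plus-integrality point (that the product of symbols is nonzero, so no filtration degree is lost), which the paper leaves implicit.
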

\begin{proof}
(\ref{Image1}) The image of $\frac{\dif-\alpha^\pm}{\dif z}$ in
$\gr\Dmod_K$ is $\bar z^{-n}(\bar\delta-a^\pm)$. If $n>1$, then
the statement follows from commutativity of
$\gr\Dmod_K$.

If $n=1$, then we have to move all copies of $\bar z^{-1}$ to
the left in $(\bar z^{-1}(\bar\delta-a^\pm))^l$. Now the
statement follows from the relation
\begin{equation}\label{commute2}
    (\bar\delta-a)\bar z^{-1}=\bar z^{-1}(\bar\delta-a-1),\quad a\in\C.
\end{equation}

(\ref{Image2}) We have $\bar B=\bar
z^{-n}(\bar\delta-a^+)(\bar\delta-a^-)$. Now the case $n>1$ is
obvious, the case $n=1$ again follows from~(\ref{commute2}).
\end{proof}

By Lemma~\ref{lm:Filtration} any element of $\gr_{-k}\Dmod_K$
can be uniquely written as
\[
    \bar z^{-k}p(\bar\delta),\quad p(\bar\delta)\in\C[\bar\delta].
\]
Denote by $\gr\Dmod_O^\pm$ the set of images of all elements of
$\Dmod_O^\pm$ in $\gr\Dmod_K$. Define $\gr\cB_O$ similarly. Fix
$k\in\Z$ and set $l:=\lceil\frac kn\rceil$.
\begin{Lemma}
\begin{align*}
&(a)\qquad\bar z^{-k}p(\bar\delta)\in\gr\Dmod_O^\pm &&\text{ if and only if }
k\le 0\text{ or }q_l^\pm(t)|p(t);\\
&(b)\qquad\bar z^{-k}p(\bar\delta)\in\gr\cB_O &&\text{ if and only if }
k\le 0\text{ or }q_l^+(t)q_l^-(t)|p(t).
\end{align*}
\end{Lemma}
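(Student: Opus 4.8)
The plan is to reduce all four implications to the multiplicativity of the symbol map $\sigma\colon C\mapsto\sigma(C)$ on $\gr\Dmod_K$, which is available because $\gr\Dmod_K$ is a domain by Lemma~\ref{lm:Filtration}, together with the symbol computations of Lemma~\ref{Image}. Write $\partial^\pm:=\frac{\dif-\alpha^\pm}{\dif z}$, so $\Dmod_O^\pm=\C[[z]]\langle\partial^\pm\rangle$ and $F=\C[[z]]\langle\delta\rangle$. Two structural facts are needed at the outset. First, $\delta\in\Dmod_O^\pm$ (since $\delta=z^n\partial^\pm+z^n\alpha^\pm/\dif z$ and $z^n\alpha^\pm/\dif z\in\C[[z]]$), hence $F\subseteq\Dmod_O^\pm$, and from the definition $F\subseteq\cB_O$. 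Second, and less obviously, $B\in\Dmod_O^+\cap\Dmod_O^-$: expanding $B=z^{-n}(z^n\partial^+)(z^n\partial^-)$ and using $\partial^+z^n=z^n\partial^++nz^{n-1}$ one checks that the factor $z^{-n}$ is absorbed, giving $B=z^n(\partial^+)^2+(nz^{n-1}+c)\partial^++c'\in\Dmod_O^+$, where $c=z^n(\alpha^+-\alpha^-)/\dif z\in\C[[z]]$ and $c'=\dif c/\dif z$; the symmetric computation puts $B$ in $\Dmod_O^-$. Consequently $\cB_O\subseteq\Dmod_O^+\cap\Dmod_O^-$, so $\gr\cB_O\subseteq\gr\Dmod_O^+\cap\gr\Dmod_O^-$.

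For part (a) the case $k\le0$ is immediate: $z^{-k}p(\delta)\in F\subseteq\Dmod_O^\pm$ has symbol $\bar z^{-k}p(\bar\delta)$. For $k>0$ with $q_l^\pm\mid p$, write $p=q_l^\pm r$ and consider $z^{nl-k}(\partial^\pm)^l r(\delta)$; since $l=\lceil k/n\rceil$ gives $nl-k\ge0$, this lies in $\Dmod_O^\pm$, and by multiplicativity and Lemma~\ref{Image}(\ref{Image1}) its symbol is $\bar z^{nl-k}\cdot\bar z^{-nl}q_l^\pm(\bar\delta)\cdot r(\bar\delta)=\bar z^{-k}p(\bar\delta)$. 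Conversely, take $C\in\Dmod_O^\pm$ with $\sigma(C)=\bar z^{-k}p(\bar\delta)$ and write it in normal form $C=\sum_m g_m(z)(\partial^\pm)^m$, $g_m\in\C[[z]]$. The term $g_m(\partial^\pm)^m$ has weight $\mathrm{ord}(g_m)-nm$ and symbol a scalar multiple of $\bar z^{\mathrm{ord}(g_m)-nm}q_m^\pm(\bar\delta)$; the leading (minimal-weight) terms contributing to $\sigma(C)$ are exactly those of weight $-k$, i.e.\ with $\mathrm{ord}(g_m)=nm-k\ge0$, forcing $m\ge k/n$ and hence $m\ge l$. Thus $p$ is a combination of the $q_m^\pm$ with $m\ge l$, each divisible by $q_l^\pm$, so $q_l^\pm\mid p$.

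Part (b) follows the same pattern. The case $k\le0$ is as before. For $k>0$ with $q_l^+q_l^-\mid p$, write $p=q_l^+q_l^-r$; then $z^{nl-k}B^l r(\delta)\in\cB_O$ has symbol $\bar z^{-k}q_l^+(\bar\delta)q_l^-(\bar\delta)r(\bar\delta)=\bar z^{-k}p(\bar\delta)$ by Lemma~\ref{Image}(\ref{Image2}). For the converse I use the inclusion $\gr\cB_O\subseteq\gr\Dmod_O^+\cap\gr\Dmod_O^-$ established above: if $\bar z^{-k}p(\bar\delta)\in\gr\cB_O$ then by part (a) both $q_l^+\mid p$ and $q_l^-\mid p$. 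Finally $q_l^+$ and $q_l^-$ are coprime — for $n>1$ because $a^+\ne a^-$ (condition~(\ref{AlphaI})), and for $n=1$ because $a^+-a^-=\res\alpha^+-\res\alpha^-\notin\Z$ (condition~(\ref{AlphaIV})) prevents their roots from coinciding — so $q_l^+q_l^-\mid p$, proving (b).

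The main obstacle is the converse half of (b): the clean statement amounts to the equality $\gr\cB_O=\gr\Dmod_O^+\cap\gr\Dmod_O^-$, and this rests on the two non-formal inputs above, namely that the factor $z^{-n}$ in the definition of $B$ is genuinely absorbed so that $B\in\Dmod_O^+\cap\Dmod_O^-$, and that the non-resonance hypotheses of~\S\ref{MODST} force $q_l^+$ and $q_l^-$ to be coprime. Everything else is bookkeeping with the symbol map, legitimate precisely because $\gr\Dmod_K$ is a domain.
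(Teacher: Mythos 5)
Your proof is correct and follows essentially the same route as the paper's: normal forms in $\Dmod_O^\pm$ together with the symbol computations of Lemma~\ref{Image} give the ``membership implies divisibility'' halves, and part (b) forward is deduced exactly as in the paper from the inclusion $\cB_O\subset\Dmod_O^+\cap\Dmod_O^-$, part (a), and coprimality of $q_l^+$ and $q_l^-$. The only (harmless) deviations are that your constructive halves use a single factored product $z^{nl-k}(\partial^\pm)^l r(\delta)$, resp.\ $z^{nl-k}B^l r(\delta)$, plus multiplicativity of symbols in the domain $\gr\Dmod_K$, where the paper instead expands $p$ as a linear combination of $q_i^\pm$, resp.\ of $q_i^+q_i^-$ and $tq_i^+q_i^-$, with $i\ge l$; and that you verify explicitly two points the paper leaves as ``easy to see'' (the commutator computation showing $B\in\Dmod_O^+\cap\Dmod_O^-$, and the derivation of coprimality from conditions~(\ref{AlphaI}) and~(\ref{AlphaIV})).
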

\begin{proof}
(a) Consider any element $C\in\Dmod_O^+$, $C\ne0$. It is easy
to see that it can be uniquely written as
\[
    \sum_{i,j\ge0}f_{ij}z^j\left(\frac{\dif-\alpha^+}{\dif z}\right)^i
\]
with $f_{ij}\in\C$. Let $k$ be the maximum value of the
function $(i,j)\mapsto ni-j$ on the set
$\{(i,j)|\,f_{ij}\ne0\}$. Then Lemma~\ref{Image}(\ref{Image1})
shows that
\[
    C\in\sum_{ni-j=k}f_{ij}z^{-k}q_i^+(\delta)+z^{1-k}F.
\]
Since elements $\bar z^{-k}q_i^+(\bar\delta)$ form a basis in
$\gr_{-k}\Dmod_K$, we see that $C\notin z^{1-k}F$ and
\[
    \bar C=\sum_{ni-j=k}f_{ij}\bar z^{-k}q_i^+(\bar\delta).
\]
Since $j\ge0$, we see that $i\ge\frac kn$, so $i\ge l$. Thus if
$k>0$, then $\bar C=\bar z^{-k}p(\bar\delta)$, where $p$ is
divisible by $q_l^+$.

Conversely, given a polynomial $p$ divisible by $q_l^+$ (or any
polynomial if $k\le0$), we can write $p=\sum_{i\ge l}f_iq_i^+$
with $f_i\in\C$. Set
\[
    C=\sum_if_i z^{ni-k}\left(\frac{\dif-\alpha^+}{\dif z}\right)^i.
\]
Then $C\in\Dmod_O^+$ and $\bar C=\bar z^{-k}p(\bar\delta)$. The
case of $\Dmod_O^-$ is completely similar.

(b) Consider $C\in\cB_O$ with $\bar C=\bar
z^{-k}p(\bar\delta)$. It is easy to see that
$\cB_O\subset\Dmod_O^+\cap\Dmod_O^-$. Thus it follows from part
(a) that $q_l^\pm(t)$ divides $p(t)$. Thus $q_l^+(t)q_l^-(t)$
divides $p(t)$, since $q_l^-(t)$ and $q_l^+(t)$ are coprime.
Finally, assume that $q_l^+(t)q_l^-(t)$ divides $p(t)$, we can
write
\[
    p(t)=\sum_{i\ge l}f_iq_i^+(t)q_i^-(t)+
    \sum_{i\ge l}g_itq_i^+(t)q_i^-(t),\qquad f_i,g_i\in\C.
\]
Set
\[
    C=\sum_i f_iz^{ni-k}B^i+\sum_i g_i z^{ni-k}B^i\delta.
\]
Clearly, $\bar C=\bar z^{-k}p(\bar\delta)$.
\end{proof}

We now see that the identities \eqref{Odin}--\eqref{Dva} hold
in the associated graded ring of $\Dmod_K$, and hence also in
$\Dmod_K$ itself. The proof of part~(b) of the lemma shows that
every element of $\gr\cB_O\cap\gr_{-k}\Dmod_K$ can be uniquely
written as
\[
    \sum_{i\ge k/n} f_i\bar z^{ni-k}\bar B^i+\sum_{i\ge k/n}
    g_i \bar z^{ni-k}\bar B^i\bar\delta
\]
with $f_i,g_i\in\C$, and~\eqref{Tri} follows. The proof of
Proposition~\ref{cB} is complete.
\end{proof}

\begin{proof}[Proof of Proposition~\ref{SSupport}]
Our first goal is to reformulate the proposition as a statement
about the cokernel of a map between sheaves on $\p1\times\p1$.

Note first of all that any
$p_1^\al\Dmod_{P,\alpha}\circledast
p_2^\al\Dmod_{P,-\alpha}$-module $\cG$ on $P\times P$ is,
in particular, a $p_1^{-1}\Dmod_{P,\alpha}$-module.
Therefore, $(\wp\times\wp)_*\cG$ has a natural structure of
a $p_1^{-1}\cB$-module coming from the isomorphism of
Proposition~\ref{cB}\eqref{cB1}. (There is also a commuting
structure of a $p_2^{-1}\cB_{-\alpha}$-module that we do
not use.)

Thus
\begin{equation}\label{pph1phi}
    (\wp\times\wp)_*H^1(\phi):(\wp\times\wp)_*\delta_\Delta\to
    (\wp\times\wp)_*R^1p_{12,*}\cF_P
\end{equation}
is a map of $p_1^{-1}\cB$-modules.

Consider $\delta_\Delta$ as a
$p_1^{-1}\Dmod_{P,\alpha}$-module. It is isomorphic to
$\iota_{\Delta,*}(\Dmod_{P,\alpha}\otimes_{O_P}\cT_P$), where
$\iota_{\Delta,*}$ is the $O$-module pushforward. By the
projection formula, Proposition~\ref{cB} gives an isomorphism
in the derived category of $p_1^{-1}\cB$-modules
\begin{equation}\label{rpp}
    R(\wp\times\wp)_*\delta_\Delta=\iota_{\Delta,*}(\cB\otimes_{O_\p1}\cT_\p1).
\end{equation}
% Because no twist on the diagonal?
Using this and~(\ref{dirimage}), we re-write~(\ref{pph1phi}) as
\begin{equation}\label{pph2phi}
    (\wp\times\wp)_*H^1(\phi):\iota_{\Delta,*}(\cB\otimes_{O_\p1}\cT_\p1)\to
    R^1p_{12,*}\cF_\p1.
\end{equation}
As was explained in the proof of Theorem~\ref{Theorem3},
$H^1(\phi)$ is injective.  Also,
$R^1(\wp\times\wp)_*\delta_\Delta=0$ by~(\ref{rpp}). We now see
that
\[
    (\wp\times\wp)_*\coker(H^1(\phi))=\coker((\wp\times\wp)_*H^1(\phi)).
\]
Thus it remains to prove that the cokernel of~(\ref{pph2phi})
is coherent. Note that~(\ref{pph2phi}) is an injective maps
between $p_1^{-1}\cB$-modules.

Now recall that $\cF_\p1$ naturally extends to a vector bundle
$\overline\cF:=p_{13}^*\xi\otimes p_{23}^*\xi^\vee$ on
$\p1\times\p1\times\Mb$, which provides the filtration
\eqref{filtration}. By Proposition~\ref{MhXxY} we obtain an
induced filtration
\begin{equation}\label{R1filtration}
R^1p_{12,*}\cF_0\subset\dots\subset
R^1p_{12,*}\cF_k\subset\dots\subset
R^1p_{12,*}\cF_\infty=R^1p_{12,*}\cF_\p1
\end{equation}
of $R^1p_{12,*}\cF_\p1$ by coherent sheaves on $\p1\times\p1$.

\begin{Lemma}
There is $l\in\Z$ such that for $k\gg 0$ the image of
$\iota_{\Delta,*}(\cB^{\le k}\otimes_{O_\p1}\cT_\p1)$
under~(\ref{pph2phi}) is contained in $R^1p_{12,*}\cF_{k+l}$
and such that the induced map
\begin{equation}\label{isom}
\iota_{\Delta,*}((\cB^{\le k+1}/\cB^{\le k})\otimes_{O_\p1}\cT_\p1)\to
R^1p_{12,*}\cF_{k+l+1}/R^1p_{12,*}\cF_{k+l}.
\end{equation}
is an isomorphism.
\end{Lemma}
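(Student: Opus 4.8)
The plan is to compare the two filtrations appearing in the statement term by term. Both the source $\iota_{\Delta,*}(\cB\otimes_{O_\p1}\cT_\p1)$ and the target $R^1p_{12,*}\cF_\p1$ of~\eqref{pph2phi} are $p_1^{-1}\cB$-modules supported on the diagonal $\Delta$. On the source I use the order filtration: by Proposition~\ref{cB}(\ref{cB3}) and Remark~\ref{cBRem}(\ref{cBRem2}) its graded pieces are $\iota_{\Delta,*}\cT_\p1$ for $k$ even and $\iota_{\Delta,*}(\cT_\p1^{\otimes2}(-\divisor))$ for $k$ odd, and $\gr\cB$ is free on the symbols $\bar\delta\in\gr^1\cB$, $\bar B\in\gr^2\cB$, so the source graded is free of rank one over $\gr\cB$ (twisted by $\cT_\p1$), generated by the image $\bar v_0$ of $1$ in degree $0$. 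On the target I use~\eqref{R1filtration}: applying $Rp_{12,*}$ to $0\to\cF_{k-1}\to\cF_k\to\iota_*(\cF_H\otimes p_3^*\cE^{\otimes k})\to 0$ and invoking Proposition~\ref{MhXxY} identifies its graded pieces with the very same line bundles on $\Delta\cong\p1$. Hence, for a suitable even shift $l$, the map~\eqref{isom} is a map between isomorphic line bundles on $\p1$, that is, multiplication by a scalar $c_k\in\C$, and the lemma amounts to the claim that $c_k\ne0$ for $k\gg0$.

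First I would prove part~(i), that~\eqref{pph2phi} is a filtered map. The $\cB$-action is the universal connection (Lemma~\ref{DirectIm}), and the universal bundle on $\p1\times\Mb$ carries the $\epsilon$-connection $\nabla=\epsilon\,\dif+\Theta$. Since $\cE=O_\Mb(\Mh)$ with defining section $\epsilon$ (Proposition~\ref{Cartier}), both the $\epsilon\,\dif$ term and the Higgs term $\Theta$ raise the order of pole along the Cartier divisor $\Mh$ by exactly one, so a first-order operator sends $\cF_k$ into $\cF_{k+1}$, whence $\cB^{\le k}$ sends $\cF_0$ into $\cF_k$. Applying this to the image $w_0$ of the cyclic generator $v_0$ fixes the shift $l$ and gives~(i).

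For part~(ii) I would exploit $\gr\cB$-equivariance of the induced graded map $\gr u$. As the source graded is free cyclic over $\gr\cB$, $\gr u$ is determined by $\gr u(\bar v_0)=c_0\bar w_0$, and it is an isomorphism in a given degree as soon as the $\gr\cB$-action fills the corresponding target piece; so it suffices to check that the module-structure maps $\gr^1\cB\otimes\gr^k\to\gr^{k+1}$ and $\gr^2\cB\otimes\gr^k\to\gr^{k+2}$ on the target graded are nonzero, exactly as on the source, and that $c_0\ne0$. Here Proposition~\ref{MhXxY} gives the target graded as $\iota_{\Delta,*}(\pi_*O_Y\otimes\cT_\p1)$ with $\pi_*O_Y=\cA=O_\p1\oplus\Omega_\p1(\divisor)^{-1}$ split into the $\mu_2$-eigensheaves producing the even and odd pieces; the $\cB$-action becomes multiplication in $\cA$, so that, once tensored with the relevant $\gr\cB$-factor, these structure maps are multiplication by the constants coming from $\tau$ and from $\tau^2=-\mu$. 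Both are nonzero precisely because $\det\nabla=\mu\ne0$ on $\cY$, i.e.\ because we are on the non-nilpotent locus. Since $\phi\ne0$ forces $w_0$ to have nonzero symbol, $c_0\ne0$; equivariance then propagates nonvanishing — $\bar B$ within each parity and $\bar\delta$ from the even to the odd part — yielding $c_k\ne0$ for all $k\gg0$, which is~(ii).

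The main obstacle is the computation underlying part~(ii): identifying the graded $\cB$-action on $R^1p_{12,*}\cF_\p1$ with multiplication by $\tau$ and $\tau^2$ in $\cA=\pi_*O_Y$, and checking these are nonzero. This requires tracking how the differential operators degenerate along $\Mh$, where $\nabla$ becomes the Higgs field, through the Fourier--Mukai identification of Proposition~\ref{MhXxY}; the filtered structure from part~(i) reduces everything to the leading ($\epsilon=0$) behaviour, where the spectral description of $\Mh$ via $Y=\SPEC\cA$ makes the symbols visible. Once the symbols are seen to act by the nonzero spectral multiplications, the propagation argument above is purely formal.
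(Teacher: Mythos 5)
Your setup coincides with the paper's: the compatibility of the two filtrations (your part~(i), which the paper asserts ``by construction''), and the identification of the graded pieces of both sides as $\iota_{\Delta,*}\cT_\p1$ in even degrees and $\iota_{\Delta,*}(\cT^{\otimes2}_\p1(-\divisor))$ in odd degrees, via Remark~\ref{cBRem}(\ref{cBRem2}) and Proposition~\ref{MhXxY}. The gap is in your part~(ii), and it sits exactly at the core of the lemma. Your propagation argument rests entirely on the claim that, under the isomorphism of Proposition~\ref{MhXxY}, the induced $\gr\cB$-action on $\bigoplus_k R^1p_{12,*}\cF_k/R^1p_{12,*}\cF_{k-1}$ is given by the spectral multiplications $\tau$ and $\tau^2=-\mu$ in $\cA=\pi_*O_Y$. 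This is not a formality and you do not prove it: Proposition~\ref{MhXxY} is established by a Fourier--Mukai computation on $Y$ (via Proposition~\ref{FMOrthog}) in which the $\cB$-action plays no role, so compatibility of that identification with the module structures --- that the symbol of $\delta$ acts through the Higgs field and that this becomes multiplication in $\cA$ restricted to the diagonal --- would need a separate diagram chase through the kernel manipulations. You acknowledge this yourself as ``the main obstacle,'' but without it the argument collapses: $\gr\cB$-linearity of the graded map together with $c_0\ne0$ yields nothing, because a priori the action of $\gr^{\ge1}\cB$ on the target graded could annihilate the image of the generator, in which case every map~\eqref{isom} would be zero while your containment statement would still hold with a larger shift. (Your choice of $l$ from the image of the cyclic generator is also not visibly the one for which~\eqref{isom} is nonzero in high degrees; and evenness of $l$ is posited rather than proved.)

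The paper avoids the hard computation entirely by a soft argument you could adopt. For each $k$ let $l(k)$ be the \emph{smallest} index such that the image of $\iota_{\Delta,*}(\cB^{\le k}\otimes_{O_\p1}\cT_\p1)$ lies in $R^1p_{12,*}\cF_{k+l(k)}$. Filtration compatibility gives $l(k+1)\le l(k)$; injectivity of~\eqref{pph2phi} gives, by a generic-rank count along the diagonal, the lower bound $l(k)\ge-\rk\iota_\Delta^*(R^1p_{12,*}\cF_0)$. Hence $l(k)$ stabilizes at some $l$, and minimality at the stable value forces~\eqref{isom} to be \emph{nonzero} for $k\gg0$ --- with no information about the module structure on the target. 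Evenness of $l$ then follows from the parity obstruction: there is no nonzero map $\cT_\p1\to\cT^{\otimes2}_\p1(-\divisor)$, i.e.\ $O_{\p1}(2)\to O_{\p1}$, so $l$ odd is impossible. Finally, a nonzero map between isomorphic line bundles on $\p1$ is automatically an isomorphism --- the same endgame as yours, but with the nonvanishing supplied by minimality and stabilization rather than by the (unproven) spectral description of the $\gr\cB$-action.
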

\begin{proof}
By construction, the filtration \eqref{filtration} agrees with
the filtration on $\cB$, so that $(p_1^{-1}\cB^{\le
l})\cF_k\subset\cF_{k+l}$. Therefore, the filtration
\eqref{R1filtration} also agrees with the filtration on $\cB$.
 Using
Remark~\ref{cBRem}(\ref{cBRem2}) and Proposition~\ref{MhXxY},
we see that for $k\gg 0$ {\small
\begin{equation}\label{assgraded}
\iota_{\Delta,*}((\cB^{\le k}/\cB^{\le k-1})\otimes_{O_\p1}\cT_\p1)\simeq
R^1p_{12,*}\cF_k/R^1p_{12,*}\cF_{k-1}\simeq
\begin{cases}
     \iota_{\Delta,*}\cT^{\otimes 2}_\p1(-\divisor) &\text{if $k$ is odd,}\\
     \iota_{\Delta,*}\cT_\p1 &\text{if $k$ is even}.
\end{cases}
\end{equation}}

For each $k$, let $l(k)$ be the smallest index such that the
image of $\iota_{\Delta,*}(\cB^{\le k}\otimes_{O_\p1}\cT_\p1)$
is contained in $R^1p_{12,*}\cF_{k+l(k)}.$ Since the filtration
\eqref{R1filtration} agrees with the filtration on $\cB$, we
see that $l(k+1)\le l(k)$ for all $k$. Also, injectivity of
\eqref{pph2phi} implies that
\[0\le \rk\iota_\Delta^*(R^1p_{12,*}\cF_{k+l(k)})-\rk(\cB^{\le k}\otimes_{O_\p1}\cT_\p1)=
(k+l(k)+\rk\iota_\Delta^*(R^1p_{12,*}\cF_0))-k,\] and therefore
$l(k)\ge -\rk\iota_\Delta^*(R^1p_{12,*}\cF_0)$. Thus $l(k)$
stabilizes as $k\to\infty$; set $l:=\lim l(k)$.

By the choice of $l$, the map~\eqref{isom} is non-zero for
$k\gg 0$. Note that such non-zero map does not exist if $k$ and
$l$ are odd. Therefore, $l$ must be even. We now see that for
$k\gg0$, the map~\eqref{isom} is a non-zero morphism between
isomorphic line bundles on $\Delta$. This implies~\eqref{isom}
is an isomorphism.
\end{proof}

It follows from the lemma that, if $k$ is large enough,
$R^1p_{12,*}\cF_k$ maps surjectively onto $\coker(\wp\times\wp)_*H^1(\phi)$.
This completes the proof of the proposition and of
Theorem~\ref{Theorem3}.
\end{proof}

\section{Second orthogonality relation.}\label{LYSENKO}
In this section we prove Theorem~\ref{ThLys}. The proof is
similar to~\cite{Arinkin} but we want to give some details.

We need to calculate $Rp_{12,*}\DR(\cF_\cM)$, where
$p_{12}:\cM\times\cM\times P\to\cM\times\cM$. Our first goal is
to reduce the problem to a calculation on
$\cM\times\cM\times\p1$. Recall that $\xi$ is the universal
bundle on $\cM\times\p1$, set $\xi_{12}:=\HOM(p_{23}^*\xi,
p_{13}^*\xi)$. We have a connection along $\p1$
\[
\ad\nabla:\xi_{12}\to\xi_{12}\otimes p_3^*\Omega_\p1(\divisor).
\]
Its polar part is a well-defined $O$-linear map
\[
    \xi_{12}|_{\cM\times\cM\times\divisor}\to
    (\xi_{12}\otimes p_3^*\Omega_\p1(\divisor))|_{\cM\times\cM\times\divisor}.
\]
Let $\eta_{12}$ be the image of this map. Denote by $\tilde
\xi_{12}$ the modification of $\xi_{12}\otimes
p_3^*\Omega_\p1$ whose sheaf of sections is
\[
    \{s\in\xi_{12}\otimes p_3^*\Omega_\p1(\divisor)|_{\cM\times\cM\times\divisor}
    :s|_{\cM\times\cM\times\divisor}\in\eta_{12}\}.
\]
As in~\cite[Lemmas~12, 13]{Arinkin} one proves that
\[
   Rp_{12,*}\DR(\cF_\cM)=
   Rp_{12,*}(\xi_{12}\xrightarrow{\ad\nabla}\tilde\xi_{12}).
\]
The next step is to calculate the restriction of the above to a
fiber over a point. So consider a closed point
$x\in\cM\times\cM$ and let $\cF^\al$ be the restriction of
$\xi_{12}\xrightarrow{\ad\nabla}\tilde\xi_{12}$ to $x$.

\begin{Proposition}\label{PpLys}
\stepzero\noindstep\label{PpLys1} If $x\notin\diag(\cM)$, then
$\HH^i(\cF^\al)=0$ for any $i$;\\
\noindstep\label{PpLys2} If $x\in\diag(\cM)$, then
\[
 \dim\HH^i(\cF^\al)=\begin{cases}
   1\text{ if }i=0,2,\\ 2\mbox{ if }i=1,\\
   0\text{ otherwise.}
 \end{cases}
\]
\noindstep\label{PpLys3} Suppose
$x=((L,\nabla),(L,\nabla))\in\cM\times\cM$. Consider the map of
complexes
\[
  (O_\p1\xrightarrow{\dif}\Omega_\p1)\hookrightarrow\cF^\al
\]
induced by $O_\p1\to\xi_{12}|_x:f\mapsto f\id_L$. Then the
induced map
\[
    H^i_{DR}(\p1,\C):=
    \HH^i(\p1,O_\p1\xrightarrow{\dif}\Omega_\p1)\to\HH^i(\cF^\al)
\]
is an isomorphism for $i=0,2$.
\end{Proposition}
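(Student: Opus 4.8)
The plan is to realize the map in the statement as a \emph{split injection} of complexes, the splitting being furnished by the fibrewise trace, and then to conclude by a dimension count. Write $\DR_\p1:=(O_\p1\xrightarrow{\dif}\Omega_\p1)$, so that $\HH^i(\p1,\DR_\p1)=H^i_{DR}(\p1,\C)$ is $\C$ for $i=0,2$ and $0$ for $i=1$. At the diagonal point $x=((L,\nabla),(L,\nabla))$ we have $\xi_{12}|_x=\END(L)$ with differential $\ad\nabla$, so $\cF^\al=\bigl(\END(L)\xrightarrow{\ad\nabla}\widetilde\END(L)\bigr)$, where $\widetilde\END(L)=\tilde\xi_{12}|_x$ is the modification of $\END(L)\otimes\Omega_\p1$ along $\divisor$ in the directions of $\eta_{12}|_x$.

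First I would show that the trace defines a map of complexes $\tr\colon\cF^\al\to\DR_\p1$, in degree $0$ by $\tr\colon\END(L)\to O_\p1$ and in degree $1$ by $\tr\colon\widetilde\END(L)\to\Omega_\p1$. Compatibility with the differentials is the identity $\tr(\ad\nabla(A))=\tr([\nabla,A])=\dif(\tr A)$, which holds because the commutator term is traceless. The one substantive point is that $\tr$ carries $\widetilde\END(L)$ into $\Omega_\p1$ rather than merely into $\Omega_\p1(\divisor)$; equivalently, that $\eta_{12}|_x$ is traceless. I would verify this in the formal frame at each $x_i$ in which $\nabla=\dif+\diag(\alpha_i^+,\alpha_i^-)$, cf.~\eqref{fnf}: there $\ad\nabla$ kills the diagonal matrices up to regular terms and multiplies $E_{12},E_{21}$ by $\pm(\alpha_i^+-\alpha_i^-)$, whose pole has order exactly $n_i$ by condition~(\ref{AlphaI}). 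Hence the image $\eta_{12}|_x$ of the polar part is spanned by off-diagonal, and thus traceless, matrices, so for $s\in\widetilde\END(L)$ the polar part of $\tr(s)$ vanishes and $\tr(s)\in\Omega_\p1$.

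Next I would compose. The composite $\DR_\p1\to\cF^\al\xrightarrow{\tr}\DR_\p1$ sends $f\mapsto f\,\id_L\mapsto f\,\tr(\id_L)=(\rk L)\,f=2f$ in each degree, hence is multiplication by $2$, which is invertible. Passing to hypercohomology, the composite $H^i_{DR}(\p1,\C)\to\HH^i(\cF^\al)\to H^i_{DR}(\p1,\C)$ is again multiplication by $2$, so the first map is a split injection for every $i$. It then remains to upgrade this to an isomorphism for $i=0,2$ by matching dimensions. For $i=0$ the source is $\C$, while $\HH^0(\cF^\al)=\ker\bigl(\ad\nabla\colon H^0(\p1,\END(L))\to H^0(\p1,\widetilde\END(L))\bigr)$ consists exactly of the $\nabla$-horizontal global endomorphisms, which equal $\C\cdot\id_L$ by Lemma~\ref{Irreducible}; so the injection $\C\hookrightarrow\C$ is onto. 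For $i=2$ the source is $H^1(\p1,\Omega_\p1)=\C$ and $\dim\HH^2(\cF^\al)=1$ by part~(\ref{PpLys2}) of the proposition, so the split injection is once more an isomorphism.

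The only delicate step is the verification that $\tr$ annihilates $\eta_{12}$, i.e.\ that the polar part of $\ad\nabla$ is traceless; once the trace map of complexes is in hand the rest is formal, the factor $\rk L=2\ne0$ being precisely what makes the retraction work. I would emphasize that this is where part~(\ref{PpLys3}) refines part~(\ref{PpLys2}): it identifies the one-dimensional groups $\HH^0(\cF^\al)$ and $\HH^2(\cF^\al)$ as the images of the scalar and ``volume'' classes carried by $\id_L$, which is exactly the compatibility needed later when comparing $Rp_{12,*}\DR(\cF_\cM)$ with $(\diag_*O_\cM)^\psi$.
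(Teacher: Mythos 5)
Your trace construction itself is sound: the verification in the frame \eqref{fnf} that the polar-part image $\eta_{12}|_x$ consists of off-diagonal, hence traceless, matrices is correct, so $\tr$ really is a retraction of complexes and the map of part~(\ref{PpLys3}) is a split injection on $\HH^i$; combined with Lemma~\ref{Irreducible} this does settle $i=0$. The genuine gap is that this is a proof of only one of the three assertions. Parts~(\ref{PpLys1}) and~(\ref{PpLys2}) are never addressed, and your argument for $i=2$ in part~(\ref{PpLys3}) explicitly cites part~(\ref{PpLys2}) to get $\dim\HH^2(\cF^\al)=1$. Since nothing in the proposal establishes the vanishing of $\HH^\bullet(\cF^\al)$ off the diagonal, the bound $\dim\HH^2\le 1$, or the value $\dim\HH^1=2$, the statements that carry the actual content of the Proposition remain unproven, and the $i=2$ case of~(\ref{PpLys3}) is circular as written.

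What is missing is precisely the three ingredients of the paper's proof (which follows Proposition~10 of~\cite{Arinkin}): irreducibility, duality, and Euler characteristic. Irreducibility (Lemma~\ref{Irreducible}) gives $\HH^0(\cF^\al)=0$ for $x\notin\diag(\cM)$, because a nonzero horizontal map between irreducible connections of equal rank and degree is forced to be an isomorphism, and gives $\HH^0(\cF^\al)=\C\cdot\id_L$ on the diagonal. Serre duality applied to the two-term complex $\cF^\al$ (the trace pairing identifies $\tilde\xi_{12}|_x$ with the $\Omega_{\p1}$-twist of the dual of $\xi_{12}|_x$ for the transposed pair, exactly as in the identification $\HIGGS(L,\eta)\simeq\END(L,\eta)^\vee\otimes\Omega_X$ used in Corollary~\ref{ConnExist}) yields $\HH^2(\cF^\al)\simeq\HH^0$ of the analogous complex with the two factors swapped, dualized; this is what bounds $\dim\HH^2$ by $1$ on the diagonal and kills it off the diagonal, and it is the step you replaced by an appeal to~(\ref{PpLys2}). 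Finally, since $\eta_{12}$ is free of rank $2$ over $O_\divisor$, the second term of $\cF^\al$ has rank $4$ and degree $\deg(\END(L)\otimes\Omega_{\p1})+2\deg\divisor=-8+8=0$, so $\chi(\cF^\al)=4-4=0$ and hence $\dim\HH^1=\dim\HH^0+\dim\HH^2$, which produces both the vanishing in~(\ref{PpLys1}) and the value $2$ in~(\ref{PpLys2}). If you supply these steps (or at minimum replace the citation of~(\ref{PpLys2}) by the duality isomorphism and then run the same bookkeeping for~(\ref{PpLys1}) and~(\ref{PpLys2})), your trace splitting becomes a clean alternative derivation of part~(\ref{PpLys3}); as submitted, however, the proposal does not prove the Proposition.
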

The proof is analogous to the proof
of~\cite[Proposition~10]{Arinkin}: one uses irreducibility,
duality, and Euler characteristic.

As in~\cite[Lemma~14]{Arinkin} the duality gives the following
\begin{Lemma}
\label{RelLys} Let $S$ be a locally Noetherian scheme,
$\iota:S\to\cM\times\cM$. Set
\[
    \cF_{(S)}:=Rp_{1,*}((\iota\times\id_\p1)^*
    (\xi_{12}\xrightarrow{\ad\nabla}\tilde\xi_{12}))
\]
\lpar{}here $p_1:S\times\p1\to S$\rpar. Then
$\HOM(H^2(\cF_{(S)}),O_S)$ is isomorphic to a subsheaf of
$H^0(\cF_{(S)})$.
\end{Lemma}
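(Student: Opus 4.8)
The plan is to deduce the statement from relative Grothendieck--Serre duality along the proper projection $p_1\colon S\times\p1\to S$. Write $\cK^\al:=(\iota\times\id_\p1)^*(\xi_{12}\xrightarrow{\ad\nabla}\tilde\xi_{12})$, a two-term complex of vector bundles on $S\times\p1$ placed in degrees $0$ and $1$; in particular $\cK^\al$ is perfect and $\cF_{(S)}=Rp_{1,*}\cK^\al$. Let $q\colon S\times\p1\to\p1$ be the other projection, so that the relative dualizing complex of $p_1$ is $q^*\Omega_\p1[1]$. Duality then gives a natural isomorphism
\[
R\HOM(\cF_{(S)},O_S)\;\cong\;Rp_{1,*}R\HOM(\cK^\al,q^*\Omega_\p1)[1],
\]
whose right-hand side is concentrated in degrees $[-2,0]$; its lowest cohomology is what will produce $\HOM(H^2(\cF_{(S)}),O_S)$.

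First I would pin down the cohomological amplitude of $\cF_{(S)}$: by Proposition~\ref{PpLys} its fibres have cohomology only in degrees $0,1,2$, and since $\cK^\al$ lives in degrees $[0,1]$ while $p_1$ has relative dimension $1$, the complex $\cF_{(S)}$ is concentrated in $[0,2]$. Feeding this into the hyper-Ext spectral sequence
\[
E_2^{p,q}=\EXT^p(H^{-q}(\cF_{(S)}),O_S)\;\Longrightarrow\;H^{p+q}\bigl(R\HOM(\cF_{(S)},O_S)\bigr),
\]
the only term in total degree $-2$ is $E_2^{0,-2}=\HOM(H^2(\cF_{(S)}),O_S)$, and there are no differentials into or out of it, since the neighbouring groups $\EXT^{-2}(H^1(\cF_{(S)}),O_S)$ and $\EXT^2(H^3(\cF_{(S)}),O_S)$ vanish (the former because $\EXT^{<0}=0$, the latter because $H^{>2}(\cF_{(S)})=0$). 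Hence $\HOM(H^2(\cF_{(S)}),O_S)\cong H^{-2}\bigl(R\HOM(\cF_{(S)},O_S)\bigr)$.

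It remains to realize the latter as a subsheaf of $H^0(\cF_{(S)})$. Here I would describe the Serre-dual complex $R\HOM(\cK^\al,q^*\Omega_\p1)[1]$ explicitly using the trace pairing $\xi_{12}\otimes\xi_{12}^\vee\to O$, which is horizontal for $\ad\nabla$, together with the residue pairing governing the modification $\tilde\xi_{12}$ (recall $\tilde\xi_{12}$ is the upper modification of $\xi_{12}\otimes q^*\Omega_\p1$ cut out by $\eta_{12}$). Term by term this identifies the dual with the analogous de Rham complex built from the swapped bundle $\xi_{12}^\vee=\HOM(p_{13}^*\xi,p_{23}^*\xi)$, shifted by $[2]$: in degree $-2$ the dual of $\tilde\xi_{12}$ is a \emph{lower} modification of $\xi_{12}^\vee$, and in degree $-1$ one has $\xi_{12}^\vee\otimes q^*\Omega_\p1\subset\tilde{\xi}_{12}^{\,\vee}$. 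At the points of $\divisor$ these are strict inclusions, so one obtains a natural \emph{injection} of complexes, and its lowest cohomology sheaf is therefore a subsheaf of $H^0$ of the swapped complex. Finally, $H^2(\cF_{(S)})$ — and hence $\HOM(H^2(\cF_{(S)}),O_S)$ — is supported on the locus where the two points of $\cM$ coincide (Proposition~\ref{PpLys}(\ref{PpLys1})); over this locus the swap of factors is the identity and the distinguished section $\id_L$ of Proposition~\ref{PpLys}(\ref{PpLys3}) identifies the swapped $H^0$ with $H^0(\cF_{(S)})$, giving the embedding $\HOM(H^2(\cF_{(S)}),O_S)\hookrightarrow H^0(\cF_{(S)})$.

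The main obstacle is this last identification: making precise, via the residue pairing, the Serre dual of the \emph{modified} complex $(\xi_{12}\to\tilde\xi_{12})$ at the points of $\divisor$, and checking there that the comparison map is exactly an injection rather than an isomorphism. This discrepancy at the punctures is precisely what degrades the expected self-duality into the weaker ``subsheaf'' assertion; the bookkeeping with the swap of the two $\cM$-factors is a minor but necessary point, dispatched by restricting to the support of $H^2(\cF_{(S)})$.
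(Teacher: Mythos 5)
Your duality computation is, in outline, exactly the argument the paper intends (its entire proof is the phrase ``the duality gives the following'' with a reference to \cite[Lemma~14]{Arinkin}), and your first two steps are correct: Grothendieck--Serre duality along $p_1$ plus the hyper-Ext spectral sequence identifies $\HOM(H^2(\cF_{(S)}),O_S)$ with $H^{-2}(R\HOM(\cF_{(S)},O_S))$, and your termwise injection of the Serre-dual complex into the \emph{swapped} de~Rham complex (duality exchanges the upper modification $\tilde\xi_{12}$ with a lower modification of $\xi_{12}^\vee=\HOM(p_{13}^*\xi,p_{23}^*\xi)$) embeds this, by left exactness of $p_{1,*}$ in lowest degree, into $H^0$ of the family pulled back along $\sigma\circ\iota$, where $\sigma$ is the flip of $\cM\times\cM$. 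The genuine gap is your last step, where you remove the flip. Proposition~\ref{PpLys}(\ref{PpLys1}) and base change give only \emph{set-theoretic} support of $H^2(\cF_{(S)})$ on $Z:=\iota^{-1}(\diag(\cM))$, whereas identifying pullbacks along $\iota$ and $\sigma\circ\iota$ requires $\iota$ to factor through $\diag(\cM)$ \emph{scheme-theoretically}: sections supported set-theoretically on $Z$ live on infinitesimal neighborhoods of $Z$, where the two maps genuinely differ. This is not a pedantic point — in the paper's only application of the lemma (the proof of Theorem~\ref{ThLys} via Lemma~\ref{lm:Nilpotents}), $S$ is an Artinian scheme whose reduction lands in the diagonal but which by design does \emph{not} factor through it; the lemma is invoked precisely to rule out such $S$-points. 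Likewise, the section $\id_L$ you appeal to exists only over the honest diagonal: fiberwise the two $H^0$'s are $\Hom_\nabla(L_2,L_1)$ and $\Hom_\nabla(L_1,L_2)$, which near the diagonal are the mutually \emph{dual} line bundles corresponding to the characters $\psi^{\pm1}$ of Remark~\ref{RemDiag}, with no canonical identification.

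The repair does not disturb your duality step. Since $\xi$ has rank $2$, $\xi^\vee\simeq\xi\otimes(\det\xi)^{-1}$, and a seesaw argument gives $\det\xi\simeq p_\cM^*\Lambda\otimes p_{\p1}^*O_{\p1}(d)$ for a line bundle $\Lambda$ on $\cM$; hence $\HOM(p_{13}^*\xi,p_{23}^*\xi)\simeq\xi_{12}\otimes p_{12}^*(\Lambda^{-1}\boxtimes\Lambda)$, i.e.\ the swapped complex is the original one twisted by a line bundle pulled back from $\cM\times\cM$ (one checks the identification respects $\ad\nabla$ and the modifications, using that the polar part of the trace, $\nu_1$, is the same for all points of $\cM$). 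Your embedding then reads $\HOM(H^2(\cF_{(S)}),O_S)\hookrightarrow H^0(\cF_{(S)})\otimes N$ with $N:=\iota^*(\Lambda^{-1}\boxtimes\Lambda)$ a line bundle on $S$; this yields the lemma verbatim whenever $N$ is trivial — in particular on any local $S$, which covers the Artinian application — and up to a line-bundle twist in general. Alternatively, keep the swapped form of the conclusion: it suffices for Theorem~\ref{ThLys}, since $H^0$ of the swapped family is concentrated on the preimage of the diagonal by the same argument as for $\cF_{(S)}$.
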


Next, $\diag:\cM\to\cM\times\cM$ is a $\gm$-torsor over
$\diag(\cM)$ (cf. Remark~\ref{RemDiag}). Denote by $\Hom$ the
corresponding line bundle. Note that the fiber of $\Hom$ over
$((L_1,\nabla_1),(L_2,\nabla_2))$ is
$\{A\in\Hom_{O_\p1}(L_1,L_2): A\nabla_1=\nabla_2A\}$. Now the
following corollary of Proposition~\ref{PpLys} is obvious.

\begin{Corollary}\label{CoLys}
\stepzero\noindstep\label{CoLys1}
$Rp_{12,*}(\xi_{12}\xrightarrow{\ad\nabla}\tilde\xi_{12})$
vanishes if restricted to $\cM\times\cM-\diag(\cM)$.

\noindstep\label{CoLys2} The map
\[
p_{12}^*\Hom\otimes p_3^*(O_\p1\xrightarrow{\dif}\Omega_\p1)
\to(\xi_{12}\xrightarrow{\ad\nabla}\tilde\xi_{12})|_{\diag(\cM)}
\]
induces an isomorphism
\[
\Hom=\Hom\otimes\HH^2(\p1,(O_\p1\xrightarrow{\dif}\Omega_\p1))\to
R^2p_{12,*}\left((\xi_{12}\xrightarrow{\ad\nabla}\tilde\xi_{12})|_{\diag(\cM)}\right).
\]
\end{Corollary}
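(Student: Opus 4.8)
The plan is to obtain both assertions from Proposition~\ref{PpLys} by cohomology and base change along the proper projection $p_{12}\colon\cM\times\cM\times\p1\to\cM\times\cM$. Since $\xi_{12}\xrightarrow{\ad\nabla}\tilde\xi_{12}$ is a two-term complex of vector bundles, it is flat over $\cM\times\cM$, so for the inclusion $\iota_x$ of a closed point $x$ base change identifies the derived fiber $L\iota_x^*Rp_{12,*}(\xi_{12}\xrightarrow{\ad\nabla}\tilde\xi_{12})$ with $R\Gamma(\p1,\cF^\al)$, whose cohomology sheaves are exactly the hypercohomology groups $\HH^i(\cF^\al)$ of Proposition~\ref{PpLys}.

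For part~\eqref{CoLys1} I would argue as follows. Proposition~\ref{PpLys}\eqref{PpLys1} gives $\HH^i(\cF^\al)=0$ for every $i$ whenever $x\in\cM\times\cM-\diag(\cM)$, so the derived fiber of $Rp_{12,*}(\xi_{12}\xrightarrow{\ad\nabla}\tilde\xi_{12})$ vanishes at each such point. A complex with coherent cohomology whose derived restriction vanishes at every point of an open substack vanishes on that substack: applying $\iota_x^*$ to its top nonvanishing cohomology sheaf computes the top cohomology of the derived fiber, which is zero, so Nakayama forces that sheaf to vanish. Hence $Rp_{12,*}(\xi_{12}\xrightarrow{\ad\nabla}\tilde\xi_{12})$ is zero on $\cM\times\cM-\diag(\cM)$.

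For part~\eqref{CoLys2} I would restrict to $\diag(\cM)$ and set $\cG^\al:=(\xi_{12}\xrightarrow{\ad\nabla}\tilde\xi_{12})|_{\diag(\cM)}$, so that its fiber over $x=((L,\nabla),(L,\nabla))$ is $\cF^\al$. By Proposition~\ref{PpLys}\eqref{PpLys2}, $\HH^i(\cF^\al)=0$ for $i>2$ and $\dim\HH^2(\cF^\al)=1$ is constant; since degree $2$ is the top degree, $R^2p_{12,*}\cG^\al$ is a line bundle whose formation commutes with base change, with fiber $\HH^2(\cF^\al)$. The projection formula computes the source, giving $R^2p_{12,*}\bigl(p_{12}^*\Hom\otimes p_3^*(O_\p1\xrightarrow{\dif}\Omega_\p1)\bigr)=\Hom\otimes\HH^2(\p1,O_\p1\xrightarrow{\dif}\Omega_\p1)=\Hom$, again a line bundle. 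The induced arrow is then a morphism of line bundles on $\diag(\cM)$, and by construction its fiber over $x$ is the degree-$2$ part of the map of Proposition~\ref{PpLys}\eqref{PpLys3}, tensored with the identification $\Hom_x=\C\cdot\id_L$ (one-dimensional by irreducibility, Lemma~\ref{Irreducible}). Proposition~\ref{PpLys}\eqref{PpLys3} says this is an isomorphism in degree $2$, so the map is an isomorphism on every fiber and hence an isomorphism of line bundles.

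The only steps that are not pure bookkeeping are the commutation of $R^2p_{12,*}$ with base change and the matching Nakayama argument in part~\eqref{CoLys1}; both are standard precisely because $\HH^2$ is the top nonvanishing hypercohomology and has locally constant rank, so the usual obstructions to cohomology and base change do not arise. This is presumably why the authors regard the corollary as immediate from Proposition~\ref{PpLys}.
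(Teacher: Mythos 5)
Your proposal is correct and takes essentially the same approach as the paper: the authors state this corollary as an immediate ("obvious") consequence of Proposition~\ref{PpLys}, and your argument simply makes explicit the standard ingredients they leave implicit --- flatness of the two-term complex plus proper base change to identify derived fibers with $\HH^\bullet(\cF^\al)$, Nakayama on the top cohomology sheaf for part~(a), and top-degree base change together with the projection formula to reduce part~(b) to the fiberwise isomorphism of Proposition~\ref{PpLys}(\ref{PpLys3}).
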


Let us use the following observation
(cf.~\cite[Lemma~~15]{Arinkin} and~\cite[Lemma in
Section~13]{MumfordAbelian}).

\begin{Lemma}\label{LmLys}
Let $Z$ be a locally Noetherian scheme, $Y\subset Z$ a closed
subscheme that is locally a complete intersection of pure
codimension $n$. Denote by $\iota:Y\hookrightarrow Z$ the
natural embedding.

\indent\stepzero\noindstep\label{LmLys1} Let $\cF$ be a
quasi-coherent sheaf on $Z$ such that $\cF|_{Z-Y}=0$,
$L_n\iota^*\cF=0$. Then $\cF=0$.

\indent\noindstep\label{LmLys2} Let
$\cF^\al=(\cF^0\to\cF^1\to\dots)$ be a complex of flat
quasi-coherent sheaves on $Z$ such that $H^i(\cF^\al)|_{Z-Y}=0$
for all $i<n$. Then $H^i(\cF^\al)=0$ for $i<n$.
\end{Lemma}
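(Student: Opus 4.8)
The plan is to make the statement local and algebraic and then run a torsion--injectivity argument; both parts are local on $Z$, so I may assume $Z=\spec A$ with $A$ Noetherian and $Y=V(I)$, where $I=(f_1,\dots,f_n)$ is generated by a regular sequence (this is the meaning of ``local complete intersection of pure codimension $n$''). For a quasi-coherent sheaf $\cF$ corresponding to an $A$-module $M$, the derived pullback $L\iota^*\cF$ is computed by the Koszul complex on $f_1,\dots,f_n$, which resolves $A/I$; in particular the top term $L_n\iota^*\cF$ is identified with the kernel of the map $M\to M^{\oplus n}$, $m\mapsto(\pm f_im)_i$, that is, with the submodule $\{m\in M\colon Im=0\}$ of elements annihilated by $I$.

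For part~\eqref{LmLys1}, I first note that $\cF|_{Z-Y}=0$ is equivalent to $M_{f_i}=0$ for all $i$, since the $D(f_i)$ cover $Z-Y$; hence every element of $M$ is killed by a power of each $f_i$, and therefore by a power of $I$, so $M$ is $I$-power torsion. Suppose $M\neq0$ and choose $m\neq0$ with $N\geq1$ minimal such that $I^Nm=0$. Picking $g\in I^{N-1}$ with $gm\neq0$, we get $I\cdot gm\subset I^Nm=0$, so $gm$ is a nonzero element of $\{m'\in M\colon Im'=0\}=L_n\iota^*\cF$, contradicting $L_n\iota^*\cF=0$. Thus $M=0$.

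For part~\eqref{LmLys2}, I induct on $n$, the case $n=0$ being vacuous. I peel off $f:=f_1$, a nonzerodivisor on $A$; flatness of each $\cF^j$ makes $f$ act injectively on $\cF^j$, giving a short exact sequence of complexes $0\to\cF^\al\xrightarrow{f}\cF^\al\to\bar\cF^\al\to0$ with $\bar\cF^\al:=\cF^\al/f\cF^\al$ a complex of flat modules over $\bar A:=A/(f)$. In $Z':=\spec\bar A$ the subscheme $Y$ is cut out by the regular sequence $\bar f_2,\dots,\bar f_n$, hence has codimension $n-1$. The long exact cohomology sequence gives, for each $i$, a short exact sequence
\[
0\to\coker\!\bigl(f\colon H^i(\cF^\al)\to H^i(\cF^\al)\bigr)\to H^i(\bar\cF^\al)\to\Ker\!\bigl(f\colon H^{i+1}(\cF^\al)\to H^{i+1}(\cF^\al)\bigr)\to0.
\]
For $i<n-1$ both $H^i(\cF^\al)$ and $H^{i+1}(\cF^\al)$ vanish on $Z-Y$, so $H^i(\bar\cF^\al)|_{Z'-Y}=0$; by the inductive hypothesis $H^i(\bar\cF^\al)=0$ for all $i<n-1$.

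Finally I feed this back into the long exact sequence of $\cF^\al$. For every $i<n$ we have $i-1<n-1$, so $H^{i-1}(\bar\cF^\al)=0$, and exactness forces multiplication by $f$ to be injective on $H^i(\cF^\al)$. But $H^i(\cF^\al)$ is supported on $Y\subset V(f)$ for $i<n$, hence is $f$-power torsion, and an $f$-power-torsion module on which $f$ acts injectively must vanish. Therefore $H^i(\cF^\al)=0$ for all $i<n$. The main obstacle is the bookkeeping in these last steps: keeping the index shift between $H^i$ and $H^{i+1}$ straight, so that the $f$-torsion of $H^i(\cF^\al)$ is correctly paired with the injectivity of $f$ coming from the vanishing of $H^{i-1}(\bar\cF^\al)$, while invoking flatness twice---to make $f$ a nonzerodivisor on each $\cF^j$ and to keep $\bar\cF^\al$ flat over $\bar A$.
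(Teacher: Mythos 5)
Your proof is correct. Note that the paper itself gives no proof of Lemma~\ref{LmLys}: it is stated as an observation with references to \cite{Arinkin} (Lemma~15) and \cite{MumfordAbelian} (Lemma in \S13), so there is nothing internal to compare against; your argument supplies exactly the kind of standard local proof those references contain. The two key identifications you make are sound: since $I=(f_1,\dots,f_n)$ is generated by a regular sequence, $L\iota^*$ is computed by the Koszul complex, whose top homology on a module $M$ is the annihilator submodule $\{m\in M:Im=0\}$, which combined with the $I$-power-torsion property coming from $\cF|_{Z-Y}=0$ gives part~(a); and in part~(b) the induction that peels off $f_1$ works because flatness makes $f_1$ injective on each $\cF^j$ (so the quotient complex is a complex of flat $A/(f_1)$-modules and the long exact sequence is available), the images $\bar f_2,\dots,\bar f_n$ remain a regular sequence in $A/(f_1)$, and the final step correctly combines $f_1$-injectivity on $H^i(\cF^\al)$ (from vanishing of $H^{i-1}(\bar\cF^\al)$) with $f_1$-power torsion (from support in $Y\subset V(f_1)$) to force $H^i(\cF^\al)=0$ for $i<n$.
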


\begin{proof}[Proof of Theorem~\ref{ThLys}]
Clearly, $\diag(\cM)=M\times B(\gm\times\gm)$ is a complete
intersection in $\cM\times\cM=M\times M\times B(\gm\times\gm)$.
Thus Lemma~\ref{LmLys}(\ref{LmLys2}) and
Corollary~\ref{CoLys}(\ref{CoLys1}) imply that
$R^ip_{12,*}(\xi_{12}\xrightarrow{\ad\nabla}\tilde\xi_{12})=0$
for $i\ne2$. Set
$\cF^{(2)}:=R^2p_{12,*}(\xi_{12}\xrightarrow{\ad\nabla}\tilde\xi_{12})$.
Corollary~\ref{CoLys}(\ref{CoLys2}) implies that
$\Hom=\cF^{(2)}|_{\diag(\cM)}$. It is easy to see that $\Hom$,
viewed as a sheaf on $\cM\times\cM$, is equal to
$(\diag_*O_\cM)^\psi$, where $\psi$ is the character of
$\gm\times\gm$ given by $(t_1,t_2)\mapsto t_1/t_2$ (because a
1-dimensional vector space $E$ can be identified with weight
$-1$ functions on $E-\{0\}$).

To complete the proof, it remains to check that $\cF^{(2)}$ is
concentrated (scheme-theoretically) on $\diag(\cM)$. Assume for
a contradiction that it is not the case. Note that $\cF^{(2)}$
is coherent and concentrated set-theoretically on $\diag(\cM)$.

\begin{Lemma}\label{lm:Nilpotents}
Let $Z$ be a locally Noetherian scheme, $Y\subset Z$ be a
closed subscheme. Let $\cG$ be a coherent sheaf on $Z$
concentrated set-theoretically but not scheme-theoretically on
$Y$. Then there is a local Artinian scheme $S$ and an $S$-point
of $Z$ such that $S^{red}$ factors through $Y$ and such that
the restriction of $\cG$ to $S$ is not concentrated on the
scheme-theoretic preimage of $Y$.
\end{Lemma}

We see that there is an $S$-point of $\cM\times\cM$ such that
the restriction of $\cF^{(2)}$ to this point is not
concentrated on the preimage $S'$ of the diagonal. Let
$\cF_{(S)}$ be as in Lemma~\ref{RelLys}; using base change we
see that $H^2(\cF_{(S)})$ is not concentrated on $S'$. The
duality for Artinian rings shows that
$\HOM(H^2(\cF_{(S)}),O_S)$ is not concentrated on $S'$ either.
But then Lemma~\ref{RelLys} gives a contradiction, since
$H^0(\cF_{(S)})$ is easily seen to be concentrated on $S'$.
\end{proof}

\begin{proof}[Proof of Lemma~\ref{lm:Nilpotents}]
We can assume that $Z=\spec A$, $Y=\spec A/\mathfrak a$, $\cG$
corresponds to an $A$-module $M$; by assumption $\mathfrak
aM\ne0$ but $\mathfrak a^nM$=0 for $n$ big enough. Consider a
maximal ideal $\mathfrak m$ such that $(\mathfrak
aM)_{\mathfrak m}\ne0$. It follows that $\mathfrak
m\supset\mathfrak a$. By Nakayama's Lemma $\cap_n\mathfrak
m^nM_{\mathfrak m}=0$ and we can choose~$n$ such that
$\mathfrak a M\not\subset\mathfrak m^nM$. We can take
$S=\spec(A/\mathfrak m^n)$.
\end{proof}

\section{Relation to the Langlands correspondence}
\label{ProofOfLanglands} In this section we prove
Theorem~\ref{Langlands}. Let us present the main steps. Recall
that $\divisor=\sum n_ix_i$. Set
\[
\beta_i^+:=\alpha_i^++\frac{n_i\lambda}2\frac{\dif z_i}{z_i},\qquad
\beta_i^-:=\alpha_i^--\frac{n_i\lambda}2\frac{\dif z_i}{z_i},
\]
where $\lambda:=\sum_i\res\alpha_i^-$, $z_i$ is a local
parameter at $x_i$. Note that the polar parts $\beta_i^\pm$ do
not depend on the choice of $z_i$. Denote now the moduli space
$\cM$ defined in Section~\ref{MODST} by $\cM_\alpha$ to make the
choice of formal types explicit. For a sheaf $\cR$ of rings we
denote by $\cR$-mod the category of left $\cR$-modules, by
$\cD^b(\cR)$ the bounded derived category of $\cR$-mod. We
shall prove first that
\[
    {\Dmod_{\overline\Bun(-1),\alpha}\text{-mod}}=
    \Dmod_{\Bun(-1),\alpha}\text{-mod}
    \simeq\Dmod_{P,\beta}\text{-mod}.
\]
It remains to
prove the following equivalences
\[
  \cD^b(\Dmod_{P,\beta})\simeq \cD^b(\cM_\beta)^{(-1)}
  \simeq\cD^b(\cM_\alpha)^{(-1)}.
\]
Note that if $\alpha_i^\pm$ satisfy the conditions
of Section~\ref{MODST}, then $\beta_i^\pm$ satisfy these conditions
as well ((\ref{AlphaIII}) and~(\ref{AlphaIV}) can be checked
case by case). Thus the first equivalence follows from
Theorem~\ref{MainTh}. For the last equivalence we shall prove
that $\cM_\alpha\simeq\cM_\beta$.

It is well known
that the definition of the derived category of $\Dmod$-modules
on a stack requires some caution. In this paper, we ignore the
difficulty and use the naive definition: the derived category
of $\Dmod_{\overline\Bun(-1),\alpha}$-modules is simply the
derived category of the abelian category of
$\Dmod_{\overline\Bun(-1),\alpha}$-modules.

\subsection{Twisted $\Dmod$-modules on algebraic
stacks}\label{TDOEXISTENCE} Let us summarize the properties of
modules over TDO rings on algebraic stacks. We make no attempt
to work in most general settings, and consider only smooth
stacks, and only twists induced by torsors over an algebraic
group. This case is enough for our purposes.

Let $G$ be an algebraic group with Lie algebra $\Lie(G)$. Fix a
$G$-invariant functional $\theta:\Lie(G)\to\C$. First, consider
twisted differential operators on a variety.

Let $X$ be a smooth variety and let $p:T\to X$ be a $G$-torsor
on $X$. These data determine a TDO ring $\Dmod_{X,T,\theta}$ on
$X$, which is obtained by non-commutative reduction of the
sheaf of differential operators $\Dmod_T$ on $T$.

Namely, every $\xi\in\Lie(G)$ gives a first order differential
operator $a(\xi)-\theta(\xi)\in\Dmod_T$, where the vector field
$a(\xi)$ on $T$ is the action of $\xi$. Let $I$ be the ideal in
$p_*\Dmod_T$ generated by these differential operators. It is
easy to see that this ideal is $G$-invariant, and we set
$\Dmod_{X,T,\theta}:=(p_*\Dmod_T/I)^G$.

The category of quasi-coherent $\Dmod_{X,T,\theta}$-modules can
be described using a twisted strong equivariance condition. Let
$M$ be a $\Dmod_T$-module equipped with a weak $G$-equivariant
structure (that is, $M$ is $G$-equivariant as a quasi-coherent
sheaf, and the structure of a $\Dmod_T$-module is
$G$-equivariant). We say that $M$ is \emph{strongly equivariant
with twist $\theta$} if the action of $\xi\in\Lie(G)$ on $M$
induced by the $G$-equivariant structure is given by
$a(\xi)-\theta(\xi)$.

\begin{Remark}
The sheaves of twisted differential operators have been
introduced in \cite{BeilinsonBernstein}. The correspondence
between $\Dmod_{X,T,\theta}$-modules and twisted strongly
equivariant modules is a particular case of the formalism of
Harish-Chandra algebras from \cite[Section~1.8]{BeilinsonBernstein}.
\end{Remark}

Let now $\cX$ be an algebraic stack, and let $T\to\cX$ be a
$G$-torsor on $\cX$. Every smooth morphism $\alpha:X\to\cX$
from a variety $X$ induces a $G$-torsor $\alpha^*T$ on $X$, and
we obtain the TDO ring $\Dmod_{X,\alpha^*T,\theta}$. Such TDO
rings form a $\Dmod$-algebra on $\cX$ in the sense of
\cite{BeilinsonBernstein}. We denote this $\Dmod$-algebra by
$\Dmod_{\cX,T,\theta}$. Note that $\Dmod_{\cX,T,\theta}$ is not
a sheaf of algebras on $\cX$.

By definition, a $\Dmod_{\cX,T,\theta}$-module $M$ is given by
specifying a $\Dmod_{X,\alpha^*T,\theta}$-module~$M_\alpha$ for
every smooth morphism $\alpha:X\to\cX$ and an isomorphism of
$\Dmod_{Y,(\alpha\circ f)^*T,\theta}$-modules
$f^*M_\alpha\simeq M_{\alpha\circ f}$ for every smooth map
$f:Y\to X$ of algebraic varieties; the isomorphisms must be
compatible with composition of morphisms $f$. Note in
particular that $M$ is a quasi-coherent sheaf on $\cX$.

\begin{Example}\label{Ex:ClassStack}
Let $\cX:=B(H)$ be the classifying stack of an algebraic group
$H$. Set $X=\spec\C$. The natural map $\alpha:\spec\C\to\cX$ is
an $H$-torsor (and, in particular, a presentation). For any
$G$-torsor $T$ on $\cX$, the pullback $\alpha^*T$ is isomorphic
to the trivial torsor $G\to\spec\C$. Fix a trivialization
$\alpha^*T\simeq G$. The group $H$ acts on $\alpha^*T=G$; this
is the right action for a homomorphism $\psi:H\to G$. In other
words,~$T$ is the descent of $G\to\spec\C$, and $\psi$ provides
the descent datum.

Let $M$ be a $\Dmod_{\cX,T,\theta}$-module. It is easy to see
that the TDO ring $\Dmod_{\spec\C,G,\theta}$ is just the field
of complex numbers, so the $\Dmod_{\spec\C,G,\theta}$-module
$\alpha^*M$ is a vector space $V$. Let us view $\alpha^*M$ as a
strongly $G$-equivariant $\Dmod_{\alpha^*T}$-module with twist
$\theta$. It corresponds to the free $O_G$-module $V\otimes_\C
O_G$ with the obvious $G$-equivariant structure. The action of
$\Dmod_{\alpha^*T}$ is uniquely determined by the twisted
strong equivariance condition. On the other hand, $\alpha^*M$
also carries a structure of a strongly $H$-equivariant
$\Dmod$-module; this structure is essentially the descent data
for $M$. If $V\ne0$, then such structure is provided by a
scalar representation of $H$ on $V$ whose derivative is
$-\theta\circ\dif\psi$.

In particular, suppose that the character
$\theta\circ\dif\psi:\Lie(H)\to\C$ does not integrate to a
representation $H_0\to\gm$, where $H_0\subset H$ is the
identity component. Then $V=0$ and therefore the only
$\Dmod_{\cX,T,\theta}$-module is the zero module.
\end{Example}

\subsection{Step 1:
$\Dmod_{\overline\Bun(-1),\alpha}-
\mathrm{mod}=\Dmod_{\Bun(-1),\alpha}-\mathrm{mod}$} In this
section we shall prove

\begin{Proposition}
Assume that $(L,\eta)\in\overline\Bun(-1)$ does not correspond
to a connection $(L,\nabla)\in\cM$ in the sense
of Section~\ref{AFFSTR}. Then the restriction of any
$\Dmod_{\overline\Bun(-1),\alpha}$-module $\xi$ to $(L,\eta)$
is zero.
\end{Proposition}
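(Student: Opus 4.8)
The plan is to reduce the statement to a computation on the residual gerbe of the point $(L,\eta)$ and then to invoke the criterion of Example~\ref{Ex:ClassStack}.

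First I would reinterpret the hypothesis. By Proposition~\ref{Affine}(\ref{Aff1}) the parabolic bundle underlying any connection in $\cM$ satisfies $\End(L,\eta)=\C$; conversely, Proposition~\ref{Affine}(\ref{Aff2}), whose nonemptiness statement rests on Corollary~\ref{ConnExist}, shows that every $(L,\eta)$ with $\End(L,\eta)=\C$ does arise from a connection in $\cM$. Hence the hypothesis that $(L,\eta)$ does not correspond to a connection in $\cM$ is equivalent to $\End(L,\eta)\neq\C$, i.e.\ to the existence of a non-scalar endomorphism preserving $\eta$. I would fix such a bundle and set $H:=\Aut(L,\eta)$, an algebraic group with $\Lie(H)=\End(L,\eta)\supsetneq\C$.

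Next I would restrict the module along the residual gerbe $\iota\colon B(H)\hookrightarrow\overline\Bun(-1)$. By construction $\Dmod_{\overline\Bun(-1),\alpha}$ is the Baer sum of the TDO rings attached to the $\gmd$-torsors $\eta_{univ}$ and $\eta'_{univ}$, so $\iota^*\Dmod_{\overline\Bun(-1),\alpha}$ is the TDO ring on $B(H)$ associated with the $(\gmd\times\gmd)$-torsor $\iota^*(\eta_{univ}\times_{\overline\Bun(-1)}\eta'_{univ})$ and the twist $(\alpha^+,\alpha^-)$. The induced homomorphism $\psi=(\psi^+,\psi^-)\colon H\to\gmd\times\gmd$ records the actions of $H$ on $\eta|_\divisor$ and on $(L|_\divisor)/\eta$, and its differential sends $A$ to $(A_+,A_-)$ in the notation of Corollary~\ref{ConnExist}. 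By Example~\ref{Ex:ClassStack}, the restriction of any $\Dmod_{\overline\Bun(-1),\alpha}$-module to $B(H)$ vanishes unless the character
\[
\chi\colon\Lie(H)\to\C,\qquad \chi(A)=\res(A_+\alpha^+)+\res(A_-\alpha^-),
\]
integrates to a homomorphism $H_0\to\gm$ on the identity component. So it suffices to prove that $\chi$ does \emph{not} integrate. Note that, via the Baer-sum construction, $\chi$ is exactly the polar-residue part of the functional of Corollary~\ref{ConnExist}, without the Atiyah term $\langle A,b(L)\rangle$.

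Here is where the conditions of \S\ref{MODST} enter, and this is the step I expect to be the main obstacle. A character of $H_0$ must vanish on every nilpotent element of $\Lie(H)$ and take integer values on every subtorus of $H_0$. If $\End(L,\eta)$ contains a non-scalar element with distinct eigenvalues, its eigen-projector $e$ lies in $\End(L,\eta)$, so $\{ae+b(\id-e)\}\cong\gm\times\gm\subset H_0$; integrality of $\chi$ on its cocharacters would force $\chi(e)\in\Z$, whereas $\chi(e)=\sum_i\res\alpha_i^{\epsilon_i}$ for a choice of signs $\epsilon_i$ coming from the position of $\eta$ relative to the two eigen-subbundles, which is not an integer by condition~(\ref{AlphaIII}). (This is the same mechanism as the contradiction with~(\ref{AlphaIII}) in the proof of Proposition~\ref{Affine}(\ref{Aff1}).) The remaining case is that $\End(L,\eta)$ is local, so that the non-scalar endomorphisms are $\C\cdot\id$ plus a nonzero nilpotent ideal; here I would exhibit a nilpotent $N\in\End(L,\eta)$ with $\chi(N)\neq0$, which again rules out integration since $\chi$ must vanish on nilpotents. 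In a local $\eta$-compatible trivialization at each $x_i$ the entries $N_\pm$ are the diagonal parts of $N|_\divisor$ (nilpotent elements of $\C[\divisor]$), and the point is that the condition that $\alpha_i^+-\alpha_i^-$ has pole of order \emph{exactly} $n_i$ (condition~(\ref{AlphaI})), together with~(\ref{AlphaIV}) at the simple points, forces $\res(N_+\alpha^+)+\res(N_-\alpha^-)$ to be nonzero for a suitable $N$. Carrying out this last computation uniformly over all of $\overline\Bun(-1)$—including the unstable bundles $L=O_{\p1}(m)\oplus O_{\p1}(n)$ with $m-n>1$, where $\End(L)$ is larger—is the technical heart of the argument, and the explicit description of parabolic bundles from Proposition~\ref{BunP} should be useful for organizing the cases.
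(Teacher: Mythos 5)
Your setup agrees with the paper: restricting to the residual gerbe $B(H)$ with $H=\Aut(L,\eta)$, identifying the relevant character as $\chi(A)=\res(A_+\alpha_+)+\res(A_-\alpha_-)$ via Example~\ref{Ex:ClassStack}, and your Case~1 (a non-scalar semisimple element, its eigen-projector, and condition~(\ref{AlphaIII})) is exactly the paper's semisimple case. The genuine gap is Case~2, and it is not merely that the computation is deferred: the mechanism you propose cannot work. Conditions~(\ref{AlphaI}) and~(\ref{AlphaIV}) on the $\alpha_i^\pm$ alone do not force $\chi(N)\ne0$ for some nilpotent $N$ whenever nilpotents exist. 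Indeed, take $L=O_{\p1}\oplus O_{\p1}(-1)$ and $\eta=O_{\p1}|_\divisor$. Every nilpotent endomorphism of $L$ kills $O_{\p1}$ and maps $L$ into $O_{\p1}$, hence preserves $\eta$, acts by zero on $\eta$, and induces zero on $(L|_\divisor)/\eta$; so $N_+=N_-=0$ and $\chi$ vanishes identically on the two-dimensional space of nilpotents in $\End(L,\eta)$, whatever the $\alpha_i^\pm$ are. This bundle escapes your Case~2 only because it happens to admit a non-scalar idempotent (projection onto $O_{\p1}$), i.e.\ it lands in Case~1. Consequently, any proof of Case~2 must use the hypothesis that $\End(L,\eta)$ has no non-scalar semisimple elements in an essential, global way, and your sketch gives no mechanism for feeding that hypothesis into the residue computation; ruling out cancellations among the residues at the points of $\divisor$, uniformly over all $(L,\eta)$ including the unstable bundles, is precisely what remains unproved.

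The missing idea is to use Corollary~\ref{ConnExist} in the \emph{nonexistence} direction, rather than only through Proposition~\ref{Affine} to reformulate the hypothesis; this is what the paper does, and it makes the hard case trivial. Since $(L,\eta)$ carries no compatible connection, condition (b) of Corollary~\ref{ConnExist} fails, so there exists $A\in\End(L,\eta)$ with $\res(A_+\alpha_+)+\res(A_-\alpha_-)+\langle A,b(L)\rangle\ne0$. The functional is linear and the Jordan components of $A$ are polynomials in $A$, hence lie in $\End(L,\eta)$, so one may assume $A$ is nilpotent or semisimple. If $A$ is nilpotent, then $\langle A,b(L)\rangle=0$ by~\eqref{atiyah}, hence $\chi(A)\ne0$; since a character of the identity component $H_0$ vanishes on nilpotent elements of $\Lie(H)$, the character does not integrate --- no residue computation at all. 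If $A$ is semisimple, it is automatically non-scalar, because the full functional vanishes on $\id_L$ by~\eqref{atiyah} and condition~(\ref{AlphaII}), and then your Case~1 argument finishes the proof. In short, your reformulation of the hypothesis as $\End(L,\eta)\ne\C$ discards exactly the tool (the element $A$ produced by Corollary~\ref{ConnExist}) that handles the nilpotent case; reinstating it closes the gap and collapses your ``technical heart'' to one line.
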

This proposition and Proposition~\ref{Affine}\eqref{Aff1} imply
that, for a point \[(L,\eta)\in \overline\Bun(-1)-\Bun(-1),\]
the restriction of every $\Dmod_{\overline\Bun(-1),\alpha}$-module to
$(L,\eta)$ is zero. This is valid for
not necessarily closed points, and the step follows.

\begin{proof}[Proof of proposition]
The proof is based on Corollary~\ref{ConnExist} and
Example~\ref{Ex:ClassStack}.

Consider Example~\ref{Ex:ClassStack} with $H=\Aut(L,\eta)$.
Recall from Section~\ref{LANGLANDS} that the twist is given by the
torsor
\[
    T=\eta_{univ}\times_{\overline\Bun(-1)}\eta'_{univ}
\]
over $G=\C[\divisor]^\times\times\C[\divisor]^\times$ and the
character $\theta=(\alpha^+,\alpha^-)$. One easily checks that
$\psi:\Aut(L,\eta)\to\C[\divisor]^\times\times\C[\divisor]^\times$
is given by the action of automorphisms on $\eta$ and
$(L|_\divisor)/\eta$. Thus, in the notation of
Corollary~\ref{ConnExist}, $\dif\psi:A\mapsto(A_+,A_-)$ and
\begin{equation}\label{chipsi}
    \theta\circ\dif\psi:A\mapsto\res(A_+\alpha_+)+\res(A_-\alpha_-).
\end{equation}
Now assume that $(L,\nabla)$ does not correspond to any
connection; our goal is to prove that $\theta\circ\dif\psi$
does not integrate to a character of the identity component of
$\Aut(L,\eta)$. It follows from Corollary~\ref{ConnExist} that
there is $A\in\End(L,\eta)$ such that
\[
    \res(A_+\alpha_+)+\res(A_-\alpha_-)+\langle A,b(L)\rangle\ne0.
\]
It is enough to consider two cases: $A$ is nilpotent, and $A$
is semisimple. In the first case it follows from~\eqref{chipsi}
and~\eqref{atiyah} that $\theta\circ\dif\psi(A)\ne0$ and
$\theta\circ\dif\psi$ cannot integrate to a character
$H_0\to\gm$, so we are done.

Let $A$ be semisimple. It follows from~\eqref{atiyah} and
condition~\eqref{AlphaII} of Section~\ref{MODST} that
\[
    \res((\id_L)_+\alpha_+)+\res((\id_L)_-\alpha_-)+\langle\id_L,b(L)\rangle=0.
\]
Thus $A$ is not scalar. Then $(L,\eta)$ decomposes with respect
to the eigenvalues of~$A$ as $L=L_1\oplus L_2$ (and for every
$i$ we have $\eta|_{n_ix_i}=L_1|_{n_ix_i}$ or
$\eta|_{n_ix_i}=L_2|_{n_ix_i}$). Let~$A'$ be the endomorphism
of $(L,\eta)$ that is zero on $L_1$ and the identity on $L_2$.
We see that
\[
    \res(A'_+\alpha_+)+\res(A'_-\alpha_-)=\sum_i\res\alpha_i^\pm\notin\Z
\]
by condition~\eqref{AlphaIII} of Section~\ref{MODST}. Again,
$\theta\circ\dif\psi$ does not integrate and we are done.
\end{proof}

\subsection{Step 2:
$\Dmod_{\Bun(-1),\alpha}-\mathrm{mod}\simeq
\Dmod_{P,\beta}-\mathrm{mod}$} Recall that $\Bun(-1)=P/\gm$,
where $\gm$ acts trivially.

Let $\pi:P\to\Bun(-1)$ be the projection. It follows from the
definition of a (strongly) equivariant $\Dmod$-module that
$\Dmod_{\Bun(-1),\alpha}-\mathrm{mod}\simeq
\pi^\al\Dmod_{\Bun(-1),\alpha}-\mathrm{mod}$. So all we have to
check is that $\pi^\al\Dmod_{\Bun(-1),\alpha}=\Dmod_{P,\beta}$.

By Proposition~\ref{BunP} we have $\Bun(-1)=P/\gm$, where $P$
is glued from two copies of $\p1$, which we denote now by
$\pp1+$ and $\pp1-$ (so that $x_i^-\in\pp1-$). We saw that $P$
can be viewed as the moduli space of triples
$(L,\eta,O_\p1(-2)\hookrightarrow L_\eta)$ (cf.
Remark~\ref{RemP}). We shall be using the notation from the
proof of Proposition~\ref{BunP}. Let $\rho:\tilde P\to P$ be
the projection. We assume that $\rho^{-1}(\pp1+)$ is given by
$p'\ne0$, while $\rho^{-1}(\pp1-)$ is given by $p\ne0$.

\begin{Lemma}\label{DistingPar}
For all $i$ there exists a unique $(L,\eta)\in\Bun(-1)$ such
that $\eta_{x_i}=(O_\p1)_{x_i}\subset L_{x_i}$ and under the
above description of $\Bun(-1)$ this point corresponds to
$x_i^-$.
\end{Lemma}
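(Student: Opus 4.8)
The plan is to carry out everything inside the explicit model of $\Bun(-1)\simeq P\times B(\gm)$ built in the proof of Proposition~\ref{BunP}. Recall from there (and from Remark~\ref{RemP}) that a point of $P=\tilde P/\gm$ is encoded by a triple $(p,p',q)$ with $pp'=f(q)$, modulo the action $(p,p',q)\mapsto(tp,t^{-1}p',q)$, where $q$ is the common simple zero of $\phi\colon O_{\p1}\oplus O_{\p1}(-2)\to L$ and $\phi'\colon O_{\p1}(-\divisor)\oplus O_{\p1}(-2)\to L_\eta$, and $\langle(p,1)\rangle=\Ker\phi(q)$, $\langle(1,p')\rangle=\Ker\phi'(q)$. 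By the convention fixed just above the lemma, the point $x_i^-$ is the $\gm$-orbit with $q=x_i$ lying in $\rho^{-1}(\pp1-)=\{p\ne0\}$; since $f(x_i)=0$ forces $pp'=0$, this is exactly the orbit $(q,p,p')=(x_i,p,0)$ with $p\ne0$. Thus the lemma amounts to the assertion that the condition $\eta_{x_i}=(O_{\p1})_{x_i}$ singles out this orbit among all points of $P$.

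First I would express the reduced fiber $\eta_{x_i}\subset L_{x_i}$ directly in terms of $(p,p',q)$. Since $\eta$ is recovered from the inclusion $L_\eta\hookrightarrow L$ and the reduced fiber $\eta_{x_i}$ is the image of $(L_\eta)_{x_i}\to L_{x_i}$, it suffices to write local frames of $L$ and of $L_\eta$ near $x_i$. Choosing a coordinate $z$ at $x_i$, letting $w_1$ be the canonical generator of $O_{\p1}\hookrightarrow L$ (so that $(O_{\p1})_{x_i}=\langle w_1(x_i)\rangle$) and $e_2$ a local generator of the $O_{\p1}(-2)$-summand, the upper-modification formulas \eqref{upper1}--\eqref{upper2} give the frame $\{w_1,\ \tfrac1z(pw_1+e_2)\}$ for $L$ and, after identifying the common $O_{\p1}(-2)$-summand carried by both $\phi$ and $\phi'$, an analogous frame for $L_\eta$. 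Evaluating at $z=0$ then exhibits $\eta_{x_i}$ as an explicit line in $L_{x_i}$ depending on whether $q=x_i$ and on the vanishing of $p$ and $p'$.

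The expected outcome is as follows. If $q\ne x_i$, then no modification occurs at $x_i$, so locally $L=O_{\p1}\oplus O_{\p1}(-2)$ and $L_\eta=O_{\p1}(-\divisor)\oplus O_{\p1}(-2)$, and one reads off $\eta_{x_i}=(O_{\p1}(-2))_{x_i}\ne(O_{\p1})_{x_i}$. If $q=x_i$, then $\eta_{x_i}=(O_{\p1})_{x_i}$ precisely when $p\ne0$, and a different line when $p=0$ (which is the branch $x_i^+$). Hence $\eta_{x_i}=(O_{\p1})_{x_i}$ holds if and only if $q=x_i$ and $p\ne0$, i.e.\ exactly at the orbit $x_i^-$. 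Existence is then immediate since $x_i^-$ is such a point; uniqueness follows because $\phi$ has a single zero $q$, so the fiber $\eta_{x_j}$ can deviate from its generic value $(O_{\p1}(-2))_{x_j}$ for at most one index $j$ (consistent with the bound $\sum_j m_j\le1$ established in the proof of Proposition~\ref{Affine}). This forces $q=x_i$, and then $p\ne0$ distinguishes $x_i^-$ from $x_i^+$. Since $\Bun(-1)=P\times B(\gm)$, a point up to isomorphism is the same as a point of $P$, so this yields the unique $(L,\eta)$.

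The main obstacle will be the bookkeeping in the local computation: one must propagate two nested upper modifications (of $L$ and of $L_\eta$) through compatible trivializations and extract the image of $(L_\eta)_{x_i}\to L_{x_i}$ correctly, paying attention to the case $n_i>1$ where several frame vectors vanish at $x_i$ and only the reduced fiber is being tested. The one genuinely delicate point is the branch bookkeeping: the computation must be reconciled with the fixed convention $\rho^{-1}(\pp1-)=\{p\ne0\}$, so that the distinguished parabolic structure lands on $x_i^-$ rather than on $x_i^+$.
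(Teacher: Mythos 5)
Your proposal is correct and is essentially the paper's argument: both work inside the $(p,p',q)$-model of $\Bun(-1)$ from Proposition~\ref{BunP} and show that $\eta_{x_i}=(O_\p1)_{x_i}$ holds exactly when $q=x_i$ on the branch $p'=0$ (equivalently, $p\ne 0$ on $\tilde P$), which is the point $x_i^-$. The only difference is one of execution: instead of your frame-by-frame computation of the image of $(L_\eta)_{x_i}\to L_{x_i}$ via \eqref{upper1}--\eqref{upper2}, the paper obtains the same criterion in one stroke by observing that $\eta_{x_i}=(O_\p1)_{x_i}$ iff the composition $O_\p1(-\divisor)\hookrightarrow L(-\divisor)\hookrightarrow L_\eta$ (the first component of $\phi'$) vanishes at $x_i$, which happens iff $\phi'$ drops rank at $x_i$ with kernel $O_\p1(-\divisor)_{x_i}$, i.e.\ iff $q=x_i$ and $p'=0$.
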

\begin{proof}
Consider the composition $O_\p1(-\divisor)\hookrightarrow
L(-\divisor)\hookrightarrow L_\eta$. Clearly,
$\eta_{x_i}=(O_\p1)_{x_i}$ if and only if this composition is
zero at $x_i$ . This happens if and only if the rank of
$\phi':O_\p1(-\divisor)\oplus O_\p1(-2)\rightarrow L_\eta$
drops at $x_i$ with the kernel $O_\p1(-\divisor)_{x_i}$. This
is in turn equivalent to $q=x_i$, $p'=0$.
\end{proof}

Let $\delta$ be the line bundle on $\Bun(-1)$ whose fiber at
$(L,\eta)$ is $\detrg(\p1,L)$. Let~$\delta'$ be the pullback of
$\delta$ to $P$. Fix $\infty\in\p1-\divisor$.

\begin{Lemma}
$\delta'\simeq O_P(2(\infty)-\sum n_i x_i^-)$.
\end{Lemma}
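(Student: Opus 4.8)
The plan is to first reduce the determinant of cohomology to the line bundle of global sections, and then to compute the latter as the line bundle associated to the $\gm$-torsor $\tilde P\to P$ of Proposition~\ref{BunP}.

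First I would observe that for every $(L,\eta)\in\Bun(-1)$ we have $L\simeq O_\p1\oplus O_\p1(-1)$ by Proposition~\ref{Affine}(\ref{Aff0}), so that $h^0(\p1,L)=1$ and $h^1(\p1,L)=0$ at every point of $P$. Writing $\mathcal{L}$ for the universal bundle on $P\times\p1$ (the pullback along $\pi$ of the universal bundle on $\Bun(-1)\times\p1$) and $\mathrm{pr}:P\times\p1\to P$ for the projection, it follows that $R\mathrm{pr}_*\mathcal{L}=\mathrm{pr}_*\mathcal{L}$ is a line bundle and $\delta'=\det R\mathrm{pr}_*\mathcal{L}=\mathrm{pr}_*\mathcal{L}$. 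Thus $\delta'$ is the line bundle whose fiber at $(L,\eta)$ is the one-dimensional space $H^0(\p1,L)$, spanned by the image of the canonical inclusion $O_\p1\hookrightarrow L$, which is an embedding of vector bundles (cf.\ Step~4 of the proof of Proposition~\ref{BunP}).

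Next I would note that this inclusion $O_\p1\hookrightarrow L$ is exactly the datum forgotten in passing from $\tilde P$ to $P$: the map $\tilde P\to P$ is a $\gm$-torsor, and the generator of $H^0(\p1,L)$ provides a weight-$\pm1$ trivialization of the pullback of $\delta'$ to $\tilde P$. Hence $\delta'$ is the associated line bundle $\tilde P\times^{\gm}\C$ for the appropriate weight. To identify it I would use the tautological coordinates $(p,p',q)$ on $\tilde P\subset P'$, subject to $pp'=f(q)$ with $f$ the canonical section of $O_\p1(\divisor)$. Here $p$ and $p'$ are sections of $\wp^*O_\p1(2)$ of opposite $\gm$-weight, and from the geometry of $P'$ together with Lemma~\ref{DistingPar} their zero divisors on $P$ are $\sum_i n_i\,x_i^+$ and $\sum_i n_i\,x_i^-$ respectively. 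Comparing $p$ against a weight-zero section of $\wp^*O_\p1(2)=O_P(2(\infty))$ (note $\wp^{-1}(\infty)$ is the single reduced point $\infty$) then yields $\delta'\simeq O_P(\sum_i n_i\,x_i^+-2(\infty))$; the sign of the $\gm$-weight of the generator of $H^0$ dictates whether $p$ or $p'$, hence $x_i^+$ or $x_i^-$, occurs.

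Finally I would rewrite this using the linear equivalence $\sum_i n_i(x_i^++x_i^-)=\wp^*\divisor\sim\wp^*(4(\infty))=4(\infty)$ on $P$, which holds since $\wp$ is \'etale and $\deg\divisor=4$; it turns $\sum_i n_i\,x_i^+-2(\infty)$ into $2(\infty)-\sum_i n_i\,x_i^-$, giving the claim. The main obstacle is the bookkeeping of the $\gm$-equivariant structure: one must fix the sign of the weight of the generator of $H^0(\p1,L)$ relative to the $\gm$-action defining $P=\tilde P/\gm$ and match it against the weights of $p$ and $p'$. This sign is precisely what selects $x_i^+$ versus $x_i^-$, and it is where the asymmetry recorded by Lemma~\ref{DistingPar} enters; once it is pinned down, the remaining computation of the zero divisors of the coordinate functions and the passage through the above linear equivalence are routine.
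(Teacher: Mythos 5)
Your route is essentially the paper's: the paper also identifies $\delta'$ with the line bundle associated to the $\gm$-torsor $\rho:\tilde P\to P$ (via a cartesian diagram, which amounts to the same observation as your reduction $\detrg(\p1,L)=H^0(\p1,L)$, using $h^0=1$, $h^1=0$), and then computes with the tautological coordinates $p,p'$. The only packaging difference is at the end: the paper restricts to the two charts, finds $\delta'|_{\pp1+}=O_\p1(2)$ and $\delta'|_{\pp1-}=O_\p1(-2)$, and reads off the divisor of a meromorphic section through the gluing $pp'=f(q)$; you instead use the globally defined weighted sections $p,p'$ together with the linear equivalence $\wp^*\divisor\sim 4(\infty)$. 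Both versions rest on the same inputs and your intermediate claims (the zero divisors $\sum n_ix_i^+$ of $p$ and $\sum n_ix_i^-$ of $p'$, and the linear equivalence) are correct.

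There is, however, one genuine gap, and it sits exactly where you flagged ``the main obstacle'': the sign. As you set it up, the computation can equally well output $O_P(2(\infty)-\sum n_ix_i^-)$ or $O_P(2(\infty)-\sum n_ix_i^+)$, and these are genuinely non-isomorphic (any rational function on $P$ is pulled back from $\p1$, so its divisor is symmetric under $x_i^+\leftrightarrow x_i^-$; hence $x_i^+-x_i^-$ is not principal). The decision is made by which of $p,p'$ scales \emph{inversely} to the generator of $H^0(\p1,L)$, and Lemma~\ref{DistingPar} cannot supply this: that lemma only fixes the labeling convention (that $x_i^-$ is the point where $p'=0$), which you have already consumed in computing the zero divisors. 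What is actually needed is the explicit action formula: rescaling the embedding $O_\p1\hookrightarrow L$ by $t$ sends $(p,p',q)$ to $(p/t,\,tp',\,q)$, which one extracts from the definitions $\Ker\phi(q)=\langle(p,1)\rangle$ and $\Ker\phi'(q)=\langle(1,p')\rangle$ in Steps 4--6 of the proof of Proposition~\ref{BunP}. This formula is precisely the starting point of the paper's proof. With it in hand, $p$ has weight $-1$, hence descends to a section of $\delta'\otimes\wp^*O_\p1(2)$ with divisor $\sum n_ix_i^+$, and your chain of identifications closes to give the stated answer; without it, the asserted intermediate isomorphism $\delta'\simeq O_P(\sum n_ix_i^+-2(\infty))$ is a guess (a correct one), not a proof.
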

\begin{proof}
Let $t\in\gm$ act on $\tilde P$ by
\begin{equation}\label{action}
    t\cdot(p,p',q)=(p/t,tp',q).
\end{equation}
This action gives rise to a $\gm$-torsor $\tilde P\to P$. We
claim that the corresponding line bundle is $\delta'$.

Indeed, consider the cartesian diagram
\[
\begin{CD}
\tilde P @>\rho>> P\\
@VVV @VVV\\
P @>>> \Bun(-1).
\end{CD}
\]
Here the left hand arrow is the torsor described above. The top
arrow corresponds to forgetting the embedding $O_\p1(-2)\to
L_\eta$. Thus $P$ on the right parameterizes parabolic bundles
with embeddings $O_\p1\to L$. However, such an embedding is the
same as a non-zero element of $\detrg(\p1,L)=H^0(\p1,L)$. Hence
the torsor on the right is the one corresponding to $\delta$,
and the torsor on the left is the one corresponding to
$\delta'$.

We have
\[
 \rho^{-1}(\pp1+)=\{(f(q)/p',p',q)\in\tilde P\},
\]
where $p'\ne0$ is in the fiber of $O_\p1(2)$ over $q$. Thus
$\rho^{-1}(\pp1+)$ is the total space of $O_\p1(2)$ with the
zero section removed, and the action~\eqref{action} is the
standard one. Hence $\delta'|_{\pp1+}=O_\p1(2)$.

Further,
\[
 \rho^{-1}(\pp1-)=\{(p,f(q)/p,q)\in\tilde P\}
\]
is also the total space of $O_\p1(2)$ with the zero section
removed but the action~\eqref{action} is the inverse one, so
the total space of the corresponding line bundle is obtained by
compactifying at infinity and $\delta'|_{\pp1-}=O_\p1(-2)$. We
also see that if a meromorphic section $s$ of $\delta'$ has
order $m_i$ at $x_i^+$, then it has order $m_i-n_i$ at $x_i^-$.

Let $s$ be a section of $O_\p1(2)=\delta'|_{\pp1+}$ with a
double zero at $\infty$. We view it as a meromorphic section of
$\delta'$. It has no other zeroes on $\pp1+$, and, by the
previous remark, it has a pole of order $n_i$ at $x_i^-$. Thus
the divisor of $s$ is $2(\infty)-\sum n_i x_i^-$.
\end{proof}

Denote by $\Delta_-$ and $\Delta_+$ the graphs of the
immersions $\divisor\hookrightarrow\pp1-\hookrightarrow P$ and
$\divisor\hookrightarrow\pp1+\hookrightarrow P$, respectively.

\begin{Lemma}\label{Det}
Let $\cL$ be the universal family on $\p1\times P$. Then
\[
\det\cL|_{\divisor\times P}\simeq
O_{\divisor\times P}(\Delta_-+\Delta_+)\otimes p_2^*\delta'.
\]
\end{Lemma}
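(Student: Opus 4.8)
The plan is to compute the full determinant line bundle $\det\cL$ on $\p1\times P$ and then restrict it to $\divisor\times P$. Write $p_1,p_2$ for the two projections from $\p1\times P$, and let $\Gamma\subset\p1\times P$ be the graph of $\wp\colon P\to\p1$. Since $\wp$ is the structure map and $\p1\times P\to P$ is smooth of relative dimension one, $\Gamma$ is a Cartier divisor, and on each chart $\pp1\pm\subset P$ it is the diagonal of $\p1\times\p1$ under the identification $\wp|_{\pp1\pm}\colon\pp1\pm\iso\p1$. Throughout I take $\cL$ to be the universal family normalized by the tautological embedding $p_1^*O_{\p1}(-2)\hookrightarrow\cL_\eta$ coming from the description of $P$ as the moduli of triples $(L,\eta,O_{\p1}(-2)\hookrightarrow L_\eta)$ in Remark~\ref{RemP}.

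First I would globalize the map $\phi$ from the proof of Proposition~\ref{BunP}. Over $P$ the tautological datum gives an \emph{untwisted} map $u_2\colon p_1^*O_{\p1}(-2)\to\cL_\eta\hookrightarrow\cL$, whereas the inclusion of the trivial summand $O_{\p1}\hookrightarrow L$ only exists after remembering a section of $p_{2,*}\cL=\delta'$, so it globalizes to $u_1\colon p_1^*O_{\p1}\otimes p_2^*\delta'\to\cL$. Together these give
\[
\phi=(u_1,u_2)\colon\bigl(p_1^*O_{\p1}\otimes p_2^*\delta'\bigr)\oplus p_1^*O_{\p1}(-2)\longrightarrow\cL,
\]
and Proposition~\ref{BunP} shows that fibrewise $\phi$ is an isomorphism except for a simple zero at the single point $q=\wp$, so that the zeros of $\det\phi$ sweep out exactly $\Gamma$. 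Hence $\det\phi$ is a section of $\det\cL\otimes p_1^*O_{\p1}(2)\otimes p_2^*(\delta')^{-1}$ whose scheme-theoretic zero locus is the reduced Cartier divisor $\Gamma$, yielding
\[
\det\cL\;\simeq\;p_1^*O_{\p1}(-2)\otimes p_2^*\delta'\otimes O_{\p1\times P}(\Gamma).
\]

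It then remains to restrict to $\divisor\times P$. As $\divisor$ lies in the affine chart $\p1-\{\infty\}$, the bundle $O_{\p1}(-2)|_\divisor$ is trivial, so $p_1^*O_{\p1}(-2)$ restricts to the trivial bundle on $\divisor\times P$. For the graph I would check in the two charts that $\Gamma$ meets $\divisor\times P$ precisely along $\Delta_-+\Delta_+$: near $(x_i,x_i^\pm)$ the divisor $\Gamma$ is cut out by $z_i-w_i^\pm$ with $w_i^\pm$ a coordinate on $\pp1\pm$, and restricting to $\{z_i^{n_i}=0\}\times\pp1\pm$ this is exactly the defining equation of $\Delta_\pm$ (with multiplicity one), while $\Gamma$ misses $\divisor\times P$ elsewhere. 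Thus $O_{\p1\times P}(\Gamma)|_{\divisor\times P}=O_{\divisor\times P}(\Delta_-+\Delta_+)$, and combining the three factors gives the asserted isomorphism.

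The main obstacle is the second paragraph: pinning down the twist in $\det\cL$. One must verify that, for the chosen normalization, $u_2$ carries no twist while $u_1$ carries exactly $p_2^*\delta'$; this is where the identification of $P$ with the moduli of $(L,\eta,O_{\p1}(-2)\hookrightarrow L_\eta)$ (Remark~\ref{RemP}) and the descent from $\tilde P$ are essential, and it is also what makes the statement sensitive to the normalization of the universal family (replacing $\cL$ by $\cL\otimes p_2^*\cN$ would change the two sides of the lemma by $p_2^*\cN^{\otimes 2}$ and $p_2^*\cN$ respectively). The accompanying transversality claim — that $\det\phi$ vanishes to first order along the smooth divisor $\Gamma\cong P$ and nowhere else — follows from the fibrewise simple-zero statement in Proposition~\ref{BunP}, but should be spelled out to justify the scheme-theoretic equality of Cartier divisors rather than a mere set-theoretic one.
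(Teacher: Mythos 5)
Your proof is correct and is essentially the paper's own argument: the paper likewise wedges the tautological map $p_1^*O_\p1(-2)\to\cL$ (from the modular description of $P$) with the adjunction map $p_2^*\delta'\to\cL$ to get a map $p_1^*O_\p1(-2)\otimes p_2^*\delta'\to\det\cL$ vanishing exactly on the graph of $\wp$, and then restricts to $\divisor\times P$. Your write-up only adds details the paper leaves implicit (the fibrewise simple-zero statement from Proposition~\ref{BunP}, the chart computation identifying $\Gamma\cap(\divisor\times P)$ with $\Delta_-+\Delta_+$, and the triviality of $p_1^*O_\p1(-2)|_{\divisor\times P}$).
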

\begin{proof}
We have a canonical map $p_1^*O_\p1(-2)\to\cL$ (recall the
modular description of~$P$). On the other hand, we have an
adjunction morphism $p_2^*\delta'\to\cL$ (recall that the fiber
of $p_2^*\delta'$ is $H^0(\p1,L)$). These maps give rise to a
map $p_1^*O_\p1(-2)\otimes p_2^*\delta'\to\det\cL$, and it
vanishes exactly over the graph of $\wp$. Restricting to
$\divisor\times P$ we obtain the result.
\end{proof}

Note that $\gmd$-torsors on a scheme $Y$ are the same as
$\divisor$-families of line bundles on $Y$ (i.e., line bundles
on $\divisor\times Y$). Indeed, a line bundle on $\divisor\times Y$
is the same as a rank one locally free module over $\C[\divisor]\otimes_\C O_Y$.
Such modules are in one-to-one correspondence with torsors over the sheaf
$(\C[\divisor]\otimes_\C O_Y)^\times$.

Thus $\eta_{univ}$ and $\eta_{univ}'$ can be viewed as line
bundles on $\divisor\times\Bun(-1)$. Clearly,~$\eta_{univ}$ is
a subbundle of $\cL|_{\divisor\times\Bun(-1)}$, and
$\eta'_{univ}=(\cL|_{\divisor\times\Bun(-1)})/\eta_{univ}$.

\begin{Proposition}
\noindstep\label{Pullb1} The pullback of $\eta_{univ}$ to
$\divisor\times P$ is $O_{\divisor\times P}(\Delta_+)$.

\noindstep\label{Pullb2} The pullback of $\eta'_{univ}$ to
$\divisor\times P$ is $p_2^*(O_P(2(\infty)-\sum n_i
x_i^-))\otimes O_{\divisor\times P}(\Delta_-)$.
\end{Proposition}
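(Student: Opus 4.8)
The plan is to deduce part (ii) from part (i) and to prove part (i) by a direct local computation read off from the modular description of $P$.

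First the reduction. The parabolic line fits into a short exact sequence $0\to\eta_{univ}\to\cL|_{\divisor\times\Bun(-1)}\to\eta'_{univ}\to0$; pulling back along $\id_\divisor\times\pi$ and taking determinants gives $\det\cL|_{\divisor\times P}=\pi^*\eta_{univ}\otimes\pi^*\eta'_{univ}$ (I keep writing $\cL$ for its pullback). Granting part (i), Lemma~\ref{Det} together with $\delta'\simeq O_P(2(\infty)-\sum n_i x_i^-)$ then yields
\[
\pi^*\eta'_{univ}=\det\cL|_{\divisor\times P}\otimes(\pi^*\eta_{univ})^{-1}
= O_{\divisor\times P}(\Delta_-+\Delta_+)\otimes p_2^*\delta'\otimes O_{\divisor\times P}(-\Delta_+),
\]
which is exactly the line bundle asserted in part (ii). So everything reduces to part (i).

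For part (i), recall from Remark~\ref{RemP} that $P$ carries a tautological embedding $p_1^*O_\p1(-2)\hookrightarrow\cL$ factoring through $L_\eta$. Since $L_\eta$ is the sheaf of sections of $L$ whose restriction to $\divisor$ lies in $\eta$, the restriction of this embedding to $\divisor\times P$ has image inside $\pi^*\eta_{univ}$, giving a map of line bundles $\psi\colon p_1^*O_\p1(-2)|_{\divisor\times P}\to\pi^*\eta_{univ}$. I claim its zero divisor is precisely $\Delta_+$; as $O_\p1(-2)|_\divisor$ is trivial, this yields $\pi^*\eta_{univ}\simeq O_{\divisor\times P}(\Delta_+)$. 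Away from the points $x_i^+$ the image of $O_\p1(-2)$ already fills $\eta$: at $x_i^-$ the kernel direction $(p,1)$ has $p\neq0$, so the images of $O_\p1$ and of $O_\p1(-2)$ coincide and, by Lemma~\ref{DistingPar}, equal $\eta$; thus $\psi$ is an isomorphism there and its zeros are supported on the $x_i^+$.

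It remains to compute the vanishing order at each $x_i^+$, which is the crux. Working on the $\pp1+$-chart with coordinate $w$ (so $x_i^+=\{w=0\}$ and $\wp$ reads $z=w$, whence $\Delta_+=\{z=w\}$ locally), formula \eqref{upper1} presents $\cL$ near $(x_i,x_i^+)$ as the upper modification of $O_\p1\oplus O_\p1(-2)$ with parameter $p=f(w)$; in the frame $\{e_1,v\}$ with $v=(f(w)e_1+e_2)/(z-w)$ the tautological generator of $O_\p1(-2)$ is $e_2=-f(w)e_1+(z-w)v$. The key point is that on $\divisor$ one has $f(z)\equiv0\pmod{z^{n_i}}$, because $f$ vanishes to order $n_i$ at $x_i$; hence $f(w)\equiv f(w)-f(z)=(w-z)g_0(z,w)\pmod{z^{n_i}}$ with $g_0$ regular, and therefore $e_2=(z-w)(g_0e_1+v)$. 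Since $\tilde g:=g_0e_1+v$ has unit $v$-component it is a nonvanishing generator of $\pi^*\eta_{univ}$, so $\psi$ is multiplication by $(z-w)$ and vanishes to order exactly one along $\Delta_+$. The main obstacle is exactly this step: setting up the local frame and the generator of $\eta$ correctly over the non-reduced scheme $\divisor$, and recognizing that $f|_\divisor=0$ is what forces $f(w)$ to be divisible by the equation $z-w$ of $\Delta_+$ (a determinant or linear-equivalence argument alone cannot detect this, since every line bundle is locally trivial and $\{w=0\}$ would give an isomorphic bundle). One must also check on the $\pp1-$-chart that $\psi$ remains an isomorphism, so that no spurious vanishing appears along $\Delta_-$.
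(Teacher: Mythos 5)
Your proof is correct, but it runs the paper's argument in mirror image rather than reproducing it. The paper uses the same two ingredients — the identity $\pi^*\eta_{univ}\otimes\pi^*\eta'_{univ}=\det\cL|_{\divisor\times P}$ together with Lemma~\ref{Det}, plus a direct computation of the vanishing divisor of one canonical map of line bundles — but in the opposite order: it proves part~(\ref{Pullb2}) directly and deduces part~(\ref{Pullb1}), whereas you prove~(\ref{Pullb1}) directly and deduce~(\ref{Pullb2}). The direct steps are also genuinely different in method. The paper studies the map $p_2^*\delta'\to\cL\to\pi^*\eta'_{univ}$ induced by the section $O_\p1\to L$ and identifies its vanishing locus with $\Delta_-$ modularly: vanishing over $S$ is rephrased as a rank drop of $\phi':\bar\delta(-(\divisor\times P))\oplus p_1^*O_\p1(-2)\to\cL_{\eta_{univ}}$, whose rank-drop locus is the graph of $\wp$ by Step~5 of the proof of Proposition~\ref{BunP} (a family version of Lemma~\ref{DistingPar}); the converse inclusion is left to the reader. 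You instead study the tautological map $p_1^*O_\p1(-2)\to\cL_{\eta_{univ}}\to\pi^*\eta_{univ}$ and compute its zero divisor in explicit local frames, where the factorization $f(w)\equiv-(z-w)g_0(z,w)\pmod{z^{n_i}}$ exhibits the scheme-theoretic multiplicity along the non-reduced $\divisor$. Your route buys an explicit, self-contained verification of exactly the point the paper handles implicitly (the multiplicity along $\Delta_+$ over the thickened points), at the cost of a coordinate computation; the paper's route is coordinate-free and shorter but sketchier. One step you should spell out: from $e_2|_{\divisor\times P}=(z-w)\tilde g$ you conclude $\tilde g$ is a section of $\pi^*\eta_{univ}$, and this requires noting that $(z-w)$ is a non-zero-divisor on $O_{\divisor\times P}$ near $(x_i,x_i^+)$ while the quotient $\pi^*\eta'_{univ}$ is locally free, so that $(z-w)[\tilde g]=0$ in the quotient forces $[\tilde g]=0$; with that line added, unimodularity of $\tilde g$ (unit $v$-component) gives via Nakayama that $\tilde g$ generates $\pi^*\eta_{univ}$, and the rest of your argument, including the set-theoretic check away from $\Delta_+$ via Lemma~\ref{DistingPar}, goes through.
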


\begin{Remark}
The asymmetry is due to the choice of one of two torsors
$P\to\Bun(-1)$.
\end{Remark}

\begin{proof}
Note that
$\pi^*\eta_{univ}\otimes\pi^*\eta'_{univ}=\det\cL|_{\divisor\times
P}$, thus~(\ref{Pullb1}) follows from~(\ref{Pullb2}) and
Lemma~\ref{Det}. Let us prove (\ref{Pullb2}). The proof is
essentially a family version of Lemma~\ref{DistingPar}.

As in the proof of Lemma~\ref{Det} we get a map
$\bar\delta:=p_2^*\delta'\to\cL$. Restricting this to
$\divisor\times P$ and composing with the natural projection we
get a map
\[
    \bar\delta|_{\divisor\times P}\to\pi^*\eta'_{univ}.
\]
We need to show that it vanishes exactly on $\Delta_-$.
Clearly, this map vanishes on $S\subset\divisor\times P$ if and
only if $\bar\delta\to\cL$ factors through $\eta_{univ}$ over
$S$. One checks that this happens if and only if
$\bar\delta(-(\divisor\times P))\to\cL(-(\divisor\times
P))\to\cL_{\eta_{univ}}$ vanishes over $S$. Let us show that
$S\subset\Delta_-$ in this case (we leave the converse to the
reader).

We see that the rank of $\phi':\bar\delta(-(\divisor\times
P))\oplus p_1^*O_\p1(-2)\rightarrow\cL_{\eta_{univ}}$ drops on
$S$. Recall the modular definition of $\wp$: its graph is given
by the scheme, where the rank of $\phi'$ drops (cf. proof of
Proposition~\ref{BunP}, Step~5). Thus
$S\subset\Delta_-\cup\Delta_+$. But the kernel of $\phi'$ is
$\bar\delta(-(\divisor\times P))$, thus in fact
$S\subset\Delta_-$.
\end{proof}

Now let us be explicit about what we need to calculate:
$\pi^*\eta_{univ}$ and $\pi^*\eta'_{univ}$ correspond to
classes $[\pi^*\eta_{univ}],[\pi^*\eta'_{univ}]\in
H^1(\divisor\times P, O^\times_{\divisor\times P})$. There is a
natural map $\mathbf{dlog}: O^\times_{\divisor\times P}\to
p_2^*\Omega_P: f\mapsto f^{-1}\dif_P f$. Applying this map to
$[\pi^*\eta_{univ}]$ and $[\pi^*\eta'_{univ}]$ we get elements
of $H^1(P,\Omega_P)\otimes_\C O_\divisor$. The TDO ring
$\pi^\al\Dmod_{\Bun(-1),\alpha}$ corresponds to an element of
$H^1(P,\Omega_P)$ given by
\[
\langle\mathbf{dlog}[\pi^*\eta_{univ}],(\alpha_i^+)\rangle+
\langle\mathbf{dlog}[\pi^*\eta'_{univ}],(\alpha_i^-)\rangle,
\]
where
\[
    \langle\cdot,\cdot\rangle:H^1(P,\Omega_P)\otimes
    O_\divisor\otimes O_\divisor^\vee\to H^1(P,\Omega_P).
\]
Choose local parameters $z_i$ at $x_i\in\p1$; we obtain an
isomorphism
\[
    O_\divisor=\prod_i\C[w_i]/w_i^{n_i}.
\]
Recall the
description of $H^1(P,\Omega_P)$ given in Lemma~\ref{CohP}. An
easy calculation shows that
\[
\begin{split}
\mathbf{dlog}\left(p_2^*(2(\infty)-\sum n_i x_i^-)\right)&=
(0,n_i\,\dif z_i/z_i)\otimes1_\divisor,\\
\mathbf{dlog}(O_{\divisor\times P}(\Delta_-))&=
\left(1_\divisor,-\frac{\dif z_i}{z_i-w_i}
\right),\\
\mathbf{dlog}(O_{\divisor\times P}(\Delta_+))&=
\left(1_\divisor,\frac{\dif z_i}{z_i-w_i}
\right)
\end{split}
\]
($\frac{\dif z_i}{z_i-w_i}$ should be expanded in the powers of
$w_i$). Further,
\[
\begin{split}
\langle(0,n_i\,\dif z_i/z_i)\otimes1_\divisor,(\alpha_i^-)\rangle&=
(0,n_i\lambda\,\dif z_i/z_i),\\
\left\langle\left(1_\divisor,-\frac{\dif z_i}{z_i-w_i}
\right),(\alpha_i^-)\right\rangle&=(\lambda,-\alpha_i^-),\\
\left\langle\left(1_\divisor,\frac{\dif z_i}{z_i-w_i}
\right),(\alpha_i^+)\right\rangle&=\left(\sum_i\res\alpha_i^+,\alpha_i^+\right).
\end{split}
\]
Note that collections $(\alpha_i^\pm)$ in the left-hand side
are viewed as elements of $O_\divisor^\vee$, while in the
right-hand side they are polar parts of 1-forms.

Applying the previous proposition and recalling that
$\lambda+\sum_i\res\alpha_i^+=-d$, we see that the element of
$H^1(P,\Omega_P)$ corresponding to
$\pi^\al\Dmod_{\Bun(-1),\alpha}$ is
\[
(-d,\alpha_i^+-\alpha_i^-+n_i\lambda\,\dif z_i/z_i).
\]
It remains to notice that $\beta_i^\pm$ correspond to the same
element of $H^1(P,\Omega_P)$, cf. Lemma~\ref{CohP}.

\subsection{Step 3: $\cM_\alpha\simeq\cM_\beta$.}

This isomorphism is provided by Katz's middle convolution. It
is defined in~\cite{Katz} in the settings of $l$-adic sheaves;
see~\cite{SimpsonMiddleConvolution} or~\cite{ArinkinFourier} for the
settings of de Rham local systems. Here is an explicit
description of the isomorphism.

Fix $\infty\in\p1-\divisor$. There is a unique 1-form $\alpha$
on $\p1-\divisor-\{\infty\}$ such that $\alpha+\alpha_i^-$ is
non-singular at $x_i$ and $\alpha$ has a pole of order one at
$\infty$.  Similarly, there is a unique 1-form $\beta$ on
$\p1-\divisor-\{\infty\}$ such that $\beta+\beta_i^-$ is
non-singular at $x_i$ and $\beta$ has a pole of order one at
$\infty$. Note that
$\res_\infty\alpha=\sum_i\res\alpha_i^-=\lambda$ and
$\res_\infty\beta=-\lambda$.

Fix $(L,\nabla)\in\cM_\alpha$. The connection
\[\nabla+\alpha:L\to L\otimes\Omega_\p1(\divisor+(\infty))\]
has formal type $(0,\alpha_i^+-\alpha_i^-)$ at $x_i$. Let
$\tilde L\subset L$ be the largest subsheaf such that
\[(\nabla+\alpha)(\tilde L)\subset L\otimes\Omega_\p1(\infty).\]
Explicitly, $\tilde L$ is the modification of $L$ with respect
to one of two parabolic structures on $L$ induced by $\nabla$.
Precisely, this parabolic structure $\eta$ is such that the
polar part of $\nabla$ induces multiplication by $\alpha_-$ on
$\eta$ (cf. Corollary~\ref{ConnExist}).

Consider on $\p1\times\p1$ the differential 1-form
$\lambda\dif\log(x-y)$, where $x$ and $y$ are the coordinates
on the first and second factors, respectively. The preimage
$p_1^*L$ carries a flat meromorphic connection $p_1^*\nabla$;
let us equip $p_1^*L$ with the flat meromorphic connection
\[
    p_1^*\nabla+p_1^*\alpha+\lambda\dif\log(x-y).
\]
Denote the `horizontal' and `vertical' parts of this connection
by $\nabla_x$ and $\nabla_y$. We then obtain an
anti-commutative square
{\small\begin{equation}\label{twonablas}
\begin{CD}
p_1^*\tilde L @>\nabla_x>> p_1^*L\otimes p_1^*\Omega_\p1(\Delta)\\
@V\nabla_yVV @V\nabla_yVV\\
p_1^*\tilde L\otimes p_2^*\Omega_\p1(\Delta+\p1\times\{\infty\})@>\nabla_x>>
p_1^*L\otimes p_1^*\Omega_\p1\otimes p_2^*\Omega_\p1(2 \Delta+\p1\times\{\infty\}).
\end{CD}
\end{equation}}

Consider the complex
\[\begin{CD}
\cF^\al:=(p_1^*\tilde L @>\nabla_x>> p_1^*L\otimes p_1^*\Omega_\p1(\Delta))
\end{CD}\]
of sheaves on $\p1\times\p1$. The differential $\nabla_x$ is
$p_2^{-1}O_\p1$-linear, so the direct image $Rp_{2,*}\cF^\al$
makes sense as an object in the derived category of
$O_\p1$-modules. It is easy to see that
$R^0p_{2,*}\cF^\al=R^2p_{2,*}\cF^\al=0$. Now the Euler
characteristic argument shows that $Rp_{2,*}\cF^\al[1]$ is a
locally free $O_\p1$-module of rank two; let us denote it
by~$E$.

Similarly, consider the complex
\[\begin{CD}
\cF^\al(\Delta)=(p_1^*\tilde L(\Delta) @>\nabla_x>>
p_1^*L\otimes p_1^*\Omega_\p1(2\Delta)).
\end{CD}\]
Then $Rp_{2,*}\cF^\al(\Delta)[1]$ is a locally free
$O_\p1$-module of rank two; let us denote it by~$\tilde E$.

The natural morphism $\cF^\al\hookrightarrow\cF^\al(\Delta)$
induces a homomorphism $\iota:E\to\tilde E$. Recall that
\[
    O_{\p1\times\p1}(k\Delta)/O_{\p1\times\p1}((k-1)\Delta)
    \approx(\iota_\Delta)_*\cT_\p1^{\otimes k}.
\]
Thus we have an exact sequence of complexes
\[
    0\to\cF^\al\to\cF^\al(\Delta)\to
    (\iota_\Delta)_*(\tilde L\otimes\cT_\p1\to L\otimes\cT_\p1)\to0.
\]
One checks that the differential in the rightmost complex is
induced by the natural inclusion $\tilde L\hookrightarrow L$.
Thus $\iota$ is an embedding, and $\coker(\iota)\simeq
O_\divisor$. We can thus identify $\tilde E$ with an upper
modification of $E$.

Finally, note that diagram~\eqref{twonablas} provides a
$\C$-linear map $\nabla_E:E\to\tilde
E\otimes\Omega_\p1(\infty)$. Clearly, $\nabla_E$ satisfies the
Leibnitz identity. We view $\nabla_E$ as a connection on $E$
with poles at $\divisor\cup\{\infty\}$.

\begin{Proposition}
The formal type of $\nabla_E$ at $x_i$ is
$(0,\beta_i^+-\beta_i^-)$, and the residue of~$\nabla_E$ at
$\infty$ is $-\lambda$. In other words, $(E,\nabla_E-\beta)\in
\cM_\beta$. The correspondence
\[(L,\nabla)\mapsto(E,\nabla_E-\beta)\]
is an isomorphism $\cM_\alpha\iso\cM_\beta$.
\end{Proposition}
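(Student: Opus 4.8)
The plan is to recognize this construction as a geometric incarnation of N.~Katz's middle convolution $MC_\lambda$, with $\infty$ serving as the convolution point and the twisting $1$-form $\alpha$ absorbing the regular parts at the $x_i$, and then to prove the proposition in three stages: a local computation of the formal type of $\nabla_E$ at each $x_i$, a local computation of the exponent at $\infty$, and an invertibility argument. Throughout I take as given the facts already established in the text, namely that $E=Rp_{2,*}\cF^\al[1]$ and $\tilde E=Rp_{2,*}\cF^\al(\Delta)[1]$ are rank two bundles, that $\nabla_E\colon E\to\tilde E\otimes\Omega_\p1(\infty)$ is a connection with poles on $\divisor\cup\{\infty\}$, and that $\tilde E$ is the upper modification of $E$ with $\coker(\iota)\simeq O_\divisor$. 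Once the formal type of $\nabla_E-\beta$ is shown to be $\{\beta_i^+,\beta_i^-\}$ at $x_i$ and $\nabla_E-\beta$ is shown to be regular at $\infty$, membership in $\cM_\beta$ is automatic: the degree of $E$ is forced to be $d$ by the residue theorem for $\tr(\nabla_E-\beta)$ together with $\sum_i\res(\beta_i^++\beta_i^-)=\sum_i\res(\alpha_i^++\alpha_i^-)=-d$ (condition~(\ref{AlphaII}) of~\S\ref{MODST}), and irreducibility holds for any connection of formal type $\beta$ because $\beta$ satisfies conditions~(\ref{AlphaIII}) and~(\ref{AlphaIV}) (cf.\ Lemma~\ref{Irreducible}).

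First I would reduce the formal type at $x_i$ to a computation on the formal disc. By base change the fibre of $E$ together with its connection near $y=x_i$ depends only on the restriction of the anti-commutative square~\eqref{twonablas} to a formal neighbourhood of $(x_i,x_i)$ in $\p1\times\p1$. On the formal disc $D_i$ the connection $\nabla+\alpha$ splits, by~\eqref{fnf} and the defining property of $\eta$ (the polar part of $\nabla$ inducing multiplication by $\alpha_i^-$ on $\eta$), into a regular summand on the $\alpha_i^-$-direction and an irregular summand with polar part $\alpha_i^+-\alpha_i^-$; correspondingly $\tilde L$ is the lower modification $L_\eta$, whose cokernel in $L$ has length $n_i$ at $x_i$. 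The irregular summand is stable under the $H^1$-construction and contributes its full polar part $\alpha_i^+-\alpha_i^-$ to the output, while the coupling of the regular summand to the residue $\lambda$ of $\lambda\,\dif\log(x-y)$ along the diagonal and to the modification $\tilde L\hookrightarrow L$ shifts the residue-level term by $n_i\lambda\,\dif z_i/z_i$, the factor $n_i$ reflecting the order of the modification at $x_i$. Summing, the formal type of $\nabla_E$ at $x_i$ is $(0,\alpha_i^+-\alpha_i^-+n_i\lambda\,\dif z_i/z_i)=(0,\beta_i^+-\beta_i^-)$. This local analysis at the irregular points, and in particular pinning down the factor $n_i$ for $n_i>1$, is the step I expect to be the main obstacle, since one must track how the convolution interacts with a higher-order pole; I would carry it out by an explicit computation in the basis adapted to the splitting of $\nabla+\alpha$, or equivalently by matching with the local formal middle-convolution formulas of~\cite{SimpsonRadon,ArinkinFourier}.

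Next I would compute the behaviour at $\infty$. Here the choice $\res_\infty\alpha=\lambda$ is designed so that the residue of $\lambda\,\dif\log(x-y)$ at $x=\infty$ cancels against the residue of $\nabla+\alpha$ there; a local computation parallel to the finite-point case then shows that $\nabla_E$ has a regular singularity at $\infty$ with residue $-\lambda$. Subtracting $\beta$, whose polar part at $x_i$ is $-\beta_i^-$ (as $\beta+\beta_i^-$ is regular there) and whose residue at $\infty$ is $-\lambda$, cancels the pole at $\infty$ and yields a connection $\nabla_E-\beta$ with poles only along $\divisor$ and formal type $\{\beta_i^+,\beta_i^-\}$ at $x_i$. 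By the previous paragraph this proves $(E,\nabla_E-\beta)\in\cM_\beta$, and since the whole construction manifestly works in families it defines a morphism of stacks $\cM_\alpha\to\cM_\beta$.

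Finally, for invertibility I would apply the identical construction to $\cM_\beta$, now using $\beta$ (for which $\res_\infty\beta=-\lambda$) and the twist $-\lambda\,\dif\log(x-y)$; by the formal-type computation above with $\lambda$ replaced by $-\lambda$ this produces a morphism $\cM_\beta\to\cM_\alpha$. That the two morphisms are mutually inverse is exactly the identity $MC_\lambda\circ MC_{-\lambda}=\mathrm{id}$, which is Katz's invertibility of middle convolution~\cite{Katz} (in the de Rham setting,~\cite{SimpsonRadon,ArinkinFourier}); concretely one traces through the double complex~\eqref{twonablas} and checks that the modification $\tilde L\hookrightarrow L$ followed by the analogous modification for $\beta$ recovers $L$. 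Hence $(L,\nabla)\mapsto(E,\nabla_E-\beta)$ is an isomorphism $\cM_\alpha\iso\cM_\beta$.
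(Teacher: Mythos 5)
Your proposal correctly recognizes the construction as a middle convolution / Katz--Radon transform and follows the same skeleton as the paper (local formal-type computation at the $x_i$ and at $\infty$, then invertibility via $MC_\lambda\circ MC_{-\lambda}=\mathrm{id}$), but the one genuinely hard step is not actually proved, and the reduction you propose for it is flawed. You assert that ``by base change'' the connection $\nabla_E$ near $y=x_i$ depends only on the restriction of~\eqref{twonablas} to a formal neighbourhood of $(x_i,x_i)$. That is not what base change gives: the fibre of $E$ at $y$ is the hypercohomology of $\cF^\al|_{\p1\times\{y\}}$ over \emph{all} of $\p1$, so the connection matrix of $\nabla_E$ near $x_i$ a priori receives contributions from every point of $\p1\times\{y\}$, including the other $x_j$ and $\infty$. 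Localizing the formal-type computation at $x_i$ is precisely the local--global compatibility (stationary phase) of the Katz--Radon transform, and that is the substantive input, not a formality. The paper obtains it by passing to twisted $\Dmod$-modules: it identifies $\jmath_{!*}\jmath^*(E,\nabla_E)$ with the Katz--Radon transform $\fR$ of $\jmath_{!*}\jmath^*(L,\nabla+\alpha)$, and then uses $\Phi_{x_i}\circ\fR=\fR(x_i,x_i)\circ\Phi_{x_i}$ together with the explicit formula for the local transform (\cite{ArinkinFourier}, Corollary~6.11 and Theorem~C), which is where the shift $n_i\lambda\,\dif z_i/z_i$ actually comes from. In your write-up this factor $n_i$ is only asserted heuristically (``reflecting the order of the modification''), and your stated fallback --- matching with the local formulas of \cite{SimpsonRadon,ArinkinFourier} --- is not an alternative route: it \emph{is} the paper's proof.

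There is a second, related overclaim. The vanishing-cycles/middle-extension formalism determines the formal type only up to integer shifts of the residues, because $\jmath_{!*}\jmath^*(E,\nabla_E)$ forgets the lattice $E$ itself; this is exactly why the paper explicitly proves only the weaker statement that $\nabla_E$ has formal type $(m_i,\beta_i^+-\beta_i^-+m_i')$ with $m_i,m_i'\in\Z$, and defines the morphism $\cM_\alpha\to\cM_\beta$ only after composing with a suitable modification (done in families, using that formal normal forms exist in families). Your proposal claims the exact formal type $(0,\beta_i^+-\beta_i^-)$ and hence exact membership of $(E,\nabla_E-\beta)$ in $\cM_\beta$ with no modification; to justify that you would have to carry out the explicit lattice-level computation on $E=R^1p_{2,*}\cF^\al$, which is precisely what is missing. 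The treatment of $\infty$ and the invertibility step (rerun the construction with $\beta$ and $-\lambda$, quote Katz) coincide with the paper's and are fine modulo the gap above.
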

We shall prove a slightly weaker statement, which is sufficient
for our purposes. Namely, we prove that $(E,\nabla_E)$
has the described formal types after a modification.
\begin{proof}
Let $\Dmod_{\p1,\lambda}$ be the TDO ring corresponding to
$\lambda\in\C=H^1(\p1,\Omega_\p1)$. For
$(L,\nabla)\in\cM_\alpha$ consider the
$\Dmod_{\p1,\lambda}$-module $\jmath_{!*}(L,\nabla+\alpha)$,
where $\jmath:\p1-(\divisor\cup\infty)\hookrightarrow\p1$ is
the natural inclusion. Note that it has no singularity at
$\infty$ because of the twist by $\lambda$.

As explained in~\cite[Section~6.3]{ArinkinFourier}, the Katz--Radon
transform gives an equivalence
$\fR:\Dmod_{\p1,\lambda}-\mathrm{mod}\to\Dmod_{\p1,-\lambda}-\mathrm{mod}$
and it is easy to see that it is compatible with our
construction in the sense that
\[
    \fR(\jmath_{!*}\jmath^*(L,\nabla+\alpha))=
    \jmath_{!*}\jmath^*(E,\nabla_E).
\]
(The proof is similar to~\cite[Lemma~13]{Arinkin}.)

Let $\Phi_{x_i}$ be the functor of vanishing cycles as defined
in~\cite{ArinkinFourier}. We get
\begin{multline*}
\Phi_{x_i}(\jmath_{!*}\jmath^*(E,\nabla_E))=
\Phi_{x_i}\fR(\jmath_{!*}\jmath^*(L,\nabla+\alpha))=
\fR(x_i,x_i)\Phi_{x_i}\jmath_{!*}\jmath^*(L,\nabla+\alpha)=\\
\fR(x_i,x_i)(O_{\dot D},\dif+\alpha_i^+-\alpha_i^-)= (O_{\dot
D},\dif+\alpha_i^+-\alpha_i^-+n_i\lambda).
\end{multline*}
Here $\fR(x_i,x_i)$ is the local Katz--Radon transform, the
second equality is~\cite[Corollary~6.11]{ArinkinFourier}, the
last equality is~\cite[Theorem~C]{ArinkinFourier}. Now it is
easy to see that $\jmath_{!*}\jmath^*(E,\nabla_E)$ has required
singularities.

It follows that the formal type of $\nabla_E$ at $x_i$ is
$(m_i,\beta_i^+-\beta_i^-+m_i')$, where $m_i$, and~$m_i'$ are
integers. Thus $(E,\nabla_E-\beta)$ becomes a connection in
$\cM_\beta$ after a suitable modification.

Note that the construction of $(E,\nabla_E-\beta)$ works in
families as well. Since formal normal forms of connections
exist in families, after a suitable modification we get a
morphism $\cM_\alpha\to\cM_\beta$. If we do the same
construction with $\alpha$ and $\beta$ switched and use the
inverse Katz--Radon transform, we get a morphism
$\cM_\beta\to\cM_\alpha$. It is easy to see that
these morphisms are inverse to each
other.
\end{proof}
This completes the proof of Theorem~\ref{Langlands}.

\bibliography{IrregularLanglands3}

\begin{thebibliography}{EGAIII}

\bibitem[AB]{ArinkinBezrukavnikov}
D.~Arinkin and R.~Bezrukavnikov.
\newblock Perverse coherent sheaves.
\newblock {\em Mosc. Math. J.}, 10(1):3--29, 271, 2010.

\bibitem[AL]{ArinkinLysenko}
D.~Arinkin and S.~Lysenko.
\newblock Isomorphisms between moduli spaces of {${\rm SL}(2)$}-bundles with
  connections on {$\p1\setminus \{x\sb 1,\cdots, x\sb 4\}$}.
\newblock {\em Math. Res. Lett.}, 4(2-3):181--190, 1997.

\bibitem[{Alp}]{Alper}
J.~{Alper}.
\newblock {Good moduli spaces for Artin stacks}.
\newblock ar{X}iv:0804.2242.

\bibitem[Ari1]{ArinkinFourier}
D.~Arinkin.
\newblock Fourier transform and middle convolution for irregular {D}-modules.
\newblock ar{X}iv:0808.0699.

\bibitem[Ari2]{Arinkin}
D.~Arinkin.
\newblock Orthogonality of natural sheaves on moduli stacks of {$\rm
  SL(2)$}-bundles with connections on {$\p1$} minus 4 points.
\newblock {\em Selecta Math. (N.S.)}, 7(2):213--239, 2001.

\bibitem[Ati]{Atiyah}
M.~F. Atiyah.
\newblock Complex analytic connections in fibre bundles.
\newblock {\em Trans. Amer. Math. Soc.}, 85:181--207, 1957.

\bibitem[BB1]{BeilinsonBernstein}
A.~Beilinson and J.~Bernstein.
\newblock A proof of {J}antzen conjectures.
\newblock In {\em I. {M}. {G}el'fand {S}eminar}, volume~16 of {\em Adv. Soviet
  Math.}, pages 1--50. Amer. Math. Soc., Providence, RI, 1993.

\bibitem[BB2]{BravermanBezrukavnikov}
A.~Braverman and R.~Bezrukavnikov.
\newblock Geometric {L}anglands correspondence for {$\Dmod$}-modules in prime
  characteristic: the {${\rm GL}(n)$} case.
\newblock {\em Pure Appl. Math. Q.}, 3(1, Special Issue: In honor of Robert D.
  MacPherson. Part 3):153--179, 2007.

\bibitem[BBH]{Nahm}
C.~Bartocci, U.~Bruzzo, and D.~Hern{\'a}ndez~Ruip{\'e}rez.
\newblock Fourier-{M}ukai and {N}ahm transforms in geometry and mathematical
  physics, 2009.

\bibitem[BK]{BurbanKreussler}
I.~Burban and B.~Kreussler.
\newblock Fourier-{M}ukai transforms and semi-stable sheaves on nodal
  {W}eierstra\ss\ cubics.
\newblock {\em J. Reine Angew. Math.}, 584:45--82, 2005.

\bibitem[Bri]{EquivalencesAndFM}
T.~Bridgeland.
\newblock Equivalences of triangulated categories and {F}ourier-{M}ukai
  transforms.
\newblock {\em Bull. London Math. Soc.}, 31(1):25--34, 1999.

\bibitem[CB]{Crawley-Boevey}
W.~Crawley-Boevey.
\newblock Indecomposable parabolic bundles and the existence of matrices in
  prescribed conjugacy class closures with product equal to the identity.
\newblock {\em Publ. Math. Inst. Hautes \'Etudes Sci.}, (100):171--207, 2004.

\bibitem[EGAII]{EGAII}
A.~Grothendieck.
\newblock \'{E}l\'ements de g\'eom\'etrie alg\'ebrique. {II}. \'{E}tude globale
  \'el\'ementaire de quelques classes de morphismes.
\newblock {\em Inst. Hautes \'Etudes Sci. Publ. Math.}, (8):222, 1961.

\bibitem[EGAIII]{EGAIII}
A.~Grothendieck.
\newblock \'{E}l\'ements de g\'eom\'etrie alg\'ebrique. {III}. \'{E}tude
  cohomologique des faisceaux coh\'erents. {II}.
\newblock {\em Inst. Hautes \'Etudes Sci. Publ. Math.}, (17):91, 1963.

\bibitem[EGAIV]{EGAIV}
A.~Grothendieck.
\newblock \'{E}l\'ements de g\'eom\'etrie alg\'ebrique. {IV}. \'{E}tude locale
  des sch\'emas et des morphismes de sch\'emas. {II}.
\newblock {\em Inst. Hautes \'Etudes Sci. Publ. Math.}, (24):231, 1965.

\bibitem[Fed]{FedorovIsoStokes}
R.~Fedorov.
\newblock Algebraic and {H}amiltonian approaches to iso{S}tokes deformations.
\newblock {\em Transform. Groups}, 11(2):137--160, 2006.

\bibitem[FG]{FrenkelGross}
E.~Frenkel and B.~Gross.
\newblock A rigid irregular connection on the projective line.
\newblock {\em Ann. of Math. (2)}, 170(3):1469--1512, 2009.

\bibitem[Fre1]{FrenkelRecentAdvances}
E.~Frenkel.
\newblock Recent advances in the {L}anglands program.
\newblock {\em Bull. Amer. Math. Soc. (N.S.)}, 41(2):151--184 (electronic),
  2004.

\bibitem[Fre2]{FrenkelRamifications}
E.~Frenkel.
\newblock Ramifications of the geometric {L}anglands program.
\newblock In {\em Representation theory and complex analysis}, volume 1931 of
  {\em Lecture Notes in Math.}, pages 51--135. Springer, Berlin, 2008.

\bibitem[Har]{Hartshorne}
R.~Hartshorne.
\newblock {\em Algebraic geometry}.
\newblock Springer-Verlag, New York, 1977.
\newblock Graduate Texts in Mathematics, No. 52.

\bibitem[HLST]{EllipticFM}
D.~Hern{\'a}ndez~Ruip{\'e}rez, A.~C. L{\'o}pez~Mart{\'{\i}}n,
  D.~S{\'a}nchez~G{\'o}mez, and C.~Tejero~Prieto.
\newblock Moduli spaces of semistable sheaves on singular genus 1 curves.
\newblock {\em Int. Math. Res. Not. IMRN}, (23):4428--4462, 2009.

\bibitem[Kat]{Katz}
N.~Katz.
\newblock {\em Rigid local systems}, volume 139 of {\em Annals of Mathematics
  Studies}.
\newblock Princeton University Press, Princeton, NJ, 1996.

\bibitem[Lys1]{LysRS}
S.~Lysenko.
\newblock Global geometrised {R}ankin-{S}elberg method for {$GL(n)$}.
\newblock ar{X}iv:math/0108208.

\bibitem[Lys2]{Lysenko}
S.~Lysenko.
\newblock Orthogonality relations for two kinds of local systems on $\p1$.
\newblock Unpublished.

\bibitem[MFK]{MumfordGIT}
D.~Mumford, J.~Fogarty, and F.~Kirwan.
\newblock {\em Geometric invariant theory. {T}hird edition}, volume~34 of {\em
  Ergebnisse der Mathematik und ihrer Grenzgebiete (2) [{R}esults in
  Mathematics and Related Areas (2)]}.
\newblock Springer-Verlag, Berlin, 1994.

\bibitem[Mih1]{Mihai1}
A.~Mihai.
\newblock Sur le r\'esidu et la monodromie d'une connexion m\'eromorphe.
\newblock {\em C. R. Acad. Sci. Paris S\'er. A-B}, 281(12):Aii, A435--A438,
  1975.

\bibitem[Mih2]{Mihai2}
A.~Mihai.
\newblock Sur les connexions m\'eromorphes.
\newblock {\em Rev. Roumaine Math. Pures Appl.}, 23(2):215--232, 1978.

\bibitem[Muk]{FourierAndModuli}
S.~Mukai.
\newblock Fourier functor and its application to the moduli of bundles on an
  abelian variety.
\newblock In {\em Algebraic geometry, {S}endai, 1985}, volume~10 of {\em Adv.
  Stud. Pure Math.}, pages 515--550. North-Holland, Amsterdam, 1987.

\bibitem[Mum]{MumfordAbelian}
D.~Mumford.
\newblock {\em Abelian varieties}.
\newblock Tata Institute of Fundamental Research Studies in Mathematics, No. 5.
  Published for the Tata Institute of Fundamental Research, Bombay, 1970.

\bibitem[Nev]{Nevins}
T.~Nevins.
\newblock Mirabolic {L}anglands duality and the quantum {C}alogero-{M}oser
  system.
\newblock {\em Transform. Groups}, 14(4):931--983, 2009.

\bibitem[OO]{OhyamaOkumura}
Y.~Ohyama and S.~Okumura.
\newblock A coalescent diagram of the {P}ainlev\'e equations from the viewpoint
  of isomonodromic deformations.
\newblock {\em J. Phys. A}, 39(39):12129--12151, 2006.

\bibitem[Sim1]{SimpsonICM}
C.~Simpson.
\newblock Nonabelian {H}odge theory.
\newblock In {\em Proceedings of the {I}nternational {C}ongress of
  {M}athematicians, {V}ol.\ {I}, {II} ({K}yoto, 1990)}, pages 747--756, Tokyo,
  1991. Math. Soc. Japan.

\bibitem[Sim2]{Simpson1}
C.~Simpson.
\newblock Moduli of representations of the fundamental group of a smooth
  projective variety. {I}, {II}.
\newblock {\em Inst. Hautes \'Etudes Sci. Publ. Math.}, (79,80):47--129,5--79,
  1994,1995.

\bibitem[Sim3]{Simpson2}
C.~Simpson.
\newblock The {H}odge filtration on nonabelian cohomology.
\newblock In {\em Algebraic geometry---{S}anta {C}ruz 1995}, volume~62 of {\em
  Proc. Sympos. Pure Math.}, pages 217--281. Amer. Math. Soc., Providence, RI,
  1997.

\bibitem[Sim4]{SimpsonMiddleConvolution}
C.~Simpson.
\newblock Katz's middle convolution algorithm.
\newblock {\em Pure Appl. Math. Q.}, 5(2, Special Issue: In honor of Friedrich
  Hirzebruch. Part 1):781--852, 2009.

\bibitem[ST]{SeidelThomas}
P.~Seidel and R.~Thomas.
\newblock Braid group actions on derived categories of coherent sheaves.
\newblock {\em Duke Math. J.}, 108(1):37--108, 2001.

\end{thebibliography}
\end{document}